\newtheorem{theorem}{Theorem}[section]
\newtheorem*{theorem*}{Theorem}
\newtheorem{teorema}{Theorem}
\newtheorem{corolario}[teorema]{Corollary}
\newtheorem{corollary}[theorem]{Corollary}
\newtheorem{lemma}[theorem]{Lemma}
\newtheorem{proposition}[theorem]{Proposition}
\newtheorem*{claim*}{Claim}
\newtheorem{definition*}{Definition}
\newtheorem{claim}[theorem]{Claim}
\DeclareFontFamily{U}{mathx}{}
\DeclareFontShape{U}{mathx}{m}{n}{<-> mathx10}{}
\DeclareSymbolFont{mathx}{U}{mathx}{m}{n}
\DeclareMathAccent{\widecheck}{0}{mathx}{"71}
\newcommand{\TT}{\textnormal{T}}
\newcommand{\RR}{\textnormal{R}}
\newcommand{\BB}{\textnormal{B}}
\newcommand{\LL}{\textnormal{L}}
\newcommand{\R}{\mathbb{R}}
\newcommand{\T}{\mathbb{T}}
\newcommand{\D}{\mathbb{D}}
\newcommand{\Z}{\mathbb{Z}}
\newcommand{\F}{\mathcal{F}}
\newcommand{\dH}{\mathrm{d}_{\mathrm{H}}}
\newcommand{\rar}{\rightarrow}
\newcommand{\uar}{\uparrow}
\newcommand{\lar}{\leftarrow}
\newcommand{\dar}{\downarrow}
\newcommand{\wh}{\widehat}
\newcommand{\tl}{\tilde}
\newcommand{\wt}{\widetilde}
\newcommand{\wc}{\widecheck}	
\newcommand{\dd}{\,\mathrm{d}}
\newcommand{\Ine}{\textnormal{Ine}}
\newcommand{\Ess}{\textnormal{Ess}}
\newcommand{\bck}{\color{black}}
\theoremstyle{definition}
\newtheorem{definition}[theorem]{Definition}
\newtheorem{remark}[theorem]{Remark}
\numberwithin{equation}{section}
\begin{document}

	\baselineskip=17pt
	
	
	\title[Chaotic conservative models for homeomorphisms of $\T^2$]{Fully chaotic conservative models for some torus homeomorphisms}
	
	\author[A. Garc\'ia-Sassi, F. A. Tal]{Alejo Garc\'ia-Sassi, Fábio Armando Tal}
	\address{Instituto de Matem\'atica y Estad\'istica\\ Facultad de Ingenier\'ia\\ Universidad de la Rep\'ublica\\
		Julio Herrera y Reissig 565\\
		11300 Montevideo, Uruguay.\\
		Instituto de Matemática e Estatísica \\ Universidade de São Paulo\\
		Rua do Matão 1010 \\ 05508-090 Butantã, São Paulo - SP, Brasil.}
	\email{alejog@fing.edu.uy}
	\address{Instituto de Matemática e Estatísica \\ Universidade de São Paulo\\
		Rua do Matão 1010 \\ 05508-090 Butantã, São Paulo - SP, Brasil.}
	\email{fabiotal@usp.br}

	\date{}

	\begin{abstract}
		We study homotopic-to-the-identity torus homeomorphisms whose rotation set has nonempty interior. We prove that any such map is monotonically semiconjugate to a homeomorphism that preserves the Lebesgue measure and that has the same rotation set. Furthermore, the dynamics of the quotient map has several interesting chaotic traits: for instance, it is topologically mixing, has a dense set of periodic points and is continuum-wise expansive. In particular, this shows that a convex compact set of $\R^2$ with nonempty interior is the rotation set of the lift of a homeomorphism of $\T^2$ if and only if it is the rotation set of the lift of a conservative homeomorphism.
	\end{abstract}
	
	\subjclass[2020]{Primary 37E30; Secondary 37E45}
	
	\keywords{Rotation Theory, Forcing Theory, Surface Homeomorphisms, Topological Horseshoe, Elliptic Island}
	
	\maketitle

	\setcounter{tocdepth}{2}
	\tableofcontents
	
	\section{Introduction}\label{sec:Intro}
	
	\subsection{Motivation} The study of arbitrary dynamical systems on a manifold is a very hard task, as there can be too much information and features for one to grasp. One of the most well-established ways to parse all that information is to try to relate a given dynamical system to some simpler model dynamics which still encompass central attributes from the original system, but where our understanding of the phenomena is more robust. This is typically done by finding semiconjugacies between the original system or some restriction of it, to certain factor dynamics. Information may be lost in this process, but several aspects of the original system become clearer in the study of these factors. 
	
	This strategy has so far been very successfully used in the study of one-dimensional dynamics. For instance, the Poincaré Rotation Theory provides simple models for homeomorphisms of the circle, and the Kneading Theory of Milnor and Thurston showed the existence of semiconjugacies from positive entropy multimodal maps of the interval, to simple piecewise-linear constant-slope models.
	
	The use of models by factors is also a prevalent strategy in the study of surface homeomorphisms. It started with the study of Smale's Horseshoes and their semiconjugacy with the shift in the symbolic space, one of the most well understood models in dynamics. Katok in \cite{katok} showed that, if a $\mathcal{C}^{1+\alpha}$ diffeomorphism $f$ of a surface has positive topological entropy, then there exists some invariant subset for a power $f^k$ of $f$ such that the restriction of $f^k$ to this invariant set is \textcolor{black}{topologically} semiconjugate to the Bernoulli shift. 
	
	Other deep interesting models are obtained when one considers the isotopy classes of maps in surfaces. Nielsen-Thurston Theory has classified all the possible isotopy classes, and it has been shown that maps which are isotopic to Linear Anosov diffeomorphisms of $\T^2$, are extensions of these (\cite{franks69}). For closed surfaces it is also known that if a map is isotopic to a pseudo-Anosov map, then there exists a semiconjugacy between some invariant closed set of the original dynamics to the pseudo-Anosov homeomorphism (\cite{handel85}). {\color{black}Therefore, when the induced action in the homotopy group is sufficiently complex, one has very good models to understand the dynamics. The original dynamics is at least as complicated as a classical model with the same induced action in the homotopy.} 
	
	But none of those models help in understanding the rotational behaviour of a dynamical system on a surface if it lies in the isotopy class of the identity. In this paper we attempt to provide models that preserve this specific feature, which has been increasingly relevant in Surface Dynamics. We will concentrate our work on the study of homeomorphisms of $\T^2$ which are homotopic to the identity, a set we denote by $\textnormal{Homeo}_0(\mathbb{T}^2)$, but we imagine most of what is done here can be extended in the correct circumstances to other closed surfaces. {\color{black}Our work will mostly deal with maps with complex rotational behaviour. For homeomorphisms of $\T^2$ with more restricted rotational behaviour, there have been nice results identifying when they could be modeled by either conjugating to an ergodic $\T^2$-translation (\cite{jager}) or by semiconjugating to an irrational rotation in the circle (\cite{TobiasFabio, Kocsardsemiconjugado}).}

 For this study, we will make use of Rotation Theory and the modern Equivariant Brouwer Theory techniques. We will postpone the description of the latter to the body of the text. Let us say a few words on Rotation Theory. 
	
	\bigskip 
	
	\subsection{Torus Rotation Theory} Given a surface $S$ with a nontrivial fundamental group and an isotopy $(f_t)_{t\in [0,1]}$ connecting the identity $f_0$ to a homeomorphism $f=f_1$ of $S$, one can ask how the trajectories of points by the isotopy \textit{loop} around the surface. For the torus case, the classical notion of rotation set was introduced in \cite{mz89}.

	\begin{definition}[{\color{black}\textbf{Rotation set}, \cite{mz89}}]
		Given $f$ in  $\textnormal{Homeo}_0(\mathbb{T}^2)$ and a lift $\wh f$ of $f$ to $\R^2$, one defines the \emph{rotation set} of $\wh f$ as
		$$ \rho(\wh f) := \Bigl\{ v \in \R^2 :  \exists \  \wh z_k \in \R^2, n_k \to +\infty \text{ such that } \lim \limits_{k \to \infty} \frac{\wh f^{n_k}(\wh z_k) - \wh z_k}{n_k} = v \Bigr\}.$$
	\end{definition} 
	
	{\color{black}In \cite{mz89}, this set is proved to be equal} to the set of mean rotations of $f$-invariant measures, that is, 
	\[\rho_{\textnormal{inv}}(\wh f) := \Bigl \{v \in \R^2 : \exists \mu \text{ which is } f- \text{invariant s.t.} \int \wh f(\wh z) - \wh z \dd \mu = v \Bigr\} \]
	
	In \cite{mz89} it is proven that $\rho(\wh f)$ is a compact convex subset of $\R^2$ (in particular it is connected), which is then either a point, a segment or a set with nonempty interior. Given {\color{black}that} every lift $\wh f$ commutes with the deck transformations, the rotation set of two different lifts will be the same up to integer translations. We will then say that the rotation set of $f$ has nonempty interior if this is true for the rotation set of a lift $\wh f$ of $f$. One notes that the notion of rotation sets was already introduced by Schwartzmann using invariant measures in \cite{schwartzmann}, and a topological version was later given by Fried in \cite{fried}. 
	
	Rotation sets have been a very effective tool in the study of torus homeomorphisms, as they can in some cases encode several dynamical properties. For instance, when the rotation set has nonempty interior, it is known that the homeomorphism has positive topological entropy, as proved in \cite{llibremackay}, using Nielsen-Thurston classification of surface homeomorphisms from \cite{thurston88}, after having punctured the surface on an $f$-invariant set and restricting the dynamics to the resulting surface. Franks shows the abundance of periodic orbits in \cite{franks89}, and ergodic measures realizing each rotation vector in the interior of the rotation set are found in \cite{mz}. New proofs of these results and several improvements are recently obtained using Forcing Theory by Le Calvez and the second author, found in \cite{lecalveztalforcing, lct2}, using Brouwer-Le Calvez's dynamically transverse decomposition built in \cite{lecalvezequivariante}. 
	
	Also, assuming some extra regularity, inside the class of $\mathcal{C}^{1+\alpha}$ diffeomorphisms, if $f$ has a lift $\wh f$ such that $\rho(\wh f)$ has the origin in its interior, then Addas-Zanata in \cite{addas14grilla} and \cite{addas15consecuencias} uses Pesin Theory (see for example \cite[Part I]{pollicott93pesin}) to find saddle periodic points with different rotation vectors such that, for any lift of {\color{black}such a point}, its unstable manifold intersects the stable manifolds of each of its integer translates in a topologically transverse way. If, furthermore, one assumes that $f$ is transitive, then one obtains that $\wh f$ (and therefore $f$ itself) must be topologically mixing.
	
	Given all {\color{black}these applications}, the study of the \emph{realization} of rotation sets, i.e. understanding which subsets of $\R^2$ are rotation sets of lifts of elements of $\textnormal{Homeo}_0(\mathbb{T}^2)$, became also a popular topic. One says that a subset $D$ of $\R^2$ is realized as a rotation set of a homeomorphism $f$ if $f$ has a lift whose rotation set is $D$.
	For the case in which it has nonempty interior, Kwapisz proved in \cite{kwapisz92} that every rational polygon (i.e. {\color{black}whose} vertices have rational coordinates) can be realized as the rotation set of a torus homeomorphism: Passeggi proves that rational polygons are yielded as rotation sets for a generic set of homeomorphisms (\cite{passeggi13}), and Guihéneuf and Koropecki prove (\cite{pakoro}) that this is a necessary condition for the rotation set to be stable under small perturbations. 
	
	Kwapisz found in 1995 the first example of a rotation set with infinitely many extremal points (\cite{kwapisz95}), and in 2017 Boyland, De Carvalho and Hall find a one-parameter family of examples, many of them also having this property (\cite{boyland_2015}). They also describe explicitly how the rotation set changes as the parameter changes. Also, a result by Béguin, Crovisier and Le Roux proves that every irrational vector can appear as the extremal point of a rotation set with nonempty interior (\cite{beguin06}). 
	
	There are examples of compact convex sets that have been proven to never be a rotation set of a torus homeomorphism, but only in a very specific case (\cite{lecalveztalforcing}). But it is not known if there exists a rotation set with uncountably many extremal points. In particular, one does not know if for instance the closed unit ball is realized as a rotation set.

	\subsection{Statement of results}
	
	Our main result shows that, whenever $f$ has a rotation set with nonempty interior, then it is always an extension of another homeomorphism with much better understood dynamics, and with the same rotation set. Let us first introduce some notions for the sake of precision. 
	
	\bigskip
	
	\paragraph{\textbf{Factors and extensions.}} Given two metric spaces $X$ and $Y$, and two continuous maps $f: X \to X$, $g:Y \to Y$, we will say that $g$ is a \emph{factor of $f$} (and equivalently, we will also say that $f$ is an \emph{extension of $g$}) if there exists a semiconjugacy $h$ from $f$ to $g$, that is, a continuous surjection $h: X \to Y$ such that $h \circ f = g \circ h$. A map $h: X \to Y$ is \textit{monotone} if the preimage of every point is connected. 
	
	\medskip

	\paragraph{\textbf{Diameter of continua.}} Let $\wh \pi:\R^2\to\T^2$ be the classical covering projection. 
	
	\begin{definition}\label{def:Inessential}
		Given an open set $U \subset \T^2$, we will say that $U$ is \textit{inessential} if every loop in $U$ is homotopically trivial in $\T^2$. Otherwise, we will say it is {\color{black}\emph{essential}}. 
		
		We will say an arbitrary set $X \subset \T^2$ is \textit{inessential} if it has an inessential neighbourhood. Otherwise we will say it is {\color{black}\emph{essential}}.

	\end{definition}
	
	Note that {\color{black}for an inessential} connected set $D\subset \T^2$, every lift of $D$ (which is a connected component of $\wh \pi^{-1}(D)$), is homeomorphic to $D$. {\color{black}Moreover,} a lift $\wh D$ of {\color{black}an inessential connected set $D \subset \T^2$} satisfies that $\wh D \cap \wh D+v = \varnothing$ for every non-null $v\in\mathbb{Z}^2$.
	
	\begin{definition}
		Given a continuum $K \subset \T^2$, we will define its lifted diameter $\textnormal{diam}(K)$ as the euclidean diameter of any connected component of its lifts $\wh K$ to the universal covering $\R^2$ of $\T^2$. 	
	\end{definition}
	Note that the diameter of $K$ in this case is finite if, and only if, it is inessential. Thus, for the context of inessential continua we will abuse notation and write \emph{diameter} for its lifted diameter. 
	
	\begin{definition}[\textbf{Dynamical diameter}]\label{def:DynamicalDiameter}
		Let $K \subset \T^2$ be a continuum, and $f: \T^2 \to \T^2$ a homeomorphism. We will define the \emph{dynamical diameter} $\mathscr{D}_f(K)$ as
		$$ \mathscr{D}_f(K) := \sup \limits_{j \in \mathbb{Z}} \{\textnormal{diam}(f^j(K))\} $$
	\end{definition}
	
	A continuum $K$ will be \textit{dynamically bounded} if $\mathscr{D}_f(K) < \infty$, and it will be \textit{dynamically unbounded} otherwise.
	
	A homeomorphism $h$ of a metric space is \emph{continuum-wise expansive} if there exists $\varepsilon>0$ such that, for any nontrivial continuum $K$, the supremum of the diameters of $h^{j}(K)$ with $j\in\mathbb{Z}$ is greater than $\varepsilon$. Continuum-wise expansive maps have several strong properties, see for instance \cite{jana04}.
	For $f$ in  $\textnormal{Homeo}_0(\mathbb{T}^2)$ we say that it is \emph{infinitely continuum-wise expansive} if, for any 	{\color{black}nontrivial} continuum $K$, we have that $\mathscr{D}_f(K) = \infty$. 
	
	\bigskip

	\paragraph{\textbf{Topological entropy}}
	
	Let us recall a definition of topological entropy by Bowen in \cite{bowen71}. Let $f$ be a uniformly continuous map of a metric space $(X,\mathrm{d})$. For every $n \in \Z^+$, we define the distances 
	\[\dd_n(x,x') = \sup \{\dd(f^j(x),f^j(x')) \ : 0 \leq j \leq n\},\]
	\[\dd_{\pm n}(x,x') = \sup \{\dd(f^j(x),f^j(x')) \ : -n \leq j \leq n\}\]
	We will say a subset $Y \subset X$ is $(n,\varepsilon)$-separated (similarly $(\pm n,\varepsilon)$-separated) with respect to $f$, if for every pair $y,y' \in Y$ we have that $\dd_n(y,y') > \varepsilon$ (similarly $\dd_{\pm n}(y,y') > \varepsilon$).

	{\color{black}For a continuum $K \subset X$, we} define $s_n(\varepsilon, K)$ as the maximal number of elements of an $(n,\varepsilon)$-separated subset of $K$, with respect to $f$, and define $s_{\pm n}(\varepsilon, K)$ in the same fashion. Let us define
	\begin{equation}\label{eq:ContinuumEntropy}
		h(f,K) = \lim_{\varepsilon \to 0} \  \Bigl (\limsup_{n \to +\infty} \frac{1}{n} \mathrm{log} \bigl ( s_n(\varepsilon, K) \bigr ) \Bigr )	
	\end{equation}
	We define the \textit{{\color{black}two-sided} (topological) entropy of $f$ carried by $K$} as 
	\begin{equation}\label{eq:TwoSidedEntropyK}
		h_{\pm}(f,K) = \lim_{\varepsilon \to 0} \  \Bigl (\limsup_{n \to + \infty} \frac{1}{n} \mathrm{log} \bigl ( s_{\pm n}(\varepsilon, K) \bigr ) \Bigr )
	\end{equation} 
	We will say that $K$ \textit{carries positive 	{\color{black}two-sided} entropy} whenever 	{\color{black}$h_{\pm}(f,K)$} is positive. We then define the \textit{topological entropy} $h(f)$ of $f$ as  
	\[h(f) = \sup_{K \text{compact}} h(f,K),\]
	{\color{black}and the \textit{two-sided topological entropy} $h_{\pm}(f)$ of $f$ as  
	\[h_{\pm}(f) = \sup_{K \text{compact}} h_{\pm}(f,K).\]
	}
	
	We will say $f$ is \textit{tight} if for every nontrivial continuum $K$, we have that $h_{\pm}(f,K)$ is positive. Note that this notion has been of interest during these last years: De Carvalho and Paternain define the 0-entropy quotient to recover tight dynamics in a more complex topological space in \cite{decarvalhopaternain02}, this result has been retaken for example in \cite{boylandcarvalhohall23} by Boyland, De Carvalho and Hall.

	\begin{teorema}\label{thmA:semiconjugation}
		Let $f \in \textnormal{Homeo}_0(\mathbb{T}^2)$ have a lift $\wh f$ such that $\rho(\wh f)$ has nonempty interior. Then, there exists a factor map $g \in \textnormal{Homeo}_0(\mathbb{T}^2)$ by a monotone semiconjugacy, having a lift $\wh g$ with the following properties:
		\begin{itemize}
			\item $\rho(\wh g) = \rho(\wh f)$.
			\item $g$ is area preserving.
			\item $g$ is topologically mixing, and if $\vec{0} \in \textnormal{int}(\rho(\wh g))$, $\wh g$ is topologically mixing.
			\item $g$ is infinitely continuum-wise expansive.
			\item $g$ is tight: for each nontrivial continuum $K \subset \T^2$, we have $h_{\pm}(g,K) > 0$.  
			\item For each open set $U \subset \T^2$, $g$ has a Markovian horseshoe $X \subset U$ (see Definition \ref{def:RotationalHorseshoe}). In particular, $\overline{\textnormal{Per}(g)} = \mathbb{T}^2$.
		\end{itemize}
	\end{teorema}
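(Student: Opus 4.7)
My plan is to build $g$ as the map induced by $f$ on a quotient of $\T^2$ by an upper semicontinuous, $f$-invariant decomposition $\mathcal{D}$ of $\T^2$ into compact connected cells that are either points or suitably maximal dynamically bounded continua. The key input from having $\rho(\wh f)$ with nonempty interior is that essential continua in $\T^2$ cannot be dynamically bounded (by the Koropecki--Tal and Le~Calvez--Tal forcing-theoretic bounds on inessential components), so inessential dynamically bounded continua admit uniform control and one can take the non-trivial cells to be their maximal connected versions. Concretely, I would declare $x \sim y$ if the two points lie in a common dynamically bounded continuum and take the transitive closure; verifying that the resulting decomposition is upper semicontinuous with cellular fibers, I would invoke Moore's decomposition theorem to conclude that $\T^2/\mathcal{D}$ is homeomorphic to $\T^2$ and that the induced map $\wt g$ is a homeomorphism lying in $\textnormal{Homeo}_0(\T^2)$.

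I would then verify that the monotone semiconjugacy $h:\T^2\to \T^2/\mathcal{D}$ preserves rotational information. Lifting to $\R^2$ and using that every fiber of $h$ is dynamically bounded, the rotation vector of any $\wh f$-orbit coincides with that of its projection under $\wh h$, giving $\rho(\wh g)=\rho(\wh f)$. Infinite continuum-wise expansiveness is then automatic: any non-trivial continuum $K$ in the quotient lifts to a continuum in $\T^2$ meeting at least two distinct cells, so it properly contains some maximal dynamically bounded continuum, which forces $\mathscr{D}_f$ and thus $\mathscr{D}_g$ to be infinite.

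For conservativity, my plan is to use Oxtoby--Ulam. Averaging ergodic measures realizing a dense set of rotation vectors in $\textnormal{int}\,\rho(\wh f)$ (existence from \cite{mz, lct2}) produces an $f$-invariant probability measure on $\T^2$ whose pushforward $\nu$ to the quotient is non-atomic and, after arranging the construction so that no open set in the quotient is topologically wandering under $\wt g$, of full support. Then I can choose the identification $\phi:\T^2/\mathcal{D}\to\T^2$ so that $\phi_*\nu$ is Lebesgue, making $g=\phi\circ \wt g\circ\phi^{-1}$ area-preserving. The remaining conclusions --- topological mixing of $g$, mixing of $\wh g$ when $\vec 0\in\textnormal{int}\,\rho(\wh g)$, tightness, density of periodic orbits, and a Markovian horseshoe inside every open set --- follow by applying the forcing theory of \cite{lecalveztalforcing, lct2} and an Addas--Zanata style transitivity-implies-mixing argument directly to the conservative map $g$, whose rotation set still has nonempty interior.

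The main obstacle will be calibrating the decomposition $\mathcal{D}$ so that all of its required properties hold simultaneously: upper semicontinuity (so $\wt g$ is continuous), cellularity (so Moore applies and we recover $\T^2$), enough non-trivial cells to absorb both inessential continua and topological wandering (so $\nu$ has full support and $g$ is infinitely continuum-wise expansive), yet few enough cells that no essential continuum is collapsed and the rotational data survives in $g$. Most of the technical work lies in showing that the maximal dynamically bounded continua really do form such an upper semicontinuous cellular decomposition when $\rho(\wh f)$ has nonempty interior --- this is precisely where the interplay between rotation theory and the topology of continua on $\T^2$ has to be leveraged.
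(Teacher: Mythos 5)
Your overall architecture coincides with the paper's: collapse dynamically bounded continua, apply Moore/Roberts to recover $\T^2$, read off $\rho(\wh g)=\rho(\wh f)$ through the monotone semiconjugacy, and finish conservativity with Oxtoby--Ulam. But the proposal has a genuine gap exactly where you defer to the literature. You claim that inessential dynamically bounded continua ``admit uniform control'' by the Koropecki--Tal and Le~Calvez--Tal bounds; no such uniform bound exists in the published literature for homeomorphisms. What is known from \cite{korotal} is a bound on periodic disks that depends on the period, and whether the bound can be made uniform was precisely an open question (Question G of \cite{korotal}) outside the $\mathcal{C}^{1+\alpha}$ setting, where it follows from Pesin theory \cite{addas14grilla}. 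This uniform bound is the paper's Theorem \ref{PropC:maximumdiameter}, and it is indispensable for your plan: without it the classes of your relation need not be bounded, closed, or inessential, and the decomposition need not be upper semicontinuous (a Hausdorff limit of dynamically bounded continua need not be dynamically bounded), so Moore's theorem cannot be invoked and the quotient need not even be a torus. Proving this bound for bare homeomorphisms --- via canonically foliated strips built from Forcing Theory and the anchoring arguments --- is the technical heart of the paper, and your proposal does not supply a substitute.

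A second, smaller gap concerns the last two bullets and conservativity. Pushing forward an average of ergodic measures realizing a dense set of rotation vectors gives an invariant, non-atomic measure, but nothing forces its support to be all of the quotient; ``arranging the construction so that no open set is topologically wandering'' is the statement to be proved, not an arrangement one can make. The paper secures full support by first proving that every open set of the quotient contains a Markovian horseshoe (heteroclinic pseudo-rectangles, prime-end theory, stable/unstable sets from continuum-wise expansiveness, and total anchoring), and then planting an atomless invariant measure, coming from a Sturmian minimal subshift, inside each basic open set. Likewise, mixing of $g$ cannot be obtained by an Addas-Zanata-style argument: that argument needs transitivity of $g$ (not known a priori) and was proved with $\mathcal{C}^{1+\alpha}$ regularity; the paper instead proves mixing of $\wh g$ directly from infinite continuum-wise expansiveness via weakly stable/unstable continua and total anchoring. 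So the final paragraph of your plan relies on off-the-shelf consequences of forcing theory that do not exist in the generality you need; they are exactly what Sections \ref{sec:StableSets} through \ref{sec:DenseTopHorseshoes} of the paper are written to establish.
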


	Note that Theorem A, in addition to presenting dynamical consequences for homeomorphisms of $\T^2$, has a direct consequence for the study of the realization of rotation sets in the form of the following corollary:
	
	\begin{corolario}\label{cor:realizacao}
		A convex compact subset of $\R^2$ with nonempty interior is realized as the rotation set of a homeomorphism of $\T^2$ if {\color{black}and} only if it is also realized as the rotation set of an area-preserving homeomorphism of $\T^2$. 
	\end{corolario}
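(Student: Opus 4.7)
The corollary is an essentially immediate consequence of Theorem A, so the plan is short. I would first dispatch the trivial direction and then reduce the nontrivial direction to Theorem A.

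For the ``if'' direction, I would simply observe that every area-preserving homeomorphism of $\T^2$ is in particular a homeomorphism of $\T^2$, so any compact convex set realized as the rotation set of an area-preserving map is automatically realized as the rotation set of a (generic) homeomorphism. No further work is needed here, and this direction is independent of the hypothesis that the set has nonempty interior.

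For the ``only if'' direction, suppose $D \subset \R^2$ is a convex compact set with nonempty interior, and assume there exists $f \in \textnormal{Homeo}_0(\T^2)$ with a lift $\wh f$ satisfying $\rho(\wh f) = D$. Since $D$ has nonempty interior, the hypothesis of Theorem A is fulfilled. I would then apply Theorem A to obtain a factor $g \in \textnormal{Homeo}_0(\T^2)$ by a monotone semiconjugacy, with a lift $\wh g$ satisfying both $\rho(\wh g) = \rho(\wh f) = D$ and the property that $g$ preserves Lebesgue measure. This is precisely the statement that $D$ is realized as the rotation set of an area-preserving homeomorphism of $\T^2$, which finishes the proof.

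There is no substantial obstacle: all the real content is concentrated in Theorem A (in particular, the construction of the monotone semiconjugacy onto an area-preserving factor that preserves the rotation set). The corollary amounts to extracting from the long list of properties of $g$ in Theorem A just the two relevant facts (area preservation and equality of rotation sets), and reading off the realization statement. The only point where I would be slightly careful is to emphasize that the lift $\wh g$ produced by Theorem A, whose rotation set equals $\rho(\wh f) = D$, is the appropriate lift of the area-preserving map $g$ one must exhibit — but this is automatic from the way Theorem A is stated.
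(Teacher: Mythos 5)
Your proposal is correct and matches the paper's intent exactly: the corollary is stated as a direct consequence of Theorem \ref{thmA:semiconjugation}, with the nontrivial direction obtained by passing to the area-preserving factor $g$ with $\rho(\wh g)=\rho(\wh f)$, and the converse being trivial. Nothing further is needed.
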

	In particular, this holds for the closed unit disk.

 {\color{black}It is not known if the same result as the corollary holds when the compact convex set has empty interior. {\color{black}Indeed, even the question of which line segments can be realized as the rotation set of a torus homeomorphism (tackled by the Franks-Misiurewicz conjecture in \cite{franksmisiurewicz}) is not completely settled. For instance, it is not known whether there exists an example in which the rotation set is a line segment with rational slope and no bi-rational points. On the other hand, it is known that a line segment with irrational slope and a bi-rational point in its relative interior, is not realized as the rotation set of any torus homeomorphism (\cite{lecalveztalforcing}).} 
 	
But every compact line segment $L\subset\R^2$ that is known to be realized as the rotation set of a homeomorphism of $\T^2$, is also realized as the rotation set of an area-preserving homeomorphism (the case of a point $p \in \R ^2$ being realized by a translation). Indeed, it is not hard to construct examples whenever $L$ has infinitely many bi-rational points. When $L$ has a single bi-rational point that is an extremal point, one can produce examples having an invariant measure with full support and no atoms by the composition of a translation $T$ and the time $t$ map of a flow $\Phi$ which commutes with $T$ and is a reparametrization of the flow obtained by a constant vector field with irrational slope. Oxtoby-Ulam's Theorem in \cite{oxtobyulam41}, then yields the desired example. Likewise, the only other known cases of a line segment being realized as a rotation set are those produced by A. Avila's technique, which are minimal and as such also could be assumed to be area-preserving, again by Oxtoby-Ulam's Theorem.}

	We remark that Theorem A for the case where $f$ is a $\mathcal{C}^{1+\alpha}$ diffeomorphism was announced by De Carvalho, Koropecki and the second author, in an unfinished pre-print. That work relied on Pesin's theory and the work of Addas-Zanata in \cite{addas14grilla}, and the main structure of the proof is similar to ours. There are some technical difficulties that appear in {\color{black}our} proof since we are working with {\color{black}$\mathcal{C}^0$} regularity, particularly in showing the existence of dense horseshoes, but the most crucial point is the proof of Theorem \ref{PropC:maximumdiameter} below. In any case, we outline here every step so that the work is self-contained.
	
	\begin{teorema}\label{PropC:maximumdiameter}
		Let $f \in \textnormal{Homeo}_0(\mathbb{T}^2)$ {\color{black}have a lift $\wh f$ such that $\rho(\wh f)$} has nonempty interior. 
		
		Then, there exists $M>0$ such that, if  {\color{black}$\wh K \subset \R^2$} is a continuum and $\mathrm{diam}({\color{black}\wh K})>M$, then {\color{black}either $\lim \limits_{j \to +\infty} \textnormal{diam}(\wh f^{j}(\wh K)) = +\infty$, or $\lim \limits_{j \to -\infty} \textnormal{diam}(\wh f^{j}(\wh K)) = +\infty$.}
	\end{teorema}
	The main improvement here is that we showed this result using equivariant Brouwer techniques and Forcing Theory, bypassing Pesin's Theory and the need for regularity.
	
	Note that if $f$ belongs to the isotopy class of a linear Anosov automorphism $A$, then any lift $\wh f$ to $\R^2$ has that $\dd(\wh f(\wh z) {\color{black},} A(\wh z))$ is uniformly bounded, and therefore we obtain that:
	
	\begin{remark}
		If we take $f \in \textnormal{Homeo}(\mathbb{T}^2)$ to be in the isotopy class of an Anosov linear automorphism, the thesis of Theorem \ref{PropC:maximumdiameter} still holds.  
	\end{remark}  
	
	This allows us to get a monotone semiconjugacy for this isotopy class, and obtain a factor $g$ which is again infinitely continuum-wise expansive, is area-preserving and has a dense set of topological horseshoes.   

	\textcolor{black}{If $f$ is homotopic to a Dehn twist, the proof of De Carvalho-Koropecki-Tal would show that the thesis of Theorem~C holds whenever $f$ is a $\mathcal{C}^{1+\epsilon}$ diffeomorphism with a lift having a vertical rotation set (see \cite{addas14grilla} for a definition) that is not a single point. We do not know if this can be extended to homeomorphisms. If the vertical rotation set is a single point, then it is easy to construct examples where the thesis of Theorem \ref{PropC:maximumdiameter} does not hold.}

	Apart from its use in the proof of Theorem A, Theorem C has also a few other applications. For instance, one can now obtain for homeomorphisms a result previously known  only for diffeomorphisms (see \cite{addas14grilla}):
	\begin{teorema}\label{thmD:TransitiveThenMixing}
		Let $f \in \textnormal{Homeo}_0(\mathbb{T}^2)$ be a transitive homeomorphism. 
		
		\begin{enumerate}
			\item If $\rho(f)$ has nonempty interior, then $f$ is topologically mixing. 
			\item Furthermore, if $f$ has a lift $\wh f$ with $\vec{0} \in \mathrm{int}(\rho(\wh f))$, then $\wh f$ is also topologically mixing.
		\end{enumerate}
	\end{teorema}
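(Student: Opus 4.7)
The proof would adapt the strategy of Addas-Zanata \cite{addas14grilla} from the $C^{1+\alpha}$ category, using Theorem \ref{PropC:maximumdiameter} to replace the Pesin-theoretic input.

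For part (1), fix nonempty open $U,V\subset\T^2$. I would first show that any nontrivial continuum $D\subset U$ must have some forward iterate $f^{n_0}(D)$ of lifted diameter greater than the constant $M$ from Theorem \ref{PropC:maximumdiameter}. Indeed, if $\mathrm{diam}(f^n(D))\leq L$ for all $n\geq 0$, then, choosing lifts $\wh f^n(\wh D)\subset\R^2$, one has
\[
\Bigl|\bigl(\wh f^n(\wh z_1)-\wh z_1\bigr)-\bigl(\wh f^n(\wh z_2)-\wh z_2\bigr)\Bigr|\leq L+\mathrm{diam}(\wh D)
\]
for all $\wh z_1,\wh z_2\in\wh D$. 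Dividing by $n$, every point of $D$ realizes the same set of asymptotic rotation vectors, collapsing the rotation behavior seen by $D$ to a singleton. By the forcing-theoretic density of periodic points and ergodic measures with interior rotation vectors (as used earlier in the paper, cf.\ \cite{lecalveztalforcing,lct2}), combined with transitivity, the open set $U$ must contain points realizing distinct rotation vectors, contradicting the singleton conclusion. Once $\mathrm{diam}(f^{n_0}(D))>M$, Theorem \ref{PropC:maximumdiameter} then yields $\mathscr{D}_f(D)=\infty$.

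I would then combine this unbounded growth with the topological horseshoe machinery from \cite{lecalveztalforcing,lct2} (the same that underpins Theorem A) to extract, inside $U$, a rotational topological horseshoe $H_U$: a compact $f$-invariant set on which a finite iterate of $f$ is conjugate to a topologically mixing subshift of finite type, with periodic points realizing several distinct rotation vectors. The symbolic mixing of an analogous horseshoe $H_V\subset V$, together with the local product structure of its topological stable/unstable continua and the transitivity of $f$ (which supplies dense-orbit points through $U$ accumulating on $H_V$), then yields $f^n(U)\cap V\neq\varnothing$ for every sufficiently large $n$.

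For part (2), the additional hypothesis $\vec 0\in\mathrm{int}(\rho(\wh f))$ lets me choose the horseshoes so that $H_U$ contains a periodic orbit lifting to an $\wh f$-periodic orbit in $\R^2$. Running the same symbolic mixing argument in a fundamental domain of the universal cover then upgrades (1) to $\wh f$-mixing of open subsets of $\R^2$. The main obstacle is the topological construction of the mixing rotational horseshoe $H_U$: the transverse heteroclinic intersections of stable and unstable manifolds that Pesin theory supplies in the smooth case must here be replaced by the unbounded growth of continua given by Theorem \ref{PropC:maximumdiameter}, paired with the equivariant Brouwer-Le Calvez foliations of \cite{lecalvezfoliations}; extracting from these a genuine Markov structure on $H_U$ whose symbolic extension is not merely transitive but mixing is the delicate part of the argument.
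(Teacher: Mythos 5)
Your reduction of item (1) to item (2) by taking powers is the same as the paper's (Lemma \ref{lem:MixingPower}), and the guiding idea of replacing Pesin theory by Theorem \ref{PropC:maximumdiameter} is the right one, but the two key steps of your sketch have genuine gaps. The first is the claim that every open set $U$ contains points realizing distinct rotation vectors, so that a continuum joining them has unbounded forward diameters. The forcing results you cite produce periodic orbits and ergodic measures realizing interior rotation vectors, but nothing forces those orbits or supports to meet a prescribed open set, and transitivity only supplies a dense orbit, whose visits to $U$ say nothing about its asymptotic rotation averages; indeed, density of periodic points for transitive $f$ in this setting is listed as an open question in this very paper, and it is not true that every non-trivial continuum of a transitive $f$ is dynamically unbounded (that is the defining property of the factor $g$, not of $f$). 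The paper replaces this step by Lemma \ref{lem:TransitiveThenWsWu}: it passes to the essential factor $g$, where stable and unstable sets of lifted periodic points are dense (Remark \ref{rem:denseStableSets}), pulls these sets back through the monotone semiconjugacy so that they become fully essential for $f$, uses transitivity together with \cite{korotal} to get $\mathrm{Ess}(f)=\T^2$, hence that finitely many iterates of $U$ form a fully essential set meeting them, and then a curve in $U$ joining the pulled-back stable sets of two points with different rotation directions is weakly unstable by Lemma \ref{lemma:AnchorTypeA}. Your sketch offers no substitute for this mechanism.

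The second gap is the passage from horseshoes to mixing. Even granting mixing topological horseshoes associated to $U$ and $V$ (note they cannot be $f$-invariant subsets of $U$), symbolic mixing on a Cantor set plus transitivity does not give $f^n(U)\cap V\neq\varnothing$ for all large $n$: in the smooth case that step rests on transverse heteroclinic intersections and a $\lambda$-lemma, i.e.\ precisely the local product structure that is unavailable here, and the ``topological stable/unstable continua'' of a Kennedy--Yorke horseshoe carry no such structure. The paper's mechanism is different and is the actual content of Section \ref{section:RotMixing}: using the total anchoring of Proposition \ref{prop:AnchoredEverywhere}, a weakly unstable continuum in $U$ and a weakly stable continuum in $V$ are anchored to the same canonically foliated strip on opposite sides of a common leaf, and the Anchoring Lemma then forces $\wh f^{\,l}(\wh K^{wu})\cap \wh K^{ws}\neq\varnothing$ for every $l\geq l_0$ (Lemma \ref{lemma:WeaklyStableUnstableMixing}); since this already happens in $\R^2$, item (2) follows at once and no separate lifting argument with periodic orbits of zero rotation vector is needed. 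Without an argument of this type (or a genuine topological replacement for the $\lambda$-lemma), your proposal does not close.
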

	
	Finally, Theorem \ref{PropC:maximumdiameter} also solves a relevant question  (Question G from \cite{korotal}). It was known that, if $f$ is a non-wandering isotopic-to-the-identity homeomorphism whose rotation set has nonempty interior, then the lifted diameter of any periodic open topological disk (usually thought of as elliptic islands) is bounded, and the bound only depends on the period of the disk. But it was not known if the bound could be independent of the period, except in the $\mathcal{C}^{1+\alpha}$ setting where it was proven in \cite{addas14grilla}.
	
	\begin{teorema}\label{thmE:UniformlyBoundedIslands}
		Let $f \in \textnormal{Homeo}_0(\mathbb{T}^2)$ be a non-wandering homeomorphism whose rotation set has nonempty interior. Then, there exists $M > 0$ such that the lifted diameter of any periodic topological disk is less than or equal to $M$. 
	\end{teorema}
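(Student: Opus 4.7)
The plan is to invoke Theorem \ref{PropC:maximumdiameter} and combine it with the previously known period-dependent bound on lifted diameters of periodic topological disks (valid in the non-wandering setting) to upgrade that bound into a uniform one. Let $M > 0$ be the constant from Theorem \ref{PropC:maximumdiameter}. I claim the same $M$ bounds the lifted diameter of every periodic topological disk of $f$.

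Argue by contradiction: fix a periodic topological disk $D \subset \T^2$ of some period $n \geq 1$ with $\mathrm{diam}(D) > M$. By the known period-dependent bound recalled in the introduction, $\mathrm{diam}(f^j(D)) < \infty$ for each $j$; set $C := \max_{0 \leq j \leq n-1} \mathrm{diam}(f^j(D)) < \infty$. Since $f^n(D) = D$, for every $j \in \Z$ we have $f^j(D) = f^{j \bmod n}(D)$, so $\mathrm{diam}(f^j(D)) \leq C$. Next I produce a large continuum inside $D$: pick a connected component $\wt D$ of $\wh\pi^{-1}(D)$, which has Euclidean diameter equal to $\mathrm{diam}(D) > M$; choose points $\wt x, \wt y \in \wt D$ with $|\wt x - \wt y| > M$, join them by a continuous path $\wt\gamma \subset \wt D$, and set $K := \wh\pi(\wt\gamma) \subset D$. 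Then $K$ is a continuum, and since $\wt\gamma$ is a connected subset of $\wh\pi^{-1}(K)$ of Euclidean diameter exceeding $M$, the component of $\wh\pi^{-1}(K)$ containing $\wt\gamma$ has Euclidean diameter greater than $M$, whence $\mathrm{diam}(K) > M$.

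By Theorem \ref{PropC:maximumdiameter} this forces $\mathscr{D}_f(K) = \infty$. On the other hand, $K \subset D$ implies $f^j(K) \subset f^j(D)$ for every $j \in \Z$; lifting, every connected component of $\wh\pi^{-1}(f^j(K))$ is contained in a connected component of $\wh\pi^{-1}(f^j(D))$, so $\mathrm{diam}(f^j(K)) \leq \mathrm{diam}(f^j(D)) \leq C$ for all $j \in \Z$. Hence $\mathscr{D}_f(K) \leq C < \infty$, contradicting $\mathscr{D}_f(K) = \infty$. I do not anticipate a significant obstacle in executing this plan: all the genuine work is hidden inside Theorem \ref{PropC:maximumdiameter}, and the only mildly delicate point is keeping careful track of lifts to ensure that the diameter comparison $\mathrm{diam}(f^j(K)) \leq \mathrm{diam}(f^j(D))$ holds component by component.
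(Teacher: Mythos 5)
Your argument is correct and is essentially the paper's own proof: both rest on the previously known fact (Koropecki–Tal) that a periodic topological disk has finite lifted diameter, which makes it (or any continuum inside it, in your version) dynamically bounded, and then Theorem \ref{PropC:maximumdiameter} forces the uniform bound $M$. The only cosmetic difference is that the paper applies Theorem \ref{PropC:maximumdiameter} directly to the closure of the disk, while you argue by contradiction with a path-continuum inside a lift; the content is the same.
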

	{\color{black}
	\begin{definition}[\textbf{General Hypothesis}]\label{def:GeneralHyp}
		We will say a homeomorphism $f \in \textnormal{Homeo}_0(\mathbb{T}^2)$ is under the \emph{General Hypothesis} when 
		it has a lift $\wh f \in \textnormal{Homeo}_0(\mathbb{R}^2)$ such that $\textnormal{int}(\rho(\wh f))\neq \varnothing$.  
	\end{definition}
	}

	\subsection{Organization of the article}

	Looking at the big picture, the proof of Theorem \ref{thmA:semiconjugation} can be split into two main parts. In the first one we will build a factor $g$, by collapsing dynamically bounded continua for $f$ (assuming that even makes sense). This is done in Sections \ref{sec:CFS} through \ref{sec:EssFactor}. The second part (Sections \ref{sec:StableSets} through \ref{sec:DenseTopHorseshoes}) is dedicated to fully understanding the dynamics of $g$ and recovering every piece for the proof of Theorem \ref{thmA:semiconjugation}. Let us now comment on the role of each section.   
	
	As usual, Section \ref{section:Preliminaries} is devoted to settle notation and explain some preliminary results and techniques we will use later.

	In Section \ref{sec:CFS} we introduce the notion of \textit{canonically foliated strip}, which is the central notion of the article and will aid us throughout the whole work. In Section \ref{section:stretching} we already use it to develop \emph{anchoring} techniques for inessential continua, to prove that there is a uniform bound for the diameter of dynamically bounded continua for $f$, which is Theorem \ref{PropC:maximumdiameter} (we also obtain Theorem \ref{thmE:UniformlyBoundedIslands} as a result). The whole existence of the factor $g$ relies on this theorem: this is shown in Section \ref{sec:EssFactor}, where we build the mentioned factor and immediately obtain some basic dynamical properties for it:
	
	\begin{itemize}
		\item There exists a lift $\wh g$ of $g$ such that $\rho(\wh g) = \rho(\wh f)$,
		\item $g$ is infinitely continuum-wise expansive, 
		\item $g$ is tight.
	\end{itemize}

	In Section \ref{sec:StableSets} we prove some topological and dynamical properties for stable and unstable sets of points, which we use in the following three sections. In Section \ref{section:RotMixing} we make a refinement of the anchoring arguments from Section \ref{section:stretching} which we call \textit{total anchoring}, and use it to prove that $g$ is rotationally mixing. 
	
	Sections \ref{sec:HPR} and \ref{sec:DenseTopHorseshoes} are somewhat technical, and dedicated to proving the density of Markovian horseshoes for $g$: in Section \ref{sec:HPR} we introduce the notion of \textit{heteroclinic pseudo-rectangle} using stable and unstable sets, and prove their density in Proposition \ref{prop:ChaoticRectanglesAreDense}; in Section \ref{sec:DenseTopHorseshoes} we show that inside any heteroclinic pseudo-rectangle there exists a Markovian horseshoe, using the total anchoring we previously developed.

	The article finishes in Section \ref{sec:Conservative}, where we show {\color{black}that} up to choosing a topological model for the quotient torus, $g$ is area-preserving. This allows us to prove Theorem \ref{thmA:semiconjugation}, which we also do in this section.  
	
	\subsection{Some open questions}
	
	Here are some questions which appeared with the development of the techniques used in this article. 
	\begin{enumerate}
		\item Can the uniform bound $M$ for the diameter of dynamically bounded continua, be in turn bounded by only using some dynamical invariants of $f$? Our construction heavily depends on the choice of an isotopy $I$ from the identity to $f$, and a transverse foliation $\mathcal{F}$ (see Definition \ref{def:MDTD} for details). 
		\item Can we give a lower bound for the period for $f$, of the realizing points built in Proposition \ref{prop:RotatingPointsNoTransverseIntersection}, using the size of the rotation set? This could yield lower bounds for the topological entropy of $f$, and (more subtly) for the topological entropy carried by continua. 
		\item {\color{black}Consider the actions of $f$ and $g$ in the fine curve graph of the torus 
		(see \cite[Definition 3.1]{bowden}). How similar are they? Do they have the same asymptotic translation length? Does their quasi-axis remain at bounded distance from each other?}
		\item Suppose on top of the General Hypothesis that $f$ is nonwandering, in which case it is also strictly toral. Is the set of periodic points dense in $\Ess(f)$? (see Definitions \ref{def:StrictlyToral} and \ref{def:EssIne}). 
		\item Even simpler: when $f$ is transitive (and therefore $\Ess(f) = \T^2$), is it true that $f$ has a dense set of periodic points? (Note that the answer is indeed positive for the factor $g$). This was the very first motivating question we faced in the production of this article, and there are no proofs known to us, even for the $\mathcal{C}^{\infty}$, area-preserving case.    
		\item Let us go to the context of closed orientable hyperbolic surfaces $S_g$, of genus $g \geq 2$. It seems reasonable to search for an analogous semiconjugacy to the one in Theorem \ref{thmA:semiconjugation}. Which conditions on the homological or homotopical rotation sets 
		would allow us to replicate the anchoring techniques from Section \ref{section:stretching} and get a uniform bound for the diameter of dynamically bounded continua, as in Theorem \ref{PropC:maximumdiameter}? We imagine that a condition as the one in the main result in \cite{salvadorbruno} is sufficient, but this can probably be improved.
		\item A very recent pre-print by Militon \cite{preprintmiliton} provides, for a homeomorphism $g$ of a hyperbolic surface $S$ of genus $g\geq 2$, homotopic to a pseudo-Anosov (pA) map, a partition of the surface $S$ into stable sets as well as a partition of the surface into unstable sets, which imitates the well known foliations of a pA map. It appears that, in that context, a continuum $K$ is {\color{black}dynamically} bounded if and only if it is contained in a connected component of the intersection of a stable set and an unstable set. Is this sufficient to obtain a result {\color{black}like} Theorem~\ref{PropC:maximumdiameter} in this case? If that is the case, a factor with similar properties would appear. 
	\end{enumerate}
	
	\section*{Acknowledgements}

	The first author would like to thank Pierre-Antoine Guihéneuf, Alejandro Passeggi, Rafael Potrie and Nelson Schuback for helpful conversations. The second author would like to thank Andres Koropecki for several discussions and ideas.
	\footnote{The first author was partially funded by ANII, CSIC, FAPESP, IFUMI, IMERL, MathAmSud and PEDECIBA. The second author was partially funded by CNPq and FAPESP.}

	\medskip

	\section{Preliminaries}\label{section:Preliminaries}

	\subsection{Basic notation}

	\paragraph{\textbf{Surfaces}}
	Throughout this paper, a \textit{surface} $S$ will be a two dimensional orientable topological manifold. We will say that $S$ is respectively a \textit{plane, annulus, sphere} or \textit{torus} if it is homeomorphic to $\R^2, \R^2 \backslash \{0\}, \mathbb{S}^2$ or $\mathbb{T}^2$. A set $U \subset S$ will be a \textit{disk} if it is homeomorphic to $\mathbb{D} = \{(x,y) \in \mathbb{R}^2: x^2+y^2 < 1\}$. \textcolor{black}{While a disk is homeomorphic to a plane, we will usually reserve \emph{plane} for when referring to the whole surface.}
	\medskip
	
	\paragraph{\textbf{Homeomorphisms}}
	All maps considered in this work are continuous. Moreover, we will say that a map $f: S \to S$ is a \textit{homeomorphism} if $f$ is bijective, and both $f$ and its inverse are continuous. 
	
	\begin{definition}
		Given two surfaces $S, S'$ and two endomorphisms $f: S \to S$ and $f': S' \to S'$ we will say that $f$ and $f'$ are topologically conjugate if there exists a homeomorphism $h: S \to S'$ such that $h \circ f = f'\circ h$. 
	\end{definition}
	
	\paragraph{\textbf{Covering spaces}}
	Given two surfaces $S$ and $\tilde{S}$, we will say that $\tilde{\pi} : \tilde{S}\to S$ is a \textit{covering projection} if every point $z \in S$ has a neighbourhood $U$, such that $\tilde{\pi}^{-1}(U)$ is a disjoint union of disks $\tilde{D}_i$ in $\tilde{S}$, such that for every value of $i$, we have that $\tilde{\pi}|_{\tilde{D_i}}$ is a homeomorphism. 
	
	Under these conditions, we will say that $\tilde{S}$ is a \textit{covering space} of $S$, and it will be the \textit{universal covering} if it is simply connected. It is very well known that the universal covering of a surface is unique up to homeomorphisms, and it is homeomorphic to $\R^2$ whenever $S \neq \mathbb{S}^2$. When there is no place for ambiguity, we will lighten notation and simply call $\tilde{\pi}$ and $\tilde{S}$ a \textit{covering}, indistinctively. 
	
	For the particular case when $S = \wc{\mathbb{A}} \simeq \mathbb{R}^2 \backslash \{0\}$ is an annulus, we will take the universal covering ${\color{black}\wc{\mathbb{A}}}$ which is a plane, and the covering projection will be ${\color{black}\wc{\pi}}(x,y) = (e^{-y} \textnormal{cos}(2\pi x), e^{-y}\textnormal{sin}(2\pi x))$. We will then have the deck transformation $T \in \textnormal{Homeo}^+({\color{black}\wc{\mathbb{A}}})$ defined by $T(x,y) = (x+1,y)$. Similarly, for the case of the torus $\T^2 = \R^2 / \Z^2$, simply take the universal covering $\R^2$ with the natural projection, and we will have that the group of deck transformations will be generated by $T_x(x,y) = (x+1,y)$ and $T_y(x,y) = (x,y+1)$. 
	
	Given $f: S \to S$, and considering $\tilde{S}$ the universal covering of $S$, we may take $\tl{f}: \tilde{S}\to \tilde{S}$ a \textit{lift} of $f$, as a map such that $f\tilde{\pi} = \tilde{\pi}\tl f$ (we know that $\tl f$ is determined up to postcomposition with a deck transformation). Notice that, for the case in which $S$ is hyperbolic, there exists a unique lift $F$ of $f$, which commutes with the deck transformations.
	
	\medskip

	\paragraph{\textbf{Curves and foliations}}
	
	\begin{definition}
		Given two points $p, p' \in S$, a \textit{path} or \textit{curve} from $p$ to $p'$ will be a continuous function $\gamma: [0,1] \rightarrow S$ such that $\gamma(0)= p$, $\gamma (1) = p'$. We will say the path is \textit{simple} when $\gamma$ is injective, and \textit{closed} when $p = p'$. 
	\end{definition}
	
	We will say $\gamma$ is respectively a \textit{segment, line, circle} if it is homeomorphic to $[0,1], \mathbb{R}, \mathbb{S}^1$ by a proper application {\color{black}$h : \gamma \to \mathbb{R}^2$.}
	
	\begin{definition}
		Given a surface $S$, an \textit{oriented topological foliation} $\mathcal{F}$ is a partition of $S$ in one-dimensional manifolds such that for each $p \in S$, there exists a neighbourhood $U_p$ and a homeomorphism $h : U_p \rightarrow (- 1, 1)\times(- 1, 1)$ which preserves orientation and sends $\mathcal{F}$ into the foliation by vertical lines, oriented from top to bottom. 
	\end{definition}

	Given {\color{black}an oriented topological foliation, and} $z \in S$, we define $\phi_z$ as the leaf going through $z$, similarly we define $\phi ^{+}_z, \phi^{-}_z$ as the semi-leaves starting at $z$, which respect the orientation.
	
	Given a leaf $\phi$ of a singular oriented topological foliation in a surface $S$, we define $\alpha(\phi), \omega(\phi)$ as the usual $\alpha$ and $\omega$ limits of curves in surfaces. 
	
	\medskip

	\paragraph{\textbf{Topological notions}}
	
	\begin{definition}
		Let $X \subset \T^2$ be a set, $U \subset \T^2$ an open set, and $f: \T^2 \to \T^2$ a homeomorphism. 
		
		\begin{itemize}
			\item An open set $U \subset \T^2$ is said to be \emph{inessential} if every closed curve in $U$ is homotopically trivial. $X$ is said to be \emph{inessential} when it has an inessential neighbourhood. For the case of connected sets, this is equivalent to Definition \ref{def:Inessential}. 
			\item $X$ is \textit{essential} when it is not inessential. Moreover, it is \emph{fully essential} when $\T^{2} \backslash X$ {\color{black}is} inessential. 
			\item $U$ is said to be \emph{annular} if it is homeomorphic to an open annulus.
			\item We define the \emph{filling} $\textnormal{Fill}(X)$ of $X$, as the union of $X$ with every inessential component of its complement. We will say that $X$ is \emph{filled} whenever $X = \textnormal{Fill}(X)$.  
			\item We will define the \emph{orbit} of $X$ as the union of the past and future iterates of $X$ by the homeomorphism $f$. 
		\end{itemize}
	\end{definition}
	
	\begin{definition}
		Let $f$ be a uniformly continuous homeomorphism of the surface $S$. We will say that $f$ is \textit{mixing} if given two open sets $U, V \subset S$, there exists $n_0$ such that 
		\[f^n(U) \cap V \neq \varnothing, \ \text{ for every }\ n \geq n_0.\]
	\end{definition}
	
	\begin{lemma}\label{lem:MixingPower}
		If $f$ has a power $f^j$ which is topologically mixing, then $f$ is also topologically mixing. 
	\end{lemma}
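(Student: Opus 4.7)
The plan is to exploit the fact that $f$ is a homeomorphism (so $f^{-k}(V)$ is open for each $k$), and then essentially ``fill in the gaps'' between iterates of $f^j$ by applying the mixing property of $f^j$ to a whole finite family of open targets rather than just one.

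More concretely, fix two open sets $U, V \subset S$, and consider the finite collection of open sets $V_k := f^{-k}(V)$ for $k = 0, 1, \ldots, j-1$. Since $f^j$ is topologically mixing by hypothesis, for each $k$ there exists $N_k \in \mathbb{Z}^+$ such that
\[
(f^j)^m(U) \cap V_k \neq \varnothing \quad \text{for all } m \geq N_k.
\]
Applying $f^k$ to both sides (or, more precisely, noting that $(f^j)^m(U) \cap f^{-k}(V) \neq \varnothing$ is equivalent to $f^{jm+k}(U) \cap V \neq \varnothing$), I obtain $f^{jm+k}(U) \cap V \neq \varnothing$ for all $m \geq N_k$.

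Setting $N := \max\{N_0, N_1, \ldots, N_{j-1}\}$ and $n_0 := jN$, I would check that any integer $n \geq n_0$ can be written as $n = jm + k$ with $0 \leq k \leq j-1$ and $m \geq N \geq N_k$, so that $f^n(U) \cap V \neq \varnothing$ holds. This yields the mixing property for $f$.

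I do not expect any genuine obstacle here: the result is a purely formal consequence of the definition of mixing together with the fact that $f$ is a homeomorphism (so that each $f^{-k}(V)$ is an open set to which the mixing of $f^j$ may be applied). The only mild care needed is the bookkeeping in the division-with-remainder step, making sure that the quotient $m$ is indeed at least $N$ for every $n \geq jN$.
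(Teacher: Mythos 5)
Your proof is correct and follows essentially the same route as the paper: apply the mixing of $f^j$ to the finitely many open sets $f^{-k}(V)$, $0 \leq k \leq j-1$, take a common threshold, and conclude via division with remainder. The bookkeeping step ($n \geq jN$ forces the quotient $m \geq N$) is checked correctly, so there is nothing to add.
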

	
	\begin{proof}
		Let $U, V \subset S$ be two open sets. And let us write $V_k = f^{-k}(V)$, where $k> 0$. By hypothesis, there exists $m_0$ such that 
		\[(f^j)^m (U) \cap V_k \neq \varnothing, \ \text{ for every } \ 0 \leq k \leq j-1, \ m \geq m_0.\] 
		This means that we may take $n_0 = jm_0$ and obtain that
		\[f^n(U) \cap V \neq \varnothing, \ n \geq n_0,\]
		which concludes the proof.  
	\end{proof}
	
	\begin{definition}
		Let $f \in \mathrm{Homeo}(\mathbb{T}^2)$. We will say that $f$ is \emph{rotationally mixing} if there exists {\color{black}$j > 0$ and a} lift $\widehat{f^j}$ of a power $f^j$ to $\mathbb{R}^2$, such that $\widehat{f^j}$ is topologically mixing. 
	\end{definition}

	\begin{definition}[\textbf{Hausdorff distance}]
		Let $(X,d)$ be a metric space, and let $K_1, K_2 \subset X$ be two nonempty subsets. We define the \emph{Hausdorff distance} $\dH(K_1,K_2)$ between $K_1$ and $K_2$ as
		\[\dH(K_1,K_2) = \inf \{r \geq 0 : K_2 \subset \mathrm{B}(K_1,r), K_1 \subset \mathrm{B}(K_2,r)\}, \] 
		where 
		\[\mathrm{B}(K,r) = \{x \in X : d(x,K) < r\}.\]
	\end{definition}
	
	We recall that whenever $X$ is compact, the metric space $(\mathcal{K}(X), \dH)$ of continua of $X$ with the Hausdorff distance, is also compact.  
	
	The following definition makes precise the notion of when a curve crosses another one from left to right, or vice versa.
	
	\begin{definition}[\textbf{Intersection number}]
		Given a surface $S$, and two curves $\gamma, \gamma': [0,1] \to S$ with a simple isolated intersection point $\gamma(t_0) = \gamma'(t'_0)$ in their interior, we will say that $\gamma \wedge \gamma' = 1$ whenever there exists a homeomorphism $h$ from a neighbourhood of $z$ to $(0,1)^2$, such that $h(\gamma(t)) = (t, \frac{1}{2})$ and such that the vertical coordinate of $h \circ \gamma'$ is strictly increasing in a neighbourhood of $t'_0$. 
	\end{definition}
	
	Note that this notion extends naturally to pairs of curves with finitely many transversal intersections.
	
	\medskip
	
	\begin{definition}[\textbf{Lines' relative position}]
		Let $\lambda_0, \lambda_1, \lambda_2 \subset \R^2$ be oriented lines. We will say that $\lambda_2$ is \emph{above} $\lambda_1$ relative to $\lambda_0$ (or equivalently, $\lambda_1$ is \emph{below} $\lambda_2$ relative to $\lambda_0$), if
		\begin{itemize}
			\item The three lines are pairwise disjoint
			\item None of the lines separate the other two, 
			\item If $\eta_1, \eta_2$ are two disjoint paths that respectively join $z_1 = \lambda_0(t_1)$ to $z'_1 \in \lambda_1$ and $z_2 = \lambda_0(t_2)$ to $z'_2 \in \lambda_2$, and the paths do not meet the three lines except at their endpoints, then $t_2 > t_1$. 
		\end{itemize}
	\end{definition}
	
	Note that this definition only depends on the orientation of $\lambda_0$. 
	
	\medskip

	\subsection{Strictly Toral Dynamics}
	
	The results for this subsection come from \cite{korotal}. The following notion tries to encode torus homeomorphisms whose dynamics can not be reduced to simpler subsurfaces. Throughout this subsection we will assume that $f$ is a \emph{non-wandering, isotopic-to-the identity} torus homeomorphism.  
	
	\begin{definition}
		We will say $f$ is \emph{irrotational} if there exists a lift $\wh f$ in $\R^2$ such that $\rho(\wh f) = (0,0)$. 
	\end{definition} 
	
	\begin{definition}
		We will say that $f$ is \emph {annular} if there exists a lift $\wh f$ of $f$, a vector $v \in \mathbb{Z}^2_*$ and some $M \in \R$, such that for every $\wh z \in \R^2$ and every $j \in \mathbb{Z}$, we have that   
		\[-M < \langle  \wh f^j (\wh z) - \wh z\; , \;  v \rangle  < M,\]
		that is, the displacement of any orbit in one particular rational direction is uniformly bounded.
	\end{definition}
	
	It can be proved in this case that there exists a finite covering of $\T^2$ by another torus, such that the corresponding lift of $f$ acting in this torus has an invariant annular set (see for example \cite[Remark 3.10]{jager}). 
	
	\begin{definition}[\textbf{Strictly toral}, \cite{korotal}]\label{def:StrictlyToral}
		Let $f \in \textnormal{Homeo}_0(\T^2)$, and assume that $f$ is non-wandering. We will say that $f$ is \emph{strictly toral} if {\color{black}none} of the following properties hold:
		\begin{enumerate}
			\item There exists a power $f^k$ of $f$ such that $\textnormal{Fix}(f^k)$ is fully essential, and such that $f^k$ is irrotational, 
			\item There exists a power $f^k$ of $f$ such that $f^k$ is annular 
		\end{enumerate}
	\end{definition}
	
	Note that the first item is essentially saying that $f$ is \emph{planar}, in the sense that, up to taking a power of $f$, understanding its dynamics is as difficult as understanding plane dynamics (i.e. the return maps for each disk in the complement of the set of fixed points). For the annular case, up to a finite covering we have an invariant annular set, whose complement is also annular, and therefore it suffices to understand annulus dynamics. 
	
	Let us now explain how the dynamics in the complement of these two cases is rightfully called {strictly toral}. 
	
	\begin{definition}\label{def:EssIne}
		We define the set $\textnormal{Ine}(f)$ of \emph{inessential points} (or simply \emph{inessential set}) of $f$, as
		\[\textnormal{Ine}(f) = \{z \in \T^2 : \exists \text{ a neighbourhood }U \text{ of } z \text{ such that the orbit of } U \text{ is inessential}\}.\] 
		
		We define $\textnormal{Ess}(f)$ the \emph{set of essential points} of $f$ as
		\[\textnormal{Ess}(f) = \T^2 \backslash \textnormal{Ine}(f)\] 
	\end{definition}
	
	Note that both of these sets are invariant. Moreover, we have by \cite[Theorems A and B]{korotal} that, if $f$ is strictly toral, then $\Ine (f)$ is an open inessential set, which is in turn the union of bounded periodic disks, whose diameter is uniformly bounded if we look at the set of disks with a fixed period $q$. Therefore, $\Ess(f)$ is a fully essential closed set. We also {\color{black}know} from \cite{korotal} that
	
	\begin{remark}\label{rem:TrasitiveIffEss}
		If $f$ is strictly toral, then $f$ is transitive $\iff$ $\Ess(f) = \T^2$. 
	\end{remark}
	
	Note that if the rotation set $\rho(f)$ has nonempty interior, then $f$ is strictly toral. 
	
	\medskip

	\subsection{Equivariant Brouwer Theory}
	
	This theory was developed by Le Calvez, and generalizes the foliations by Brouwer lines for fixed-point-free orientation-preserving homeomorphisms of the plane. 
	
	\begin{definition}
		Given a closed surface and an isotopy $I: [0,1] \times S \to S$, we will define the set $\textnormal{Fix}(I)$ of fixed points of $I$ by
		\[\textnormal{Fix}(I) = \{z \in S : I(t,z) = z \text{ for every } t \in [0,1]\}.\]
		We will also define $\textnormal{Dom}(I) = S \backslash \textnormal{Fix}(I)$.  
	\end{definition}

	\begin{definition}
		Given $f: S \to S$ and an isotopy $I : [0,1] \times S \to S$ from the identity to $f$, we will say that $I$ is \emph{maximal} if for every $z \in  \textnormal{Fix}(f) \cap \textnormal{Dom}(I)$, we have that the path $I_z(t)$ is not homotopically trivial in $\textnormal{Dom}(I)$.  
	\end{definition}
	
	The following result is due to Béguin, Crovisier and Le Roux, and can be found in \cite{beguin20isotopies}.
	
	\begin{theorem}[\cite{beguin20isotopies}]
		Given an isotopy $I'$ from the identity to a homeomorphism $f$ of a closed surface $S$, there exists a new isotopy $I$ which is homotopic to $I'$ with fixed endpoints (seen as paths in the space of homeomorphisms of $S$) such that $I$ is a maximal isotopy.  
	\end{theorem}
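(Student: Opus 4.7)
The plan is to apply Zorn's lemma to the poset
\[ \mathcal{P} = \{ I : I \text{ is an isotopy from } \mathrm{id} \text{ to } f \text{ with } I \simeq I' \text{ rel endpoints}\}, \]
partially ordered by $I \preceq J$ iff $\mathrm{Fix}(I) \subseteq \mathrm{Fix}(J)$ and the restrictions of $I$ and $J$ to $\mathrm{Fix}(I)$ agree (i.e.\ both stationary there). A maximal element of $(\mathcal{P},\preceq)$ should be shown to be a maximal isotopy in the sense of the definition above.

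First I would verify the chain condition. Given a totally ordered chain $(I_\alpha)$, the sets $\mathrm{Fix}(I_\alpha)$ are totally ordered closed subsets of $S$, so one obtains a closed upper-bound set $F = \overline{\bigcup_\alpha \mathrm{Fix}(I_\alpha)}$. One then needs to produce an isotopy $I \in \mathcal{P}$ with $\mathrm{Fix}(I) \supseteq F$. A compactness argument in the space of continuous maps $[0,1]\times S \to S$ (via Arzelà–Ascoli after a standard reduction, as in Hamilton's or Oxtoby-type arguments) provides a limit isotopy; the homotopy relation $I \simeq I'$ passes to this limit because the space of isotopies from $\mathrm{id}$ to $f$ with a given homotopy class forms a closed subspace. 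This step is mostly formal once the ambient topology on isotopies is fixed.

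Second, and more delicate, I would show that any $\preceq$-maximal $I$ is in fact a maximal isotopy. Suppose not: there is $z \in \mathrm{Fix}(f) \cap \mathrm{Dom}(I)$ such that $I_z$ is a loop contractible in $\mathrm{Dom}(I)$. The goal is then to modify $I$ on a small neighborhood of $\{z\}$ (in $S$) to produce $J \in \mathcal{P}$ with $\mathrm{Fix}(J) = \mathrm{Fix}(I) \cup \{z\}$ (or a larger set), contradicting maximality. For this, consider a small topological disk $U$ containing the loop $I_z$ entirely inside $\mathrm{Dom}(I)$ (using that $I_z$ bounds a disk in the open set $\mathrm{Dom}(I)$, by the loop theorem for surfaces or by lifting to the universal cover of $\mathrm{Dom}(I)$ and using that $I_z$ lifts as a loop). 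Since $U \subset \mathrm{Dom}(I)$, the isotopy $I$ carries $z$ along a trivial loop in $U$. One then replaces the path $t \mapsto I_t(z)$ by the constant path at $z$, extending the modification to a tubular neighborhood via an isotopy supported in $U$ and time-$0$/time-$1$ equal to the identity. Because the new path is homotopic (rel endpoints) to the old inside $U$, the resulting $J$ is homotopic to $I$ rel endpoints, hence $J \simeq I'$, and $J \in \mathcal{P}$ with $z \in \mathrm{Fix}(J)$.

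The main obstacle is the second step: executing the local modification rigorously while keeping $J$ a well-defined isotopy of the whole surface (not just a local one), controlling that $\mathrm{Fix}(I)$ remains fixed, and ensuring homotopy of isotopies rel endpoints. The standard way around this is to work with the fragmentation-type lemmas of Béguin–Crovisier–Le Roux, which allow one to exchange the global isotopy for a product of isotopies supported in small disks, and to handle the disk $U$ in isolation by an explicit smooth deformation. A secondary subtlety is the chain step, where one must argue that in the limit no new fixed points escape $\mathrm{Dom}(I)$ in pathological ways and the isotopy class is preserved; both issues are dealt with by working in a suitable complete metric on the space of isotopies under which $\mathrm{Fix}(\cdot)$ is upper semicontinuous for Hausdorff convergence.
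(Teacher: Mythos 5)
The paper does not prove this statement; it is quoted from Béguin--Crovisier--Le Roux, so your proposal has to be measured against the known proof in the literature rather than against an argument in this text. Your overall outline (Zorn's lemma plus a local lemma saying that a fixed point whose trajectory loop is contractible in $\mathrm{Dom}(I)$ can be added to the fixed set) does reflect the general strategy, but the crucial step is exactly the one you dismiss as ``mostly formal'': producing an upper bound for a chain. There is no equicontinuity available, so Arzel\`a--Ascoli does not apply to a chain of isotopies, and the set of isotopies homotopic to $I'$ rel endpoints with prescribed fixed points is not compact in any useful topology; upper semicontinuity of $\mathrm{Fix}(\cdot)$ does not produce a limit isotopy fixing $\overline{\bigcup_\alpha \mathrm{Fix}(I_\alpha)}$. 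This is precisely why the actual proof does not run Zorn on isotopies at all: following Jaulent, one runs Zorn on closed \emph{unlinked} sets $F\subset\mathrm{Fix}(f)$ (recording only homotopical data of trajectories in $S\setminus F$), where the chain condition is a genuinely nontrivial compactness statement about unlinked sets, and the heart of the Béguin--Crovisier--Le Roux paper is then the realization theorem: a maximal unlinked set is the fixed point set of an honest isotopy of $S$ homotopic to $I'$ (this uses, among other things, Hamstrom-type results on the topology of homeomorphism groups and a delicate extension argument, not a soft limit). So the main obstacle you flag is not a technicality but the theorem's core content, and your proposed route around it would fail.

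There is also a local error in your second step: a trajectory loop $I_z$ that is null-homotopic in the open surface $\mathrm{Dom}(I)$ need not be contained in any embedded disk $U\subset\mathrm{Dom}(I)$ (already in an annulus one can have contractible loops whose image is not contained in a disk of the annulus), so the modification cannot simply be supported in a small disk around the loop; making this augmentation lemma rigorous while keeping $\mathrm{Fix}(I)$ pointwise fixed and staying in the homotopy class rel endpoints requires the more careful arguments of Jaulent/Béguin--Crovisier--Le Roux. The Zorn-maximal $\Rightarrow$ maximal-isotopy reduction itself is fine, but both supporting pillars of your sketch are left unproved and, as written, one of them is based on a false geometric claim.
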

	
	This allows us to use the equivariant version of the Brouwer-Le Calvez transverse foliation built in \cite{lecalvezequivariante} and work with the following object:
	
	\begin{definition}[\textbf{Maximal dynamically transverse decomposition}]\label{def:MDTD}
		Let $f$ be a homeomorphism of the surface $S$ which is isotopic to the identity, by the maximal isotopy $I$. We will say that a singular oriented foliation $\mathcal{F}$ of $S$ is {\color{black}\emph{dynamically transverse}} to $I$ if
		\begin{itemize}
			\item $\textnormal{Sing}(\mathcal{F}) = \textnormal{Fix}(I)$
			\item Every path by the isotopy $\gamma_z(t) : [0,1] \to \textnormal{Dom}(I), \ \gamma_z(t) = I(t,z)$ has a representative $\gamma'_z:[0,1] \to \mathrm{Dom}(I)$ which is homotopic to {\color{black}$\gamma_z$} with fixed endpoints in $\textnormal{Dom}(I)$, such that $\mathcal{F}$ is positively transverse to $\gamma'_z$ (i.e. the new path only crosses leaves from right to left).  
		\end{itemize}
		
	\end{definition}

	In this context, we will say that $(f,I,\mathcal{F})$ is a {\color{black}\emph{maximal dynamically transverse decomposition}} (MDTD) for $f$. The desired foliation appears as the projection of an equivariant version of the plane foliation by Le Calvez for the universal covering $\tilde{M}$ of $M=\mathrm{Dom}(I)$, that is, we first obtain a maximal dynamically transverse decomposition $(\tl f, \tl I, \tl{\mathcal{F}})$ in the universal covering of $\mathrm{Dom(I)}$ (where $\tl{\mathcal{F}}$ is a non-singular equivariant foliation), and then we project it to obtain $(f,I,\mathcal{F})$.  
	
	We will define $I^t_{\mathcal{F}}(z) = \gamma'_z(t)$ the transverse path for $z$, which is defined for times in $[0,1]$, and we will simply write $I_{\mathcal{F}}(z)$ for said path. We will extend this map in the usual way for other values of $t$, by concatenating respective transverse paths of iterates by $f$:
	\[I^n_{\mathcal{F}}(z) = \prod \limits_{0 \leq j \leq n-1}I_{\mathcal{F}}(f^j(z)), \ I^\Z_{\mathcal{F}}(z) = \prod \limits_{j \in \Z}I_{\mathcal{F}}(f^j(z))\] 
	We can also lift these transverse paths to the universal covering of the surfaces we are working on.  
	
	We will write $\phi$ to denote leaves in $\F$, and similarly $\tl \phi$ for their lifts in $\tl \F$. Given a transverse path $\gamma: [a,b] \to \mathrm{Dom}(I)$ and $t \in [a,b]$, we will write $\phi_{\gamma(t)}$ to denote the leaf such that $\gamma(t) \in \phi_{\gamma(t)}$ (and similarly for their respective lifts).

	\subsection{Forcing Theory}
	
	The Theory of Forcing for transverse trajectories has less than ten years of existence, and has been developed in \cite{lecalveztalforcing}, and \cite{lct2}. It has already yielded several strong results in the description of surface topological dynamics, and uses the elements from Definition \ref{def:MDTD}.  	
	
	We say that two transverse paths $\tl \gamma, \tl \gamma '$ whose images are in $\mathrm{Dom}(\tl I)$ are {\bck \emph{equivalent}} if they have lifts to $\tilde{M}$ that start and finish at the same leaf. 
	
	We say a path $\tl \gamma: [a,b] \to \tilde{M}$ from $\tl \phi_{\tl \gamma(a)}$ to $\tl \phi_{\tl \gamma(b)}$ is \textit{admissible of order $n$} if it is equivalent to a transverse path $\tilde{I}^{n}_{\tl \F}(\tl z)$, that is, if
	\[\tl f^{n}(\tl \phi_{\tl \gamma(a)}) \cap \tl \phi_{\tl \gamma(b)} \neq \varnothing.\] 
	
	We say a path $\gamma: [a,b] \to \mathrm{Dom(I)}$ is  \textit{admissible of order $n$} if it has a lift that is admissible of order $n$.
	
	The following definition comes from \cite[Section 3]{lecalveztalforcing}. 
	
	\begin{definition}[\textbf{$\tl{\mathcal{F}}$-transverse intersection}, \cite{lecalveztalforcing}]
		Let $\tl{\mathcal{F}}$ be an oriented foliation of the plane and let $(\tl f, \tl I, \tl{\mathcal{F}})$ be a {\bck \emph{maximal}} dynamically transverse decomposition. Let $\tl \gamma_1: J_1 \to \R^2$, $\tl \gamma_2: J_2 \to \R^2$ be two transverse paths such that $\tl \phi_{\gamma_1(t_1)} = \tl \phi_{\gamma_2(t_2)} = \tl \phi$. 
		
		We will say that $\tl \gamma_1$ intersects $\tl \gamma_2$ {\bck \emph{$\tl{\mathcal{F}}$-transversely and positively}} (or equivalently, that $\tl \gamma_2$ intersects $\tl \gamma_1$ {\bck \emph{$\tl{\mathcal{F}}$-transversely and negatively}}) if there exist $a_1, b_1 \in J_1$ with $a_1 < t_1 < b_1$, and $a_2, b_2 \in J_2$ with $a_2 < t_2 < b_2$, such that 
		\begin{itemize}
			\item $\tl \phi_{\tl{\gamma}_2(a_2)}$ is below $\tl \phi_{\tl{\gamma}_1(a_1)}$ relative to $\tl \phi$,
			\item $\tl \phi_{\tl{\gamma}_2(b_2)}$ is above $\tl \phi_{\tl{\gamma}_1(b_1)}$ relative to $\tl \phi$.
		\end{itemize}
		
		In this context we can also say that $\tl \gamma_1$ and $\tl \gamma_2$ have an {\bck \emph{$\tl{\mathcal{F}}$-transverse intersection}} 		 
	\end{definition}
	
	We will say that two transverse paths $\gamma_1, \gamma_2$ intersect $\mathcal{F}$-transversally whenever they have lifts $\tl \gamma_1, \tl \gamma_2$ that intersect $\tl{\mathcal{F}}$-transversally.
	
	If two lifts of $\gamma_1$ have a $\tl{\mathcal{F}}$-transverse intersection, we will say that $\gamma_1$ has a transverse self-intersection. 
	
	The fundamental result of \cite{lecalveztalforcing} is without a doubt Proposition 20 which roughly says we may concatenate pieces of transverse trajectories, whenever these paths intersect $\tilde{\mathcal{F}}$-transversally:
	
	\begin{proposition}[\textbf{Forcing}, \cite{lecalveztalforcing}]\label{prop:Forcing}
		Let $\gamma_1:[a_1,b_1] \to S, \gamma_2:[a_2,b_2] \to S$ be two admissible transverse paths, which are respectively admissible of order $n_1$ and $n_2$, and intersect $\F$-transversally at $\gamma_1(t_1) = \gamma_2(t_2)$. Then, 
		\begin{itemize}
			\item $\gamma_1|_{[a_1,t_1]}\gamma_2|_{[t_2,b_2]}$ and $\gamma_2|_{[a_2,t_2]}\gamma_1|_{[t_1,b_1]}$ are both admissible of order $n_1 + n_2$. 
			\item Furthermore, either one of these paths is admissible of order $\min(n_1,n_2)$, or both are admissible of order $\max(n_1,n_2)$.
		\end{itemize} 
	\end{proposition}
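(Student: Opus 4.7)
The statement says that admissibility of transverse paths is preserved under $\mathcal{F}$-transverse concatenation, with the correct dynamical order. The target is to find, in the universal cover $\tilde{M}$ of $\mathrm{Dom}(I)$, points whose $(n_1+n_2)$-th $\tilde{f}$-iterate connects the prescribed initial leaf of one path to the prescribed final leaf of the other, and conversely.

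\emph{Step 1: Lifts and data.} Pass to $\tilde{M}$ and choose lifts $\tl\gamma_1$, $\tl\gamma_2$ that realize the $\tilde{\mathcal{F}}$-transverse intersection at a common leaf $\tl\phi$. Using admissibility, pick points $\tl z_1\in \tl\phi_{\tl\gamma_1(a_1)}$ with $\tl f^{n_1}(\tl z_1)\in \tl\phi_{\tl\gamma_1(b_1)}$, and $\tl z_2\in \tl\phi_{\tl\gamma_2(a_2)}$ with $\tl f^{n_2}(\tl z_2)\in \tl\phi_{\tl\gamma_2(b_2)}$. Translate the "above/below relative to $\tl\phi$" hypotheses into concrete half-plane information: since every leaf of $\tl{\mathcal{F}}$ is a Brouwer line (properly embedded, separating $\tl M$), the leaves $\tl\phi_{\tl\gamma_1(a_1)}$ and $\tl\phi_{\tl\gamma_2(b_2)}$ sit in one half-plane bounded by $\tl\phi$, while $\tl\phi_{\tl\gamma_2(a_2)}$ and $\tl\phi_{\tl\gamma_1(b_1)}$ sit in the other, with the additional relative ordering forced by transversality.

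\emph{Step 2: The two candidate transverse trajectories.} Form the transverse paths $\tilde I^{n_1+n_2}_{\tl{\mathcal{F}}}(\tl z_1)$ and $\tilde I^{n_1+n_2}_{\tl{\mathcal{F}}}(\tl z_2)$. The first is a concatenation of $\tilde I^{n_1}_{\tl{\mathcal{F}}}(\tl z_1)$ (equivalent to $\tl\gamma_1$) with $\tilde I^{n_2}_{\tl{\mathcal{F}}}(\tl f^{n_1}(\tl z_1))$, starting on $\tl\phi_{\tl\gamma_1(a_1)}$ and crossing $\tl\phi_{\tl\gamma_1(b_1)}$ at time $n_1$; the second is a similar concatenation starting on $\tl\phi_{\tl\gamma_2(a_2)}$ and passing through $\tl\phi_{\tl\gamma_2(b_2)}$ at time $n_2$. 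The key observation is that any transverse path crosses each leaf only in the positive direction, so a transverse path that starts on one side of $\tl\phi$ and ends on the opposite side is forced to meet $\tl\phi$.

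\emph{Step 3: Detour principle.} Now compare the two transverse trajectories against the "crossed" target leaves $\tl\phi_{\tl\gamma_2(b_2)}$ and $\tl\phi_{\tl\gamma_1(a_1)}$. Using the ordering produced in Step~1 and the fact that both trajectories must traverse $\tl\phi$ (by the positive transversality principle above), a connectedness and ordering argument along the leaves shows that $\tilde I^{n_1+n_2}_{\tl{\mathcal{F}}}(\tl z_1)$ must contain a subpath equivalent to $\tl\gamma_1|_{[a_1,t_1]}\cdot\tl\gamma_2|_{[t_2,b_2]}$, and $\tilde I^{n_1+n_2}_{\tl{\mathcal{F}}}(\tl z_2)$ a subpath equivalent to $\tl\gamma_2|_{[a_2,t_2]}\cdot\tl\gamma_1|_{[t_1,b_1]}$. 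This produces the order $n_1+n_2$ admissibility of both concatenations.

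\emph{Step 4: The min/max refinement.} For the second bullet, argue by dichotomy on the $n_1$-iterate of $\tl z_1$ (and symmetrically on $\tl z_2$). If $\tl f^{n_1}(\tl z_1)$ happens to land already on the leaf $\tl\phi_{\tl\gamma_2(b_2)}$ (or $\tl f^{n_2}(\tl z_2)$ on $\tl\phi_{\tl\gamma_1(a_1)}$), then one of the two concatenations is admissible at the smaller order $\min(n_1,n_2)$. If neither of these two shortcuts occurs, the crossings at times $n_1$ and $n_2$ in the two transverse trajectories are strictly inside the respective pieces, and a further topological argument (tracking which side of the target leaf the iterate lies on) forces both concatenations to be admissible at the larger order $\max(n_1,n_2)$.

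\emph{Expected main obstacle.} The delicate point is Step~3: translating the purely topological "half-plane" data produced by the transverse intersection into the \emph{dynamical} statement that the iterated leaf $\tl f^{n_1+n_2}(\tl\phi_{\tl\gamma_1(a_1)})$ actually meets $\tl\phi_{\tl\gamma_2(b_2)}$. The tension is that $\tl f$ does not preserve $\tl{\mathcal{F}}$, so we cannot simply track images of leaves; the argument must instead use the positive transversality of the isotopy paths $\tl I_{\tl{\mathcal{F}}}(\cdot)$ with respect to $\tl{\mathcal{F}}$, together with the Brouwer-line separation property of each leaf, to "trap" a concrete orbit inside the correct leaf rather than just inside a half-plane. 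Careful bookkeeping of the deck-group translates that appear when passing back and forth between $\tl M$ and $\mathrm{Dom}(I)$ is also needed so that the concatenated path and its realizing orbit are equivalent in the intended covering sense.
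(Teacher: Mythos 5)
The paper does not actually prove this proposition; it imports it verbatim from \cite{lecalveztalforcing} (Proposition 20 there), so your attempt has to be measured against that proof, which is several pages of delicate work. Your outline sets the stage correctly (pass to the universal cover, use that each leaf of $\tilde{\mathcal{F}}$ is a Brouwer line for $\tilde f$ and that transverse trajectories cross leaves in only one direction), but the heart of the argument is missing, and the way Steps 2--3 are framed would not work. You fix realizing points $\tilde z_1,\tilde z_2$ for the two given paths and then claim that the trajectory $\tilde I^{\,n_1+n_2}_{\tilde{\mathcal{F}}}(\tilde z_1)$ itself must contain a subpath equivalent to $\gamma_1|_{[a_1,t_1]}\gamma_2|_{[t_2,b_2]}$. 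Nothing forces this: after time $n_1$ the orbit of $\tilde z_1$ is unconstrained and has no reason to shadow $\gamma_2$, which is realized by the different orbit of $\tilde z_2$; moreover, with the paper's notion of equivalence the realizing trajectory of $\tilde z_1$ is only required to join the two end leaves of $\gamma_1$ and need not even meet the intersection leaf $\tilde\phi$. The content of the forcing proposition is precisely that the two given orbits, together with the $\tilde{\mathcal{F}}$-transverse intersection, force the existence of a \emph{third} point of $\tilde\phi_{\gamma_1(a_1)}$ whose $(n_1+n_2)$-th iterate lies on $\tilde\phi_{\gamma_2(b_2)}$. Producing that point is exactly what requires Le Calvez--Tal's analysis (ordering of leaves relative to $\tilde\phi$, induction on the pieces of the trajectories, and arguments by contradiction exploiting the Brouwer-line property of leaves), none of which is supplied by the phrase ``a connectedness and ordering argument shows''; your own closing paragraph correctly identifies this as the obstacle, but identifying it is not overcoming it.

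The min/max refinement in Step 4 is also not captured by your dichotomy. Whether $\tilde f^{n_1}(\tilde z_1)$ happens to land exactly on the leaf $\tilde\phi_{\gamma_2(b_2)}$ is a non-generic coincidence and is not where the alternative comes from: in \cite{lecalveztalforcing} the statement ``one concatenation is admissible of order $\min(n_1,n_2)$, or both are admissible of order $\max(n_1,n_2)$'' is proved by a separate argument comparing images of the half-planes bounded by the relevant leaves and deriving a contradiction when neither concatenation is realizable at the smaller order. As written, your proposal is a correct restatement of what must be proved, in the right setting, rather than a proof of it.
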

	
	We will use \cite[Propositions 7 and 26]{lecalveztalforcing} and \cite[Theorem M]{lct2} to build the canonically foliated strips in Section \ref{sec:CFS}. 
	For the reader who wants some insight on how these techniques work, going through \cite[Sections 3 and 4]{lecalveztalforcing}, could be a useful practice.

	\subsection{Topological and Markovian horseshoes}
	
	Here we introduce two objects with horseshoe-like dynamics, the former is easier to detect, the latter yields stronger consequences.  
	
	\medskip
	
	\paragraph{\textbf{Topological horseshoes}}We start by defining a purely topological notion of horseshoe, introduced by Kennedy and Yorke in \cite{kennedy}. 
	
	\begin{definition}
		Let $C$ be a compact set of a surface $S$, and let $C_0, C_1$ be two fixed disjoint compact subsets of $C$, each of them intersecting each connected component of $C$. We will say that a continuum $K$ of $C$ is a \emph{connection} if it intersects both $C_0$ and $C_1$. 
		
		We will say that a continuum $K'$ of $C$ is a \emph{preconnection} (for the homeomorphism $f$) when $f(K')$ is a connection. 
	\end{definition}
	
	\begin{definition}
		We will say that the homeomorphism $f$ has \emph{crossing number} equal to $m$ for the compact set $C$, if $m$ is the greatest integer for which there exist $C_0, C_1 \subset C$ such that each connection contains {\bck at least} $m$ disjoint preconnections.
	\end{definition}
	
	\begin{theorem}[\cite{kennedy}]\label{thm:TopHorseshoe}
		Suppose that a power $f^j$ of a homeomorphism $f$ of the surface $S$ has crossing number $m > 1$ for the locally connected, compact set $C \subset S$. Then, there exists a compact subset $Q \subset C$, such that $f^j |_Q$ is semiconjugate to the Bernoulli shift in $m$ symbols. 
		
		In particular, $h_{\textnormal{top}}(f)$ is positive. 
	\end{theorem}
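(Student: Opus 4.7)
Since this is the Kennedy--Yorke topological horseshoe theorem, I would adapt the standard nested-intersection symbolic-coding argument. First, for every connection $K \subset C$, I would fix once and for all a selection of $m$ pairwise-disjoint preconnections $P_1(K), \ldots, P_m(K)$ inside $K$; such a selection exists because the crossing number of $f^j$ with respect to $C$ is exactly $m$. Note that $f^j(P_i(K))$ is then itself a connection in $C$, so the selection procedure can be iterated.

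Fix an initial connection $K_0 \subset C$. For every finite word $\tau = (\tau_0, \ldots, \tau_{n-1}) \in \{1, \ldots, m\}^n$ I would recursively define a continuum $A_\tau \subset K_0$ as follows: set $A_\emptyset := K_0$; given $A_\tau$ with $f^{nj}(A_\tau)$ a connection, let $A_{\tau \cdot i}$ be the sub-continuum of $A_\tau$ such that $f^{(n+1)j}(A_{\tau \cdot i}) = P_i(f^{nj}(A_\tau))$. At each level $n$, the $m^n$ sets $A_\tau$ with $|\tau|=n$ are pairwise disjoint, inherited from the disjointness of the chosen preconnections at each step. For $\sigma \in \{1, \ldots, m\}^{\mathbb{N}}$ I would then let $A_\sigma := \bigcap_{n \geq 0} A_{\sigma|_n}$, which is a nonempty continuum by nested compactness. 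Setting $Q := \bigcap_n \bigcup_{|\tau|=n} A_\tau$ gives a compact subset of $C$, and the map $h: Q \to \{1, \ldots, m\}^{\mathbb{N}}$ sending $z$ to the unique $\sigma$ with $z \in A_{\sigma|_n}$ for every $n$ is well-defined by disjointness.

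The remaining verifications are routine: $h$ is continuous because at each level the $A_\tau$ are finitely many disjoint closed sets, making each coordinate of $h$ locally constant on $Q$; $h$ is surjective because each $A_\sigma$ is nonempty; and the semiconjugacy equation $h \circ f^j = s \circ h$, where $s$ is the Bernoulli shift, follows from the construction since by design $f^j(A_{\sigma|_n})$ is, up to relabeling starting connection, the $A$-set corresponding to the shifted word $s(\sigma)|_{n-1}$. The positive-entropy conclusion then follows because the $m$-symbol Bernoulli shift has entropy $\log m > 0$ and topological entropy does not decrease under semiconjugacy, giving $h_{\textnormal{top}}(f) \geq h_{\textnormal{top}}(f^j)/j \geq \log(m)/j > 0$.

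The main obstacle I would expect is ensuring that the sets $A_\tau$ remain genuine continua at every stage of the induction, since the preimage of a specified preconnection under $f^{nj}$ restricted to $A_{\tau|_{n-1}}$ may a priori split into several components. This is precisely where the hypothesis of local connectedness of $C$ enters: it lets one select, in a consistent and connected way, the correct sub-continuum of $A_\tau$ whose image under the iterate is the specified preconnection. Without local connectedness, the combinatorial tree of nested continua could degenerate and the symbolic coding could fail to capture a genuine invariant set; once this point is handled carefully, the rest of the argument is bookkeeping.
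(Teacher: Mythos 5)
The statement you are proving is not proved in the paper at all: it is quoted from Kennedy--Yorke \cite{kennedy}, so the relevant comparison is with their argument, and your proposal does not reproduce it — it runs into exactly the difficulty that makes their theorem nontrivial. The fatal step is the inductive definition ``let $A_{\tau\cdot i}$ be the sub-continuum of $A_\tau$ such that $f^{(n+1)j}(A_{\tau\cdot i})=P_i(f^{nj}(A_\tau))$''. Such a subcontinuum need not exist: the preimage of the chosen preconnection under the restricted iterate is compact but possibly disconnected, and no single component of it need map \emph{onto} $P_i$. This is not a pathology cured by local connectedness of $C$. Even for a path $g:[0,1]\to\mathbb{R}^2$ tracing the four sides of a square and then a diagonal, the U-shaped subcontinuum consisting of three sides of the square is contained in the image, yet no subinterval maps onto it exactly; the same folding phenomenon occurs here, and then a component of the preimage has image a proper subcontinuum of $P_i$, whose further image need not touch both ends, so it is not a connection and the crossing-number hypothesis can no longer be invoked. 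Thus the tree of ``cylinders'' $A_\tau$ collapses already at the second level, and the paragraph where you appeal to local connectedness to ``select, in a consistent and connected way, the correct sub-continuum'' is precisely the missing content of the theorem, not a routine verification. (Local connectedness enters Kennedy--Yorke's proof for different purposes, e.g.\ to control components of open subsets; their construction does not proceed by pulling back a chosen preconnection to a nested family of disjoint continua, which is why their proof is considerably more elaborate than the Smale-horseshoe template.)

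The surrounding bookkeeping is fine where it does not depend on this step: if the disjoint nested continua existed, continuity and surjectivity of $h$, the semiconjugacy relation, and the entropy estimate $h_{\textnormal{top}}(f)=h_{\textnormal{top}}(f^j)/j\geq \log(m)/j>0$ would all follow as you say (there is also a harmless off-by-one in your displayed condition, which should read $f^{nj}(A_{\tau\cdot i})=P_i(f^{nj}(A_\tau))$, the image under one more iterate then being a connection). But as written the proposal proves the theorem only under the additional, unjustified assumption that restricted iterates of $f^j$ have the exact-covering property for subcontinua, so it does not constitute a proof of the Kennedy--Yorke theorem; to repair it you would either have to import their actual argument or find a genuinely different mechanism (e.g.\ an inverse-limit or hyperspace-limit construction) for producing the symbolic factor.
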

	
	In this case, we will say that $Q$ is a topological horseshoe in $m$ symbols for $f$, with period $j$. 
	
	\medskip
	
	This notion generalizes the classical Smale's Horseshoe, which satisfies the hypothesis of Theorem \ref{thm:TopHorseshoe}, see Figure \ref{figure:Smale} for a brief explanation. 
	
	Note that although we obtain positive topological entropy, we do not recover the existence of periodic points for $f$ as we can not make sure that the preimages by the semiconjugacy of periodic points for the Bernoulli shift, contain periodic points for $f$.
	
	\begin{figure}[ht]
		\centering
		
		\def\svgwidth{.92\textwidth}
\begingroup%
  \makeatletter%
  \providecommand\color[2][]{%
    \errmessage{(Inkscape) Color is used for the text in Inkscape, but the package 'color.sty' is not loaded}%
    \renewcommand\color[2][]{}%
  }%
  \providecommand\transparent[1]{%
    \errmessage{(Inkscape) Transparency is used (non-zero) for the text in Inkscape, but the package 'transparent.sty' is not loaded}%
    \renewcommand\transparent[1]{}%
  }%
  \providecommand\rotatebox[2]{#2}%
  \newcommand*\fsize{\dimexpr\f@size pt\relax}%
  \newcommand*\lineheight[1]{\fontsize{\fsize}{#1\fsize}\selectfont}%
  \ifx\svgwidth\undefined%
    \setlength{\unitlength}{487.54495456bp}%
    \ifx\svgscale\undefined%
      \relax%
    \else%
      \setlength{\unitlength}{\unitlength * \real{\svgscale}}%
    \fi%
  \else%
    \setlength{\unitlength}{\svgwidth}%
  \fi%
  \global\let\svgwidth\undefined%
  \global\let\svgscale\undefined%
  \makeatother%
  \begin{picture}(1,0.35191562)%
    \lineheight{1}%
    \setlength\tabcolsep{0pt}%
    \put(0,0){\includegraphics[width=\unitlength,page=1]{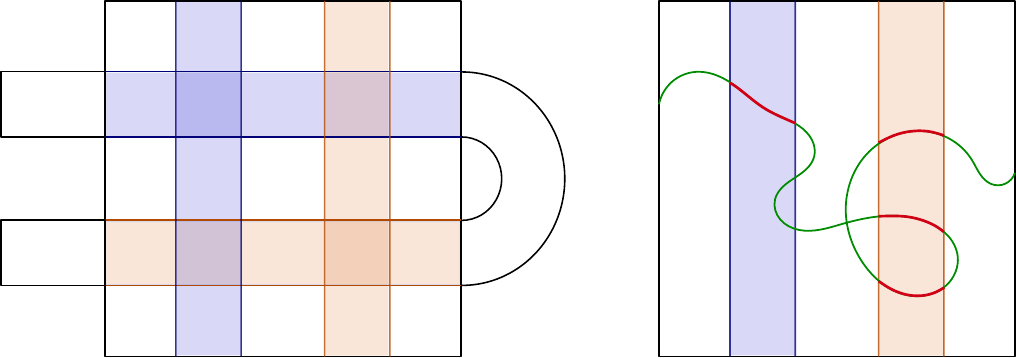}}%
    \put(0.00660414,0.15010128){\makebox(0,0)[lt]{\lineheight{1.25}\smash{\begin{tabular}[t]{l}$f(C_0)$\end{tabular}}}}%
    \put(0.46219073,0.01168524){\makebox(0,0)[lt]{\lineheight{1.25}\smash{\begin{tabular}[t]{l}$C_1$\end{tabular}}}}%
    \put(0.06340612,0.01168524){\makebox(0,0)[lt]{\lineheight{1.25}\smash{\begin{tabular}[t]{l}$C_0$\end{tabular}}}}%
    \put(0.00660414,0.29124144){\makebox(0,0)[lt]{\lineheight{1.25}\smash{\begin{tabular}[t]{l}$f(C_1)$\end{tabular}}}}%
  \end{picture}%
\endgroup%

		\smallskip
		
		\caption {For the classical Smale Horseshoe, every connection (green) has at least two respective subcontinua (red) connecting the vertical sides of the blue and orange regions, which implies it has at least two disjoint preconnections.}
		\label{figure:Smale}
	\end{figure}
	
	\medskip
	
	\paragraph{\textbf{Markovian horseshoes}} Stronger notions of horseshoe than the one in \cite{kennedy} are described for example in \cite{zbMATH06864334}, \cite{lct2} and \cite[Section 9.2]{pamiliton}. The notion used in \cite{pamiliton} is slightly stronger and adapted to the particular context of hyperbolic surface homeomorphisms. 
	
	This will be our case for the surface $S = \R^2 \backslash \textnormal{Dom}(\wh I)$, whenever Dom$(\wh I)$ is nonempty. Furthermore, be aware that although this notion is called \textit{rotational horseshoe} in its original definition, in our context the horseshoes will rotate around singularities {\bck \emph{of}} $\wh \F$, which is not itself rotation in our original torus.

	\begin{definition}
		We will say a compact connected set $R \subset S$ is a \emph{rectangle} if it is homeomorphic to $[0,1]^2$ by a homeomorphism $h : [0,1]^2 \to h([0,1]^2) \subset S$. We will call \textit{sides} of $R$, the image of the sides of $[0,1]^2$ by $h$, the \textit{horizontal sides} being $R^- = h([0,1]\times \{0\})$ and $R^+ = h([0,1]\times \{1\})$. 
	\end{definition}
	
	\begin{definition}\label{def:MarkovianIntersection}
		Let $R_1, R_2 \subset S$ be two rectangles. We will say that $R_1 \cap R_2$ is a \textit{Markovian intersection} if there exists a homeomorphism $h$ from a neighbourhood of $R_1 \cup R_2$ to an open subset of $\R^2$, such that
		\begin{itemize}
			\item $h(R_2) = [0,1]^2$;
			\item Either $h(R_1^+) \subset \{(x,y) \in \R^2 : y > 1\}$ and $h(R_1^-) \subset \{(x,y) \in \R^2 : y < 0\}$; or $h(R_1^+) \subset \{(x,y) \in \R^2 : y < 0\}$ and $h(R_1^-) \subset \{(x,y) \in \R^2 : y > 1\}$;	
			\item $h(R_1) \subset \{(x,y) \in \R^2 : y > 1\} \cup [0,1]^2 \cup \{(x,y) \in \R^2 : y < 0\}$.
		\end{itemize}
	\end{definition}
	
	{\color{black}
	The next statement generalizes the Jordan-Schöenflies theorem using a result due to Homma (see \cite{homma53}). It will be used in the construction of rectangles and Markovian intersections. This particular formulation was obtained from \cite[Theorem 9.4]{pamiliton}.

	\begin{lemma}[Homma, \cite{homma53}]\label{lem:homma}
		Any homeomorphism from 
		\[\Big( \big( (\R \times \{0\}) \cup (\R \times \{1\}) \cup (\{0\} \times [0,1]) \cup (\{1\} \times [0,1]) \big) \cap B(\vec{0}, 10) \Big) \cup \partial B(\vec{0}, 10),\]
		to its image in $\R^2$, can be extended to a homeomorphism $h : \R^2 \to \R^2$. 
	\end{lemma}
	}

	\begin{definition}[\textbf{Markovian horseshoe}. \cite{pamiliton}]\label{def:RotationalHorseshoe}
		Let $f$ be a homeomorphism of a hyperbolic surface $S$, let $\tl f$ be its lift to the universal covering $\tl S$, and let $m > 1$ be an integer. 
		
		We will say that $f$ has a \emph{Markovian horseshoe} in $m$ symbols with deck transformations $W_1, \hdots, W_m$, if there exists a rectangle $\tl R \subset \tl S$, and some $j \geq 1$ such that
		$$ \text{for every } 1 \leq i \leq m, \text{ the intersections } W_i(\tl R) \cap \tl f^j(\tl R) \text { are Markovian}.$$ 
		
		In this context, we will say that $j$ is the period of the horseshoe. Whenever $\tl R$ projects injectively by the covering projection, we may also say that $R$ contains a Markovian horseshoe in $m$ symbols for $f$. In this case, the set
		\[\bigcap_{n \in \Z} f^{nj}(R)\]
		is the Markovian horseshoe, and $j$ is again its period. 
		
		\smallskip
		
		For the purpose of this work, we will adapt this notion and say that a homeomorphism $f \in \mathrm{Homeo}_0(\T^2)$ has a Markovian horseshoe if that happens for the restriction $\wh {f^k} |_{\mathrm{Dom}(\wh I)}$ of a lift of some power $f^k$, where $\wh I$ is taken as in Definition \ref{def:MDTD}. 
	\end{definition}
	
	\medskip

	The proof of the following result can be found in \cite{pamiliton}. 
	
	\begin{proposition}\label{prop:topologicalhorseshoe}
		Let $f \in \textnormal{Homeo}(S)$. Suppose that $f$ has a Markovian horseshoe $X$ of period $j$ contained in the rectangle $R$, and with associated deck transformations $W_1, \dots , W_m \ (m \geq 2)$, which generate a free group. Then, there exists a compact subset $\tl Y \subset \tl S$ such that
		\begin{itemize}
			\item The map $f^j|_X$ has an extension $\tl g: \tl Y \to \tl Y$, which in turn is an extension of the Bernoulli shift $\sigma: \{1,\dots,m\}^{\mathbb{Z}} \to \{1,\dots,m\}^{\mathbb{Z}}$ in $m$ symbols by the semiconjugacy $h$.
			
			In particular, the following diagram commutes:

			\begin{center}
				\begin{tikzpicture}[scale=1]
					\node (A) at (0,0) {$X$};
					\node (B) at (3,0) {$X$};
					\node (C) at (0,1.5) {$\tl Y$};
					\node (D) at (3,1.5) {$\tl Y$};
					\node (E) at (0,3) {$\{1,\dots,m\}^\mathbb{Z}$};
					\node (F) at (3,3) {$\{1,\dots,m\}^\mathbb{Z}$};
					
					\draw[->,>=latex,shorten >=3pt, shorten <=3pt] (A) to node[midway, above]{$f^j$} (B);
					\draw[->,>=latex,shorten >=3pt, shorten <=3pt] (C) to node[midway, above]{$\tl g$} (D);
					\draw[->,>=latex,shorten >=3pt, shorten <=3pt] (E) to node[midway, above]{$\sigma$} (F);
					\draw[->,>=latex,shorten >=3pt, shorten <=3pt] (C) to node[midway, left]{$h$} (E);
					\draw[->,>=latex,shorten >=3pt, shorten <=3pt] (D) to node[midway, right]{$h$} (F);
					\draw[->,>=latex,shorten >=3pt, shorten <=3pt] (C) to node[midway, left]{$\pi$} (A);
					\draw[->,>=latex,shorten >=3pt, shorten <=3pt] (D) to node[midway, right]{$\pi$} (B);
				\end{tikzpicture}
			\end{center}
			
			\item the preimage by $h$ of every periodic sequence by the shift $\sigma$, contains a periodic point of $g$ (which then projects to a periodic point of $f^j$). 
		\end{itemize}

	\end{proposition}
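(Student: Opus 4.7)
The plan is to construct $\tl Y$ as the maximal invariant set of a suitable induced return map, built from the restriction of $\tl f^j$ to $\tl R$ and the deck transformations $W_i$. First I would unpack the Markovian hypothesis: for each $i$, applying $\tl f^{-j}$ to the Markovian intersection $W_i(\tl R)\cap \tl f^j(\tl R)$ shows that the set
\[\tl V_i := \tl R\cap \tl f^{-j}(W_i(\tl R))\]
is a ``vertical substrip'' of $\tl R$ reaching from one horizontal side to the other, and that $\tl V_1,\dots,\tl V_m$ are pairwise disjoint (the $W_i$ are distinct since they freely generate a group). I would then define the induced map
\[\tl g: \bigsqcup_{i=1}^{m}\tl V_i \longrightarrow \tl R,\qquad \tl g|_{\tl V_i} := W_i^{-1}\circ \tl f^j.\]
Because each $W_i$ is a deck transformation, this automatically satisfies $\pi\circ \tl g = f^j\circ \pi$, so once restricted to an invariant compact subset $\tl Y$ it will produce an extension of $f^j|_X$ via $\pi$.

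For the symbolic part I would set
\[\tl Y := \Bigl\{\tl x\in\tl R:\ \tl g^n(\tl x)\text{ is defined and lies in }\bigsqcup_{i}\tl V_i\text{ for every }n\in\Z\Bigr\}\]
and $h(\tl x)_n = i$ iff $\tl g^n(\tl x)\in \tl V_i$. Continuity of $h$, compactness of $\tl Y$, and the intertwining $h\circ \tl g = \sigma\circ h$ are straightforward consequences of the disjointness of the strips and the definition of $\tl g$. The heart of the argument is surjectivity of $h$: by induction on $N$ I would show that each cylinder
\[\tl Y_{(s_{-N},\dots,s_N)} := \bigcap_{n=-N}^{N}\tl g^{-n}(\tl V_{s_n})\]
is a nonempty compact vertical subrectangle of $\tl R$. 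The induction rests on a ``Markov step'': if $C\subset \tl V_i$ is a vertical subrectangle of $\tl R$, then $\tl g(C)$ is a horizontal subrectangle of $\tl R$ crossing every $\tl V_\ell$ from top to bottom, so $\tl g(C)\cap \tl V_\ell$ is again a vertical subrectangle. A nested intersection argument over the compact family $\{\tl Y_{(s_{-N},\dots,s_N)}\}_N$ then yields a point with arbitrarily prescribed two-sided coding.

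For the periodic orbit statement, given a periodic sequence $\mathbf s$ of period $p$ the cylinder $\tl Y_{(s_0,\dots,s_{p-1})}$ is a vertical subrectangle of $\tl R$, and by the Markov step $\tl g^p$ maps it onto a horizontal subrectangle of $\tl R$ which crosses this cylinder in the transverse direction. A standard Brouwer-type fixed point argument on this crossing configuration (the same that produces periodic orbits inside the classical Smale horseshoe) yields a fixed point of $\tl g^p$ lying in $h^{-1}(\mathbf s)$. The free group hypothesis on $W_1,\dots,W_m$ then ensures that distinct periodic words produce geometrically distinct lifted periodic orbits, since $\tl f^{jp}(\tl x) = W_{s_0}\cdots W_{s_{p-1}}(\tl x)$ for such a fixed point and different reduced words in a free group act distinctly on $\tl S$; this is also what prevents the shift from collapsing to a proper subshift through accidental coincidences $W_{s_0}\cdots W_{s_q}=W_{t_0}\cdots W_{t_r}$. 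The main technical difficulty in the above plan is the careful inductive verification that the Markov crossing structure of Definition \ref{def:MarkovianIntersection} persists under the cylinder construction; this is a purely topological statement, but it requires care in keeping track of ``above'' and ``below'' at every iteration, to ensure that horizontal images of vertical substrips indeed fully cross each $\tl V_\ell$.
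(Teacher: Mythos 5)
The paper does not actually prove this proposition; it defers to \cite{pa}, and your overall architecture (the induced map $\tl g|_{\tl V_i}=W_i^{-1}\circ\tl f^j$ on the strips $\tl V_i=\tl R\cap\tl f^{-j}(W_i\tl R)$, the maximal invariant set, the itinerary coding, nested cylinders) is indeed the standard route and essentially the one of the cited source. The genuine gap is in the second bullet, the periodic points. You dismiss it as ``a standard Brouwer-type fixed point argument on this crossing configuration (the same that produces periodic orbits inside the classical Smale horseshoe)'', but in the classical horseshoe periodic orbits come from hyperbolicity, and in the purely topological category a crossing of a region by its image does \emph{not} force a fixed point --- this is precisely why Kennedy--Yorke horseshoes (Theorem \ref{thm:TopHorseshoe}) yield no periodic points, as the paper itself stresses. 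What rescues the statement is the specific Markovian structure of Definition \ref{def:MarkovianIntersection}: the images of the horizontal sides are thrown strictly beyond the target rectangle \emph{and} the image cannot escape through the vertical sides at intermediate heights. From this one proves a fixed-point lemma (for $G=W^{-1}\tl f^{jp}$ with $W=W_{s_0}\cdots W_{s_{p-1}}$, whose image crosses $\tl R$ Markovianly) via a fixed-point index or Poincar\'e--Miranda-type computation after straightening; note that Brouwer cannot be invoked directly since $G$ is not a self-map of the rectangle, and the naive clamping/retraction trick produces spurious boundary fixed points in one of the two allowed orientations. Moreover, to place the fixed point in $h^{-1}(\mathbf s)$ rather than merely in $\tl R$, the index argument must be localized to the crossing component of the cylinder, which is exactly where your deferred bookkeeping has to be carried out; your cylinders are in general not subrectangles (Markovian intersections can be disconnected and wild), so the induction should be phrased in terms of compact sets containing a continuum joining, respectively separating, the appropriate pairs of sides, and the fixed-point lemma applied to a genuine subrectangle extracted from that component. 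Without this lemma and its localization, the second bullet is unproven.

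Two smaller points. First, pairwise disjointness of the $\tl V_i$ (and the uniqueness of backward branches that your two-sided definition of $\tl Y$ and of $h$ tacitly uses) does not follow from the $W_i$ being distinct generators of a free group; a set can meet its translate under a nontrivial deck transformation. It follows from the fact that $\tl R$ is disjoint from all its nontrivial translates, i.e.\ projects injectively, which is implicit in the hypothesis that the horseshoe is \emph{contained in the rectangle} $R$ (see Definition \ref{def:RotationalHorseshoe}); you should invoke that, and likewise the identity $\tl f^{jp}(\tl x)=W_{s_0}\cdots W_{s_{p-1}}(\tl x)$ needs $\tl f$ to commute with the deck transformations (true for the canonical lift used here, but worth saying). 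Second, with $X=\bigcap_{n\in\Z}f^{nj}(R)$ your construction only gives an extension of $f^j$ restricted to $\pi(\tl Y)$, which may be a proper subset of $X$ (an orbit can leave $\tl R$ through a translate $W\tl R$ with $W\notin\{W_1,\dots,W_m\}$); either interpret the horseshoe as $\pi(\tl Y)$, as the cited source does in effect, or add an argument for $\pi(\tl Y)=X$ --- as stated, that equality is not justified.
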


	\begin{remark}
		If $f \in \textnormal{Homeo}(S)$ has a Markovian horseshoe $X$ of period $j$ in $m$ symbols, then $h_{\textnormal{top}}(f) \geq \frac{\textnormal{log}(m)}{j} $. In particular, the existence of a Markovian horseshoe for $f$ implies positivity of topological entropy of $f$.  
	\end{remark}

	Note that the existence of a Markovian horseshoe implies the existence of a topological one. The Markovian ones behave better, in particular
	
	\begin{remark}
		Markovian horseshoes coming from deck transformations forming a free group yield exponential growth of the number of periodic orbits, as the period goes to infinity. Topological horseshoes do not ensure the existence of periodic points.    
	\end{remark}

	\subsection{Prime End Theory} We will use classical Prime End Theory by Carathéodory in order to prove the density of horseshoes, in Section \ref{sec:DenseTopHorseshoes}. An exhaustive construction of the definitions and results we exhibit can be found for example in \cite[Chapter 17]{milnor} or \cite[Chapter 9]{collingwood}. We include the results we will use later for the sake of completeness.

	Let us take $\mathcal{O} \subset \mathbb{S}^2$ a simply connected open set, whose complement has more than one point. By the Riemann mapping theorem, it is conformally equivalent to the unit disk $\D = \mathrm{B}(0,1) \subset \R^2$, {\color{black}by the Riemann map $h: \mathcal{O} \to \D$}.

	\begin{definition}
		We will say $\gamma: (0,1) \to \mathcal{O}$ is a \textit{crosscut} if it is homeomorphic to (0,1), and its closure is homeomorphic to [0,1], with its two endpoints in $\partial \mathcal{O}$. 
	\end{definition}
	
	Using the Jordan Curve Theorem, we know that every crosscut $\gamma$ separates $\mathcal{O}$ into two connected components. Each component $N$ of $\mathcal{O} \backslash \gamma$ will be called a \textit{cross-section}, which {\color{black}satisfies} $\gamma = {\bck \mathcal{O}} \cap \partial N$. 
	
	\begin{definition}
		A \textit{fundamental chain} $\mathcal{N} = \{N_j\}_{j \in \mathbb{N}}$ will be a decreasing sequence ($N_{j+1} \subset N_j$ for every $j$) of cross-sections $N_i \subset \mathcal{O}$, such that the diameter of the corresponding crosscuts tends to 0 as $j$ tends to infinity.
	\end{definition}
	
	Two fundamental chains $\mathcal{N} = \{N_j\}_{j \in \mathbb{N}}$ and $\mathcal{N'} = \{N'_j\}_{j \in \mathbb{N}}$ will be equivalent if every $N_j$ contains some $N'_k$ and conversely every $N'_{j'}$ contains some $N_{k'}$. 
	
	\begin{definition}
		A \textit{prime end} $\breve{z}$ for the pair $(\mathrm{cl}(\mathcal{O}), \partial \mathcal{O})$ is an equivalence class of fundamental chains. 
	\end{definition}
	
	\begin{definition}
		We define the \textit{impression} $\mathrm{Imp}(\breve{z}) \subset \mathrm{cl}(\mathcal{O})$ of a prime end $\breve{z}$ as 
		\[\bigcap_{j \in \mathbb{N}} \mathrm{cl}(N_j),\]
		where $\{N_j\}$ is any fundamental chain which defines $\breve{z}$. 
	\end{definition}
	
	\begin{remark}
		For every prime end $\breve z$, we have that $\mathrm{Imp}(\breve{z})$ is a compact connected subset of $\partial \mathcal{O}$. 	
	\end{remark}

	\begin{definition}\label{def:Caratheodory}
		We define the \textit{Caratheodory compactification} $\breve{\mathcal{O}}$ of $\mathcal{O}$ as the disjoint union of $\mathcal{O}$ and the set of prime ends of $\mathcal{O}$, with the topology having $\mathscr{\mathcal{O}} \cup \mathscr{N}$ as a base, where 
		\begin{itemize}
			\item $\mathscr{\mathcal{O}}$ is the set of open subsets of $\mathcal{O}$,
			\item $\mathscr{N} = \bigcup \underline{N}$, where $\underline{N}$ is the union of any cross-section $N$ with the set of prime ends which are given by a fundamental chain $\{N_j\}$ with $N_1 \subset N$. 
		\end{itemize}  	
	\end{definition}

	\begin{theorem}[\textbf{Caratheodory compactification}]
		Any conformal isomorphism $h: \D \to \mathcal{O}$ extends uniquely to an isomorphism from $\breve{\D} = \mathrm{cl}(\D)$ to $\breve{\mathcal{O}}$. 
	\end{theorem}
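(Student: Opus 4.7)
The plan is to construct the extension $\wt h: \breve{\D} \to \breve{\mathcal{O}}$ explicitly, verify it is well-defined on equivalence classes of fundamental chains, and then check it is a homeomorphism; uniqueness will follow because the extension is forced by continuity. The starting observation is that $\breve{\D}$ itself is homeomorphic to $\mathrm{cl}(\D)$: since $\partial \D$ is a Jordan curve, every fundamental chain in $\D$ has crosscuts whose closures shrink to a single boundary point, and each $\zeta \in \partial \D$ corresponds to exactly one prime end. Thus it suffices to construct, for each $\zeta \in \partial \D$, a prime end $\wt h(\zeta) \in \breve{\mathcal{O}}$, show that $\zeta \mapsto \wt h(\zeta)$ is a homeomorphism between $\partial \D$ and the set of prime ends of $\mathcal{O}$, and combine it with $h$ on the interior.

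The natural candidate: given a fundamental chain $\{N_j\}$ for $\zeta$ with crosscuts $\gamma_j \subset \D$, set $\wt h(\zeta)$ to be the prime end of $\mathcal{O}$ represented by $\{h(N_j)\}$. The core technical step is verifying that this is a fundamental chain at all, which requires: (i) $h(\gamma_j)$ is a crosscut of $\mathcal{O}$, i.e.\ it has well-defined limits in $\partial \mathcal{O}$ at its two endpoints; and (ii) the diameters of $h(\gamma_j)$ tend to $0$. After a Möbius reduction we may assume $\mathcal{O}$ is bounded, so $\mathrm{area}(\mathcal{O}) = \int_{\D} |h'|^2 \, dA < \infty$. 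The hard part will be the classical length-area estimate (Wolff's lemma): passing to polar coordinates centered at $\zeta$, for any $0 < r < 1$ one has
\[
\int_{0}^{r} \left( \int_{\D \cap \partial B(\zeta,\rho)} |h'| \, ds \right)^{2} \frac{d\rho}{\rho} \le 2\pi \int_{\D \cap B(\zeta,r)} |h'|^2 \, dA,
\]
and the right-hand side tends to $0$ as $r \to 0$. This forces the existence of a sequence of radii $\rho_n \to 0$ along which the length of $h(\D \cap \partial B(\zeta,\rho_n))$ tends to $0$; each such arc, viewed as a crosscut in $\D$, therefore has an image in $\mathcal{O}$ with small diameter and (being a curve of finite length with endpoints approaching $\partial \D$) with well-defined endpoints on $\partial \mathcal{O}$, establishing both (i) and (ii).

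With this in hand, well-definedness on equivalence classes is routine: if $\{N_j\}, \{N'_j\}$ are equivalent fundamental chains for $\zeta$, then each $N_j$ contains some $N'_k$ and conversely, and applying $h$ preserves these inclusions. Continuity of $\wt h$ at each prime end follows directly from the base of neighborhoods $\mathscr{N}$ described in Definition \ref{def:Caratheodory}: a neighborhood $\underline{N}$ in $\breve{\mathcal{O}}$ of $\wt h(\zeta)$ contains $\underline{h(N_j)}$ for $j$ large, which is the image of the neighborhood $\underline{N_j}$ of $\zeta$ in $\breve{\D}$. Continuity on the interior is immediate from continuity of $h$, and the two continuities match because crosscuts in $\D$ whose diameters tend to $0$ accumulate on $\zeta$.

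Finally, to conclude $\wt h$ is a homeomorphism, I would apply the same construction to the conformal inverse $h^{-1}: \mathcal{O} \to \D$, obtaining $\wt{h^{-1}}: \breve{\mathcal{O}} \to \breve{\D}$; by construction $\wt{h^{-1}} \circ \wt h$ is the identity on $\D$ and on each fundamental chain at $\zeta$, hence on $\breve{\D}$, so $\wt h$ is bijective. As $\breve{\D} \cong \mathrm{cl}(\D)$ is compact and $\breve{\mathcal{O}}$ is Hausdorff, the continuous bijection $\wt h$ is a homeomorphism. Uniqueness of the extension is then clear, since $\D$ is dense in $\breve{\D}$ and $\breve{\mathcal{O}}$ is Hausdorff, so any continuous extension agrees with $\wt h$. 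The entire argument hinges on the length-area inequality above — that is the single nontrivial analytic input, and it is where the assumption that $h$ is conformal (not merely a homeomorphism of $\D$ to $\mathcal{O}$) is used in an essential way.
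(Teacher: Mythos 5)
The paper does not actually prove this statement: it is quoted as classical background, with the proof deferred to \cite[Chapter 17]{milnor} and \cite[Chapter 9]{collingwood}. So the comparison is with the standard argument, and your forward construction follows it: extending $h$ over $\partial\D$ by pushing forward fundamental chains of circular crosscuts, with the length--area (Wolff/Koebe) estimate as the analytic engine, is exactly the classical route, and your treatment of well-definedness and of continuity via the neighbourhood base $\mathscr{N}$ is fine. One repair is needed even here: ``after a M\"obius reduction we may assume $\mathcal{O}$ is bounded'' is false in general --- if the complement of $\mathcal{O}$ in $\mathbb{S}^2$ has empty interior (e.g.\ an arc), no M\"obius transformation bounds $\mathcal{O}$. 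The standard fix is harmless: run the same estimate in the spherical metric, where injectivity of $h$ gives $\int_\D |h'|^2_{\mathrm{sph}}\,dA = \mathrm{area}_{\mathrm{sph}}(\mathcal{O}) < \infty$.

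The genuine gap is in the last paragraph. ``Apply the same construction to $h^{-1}$'' is not available: your construction used polar coordinates about boundary points of the \emph{round disk} together with finiteness of the area of the \emph{image}, and neither ingredient transfers to $h^{-1}\colon \mathcal{O}\to\D$. What is actually needed there is that for a fundamental chain $\{N_j\}$ of $\mathcal{O}$ (crosscuts $\gamma_j$ with diameters tending to $0$), the preimages $h^{-1}(\gamma_j)$ are crosscuts of $\D$ landing at boundary points, and that the nested regions $h^{-1}(N_j)$ shrink onto a \emph{single} point of $\partial\D$. This is not automatic --- you have no control on $|(h^{-1})'|$, and a priori the preimage of a short arc could accumulate on a whole sub-arc of $\partial\D$; ruling this out is the classical ``no Koebe arcs'' step, usually done via Lindel\"of's theorem plus the fact that a univalent map cannot have the same radial limit along a boundary set of positive measure (or by a second, genuinely different length--area/harmonic-measure argument). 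Without it you only get a continuous injection of $\mathrm{cl}(\D)$ into $\breve{\mathcal{O}}$ (injectivity is indeed easy, since distinct points of $\partial\D$ have eventually disjoint chains and $h$ preserves disjointness), but not surjectivity. Your fallback ``continuous bijection from a compact space to a Hausdorff space'' does not close this: it presupposes the bijectivity at issue, and it also quietly uses that $\breve{\mathcal{O}}$ is Hausdorff, which is itself a nontrivial property of the prime-end topology that neither the paper's definitions nor your outline establish (it is normally either proved by showing inequivalent fundamental chains have eventually disjoint cross-sections, or extracted as a consequence of the very theorem being proved). So the missing idea is the boundary behaviour of $h^{-1}$ on small crosscuts; once that is supplied, the rest of your outline assembles correctly.
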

	
	We will write $\breve{z}$ for points in $\breve{\D}$, note that the set of prime ends is then homeomorphic to $\partial \breve{\D} \simeq \mathbb{S}^1$.  
	
	{\color{black} 
		
		\begin{definition}{\label{def:ray}}
			A \textit{ray} $r$ is a continuous injective curve $r : [0,1) \to \D$, such that $r(0) = \vec{0}$. Moreover, when there is no room for ambiguity, we will call $r$ to the image of $r$.	
		\end{definition}

		\begin{definition}\label{def:LandingRay}
			We will say a curve $\eta: [0,1) \to \mathcal{O}$ \textit{lands} at $\breve{z} \in \partial \mathbb{D}$, when $\lim \limits_{t \to 1} \eta(t) = \breve{z}$. Similarly, we will say that $h(\eta) : [0,1] \to \mathcal{O}$ \textit{lands} at $z \in \partial \mathcal{O}$, when $\lim \limits_{t \to 1} h(\eta(t)) = z$
		\end{definition}

	
	\begin{definition}[\textbf{Accessible point}]\label{def:AccessiblePoint}
	We will say $z \in \partial \mathcal{O}$ is \textit{accessible} if there exists a simple curve in $\mathcal{O}$ which lands at $z$.	
	\end{definition}
	}
	
	Note that any neighbourhood $U$ of a point $z$ in ${\bck \partial}\mathcal{O}$ intersects both $\mathcal{O}$ and its complement. By drawing a curve inside $U$ between points in each of these components, and which does not go through $z$, we obtain the following result. 
	
	\begin{remark}\label{rem:AcessiblePoints}
		The set of accessible points is dense in $\partial \mathcal{O}$. 
	\end{remark}
	
	{\color{black}
	\begin{definition}[\textbf{Accessible prime end}]\label{def:AccessiblePrimeEnd}
		We will say a prime end $\breve{z} = e^{i\theta}$ is \textit{accessible}, if there exists a simple curve $\breve{\gamma}:[0,1] \to \D$ with $\breve{\gamma}|_{[0,1)} \subset \mathbb{D}$, $\breve{\gamma}(1) =e^{i\theta}$, {\color{black}satisfying} 
		\[h(\breve{\gamma}) \text{ lands at some }z\in \partial \mathcal{O}\]
	\end{definition}

\begin{remark}\label{rem:AccessibleImpression}
	The impression of an accessible prime end is an accessible point.  
\end{remark}
}
	
	\begin{theorem}[17.4, \cite{milnor}]\label{thm:milnor}
		The set of accessible prime ends has full Lebesgue measure in $\mathbb{S}^1$. In particular, it is dense. 
	\end{theorem}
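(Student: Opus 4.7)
My plan is to reduce to the classical radial boundary behavior of conformal maps on the disk and invoke Fatou's theorem. First, since $\partial \mathcal{O}$ contains at least two points, I can postcompose with a Möbius transformation of $\mathbb{S}^2$ moving some point of $\partial \mathcal{O}$ to infinity, so that $\mathcal{O}$ becomes a bounded simply connected domain of $\mathbb{C}$; neither the set of prime ends, the Carathéodory compactification, nor the notion of accessibility is affected by this change of coordinates. After this reduction the Riemann map $h: \mathbb{D} \to \mathcal{O}$ is a bounded holomorphic function.

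The key input is Fatou's theorem: for a bounded holomorphic function on the disk, the radial limit $h^*(\theta) := \lim_{r \to 1^-} h(re^{i\theta})$ exists for Lebesgue-almost every $\theta \in [0, 2\pi)$. For any such $\theta$, the radial segment $\breve{\gamma}: [0,1] \to \overline{\mathbb{D}}$ defined by $\breve{\gamma}(r) := re^{i\theta}$ is a simple curve with $\breve{\gamma}|_{[0,1)} \subset \mathbb{D}$ and $\breve{\gamma}(1) = e^{i\theta}$, and its image $h \circ \breve{\gamma}|_{[0,1)}$ is a curve in $\mathcal{O}$ with a well-defined limit at $r = 1$. Since $h$ is a homeomorphism onto the open set $\mathcal{O}$, this limit cannot lie in $\mathcal{O}$ itself, so $h^*(\theta) \in \partial \mathcal{O}$; by the paper's definition of accessibility, the prime end $e^{i\theta}$ is then accessible. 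Hence the set of accessible prime ends contains a subset of $\mathbb{S}^1$ of full Lebesgue measure, which yields the measure statement directly; density follows because a set of measure zero in $\mathbb{S}^1$ has empty interior, so the complement is dense.

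The main subtlety is the bookkeeping between three distinct boundaries: the unit circle $\partial \mathbb{D}$ (on which Fatou's theorem is stated), the topological boundary $\partial \mathcal{O}$, and the circle of prime ends of the Carathéodory compactification $\breve{\mathcal{O}}$. The paper's definition of accessibility is already phrased on the disk side, through the existence of a simple curve $\breve{\gamma}$ whose $h$-image lands, so the identification is direct once one observes that the radial ray is simple and that convergence of $h \circ \breve{\gamma}$ forces the limit into $\partial \mathcal{O}$. A route more internal to Prime End Theory, bypassing Fatou, is the classical Carathéodory area-length argument: integrating $|h'|^2$ in polar coordinates and applying Cauchy--Schwarz shows that $\int_0^{2\pi}\!\left(\int_\rho^1 |h'(re^{i\theta})|\,dr\right)^{\!2} d\theta$ is controlled by the area of $\mathcal{O}$, so for a.e. $\theta$ the radial image is rectifiable and therefore convergent as $r \to 1^-$. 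Both approaches yield the same conclusion; the former is shorter, while the latter is closer to the spirit of Carathéodory's original treatment of prime ends.
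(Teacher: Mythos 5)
The paper does not actually prove this statement — it is quoted directly from \cite{milnor} — so the only comparison available is with the classical argument, which is indeed the Fatou/length--area route you outline. Your disk-side bookkeeping is correct: the radial segment is a simple curve landing at $e^{i\theta}$, a finite radial limit of $h$ cannot lie in $\mathcal{O}$ because $h^{-1}$ is continuous there, this matches the paper's definition of an accessible prime end verbatim, and a full-measure subset of $\mathbb{S}^1$ is dense.

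However, your opening reduction fails as stated. A Möbius transformation sending a point of $\partial\mathcal{O}$ to infinity does not in general make $\mathcal{O}$ bounded: for that, $\infty$ would have to be an \emph{interior} point of the complement, and the complement of $\mathcal{O}$ may have empty interior (take $\mathcal{O}=\mathbb{S}^2\setminus A$ with $A$ a nondegenerate arc — precisely the kind of domain relevant here, since the boundary of a heteroclinic pseudo-rectangle is a union of continua with empty interior); even when the complement does have interior, moving a mere boundary point to infinity is not enough, as the upper half-plane shows. Since you invoke Fatou's theorem for a \emph{bounded} holomorphic function, and your alternative length--area argument likewise needs the area of $\mathcal{O}$ to be finite, both versions of the proof break at this step. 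The gap is repairable by standard means: run the length--area estimate in the spherical metric, where the area of $\mathcal{O}\subset\mathbb{S}^2$ is automatically finite (this is essentially the proof in \cite{milnor}); or use that univalent maps of the disk lie in the Hardy space $H^p$ for $p<1/2$ and hence admit radial limits almost everywhere; or reduce to a genuinely bounded univalent map by the square-root trick — send one complementary point to $\infty$, pick another complementary point $b$, choose a branch of $\sqrt{h-b}$, whose univalent image omits a nonempty open set, and postcompose with a Möbius map to land in a bounded domain, to which Fatou applies. With any of these substitutes, the rest of your argument goes through verbatim.
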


	\section{Canonically foliated strips}\label{sec:CFS}
	
	The purpose of this section is to build the main tool for the proof of Theorem \ref{PropC:maximumdiameter}, which is called a \textit{canonically foliated strip}. Its construction uses some notions of Le Calvez-Tal Forcing Theory for surface homeomorphisms. Given our homeomorphism $f$ under the hypothesis of Theorem \ref{PropC:maximumdiameter}, we may assume up to taking a power and an adequate lift, that the origin $\vec{0} \in {\bck {\textnormal{int}}}(\rho(\wh f))$ for some lift $\wh f$.

	Let us settle notation for the remainder of the current and the following sections. We will take a maximal dynamically transverse decomposition {\bck ({MDTD})} $(f,I,\mathcal{F})$ for $f$, where $\mathcal{F}$ is a singular foliation with $\textnormal{Sing}(\mathcal{F}) = \textnormal{Fix}(I)$, and such that $I$ lifts to an isotopy $\wh I$ from the identity to $\wh f$. We may then take a lift of this decomposition to the universal covering $\mathbb{R}^2$ of $\mathbb{T}^2$, obtaining $(\widehat{f}, \widehat{I}, \widehat{\mathcal{F}})$. Restricting to our domain $\textnormal{Dom}(\widehat{I}) = \mathbb{R}^2 \backslash \textnormal{Fix}(\widehat{I})$, we may then lift again to the universal covering $\tl \D$ of $\textnormal{Dom}(\widehat{I})$, and get the corresponding $(\tilde{f}, \tilde{I}, \tilde{\mathcal{F}})$, where $\tl f$ acts as a Brouwer homeomorphism on the topological plane $\tl \D$.

	Unless explicitly stated, we shall use for the remainder of the work the diacritic $\ \wh{\mbox{}} \ $ (i.e. $\wh f$) to denote objects in the universal covering $\R^2$ of $\T^2$, and we shall use the diacritic $\ \wt{\mbox{}} \ $ (i.e. $\tl f$) to denote objects in the universal covering $\tl \D$ of $\textnormal{Dom}(\widehat{I})$.  
	
	Be aware that we will develop the proofs without going to the -more frequent- universal covering of the punctured torus $\mathrm{Dom}(I)$. In our context, for instance, the group of deck transformations acting in $\tl \D$ is necessarily not finitely generated, as $\mathrm{Dom}(\wh I)$ has infinitely many punctures for each puncture of $\mathrm{Dom}(I)$.   
	
	For the particular case of Proposition \ref{prop:RotatingPointsNoTransverseIntersection} we will use maximal dynamically transverse decompositions in two other auxiliary spaces, which will be explained in its proof. 
	
	\begin{definition}[\textbf{Canonically foliated strip}]
		Let $\wh f$ be a lift to $\R^2$ of a homeomorphism $f \in \mathrm{Homeo}_0(\T^2)$, and let $(\wh f, \wh I, \wh \F)$ be an MDTD for $\wh f$. We will say that $\wh A \subset \R^2$ is a \textit{canonically foliated strip} (CFS) for $\wh f$ if 
		
		\begin{enumerate}
			\item There exists $v \in \Z^2$, $\wh z \in \R^2$ and some $j > 0$, such that $\wh f^j(\wh z) = \wh z + v$, and such that $\wh A$ is the saturated set of $\wh I^{\Z}_{\wh \F}(\wh z)$ by leaves of $\wh \F$. 
			\item There exists a homeomorphism $h: \wh A \to \R^2$, such that $h(\wh \F|_{\wh A})$ is the foliation by vertical lines, oriented from top to bottom. 
			
			In this context, we will say that $\wh z$ is a \textit{realizing point} for $\wh A$. 
		\end{enumerate}
	\end{definition}
	
	\begin{remark}
		If $\wh A$ is a CFS, there exists a non-null vector $v \in \Z^2$ such that $\wh A + v = \wh A$.  
	\end{remark}
	
	The goal is then to prove the following result.
	
	\begin{proposition}\label{prop:ExistenceofCFS}
		Let $\wh f$ be a lift to $\R^2$ of $f \in \mathrm{Homeo}_0(\T^2)$, with $\vec{0} \in \mathrm{int}(\rho(\wh f))$. 
		
		Then, there exist $j, p \in \Z^+$, and four families 
		$$\wh {\mathcal{A}}^{\rar} = \{\wh A^{\rar}_k\}_{k \in \Z}, \ \wh {\mathcal{A}}^{\uar} = \{\wh A^{\uar}_k\}_{k \in \Z}, \ \wh {\mathcal{A}}^{\lar} = \{\wh A^{\lar}_k\}_{k \in \Z}, \ \wh {\mathcal{A}}^{\dar} = \{\wh A^{\dar}_k\}_{k \in \Z}$$ of canonically foliated strips $\wh A^{\rar}_k, \wh A^{\uar}_k, \wh A^{\lar}_k, \wh A^{\dar}_k$, with respective realizing points $\wh z^{\rar}_k, \wh z^{\uar}_k, \wh z^{\lar}_k, \wh z^{\dar}_k$, such that
		\begin{enumerate}
			\item For every $k \in \Z$, we have that 
			\[\wh f^j(\wh z^{\rar}_k) = \wh z^{\rar}_k + (p,0), \ \wh f^j(\wh z^{\uar}_k) = \wh z^{\uar}_k + (0,p),\]
			\[\wh f^j(\wh z^{\lar}_k) = \wh z^{\lar}_k + (-p,0), \ \wh f^j(\wh z^{\dar}_k) = \wh z^{\dar}_k + (0,-p),\]
			\item For every pair of values $k, k' \in \Z$ with $k \neq k'$, we have that
			\[\mathrm{cl}(\wh A^{\rar}_k) \cap \mathrm{cl}(\wh A^{\rar}_{k'}) = \varnothing, \ \mathrm{cl}(\wh A^{\uar}_k) \cap \mathrm{cl}(\wh A^{\uar}_{k'}) = \varnothing,\]
			\[ \mathrm{cl}(\wh A^{\lar}_k) \cap \mathrm{cl}(\wh A^{\lar}_{k'}) = \varnothing, \ \mathrm{cl}(\wh A^{\dar}_k) \cap \mathrm{cl}(\wh A^{\dar}_{k'}) = \varnothing. \]
		\end{enumerate}
		
	\end{proposition}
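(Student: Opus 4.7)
Since $\vec 0$ lies in the interior of the convex set $\rho(\wh f)$, I can choose a positive integer $j$ and a positive integer $p$ so that the four rational rotation vectors $(\pm p/j,0)$ and $(0,\pm p/j)$ all lie in $\mathrm{int}(\rho(\wh f))$. The proposition then splits into four symmetric sub-tasks, one for each cardinal direction $w\in\{(p,0),(0,p),(-p,0),(0,-p)\}$: produce an infinite $\mathbb{Z}$-indexed family of CFS whose realizing points $\wh z$ satisfy $\wh f^{j}(\wh z)=\wh z+w$ and whose closures are pairwise disjoint.

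Fixing one such direction $w$, I would invoke Proposition \ref{prop:RotatingPointsNoTransverseIntersection} (the realization result that I expect to be proved earlier using \cite[Theorem~M]{lct2} together with the forcing machinery): it should supply a realizing periodic point $\wh z$ with $\wh f^{j}(\wh z)=\wh z+w$ whose biinfinite transverse trajectory $\wh I^{\Z}_{\wh\F}(\wh z)$ has no $\wh\F$-transverse self-intersection, and---crucially for the final disjointness---no $\wh\F$-transverse intersection with any of its nontrivial integer translates other than those forced by the periodicity. I would then let $\wh A$ be the saturation of $\wh I^{\Z}_{\wh\F}(\wh z)$ by the leaves of $\wh\F$, i.e.\ the union of the leaves $\wh\phi_{\wh I^{t}_{\wh\F}(\wh z)}$ for $t\in\R$. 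Using \cite[Propositions~7 and 26]{lecalveztalforcing}, the absence of $\wh\F$-transverse self-intersection forces the leaves met by $\wh I^{\Z}_{\wh\F}(\wh z)$ to be pairwise disjoint topological lines, linearly ordered by the trajectory, and each crossed exactly once; stitching them together produces a canonical homeomorphism $h:\wh A\to\R^{2}$ carrying $\wh\F|_{\wh A}$ to the downward-oriented vertical foliation. By construction $\wh A+w=\wh A$ and $\wh z$ is a realizing point, so $\wh A$ is a CFS.

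To build the $\mathbb{Z}$-indexed family in the rightward direction I would set $\wh z^{\rar}_{k}:=\wh z^{\rar}+(0,k)$ and $\wh A^{\rar}_{k}:=\wh A^{\rar}+(0,k)$ for $k\in\Z$, and use horizontal translates for the vertically-rotating families. Equivariance of $\wh f$ under integer translations immediately gives $\wh f^{j}(\wh z^{\rar}_{k})=\wh z^{\rar}_{k}+(p,0)$ and exhibits $\wh A^{\rar}_{k}$ as a CFS with realizing point $\wh z^{\rar}_{k}$. The disjointness clause $\mathrm{cl}(\wh A^{\rar}_{k})\cap \mathrm{cl}(\wh A^{\rar}_{k'})=\varnothing$ for $k\neq k'$ is the delicate point; I would argue by contradiction. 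A common leaf in the two closures would force the transverse trajectories through $\wh z^{\rar}_{k}$ and $\wh z^{\rar}_{k'}$ to meet a common leaf of $\wh\F$; applying the Forcing Proposition \ref{prop:Forcing} to these two admissible transverse paths would then produce an $\wh\F$-transverse intersection between a translate of $\wh I^{\Z}_{\wh\F}(\wh z^{\rar})$ and another translate of the same trajectory, contradicting the strengthened no-intersection clause of Proposition \ref{prop:RotatingPointsNoTransverseIntersection}. Up to replacing $p$ by a sufficiently large multiple (equivalently, passing to a higher power of $\wh f$ so that each fundamental piece of $\wh A^{\rar}$ is \emph{thin} enough vertically), this rules out all residual intersections between translated closures.

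The hard part will be the equivariant no-self-intersection statement of Proposition \ref{prop:RotatingPointsNoTransverseIntersection}: simultaneously producing four rotationally distinct periodic orbits, with a common period $j$ and common displacement $p$, whose lifted transverse trajectories avoid all $\wh\F$-transverse intersections with themselves and with their nontrivial integer translates. This is where the full strength of \cite[Theorem~M]{lct2} and the forcing toolbox of \cite{lecalveztalforcing} is needed; once that existence result is in hand, the geometric construction of each CFS and of its $\mathbb{Z}$-indexed family of disjoint translates is essentially a direct, equivariant application of the Brouwer--Le Calvez structural theorems.
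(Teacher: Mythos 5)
Your overall skeleton (realize periodic points in the four cardinal directions with no $\wh\F$-transverse self-intersection via Proposition \ref{prop:RotatingPointsNoTransverseIntersection}, saturate the transverse trajectories by leaves to get strips, then translate to get the families) is the paper's route, but the step where you prove disjointness of the closures within each family has a genuine gap. First, your mechanism is wrong: two admissible transverse paths that meet a common leaf need not have any $\wh\F$-transverse intersection (parallel trajectories inside one strip cross exactly the same leaves and never intersect transversely), and Proposition \ref{prop:Forcing} takes a transverse intersection as a \emph{hypothesis} — it cannot manufacture one from a shared leaf. Moreover, the strengthened clause you lean on (no transverse intersection with \emph{any} nontrivial integer translate) is not what the paper's Proposition \ref{prop:RotatingPointsNoTransverseIntersection} yields: passing to the annulus only controls intersections with \emph{horizontal} translates of the rightward trajectory, and nothing rules out the closure of $\wh A^{\rar}$ meeting the closure of a vertical translate without any transverse intersection occurring. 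Second, your family $\wh A^{\rar}_k=\wh A^{\rar}+(0,k)$ is spaced by unit translations, while the strip generally has vertical extent much larger than $1$, so consecutive translates will simply overlap; and your proposed repair — replace $p$ by a large multiple, or pass to a higher power of $\wh f$ so the strip becomes ``thin'' — does not help, because $\wh A^{\rar}$ is the leaf-saturation of the full bi-infinite transverse trajectory and is unchanged when $\wh f$ is replaced by a power.

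The paper's disjointness argument is metric rather than forcing-theoretic: by Lemma \ref{lem:UniformlyBoundedLeaves} the leaves of $\wh\F$ have uniformly bounded diameter (this is where $\vec 0\in\rho(\wh f)$ is used), and since $\wh A^{\rar}$ is invariant under the horizontal integer translation given by its realizing point, it is contained in a horizontal band of bounded height; hence there is $d^{\rar}$ with $\mathrm{cl}(\wh A^{\rar})\cap\mathrm{cl}(\wh A^{\rar}+(0,d'))=\varnothing$ for all $d'\ge d^{\rar}$, and one defines $\wh A^{\rar}_k=\wh A^{\rar}+(0,pk)$ with $p\ge\max\{d^{\rar},d^{\uar},d^{\lar},d^{\dar}\}$ chosen as a suitable multiple of the common displacement. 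Relatedly, you assume you can prescribe a common $j$ and $p$ directly in Proposition \ref{prop:RotatingPointsNoTransverseIntersection}; in fact that proposition produces points with uncontrolled periods and displacements, and the synchronization (common period, common displacement, and the same $p$ serving as the spacing of the translates) is a separate step the paper carries out using item (2) of that proposition together with least common multiples. Your write-up omits this, though it is the more repairable of the two issues.
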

	

	\medskip
	
	Let us start with an elementary result regarding transverse foliations {\bck {which can be found in \cite{lecalveztalforcing}. We prove it here for the sake of completeness.}}
	
	\medskip
	
	\begin{lemma}\label{lem:UniformlyBoundedLeaves}
		If $\vec{0} \in \rho(\wh f)$, the diameter of leaves $\wh \phi \in \wh{\mathcal{F}}$ is uniformly bounded. 
	\end{lemma}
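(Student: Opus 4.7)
The strategy is a proof by contradiction. Suppose there exist leaves $\wh\phi_n \in \wh{\mathcal{F}}$ with $\DD(\wh\phi_n) \to +\infty$. Using the $\Z^2$-equivariance of $\wh{\mathcal{F}}$, translate each $\wh\phi_n$ by a suitable element of $\Z^2$ so that $\wh\phi_n \cap [0,1]^2 \neq \varnothing$; pick $\wh z_n$ in this intersection and extract a convergent subsequence $\wh z_n \to \wh z_\infty$. The local triviality of $\wh{\mathcal{F}}$ at any regular point will then force the leaf $\wh\phi_\infty$ through $\wh z_\infty$ to have infinite diameter as well: otherwise a trivializing chart around $\wh\phi_\infty$ would contain all but finitely many of the $\wh\phi_n$, contradicting $\DD(\wh\phi_n)\to +\infty$.

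Next I would analyse the structure of $\wh\phi_\infty$. Two $\Z^2$-translates $\wh\phi_\infty + v$ and $\wh\phi_\infty + v'$ are leaves of the same foliation, so they are either equal or disjoint. Hence either some $v_0 \in \Z^2\setminus\{\vec 0\}$ satisfies $\wh\phi_\infty + v_0 = \wh\phi_\infty$, in which case $\phi_\infty = \wh\pi(\wh\phi_\infty)$ is an essential closed curve on $\T^2$ realising the homology class $v_0$; or else all translates are pairwise disjoint, in which case a standard Poincaré--Bendixson-type analysis on $\T^2$ provides $\phi_\infty$ with a well-defined asymptotic homological direction $v_0 \in \Z^2\setminus\{\vec 0\}$. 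In both situations one obtains a pairwise disjoint family of properly embedded oriented lines $\{\wh\phi_\infty + k w\}_{k\in\Z}$, with $w\in\Z^2$ completing $v_0$ to a basis, that decomposes a $w$-periodic neighbourhood of $\wh\phi_\infty$ into parallel strips, all oriented coherently by $\wh{\mathcal{F}}$.

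The final step exploits the positive dynamical transversality between $\wh{\mathcal{F}}$ and $\wh I$: every transverse trajectory $\wh I^n_{\wh{\mathcal{F}}}(\wh z)$ crosses each of the lines $\wh\phi_\infty+kw$ in the same prescribed direction, so the number of crossings up to time $n$ grows linearly in $n$ at a uniformly positive rate, independently of $\wh z$. Since $\wh I^n_{\wh{\mathcal{F}}}(\wh z)$ is homotopic with fixed endpoints to the Euclidean segment from $\wh z$ to $\wh f^n(\wh z)$, this yields a uniformly positive drift of every orbit in the direction dual to $v_0$. Consequently the rotation vector of every point -- and in particular every element of $\rho(\wh f)$ -- has a strictly positive component in that dual direction, contradicting $\vec 0 \in \rho(\wh f)$.

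The main obstacle I expect is the limit-leaf step: one must rule out that $\wh z_\infty$ is a singular point of $\wh{\mathcal{F}}$ (since $\textnormal{Sing}(\wh{\mathcal{F}}) = \textnormal{Fix}(\wh I)$ is not a priori discrete) and then justify via a trivialising chart that boundedness of $\wh\phi_\infty$ would imply uniform boundedness of all nearby $\wh\phi_n$. A secondary technical point is the asymptotic-direction claim for a non-compact limit leaf, which for topological foliations of $\T^2$ requires a careful classification of leaf closures but causes no essential difficulty because $\T^2$ is compact and the foliation has only finitely many singular leaves per fundamental domain through any regular neighbourhood.
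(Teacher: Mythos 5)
Your argument has genuine gaps, the most serious one being exactly the step you flag as the "main obstacle": the passage from a sequence of leaves $\wh\phi_n$ with $\mathrm{diam}(\wh\phi_n)\to+\infty$ to a limit leaf $\wh\phi_\infty$ of infinite diameter. A topological (singular) foliation only admits local flow boxes; there is no product neighbourhood of an entire, possibly non-compact, leaf, so boundedness of $\wh\phi_\infty$ does \emph{not} imply that nearby leaves stay in a trivialising chart. Leaves through points $\wh z_n\to\wh z_\infty$ shadow $\wh\phi_\infty$ only for a bounded amount of "time" and may then wander arbitrarily far (think of leaves passing near a saddle-type piece of $\mathrm{Sing}(\wh\F)$ whose separatrices are bounded), so their diameters can blow up while the limit leaf remains bounded. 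Recall also that $\mathrm{Sing}(\wh\F)=\mathrm{Fix}(\wh I)$ is merely a closed set, so you cannot assume $\wh z_\infty$ is regular, nor that there are "finitely many singular leaves per fundamental domain"; this same lack of control undermines the Poincar\'e--Bendixson-type classification you invoke to extract an asymptotic homological direction for $\wh\phi_\infty$. Finally, even granting the family of parallel leaf-lines $\{\wh\phi_\infty+kw\}_{k\in\Z}$, positive transversality only makes the crossing count monotone, not linearly growing at a uniform rate: an orbit may stay forever between two consecutive lines. What you actually get is $\rho(\wh f)$ contained in a closed half-plane through the origin, which does not contradict $\vec 0\in\rho(\wh f)$ (it would contradict $\vec 0\in\mathrm{int}(\rho(\wh f))$, the standing hypothesis of the section, but your stated conclusion of a strictly positive drift for \emph{every} orbit is unjustified).

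For contrast, the paper avoids any limit-leaf or leaf-classification argument and is direct: using Franks' realization it takes three periodic orbits whose rotation vectors point in three different directions with $\vec 0$ in their convex hull, and concatenates long pieces of their transverse trajectories into a positively and cyclically oriented "triangle" $\wh R$ containing a fundamental domain. Since these trajectories are positively transverse to $\wh\F$, a forward half-leaf starting in $\wh R$ can never cross the sides and stays in $\wh R$; equivariance then bounds all forward half-leaves uniformly, and the symmetric construction handles backward half-leaves. If you want to salvage a contradiction-style proof, you would still need an input of this kind (transverse trajectories realizing several rotation directions) to trap leaves, rather than a compactness argument on the leaves themselves.
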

	
	\begin{proof}
		By Franks realization result in \cite{franks89}, let us take $\wh z_1, \wh z_2, \wh z_3$ three periodic points realizing different rotation directions and such that $\vec{0}$ belongs to the convex hull of the set containing the respective three rotation vectors. Let $\wh \gamma_1, \wh \gamma_2, \wh \gamma_3$ be their respective transverse paths for their whole orbits, 
		\[\wh \gamma_j = \wh I ^\Z_{\wh \F} (\wh z_j), \ \ 1 \leq j \leq 3.\]
		which we will build as periodic by respective integer translations, given that they come from periodic orbits. Furthermore, note that these paths have a well defined slope, again because they come from periodic orbits (and the three slopes are different from each other). We may then take three sufficiently long pieces $\wh \gamma'_1, \wh \gamma'_2, \wh \gamma'_3$ {\bck {respectively of $\wh \gamma_1, \wh \gamma_2, \wh \gamma_3$}} such that their concatenation defines a region $\wh R$ on its right (i.e. we obtain a \textit{triangle} with its sides being positively and cyclically oriented), such that $\wh R$ contains a fundamental domain. Given that the used paths are positively transverse with respect to the foliation, we obtain that the future half-leaf of any point inside the region $\wh R$, must also stay inside $\wh R$, which proves that the diameter of these half-leaves is uniformly bounded since $\wh R$ contains a fundamental domain. The same argument reiterated implies the diameter of past half-leaves is also uniformly bounded, which completes the proof. 
	\end{proof}

	\subsection{Construction} We will use this subsection to prove Proposition \ref{prop:ExistenceofCFS}, by constructing the respective families. The following result is key for this construction. 
	
	\begin{proposition}\label{prop:RotatingPointsNoTransverseIntersection}
		If $\vec{0} \in \mathrm{int}(\rho(\wh f))$, then 
		\begin{enumerate}
			\item There exists $\wh z^{\rar} \in \R^2$ and two integers $p^{\rar},q^{\rar} > 0$, {\color{black}with} $\wh f^{q^{\rar}}(\wh z^{\rar}) = \wh z^{\rar} + (p^{\rar},0)$, and such that the transverse path $\wh I^\Z_{\wh \F}(\wh z^{\rar})$ has no $\wh \F$-transverse self-intersections.
			\item Moreover, for every $q'^{\rar} \geq q^{\rar}$, there exists $\wh z'^{\rar} \in \R^2$ such that 
			\[\wh f^{q'^{\rar}}(\wh z'^{\rar}) = \wh z'^{\rar} + (p^{\rar},0)\] and such that the transverse path $\wh I^\Z_{\wh \F}(\wh z'^{\rar})$ has no $\wh \F$-transverse self-intersections.
		\end{enumerate}

	\end{proposition}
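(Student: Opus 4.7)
The plan is to combine Franks' realization of rational rotation vectors with a minimality argument driven by forcing theory. Since $\vec{0} \in \mathrm{int}(\rho(\wh f))$, convexity ensures that every sufficiently small rational vector $(p/q, 0)$ with $p, q$ coprime lies in $\rho(\wh f)$, and by Franks' theorem (sharpened through \cite{lecalveztalforcing, lct2}) each such vector is realized by a periodic point $\wh z$ with $\wh f^q(\wh z) = \wh z + (p, 0)$. For item (1) I would choose $(p^{\rar}, q^{\rar})$ so that $p^{\rar}/q^{\rar}$ is extremal in a suitable sense: for instance the largest positive rational realized by a periodic orbit of minimal period along the horizontal direction, or a rational corresponding to a vertex of $\rho(\wh f)$ in the closed right half-plane. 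The extremality provides the rigidity needed later to rule out unwanted transverse intersections.

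To secure the no-$\wh \F$-transverse-self-intersection property, the two auxiliary spaces come into play: the annular quotient $\R^2 / \langle (p^{\rar}, 0)\Z \rangle$ and the universal covering of its restriction to the complement of the lifted fixed set. The MDTD $(\wh f, \wh I, \wh \F)$ is $(p^{\rar}, 0)$-equivariant (obtained by lifting from $\T^2$), hence descends to an MDTD on the annular quotient; in this annulus the transverse trajectory $\wh I^{\Z}_{\wh \F}(\wh z^{\rar})$ closes up into a loop, since the displacement per period equals the quotiented deck transformation. Lifting to the universal cover of the annulus minus its fixed set, the trajectory becomes a proper arc, and a $\wh \F$-transverse self-intersection in $\R^2$ manifests as a $\tl \F$-transverse intersection between that arc and one of its deck translates. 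Applying Proposition~\ref{prop:Forcing} to this configuration produces an admissible transverse path that either realizes the horizontal displacement $p^{\rar}$ in strictly fewer than $q^{\rar}$ iterations, contradicting the minimality of $q^{\rar}$, or realizes a rotation vector with a nonzero vertical component while preserving total horizontal displacement; the latter case is ruled out by the extremal choice of $(p^{\rar}, q^{\rar})$, since invoking Franks to extract a periodic orbit from the new admissible path would yield a rotation vector strictly outside $\rho(\wh f)$.

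For item (2), convexity of $\rho(\wh f)$ together with $\vec{0}, (p^{\rar}/q^{\rar}, 0) \in \rho(\wh f)$ implies $(p^{\rar}/q'^{\rar}, 0) \in \rho(\wh f)$ for every $q'^{\rar} \geq q^{\rar}$, so realization yields a periodic point $\wh z'^{\rar}$ with $\wh f^{q'^{\rar}}(\wh z'^{\rar}) = \wh z'^{\rar} + (p^{\rar}, 0)$. The same annular-cover forcing argument, now holding $p^{\rar}$ fixed and varying the period, allows us to select $\wh z'^{\rar}$ whose transverse trajectory has no $\wh \F$-transverse self-intersections, since any hypothetical such intersection would again, via Proposition~\ref{prop:Forcing}, produce an admissible path violating the extremal horizontal rotation of $(p^{\rar}, q^{\rar})$. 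The hard step I anticipate is precisely this forcing-minimality dichotomy: translating a $\wh \F$-transverse self-intersection into a concrete contradiction requires careful bookkeeping of the homology class of the concatenated admissible paths in the annular cover, together with control on leaf diameters provided by Lemma~\ref{lem:UniformlyBoundedLeaves} so that the resulting rotation vectors are genuinely pinned down.
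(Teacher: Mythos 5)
There is a genuine gap, and it sits at the heart of your argument: the claimed ``forcing-minimality dichotomy'' does not exist. If the transverse trajectory of your chosen periodic point has an $\wh\F$-transverse self-intersection, Proposition~\ref{prop:Forcing} (and the realization results it feeds into) only ever produces admissible paths and periodic orbits whose rotation vectors lie \emph{inside} $\rho(\wh f)$ --- forcing reveals structure already compatible with the rotation set, it never manufactures a vector ``strictly outside'' it, so no contradiction with extremality can be extracted. Likewise, a concatenation being admissible of order $n_1+n_2$ does not give a periodic orbit achieving the displacement $(p^{\rar},0)$ in fewer than $q^{\rar}$ iterates, so the ``minimality of $q^{\rar}$'' horn also fails. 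On top of this, the extremal choice itself is problematic: Franks' theorem realizes rationals in the \emph{interior} of $\rho(\wh f)$, so there is no ``largest realized horizontal rational'' to pick, and boundary/vertex rationals need not be realized at all. The underlying point is that when $\rho(\wh f)$ has nonempty interior, a horizontally rotating periodic orbit can perfectly well have a transversally self-intersecting trajectory; the statement does not say that \emph{every} such orbit is self-intersection free, only that \emph{some} orbit is, and that cannot be obtained by contradiction from a single cleverly chosen orbit.

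The paper's proof instead accepts the self-intersection and runs a finite reduction: projecting to the annulus $\mathbb{S}^1\times\R$ (quotient by horizontal translation), a transverse self-intersection of the loop $\wc\gamma_0$ singles out a subloop which, by \cite[Theorem M]{lct2}, is itself realized by a periodic point (of period $4q_0$), rotating positively in the annulus and whose transverse loop has strictly fewer self-intersections counted with multiplicity; iterating this at most $\ell_0$ times produces the desired point $\wh z^{\rar}$. Your item (2) has the same flaw and misses the actual mechanism: having a periodic point of period $q'$ with displacement $(p^{\rar},0)$ says nothing about its trajectory, whereas the paper fixes the already non-self-intersecting loop from item (1) and uses \cite[Proposition 26]{lecalveztalforcing} --- with admissible paths crossing it $\wc\F$-transversally positively and negatively, supplied by vertically rotating periodic orbits and \cite[Proposition 15]{salomaotal} --- to realize that \emph{same} loop, up to equivalence, by periodic points of every period $q'\geq q^{\rar}$, so the absence of transverse self-intersections is inherited rather than re-proved. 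Your instinct to work in the annular quotient and to watch leaf diameters (Lemma~\ref{lem:UniformlyBoundedLeaves}) is on target, but the contradiction scheme must be replaced by this realize-a-subloop induction for (1) and the path-equivalence realization for (2).
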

	
	{\color{black}The proof of this proposition yields a similar statement for the other three directions $\uar, \lar, \dar$, respectively replacing $(p^{\rar},0)$ with $(0,p^{\uar})$, $(-p^{\lar},0)$, or $(0,-p^{\dar})$.}  
	
	\begin{proof}
		
		\textbf{(1)}. By the realization result in \cite{franks89}, there exists $\wh z_0$ such that $\wh f^{q_0}(\wh z_0) = \wh z_0 + (1,0)$ {\bck{for some $q_0 > 0$}}. Let us take a dynamically transverse decomposition $(\wh f, \wh I, \wh \F)$.
		
		Given the foliation $\wh {\mathcal{F}}$ is equivariant by integer translations, we can project our decomposition to the annulus $\widecheck{\mathbb{A}} = \mathbb{S}^1 \times \R$, by taking {\color{black}$\wh z \sim \wh z'$ whenever $\wh z' - \wh z \in \Z\times\{0\}$}, and obtain the corresponding $(\widecheck f, \widecheck I, \widecheck \F)$. Let us then take $\widecheck z$ as the projection to the annulus of a point $\wh z \in \R^2$. 
		
		Let us define $\widecheck{\gamma}_0 : \R \to \widecheck{\mathbb{A}}$ by 
		\[ \widecheck{\gamma}_0(t) = \widecheck{I}^{q_0t}_{\widecheck{\F}}(\widecheck z_0)\]
		
		Note that $\wc \gamma_0(0) = \wc \gamma_0(1)$, we may then think of it as a loop in the annulus (with the same name), with domain equal to $\mathbb{S}^1$. Note that it suffices to prove that $\widecheck{I}^{\Z}_{\widecheck{\F}}(\wc z^{\rar})$ has no $\widecheck \F$-transverse self-intersection: this fact will then imply that $\widehat{I}^{\Z}_{\widehat{\F}}(\wh z^{\rar})$ has no $\wh \F$-transverse intersection with any of its horizontal integer translates, which is in turn stronger than our conclusion.
		
		Compactifying the annulus in the classical way to get a sphere lets us apply Proposition 7 in \cite{lecalveztalforcing}, from which we know that $\widecheck{\gamma}_0$ has an $\widecheck{\mathcal{F}}$-transverse self-intersection if and only if $\widecheck{\gamma}_0 |_{[0,2]}$ has an $\widecheck{\mathcal{F}}$-transverse self-intersection. 
		
		Given the path $\widecheck \gamma_0$ is at a positive distance from the set $\mathrm{Sing}(\widecheck \F)$, we can assume up to taking an equivalent path, that {\bck{$\widecheck \gamma_0 |_{[0,2]}$}} has a finite number $\ell_0$ of self-intersections counted with multiplicity (without looking at $\widecheck \F$-transversality). If $\widecheck \gamma_0 |_{[0,2]}$ has no $\widecheck \F$-transverse self-intersection, the proof is finished.
		
		Let us then suppose that there exists an $\widecheck \F$-transverse self-intersection at $\widecheck \gamma_0(s) = \widecheck \gamma_0(t)$, with $s,t \in [0,2], \ s < t$. In this case, we will build a process which finds $\wh z_1 \in \R^2$ (its projection to the annulus being $\widecheck z_1$), {\color{black}satisfying} the following properties:
		\begin{itemize}
			\item There exists $q_1 > 0$ such that $\wh f ^{q_1}(\wh z_1) = \wh z_1 + (p_1,0)$, where $p_1 \in \Z^+$,
			\item The transverse trajectory $\widecheck I^{q_1}_{\widecheck \F}(\widecheck z_1)$ is a subloop of {\bck{$\widecheck \gamma_0$}}, which has at most $\ell_1 \leq \ell_0 - 1$ self-intersections counted with multiplicity (without looking at $\widecheck \F$-transversality).
		\end{itemize} 
		
		We shall use $\rho(\widecheck \gamma_0|_{[s,t]})$ to denote  the integer such that, when taking lifts to $\R^2$, we obtain
		\[\wh \gamma_0(t) = \wh \gamma_0(s) + (\rho(\widecheck \gamma_0|_{[s,t]}),0).\]
		We define $\rho(\widecheck \gamma_0|_{[t,s+1]})$ in the same fashion. Note that one of these two numbers is positive, given their sum is equal to 1.  
		
		Let us write $\pitchfork_{\wc \F}$ to denote $\wc \F$-transverse intersection between curves. Note that from 
		\[\wc \gamma_0 |_{[0,2]} (s) \pitchfork_{\wc \F} \wc \gamma_0 |_{[0,2]} (t)\]
		we derive that 
		\[\wc \gamma_0 |_{[1,3]} (s+1) \pitchfork_{\wc \F} \wc \gamma_0 |_{[0,2]} (t), \ \wc \gamma_0 |_{[0,2]} (s) \pitchfork_{\wc \F} \wc \gamma_0 |_{[-1,1]} (t-1),\]  \[\wc \gamma_0 |_{[1,3]} (s+1) \pitchfork_{\wc \F} \wc \gamma_0 |_{[-1,1]} (t-1)\]
		
		This means that the closed curve $\wc \gamma_0 |_{[-1,3]}$ which contains all the previous ones, has transverse intersections at the pairs of times $(s,t), (s+1,t), (s,t-1)$ and $(s+1, t-1)$. 
		
		Let us go to the universal covering $\breve{\D}$ of $\mathrm{Dom}(\wc I)$, and name $(\breve f, \breve I, \breve \F)$ to the corresponding dynamically transverse decomposition. Let $\breve \gamma_0$ be a lift of $\widecheck \gamma_0$ to this covering. By hypothesis on $\widecheck \gamma_0$, we know there exists a deck transformation $\breve T$ such that there exists an $\breve \F$-transverse intersection at $\breve T^{j} (\breve \gamma_0(t)) = \breve T^{j+1} (\breve \gamma_0(s))$ for every $j \in \Z$.

		Let us first assume that $t-s < 1$, the other case is analogous. Given that $\breve \gamma_0 |_{[-1,3]}$ is admissible of order $4q_0$, we know by \cite[Theorem M]{lct2} the path $\breve{\gamma}_0|_{[s,t]}$ is realized by a periodic point $\breve z_1$ of period $q_1 = 4q_0$, and therefore by definition we have that its projection $\wc{\gamma}_0|_{[s,t]}$ is realized by a periodic point $\wc z_1 = \breve{\pi}(\breve{z}_1)$ of period $q_1 = 4q_0$. 
		
		If $\rho(\widecheck \gamma_0|_{[s,t]})$ is positive, then we define $\wh z_1$ as any lift of $\wc{z}_1$ to $\R^2$. Note that 
		\begin{itemize}
			\item $\wh f^{q_1}(\wh z_1) = \wh z_1 + (\rho(\wc{\gamma}_0|_{[s,t]}),0)$
			\item $\widecheck I^{q_1}_{\widecheck \F}(\widecheck z_1) = \wc{\gamma}_0|_{[s,t]}$ is a subloop of $\wc{\gamma}_0$, and therefore the number $\ell_1$ of self-intersections counted with multiplicity, is strictly lower than $\ell_0$. 
		\end{itemize}
		
		If $\rho(\widecheck \gamma_0|_{[s,t]})$ is negative, we have that  $\rho(\widecheck \gamma_0|_{[t,s+1]})$ is positive, and the solution is analogous, because again by \cite[Theorem M]{lct2} we get that the path $\breve{\gamma}_0|_{[t,s+1]}$ is realized by a periodic point of period $q_1 = 4q_0$. 
		
		We are left with the case where $1 < t-s < 2$. In this case, note that $(t-1) - s < 1$, and again we will look at the two subpaths $\widecheck \gamma_0|_{[s,t-1]}$ and $\widecheck \gamma_0|_{[t-1,s+1]}$. Both are realized by periodic points of period $4q_0$, and one of them rotates positively in the annulus $\wc A$, which will be our desired $\wc z_1$.   
		
		If $\wc \gamma_1$ has an $\widecheck \F$-transverse self intersection, this process can be iterated to obtain $\wh z_2, q_2, \ell_2$ in the same fashion as before. Note that this process ends in at most $k \leq \ell_0$ iterations. This means that the point $\wh z^{\rar} = \wh z_k$ obtained after the $k$-th and last iteration of the process, must have a transverse path with no $\widecheck \F$-transverse self-intersections (otherwise we would be able to reiterate the process), and also satisfies  
		\[\wh f^{q_k}(\wh z_k) = \wh z_k + (p_k,0), \text{ for some } q_k > 0,\]
		which concludes the proof of the first item. 
		
		\medskip
		
		\textbf{(2)}. Let $\wh z^{\rar} \in \R^2$ be as obtained in the previous item, and let us take its projection $\wc z^{\rar}$ to the annulus. Define its transverse path $\wc \gamma : \R \to \wc {\mathbb{A}}$ by 
		\[ \wc \gamma(t) = \wc I^{q^{\rar}t}_{\wc \F}(\wc z^{\rar}),\]
		and note that for $n \in \Z^+$, $\wc \gamma |_{[0,n]}$ is an admissible path of order $nq^{\rar}$ because $\wc z^{\rar}$ has period $q^{\rar}$. 
		
		By \cite[Proposition 26]{lecalveztalforcing}, it suffices to prove that there exists an admissible path which intersects $\wc \gamma$ $\wc \F$-transversely and positively, and an admissible path which intersects $\wc \gamma$ $\wc \F$-transversely and negatively: in that case we will obtain that for every $q'^{\rar} \geq q^{\rar}$, the path $\wc \gamma |_{[0,1]}$ is realized up to equivalence, as the transverse path of a periodic point of period $q'$, whose transverse trajectory will have no $\wc \F$-transverse self-intersection, and the proof will then be finished. 
		
		Let us find these two paths. Let $\wc A^{\rar}$ be the saturated set by leaves of $\wc \F$ of $\wc \gamma|_{[0,1]}$. Using the result from the first item, take $\wh z^{\uar}$ and $\wh z^{\dar}$ lifts of periodic points of $f$ which respectively rotate $(0,p^{\uar}/q^{\uar})$ and $(0,-p^{\dar}/q^{\dar})$ {\bck{($p^{\uar}, p^{\dar} >0$)}}, and take their respective projections $\wc z^{\uar}$, $\wc z^{\dar}$ to the annulus $\wc {\mathbb{A}}$. Using that 
		\begin{itemize}
			\item The diameter of leaves of $\wc \F$ is uniformly bounded because $\wc \F$ is the projection of $\wh \F$ (see Lemma \ref{lem:UniformlyBoundedLeaves}),
			\item The transverse paths
			\[\wc \gamma^{\uar}(t) = \wc I^{q^{\uar}t}_{\wc \F}(\wc z^{\uar}), \ \wc \gamma^{\dar}(t) = \wc I^{q^{\dar}t}_{\wc \F}(\wc z^{\dar}),\]
			both go through $\wc A^{\rar}$ in opposite directions, 
		\end{itemize}
		\smallskip
		
		\noindent we obtain that the following sets 
		\[J^{\uar} = \{t \in \R : \wc{\phi}_{\wc \gamma (t)} \cap \wc \gamma^{\uar} \neq \varnothing\}, J^{\dar} = \{t \in \R : \wc{\phi}_{\wc \gamma (t)} \cap \wc \gamma^{\dar} \neq \varnothing\}\]

		are both bounded. Finally, by \cite[Proposition 15]{salomaotal}, we obtain that for sufficiently long pieces $\wc \gamma^{\uar}|_{[s^{\uar},t^{\uar}]}, \wc \gamma^{\dar}|_{[s^{\dar},t^{\dar}]}$, we have that
		\begin{equation}
			{\bck{\wc \gamma^{\uar}|_{[s^{\uar},t^{\uar}]}}} \text{ intersects } \wc \gamma^{\rar} \ \wc \F\text{-transversally and positively,}
		\end{equation}
		\begin{equation}
			{\bck{\wc \gamma^{\dar}|_{[s^{\dar},t^{\dar}]}}} \text{ intersects } \wc \gamma^{\rar} \ \wc \F\text{-transversally and negatively,}
		\end{equation}	
		which concludes the proof. 
		
	\end{proof}

	\medskip
	
	We emphasize that the saturated set $\wh X$ of $\wh I^\Z_{\wh \F}(\wh z^{\rar})$ by leaves of $\wh \F$ is a topological plane with a trivial foliation: \textcolor{black}{First, as $\wh I^\Z_{\wh \F}(\wh z^{\rar})$ lifts the transverse trajectory of a periodic point with non-zero rotation, it can be taken as a proper path. Now, as the leaves of $\wh \F$ have uniformly bounded diameter and $\wh I^\Z_{\wh \F}(\wh z^{\rar})$ has no transverse $\wh \F$-self intersection, Proposition 4 of \cite{lecalveztalforcing} implies that every leaf $\wh \phi \subset X$ intersects $\wh I^\Z_{\wh \F}(\wh z^{\rar})$ in exactly one point.} {\color{black}{From this and from $\wh I^\Z_{\wh \F}(\wh z^{\rar})$ being proper follows that no leaf contained in $\wh X$ can be a closed curve. Furthermore, if $\tl X$ is a connected component of the lift of $\wh X$, then $\tl X$ is an open, connected and foliated set, and as $\tl \F$ has no singularities, its complement, which is also a foliated set, cannot have bounded components. Thus $\tl X$ is simply connected and homeomorphic to a plane. Since $\wh I^\Z_{\wh \F}(\wh z^{\rar})$ intersects every leaf once, and since these leaves are not closed, the projection of $\tl X$ to $\wh X$ is injective and therefore $\wh X$ and $\tl X$ are homeomorphic. Finally, by \cite{kaplan40}, we have a global homeomorphism $h: \R^2 \to \wh X$ which gives us coordinates $(s,t)$ for every point in $\wh X$, where $\wh y_1$ and $\wh y_2$ are in the same leaf if and only if $h^{-1}(\wh y_1)-h^{-1}(\wh y_2)$ is in $\{0\}\times\R$.}}
	
	\medskip
	
	We now build the four families of closure-disjoint CFS. 
	
	\begin{proof}[Proof of Proposition \ref{prop:ExistenceofCFS}]
		
		Using the first item in Proposition \ref{prop:RotatingPointsNoTransverseIntersection}, we find the existence of four periodic points $\wh z^{\rar}, \wh z^{\uparrow}, \wh z^{\leftarrow}$ and $\wh z^{\downarrow}$, with respective periods $q^{\rar}, q^{\uar}, q^{\lar}, q^{\dar}$, which respectively rotate $(p^{\rar}/q^{\rar},0)$, $(0,p^{\uar}/q^{\uar})$, $(-p^{\lar}/q^{\lar},0)$ and $(0,-p^{\dar}/q^{\dar})$, (where $p^{\rar}, p^{\uar}, p^{\lar}, p^{\dar} >0$), and whose transverse trajectories by the isotopy $\wh I$ have no $\wh \F$-transverse self-intersection.

		Using the second item from Proposition \ref{prop:RotatingPointsNoTransverseIntersection}, let us take $q'^{\rar} = (q^{\rar}q^{\uar}q^{\lar}q^{\dar}) p^{\rar} \geq q^{\rar}$ to obtain a new periodic point which we will still call $\wh z^{\rar}$ for the sake of simplicity, such that its transverse trajectory has no $\wh \F$-transverse self-intersection, and moreover
		\[ \wh f^{q'^{\rar}}(\wh z^{\rar})  = \wh z^{\rar} + (p^{\rar},0).\]
		Therefore, $\wh z^{\rar}$ has rotation vector $(1/(q^{\rar}q^{\uar}q^{\lar}q^{\dar}), 0)$. In the same fashion we obtain $\wh z^{\uar}$, $\wh z^{\lar}$ and $\wh z^{\dar}$ with the same rotation speed $1/(q^{\rar}q^{\uar}q^{\lar}q^{\dar})$ (the speed being the norm of the rotation vector).

		\bigskip
		
		Note that if we take a power $f^j$ of $f$, then we may take a new isotopy $\wh I^j$ from the identity to $\wh f^j$ defined as $\wh I^j(t, \wh z) = \wh I(jt, \wh z)$, and we then have that $(\wh f^j, \wh I^j, \wh \F)$ is a maximal dynamically transverse decomposition for $\wh f^j$ (notice that we have used the same singular foliation). We then also have the following equality for the transverse paths 
		\[ (\wh I^j)^n_{\wh \F}(\wh z) = \wh I^{jn}_{\wh \F}(\wh z), \text{ where } n \in \Z\]
		
		This implies that if $\wh I^{\Z}_{\wh \F}(\wh z^{\rar})$ has no $\wh \F$-transverse self-intersection, then $(\wh I^j)^{\Z}_{\wh \F}(\wh z^{\rar})$ also has no $\wh \F$-transverse self-intersection. 
		
		This in turn shows that if we take 
		\[q = q^{\rar} q^{\uar} q^{\lar} q^{\dar}, \ \underline{p} = |\mathrm{lcm}(p^{\rar}, p^{\uar}, p^{\lar}, p^{\dar})|,\]
		we have that 
		\[\wh f^{q\underline{p}}(\wh z^{\rar}) = \wh z^{\rar} + (\underline{p},0), \ \wh f^{q\underline{p}}(\wh z^{\uar}) = \wh z^{\uar} + (0,\underline{p}),\] 
		\[\wh f^{q\underline{p}}(\wh z^{\lar}) = \wh z^{\lar} + (-\underline{p},0), \ \wh f^{q\underline{p}}(\wh z^{\dar}) = \wh z^{\dar} + (0,-\underline{p}).\]

		For this $\widehat{z}^{\rightarrow}$, we can define $\wh A^{\rar}$ as the saturated set by leaves of $\wh \F$, of the transverse path $\wh I_{\wh \F}(\wh z^{\rar})$. By construction, we obtain that $\wh A^{\rar}$ is a trivially foliated \textit{horizontal strip} (\textit{i.e.} homeomorphic to a plane, and invariant under integer horizontal translations). In this context, we will say that $\wh A^{\rar}$ is a \textit{canonically foliated strip} (CFS), and that $\wh z^{\rightarrow}$ is a \textit{realizing point} for $\wh A^{\rightarrow}$.   
		
		Given that the diameter of leaves of $\wh \F$ is uniformly bounded (see Lemma \ref{lem:UniformlyBoundedLeaves}) and that $\wh A^{\rar}$ is invariant under integer horizontal translations, there exists $d^{\rar} \in \Z^+$ such that 
		\begin{equation}\label{eq:dRar}
			\mathrm{cl}(\wh A^{\rar}) \cap \mathrm{cl}(\wh A^{\rar} + (0,d')) = \varnothing, \text{ for every } d' \geq d^{\rar}
		\end{equation}

		In the same way as before, we get three analogous canonically foliated strips, which we will call $A^{\uparrow}, A^{\leftarrow}$ and $A^{\downarrow}$, with their respective realizing points $\wh z^{\uparrow}, \wh  z^{\leftarrow}$ and $\wh z^{\downarrow}$, and integers $d^{\uar}, d^{\lar}, d^{\dar}$ as in Equation \ref{eq:dRar}.

		Let us take $p$ the smallest multiple of $\underline{p}$ such that 
		\[p \geq \max\{d^{\rar}, d^{\uar}, d^{\lar}, d^{\dar}\} := d.\] 
		Again, we obtain that 
		\begin{equation}\label{eq:NiceRotatingPoint}
			\wh f^{qp}(\wh z^{\rar}) = \wh z^{\rar} + (p,0),
		\end{equation}
		and similarly for the other three realizing points. 
		
		Let us then take $\wh z^{\rar} = (\wh x^{\rar},\wh y^{\rar})$, such that $0 \leq \wh y^{\rar} < 1$, and build its corresponding CFS $\wh A^{\rar}$. Now, consider the family $\wh{\mathcal{A}}^{\rar} = \{\wh A^{\rar}_k\}_{k \in \Z}$ of closure-disjoint horizontal CFS, defined by 
		\begin{equation}\label{eq:HorizontalCFS}
			\wh A^{\rar}_k = \wh A^{\rar} + (0,pk),	
		\end{equation}
		and note that $(\wh x^{\rar}_k, \wh y^{\rar}_k) = \wh z^{\rar}_k = \wh z^{\rar} + (0,pk)$ is a realizing point for $\wh A^{\rar}_k$. We then have that 
		$$k = \lfloor \frac{\wh y^{\rar}_k}{p} \rfloor.$$ 
		We define in the same fashion the families 
		\begin{equation}\label{eq:CFSThree}
			\wh{\mathcal{A}}^{\uar} = \{\wh A^{\uar}_k\}_{k \in \Z}, \  \wh{\mathcal{A}}^{\lar} = \{\wh A^{\lar}_k\}_{k \in \Z} \  \text{and} \ \wh{\mathcal{A}}^{\dar} = \{\wh A^{\dar}_k\}_{k \in \Z},
		\end{equation}
		each made of closure-disjoint CFS, with the desired realizing points, which concludes the proof. 
	\end{proof}
	
	We will work with these four families in the following sections. We shall write 
	\begin{equation}\label{eq:CFSFamily}
		\wh{\mathcal{A}} = \wh{\mathcal{A}}^{\rar} \cup \wh{\mathcal{A}}^{\uar} \cup \wh{\mathcal{A}}^{\lar} \cup \wh{\mathcal{A}}^{\dar}.	
	\end{equation}
	
	We shall write $\wh A$ for an arbitrary canonically foliated strip in $\wh {\mathcal{A}}$.
	
	\begin{remark}\label{rem:ClosureOfStrip}
		We will sometimes work with the closure of our canonically foliated strips and use the same notation $\wh {A}^{\rightarrow, \uparrow,\leftarrow,\downarrow}$ for the sake of simplicity. In this context, we will refer to this closure as a closed CFS. However, we will refer to the open ones unless explicitly stated.
	\end{remark}

	\medskip
	
	\subsection{Crossing continua}
	
	We will introduce the notion of crossing continuum for a closed CFS, which is central for the remainder of our work, and describe the topology of the strips.  
	
	\medskip

	\paragraph{\textbf{Boundary components: Top and Bottom}}
	
	Note that by the Uniformization Theorem, we know that the complement of a CFS $\wh A$ has no bounded connected components. Moreover, there are exactly two connected components on this complement, because $\wh A$ is invariant under the integer translation given by its realizing point, but bounded in the orthogonal direction (recall that the diameter of leaves of $\wh \F$ is uniformly bounded by Lemma \ref{lem:UniformlyBoundedLeaves}). With this uniform bound we also obtain that the boundary $\partial \wh A$ has exactly two connected components.
	
	\begin{definition}
		Given a CFS $\widehat{A}$ with a realizing point $\widehat{z}$, a point $\widehat{w}$ on its complement will be \textit{on its top} (and we will write $\widehat{w} \in \textnormal{T}(\widehat{A})$) if it belongs to the connected component of its complement included in $\mathrm{R}(\widehat{I}_{\widehat{F}}^{\mathbb{Z}}(\widehat{z}))$, and it will be \textit{on its bottom} ($\widehat{w} \in \textnormal{B}(\widehat{A})$) otherwise.
	\end{definition}

	In addition, we will write $\partial \widehat{A} = \partial_{\textnormal{T}} \widehat{A} \cup \partial_{\textnormal{B}} \widehat{A}$, where 
	$$ \partial_{\textnormal{T}} \widehat{A}= \partial  (\textnormal{T} (\widehat{A})), \ \textnormal{and} \  \partial_{\textnormal{B}} \widehat{A} = \partial  ({\textnormal{B}} (\widehat{A})).$$
	
	Note that both $\partial_{\TT} \wh A^{\rar}$ and $\partial_{\BB} \wh A^{\rar}$ are connected, since $\wh A^{\rar}$ is bounded in the vertical direction. 
	
	{\bck{Given that $\wh A$ is equivariant by integer translations given by its realizing point, and that it is crossed by infinitely many CFS in both orthogonal directions}}, we get that each boundary component $\partial_{\text{T}} \widehat{A}$ and $\partial_{\text{B}} {\bck {\wh A}}$ of a {\bck{CFS}} contains infinitely many singularities of the foliation. This means that if we take a lift $\tilde{A} \subset \tl \D$ of $\widehat{A} \subset \R^2$ (which we will also call CFS), then its boundary and its complement will both have infinitely many connected components, each one determined by one leaf of $\partial\widehat{A}$. We will now settle notation for these components. 
	
	\begin{definition}
		Let $\widehat{A}$ be a canonically foliated strip, $\tilde{A}$ a lift of $\widehat{A}$ to ${\tl {\mathbb{D}}}$.  
		We will write $$\partial_{\textnormal{T}}\tilde{A} := \partial \tilde{A} \cap \{\tilde{\pi}^{-1}(\widehat{\phi}_{\textnormal{T}}) : \widehat{\phi}_{\textnormal{T}} \in \partial_{\textnormal{T}}\widehat{A} \}, \ \partial_{\textnormal{B}}\tilde{A} := \partial \tilde{A} \cap \{\tilde{\pi}^{-1}(\widehat{\phi}_{\textnormal{B}}) :  \widehat{\phi}_{\textnormal{B}} \in \partial_{\textnormal{B}}\widehat{A} \}.$$
	\end{definition}
	
	Moreover, we will write $\text{T}(\tilde{A})$ for the {\bck{union of the}} (closed) connected components of the complement of $\tilde{A}$ which are bounded by leaves $\tilde{\phi}_{\text{T}} \subset \partial_{\text{T}}\tilde{A}$, and similarly we will write $\text{B}(\tilde{A})$ for the {\bck{union of the}} (closed) connected components of the complement of $\tl A$ which are bounded by leaves $\tilde{\phi}_{\text{B}} \subset \partial_{\text{B}}\tilde{A}$. 
	
	\begin{definition}
		Given a lift $\tilde{A}$ of a canonically foliated strip, and $\tilde{\phi}_{\textnormal{T}} \subset \partial_{\textnormal{T}} \tilde{A}$, we will write $\textnormal{T}_{\tilde{\phi}_{\textnormal{T}}}(\tilde{A})$ for the closed connected component of $\tilde{A}^{\textnormal{C}}$ bounded by $\tilde{\phi}_{\textnormal{T}}$ (similar fashion for $\tilde{\phi}_{\textnormal{B}} \subset \partial_{\textnormal{B}} \tilde{A}$).
	\end{definition}

	\begin{figure}[ht]
		\centering
		
		\def\svgwidth{.92\textwidth}
		\import{./Figures/}{tildepiCFS2.pdf_tex}
		
		\bigskip

		\caption{A canonically foliated strip $\widehat{A}^{\rightarrow}$, with two boundary connected components, and one of its lifts, with infinitely many of them.  }
		\label{figure:BaseCFS}
	\end{figure}
	
	\medskip
	
	\paragraph{\textbf{Crossing continua and diameter measurement}}
	
	\begin{definition}[\textbf{Crossing continuum}]
		Given a closed CFS ${\widehat{A}}$ and a continuum $\widehat{K} \in \mathbb{R}^2$, we will say that $\widehat{K}$ is a  \textit{crossing continuum} (relative to ${\widehat{A}}$) if it intersects both connected components of $\partial \wh A$. In addition, a crossing continuum will be \textit{minimal} if it does not strictly contain any crossing subcontinua. 
	\end{definition}
	
	\begin{remark}
		Given $\widehat{A}$ a closed CFS, any crossing continuum $\widehat{K}$ has a minimal crossing subcontinuum $\widehat{K}' \subset \widehat{A}$. 
		In particular, $\widehat{K}'$ intersects {\bck{at least}} one leaf (or singularity of the foliation) on each connected component of $\partial \widehat{A}$.  
	\end{remark}
	
	\begin{definition}[\textbf{Vertical diameter}]
		Given a continuum $\widehat{K} \in \mathbb{R}^2$, we will define its \textit{vertical diameter} $\textnormal{Vdiam}(\widehat{K})$ as the supremum of the difference of the $y$-coordinates in pairs of points belonging to $\widehat{K}$. 
	\end{definition}
	
	The \textit{horizontal diameter} of a continuum, $\textnormal{Hdiam}(\widehat{K})$ is defined analogously, looking at the $x$-coordinates. For inessential continua $K \subset \T^2$ we recall that we take their lifted diameter as $\text{diam}(K) = \text{diam}(\wh K)$, where $\wh K$ is any of its lifts to $\R^2$. Accordingly, if we go to the universal covering $\tl \D$ of $\textnormal{Dom}(\wh I)$ and take $\tl K \subset \tl {\mathbb{D}}$, we will write $\text{diam}(\tl K) = \text{diam}(\wh K) = \text{diam}(\tl \pi (\tl K))$. The same holds for horizontal and vertical diameter.
	
	We emphasize that the (lifted) diameter of any continuum (either included in $\T^2$ or in $\tl \D$) is always measured in $\R^2$, respectively taking a lift or projecting.
	
	{\bck
	{
	\begin{remark}\label{rem:LargeCrosses}
		By Lemma \ref{lem:UniformlyBoundedLeaves} and Proposition \ref{prop:ExistenceofCFS}, there exists $C > 0$ such that if $\textnormal{Vdiam}(\wh K) > C$, there exists $k_0$ such that $\wh K$ is a crossing continuum of both $\wh A^{\rar}_{k_0}$ and $\wh A^{\lar}_{k_0}$. 
	\end{remark}
	}
	}
	
	\bigskip
	
	\paragraph{\textbf{Inside the strip: Left and Right}}
	
	\begin{definition}
		Given a canonically foliated strip $\widehat{A}$ with a realizing point $\wh z$ and a crossing continuum $\wh K$, we say a point $\widehat{w} \in \widehat{A} \backslash \widehat{K}$ is \textit{on its left} ($\widehat{w} \in \text{L}_{\widehat{A}}(\widehat{K})$) if it belongs to the connected component of $\widehat{A} \backslash \widehat{K}$ which contains $\wh f^j (\widehat{z})$ for arbitrarily large values of $j$, and it is \textit{on its right} ($\wh w \in \text{R}_{\widehat{A}}(\widehat{K})$) if it belongs to the connected component of $\widehat{A} \backslash \widehat{K}$ which contains $\wh f^{j} (\widehat{z})$ for arbitrarily large negative values of $j$. 
	\end{definition}
	
	These notions lift naturally to the lifts $\tl A \subset \tl \D$ of our canonically foliated strips. Moreover, note that each of the two ends of the strip belong to one of these connected components (different from each other). For the case of CFS $\tilde{A}$ in the universal covering $\tilde{\mathbb{D}}$, we may identify each of these ends $l_{\tilde{A}}$ (the future) and $r_{\tilde{A}}$ (the past) with a point in the boundary of $\tilde{\mathbb{D}}$. 
	
	Note that the closure of a leaf ${\widehat{\phi}} \subset \wh A$ is a crossing continuum for $\wh A$. We may then apply the \textit{left-right} notion to leaves $\widehat{\phi} \subset \widehat{A}$ by taking their closure: we will say that $\widehat{z} \in \textnormal{L}_{\widehat{A}}(\widehat{\phi})$ whenever $\widehat{z} \in \textnormal{L}_{\widehat{A}}(\mathrm{cl}(\widehat{\phi}))$.  Analogously, $\widehat{z} \in \textnormal{R}_{\widehat{A}}(\widehat{\phi})$ if $\widehat{z} \in \textnormal{R}_{\widehat{A}}(\mathrm{cl}(\widehat{\phi}))$   
	
	This leads us to the following:
	
	\begin{remark}
		Let $\widehat{A}$ be a CFS and $\widehat{\phi} \subset \widehat{A}$ a leaf. Then, if $\tilde{A} \subset \tl \D$ is a lift of $\widehat{A}$, we obtain that  $$\textnormal{L}_{\widehat{A}}(\widehat{\phi}) = \tilde{\pi}(\text{L}(\tilde{\phi}) \cap \tilde{A}), \text{ and } \textnormal{R}_{\widehat{A}}(\widehat{\phi}) = \tilde{\pi}(\text{R}(\tilde{\phi}) \cap \tilde{A})$$
	\end{remark}

	\begin{remark}
		Given the relative orientation between the transverse foliation and the paths by the isotopy, the notions of Top, Bottom, Left and Right we just defined coincide with the visual concepts, for canonically foliated strips $\wh A^{\lar}$, where the realizing point goes \textit{from right to left}.  
	\end{remark}

	\paragraph{\textbf{Fundamental domains}}
	
	Recall the definition of $\wh {\mathcal
		A}$ from Equations \ref{eq:HorizontalCFS} through \ref{eq:CFSFamily}, and let us explain the notion of fundamental domains for a CFS $\wh A^{\rar} \subset \wh {\mathcal{A}}^{\rar}$. Recall that every $\wh A^{\dar}_k \in \wh {\mathcal{A}}^{\dar}$ must cross $\wh A^{\rar}$ from bottom to top at least once. 
	
	Then, we may take $\wh \phi^{\dar}_{\TT,0} \subset \wh A^{\rar} \cap \partial_{\TT}\wh A^{\dar}_0$, and then define 
	$$\wh \phi^{\dar}_{\TT,k} = \wh \phi^{\dar}_{\TT,0} + (pk,0),$$
	where $p$ is as in Equation \ref{eq:NiceRotatingPoint}. 
	
	These leaves define the fundamental domains 
	$$ \wh D^{\rar}_k = \text{L}_{\wh A^{\rar}}(\wh \phi^{\dar}_{\TT,k}) \cap \text{R}_{\wh A^{\rar}}(\wh \phi^{\dar}_{\TT,k+1}).$$
	
	Note that all of these fundamental domains behave nicely when taking natural lifts, therefore obtaining that any $\tl A^{\rar}$ is divided in the following domains:
	$$ \tl D^{\rar}_k = \text{L}_{\tl A^{\rar}}(\tl \phi^{\dar}_{\TT,k}) \cap \text{R}_{\tl A^{\rar}}(\tl \phi^{\dar}_{\TT,k+1}).$$
	
	This construction can be accordingly made with any other canonically foliated strip  $\wh A^{\uar}, \wh A^{\lar}, \wh A^{\dar}$ and their respective lifts to the universal covering. We emphasize that each of these fundamental domains has \textit{length} $p$, more precisely, we have that $\wh \phi^{\dar}_{\TT,k} \in \partial_{\TT}\wh A^{\dar}_k$. 
	
	Let us finish the section with a result on how points travel through fundamental domains. 
	
	\begin{lemma}\label{lem:PointsMoveSlowly}
		Let $\wh A \in \wh {\mathcal{A}}$ be a CFS, $\wh \phi \subset \wh A$ a leaf in the fundamental domain $\wh D_k$, and $n > 0$ an integer.
		
		Then, there exists $k'$ such that for any leaf $\wh \phi' \subset \wh D_{k'} \subset \wh A$, when we take a set of natural lifts $\tl A, \tl \phi, \tl \phi'$ to $\tl \D$, we have that
		
		\[\tl f^{j}(\tl \phi') \subset \mathrm{R}(\tl \phi), \text{ for every } 0 \leq j \leq n.\]

	\end{lemma}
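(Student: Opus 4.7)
The plan is to use the integer-translation equivariance of the CFS family together with uniform bounds coming from compactness of $\T^2$: for $k'$ sufficiently far in the past direction, the iterates $\tl f^j(\tl \phi')$ must stay horizontally separated from $\tl \phi$, hence on its right side.

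First I would normalize. Assume $\wh A = \wh A^\rar$ with realizing point $\wh z$ satisfying $\wh f^{qp}(\wh z) = \wh z + (p,0)$; the other three types $\wh A^\uar, \wh A^\lar, \wh A^\dar$ are symmetric. Since $\wh f$ commutes with integer translations and the family $\wh{\mathcal A}^\rar$ is $T_v = (p,0)$-equivariant in a compatible way, we may reduce to $k = 0$, i.e., $\wh \phi \subset \wh D_0$. Then I would collect uniform bounds depending only on $n$: by Lemma~\ref{lem:UniformlyBoundedLeaves}, leaves of $\wh \F$ have diameter at most some $C_1$; since $\wh f$ lifts from $\T^2$, there is $C_2 = C_2(n)$ with $|\wh f^j(\wh w) - \wh w| \le C_2$ uniformly in $\wh w \in \R^2$; by uniform continuity of $\wh f^j$, the diameter of $\wh f^j(\wh \phi')$ is at most some $C_3 = C_3(n)$; and the transverse path $\wh I^j_{\wh \F}(\wh w)$, joining $\wh w$ to $\wh f^j(\wh w)$, has diameter at most some $C_4 = C_4(n)$ for $\wh w$ in $\wh A$.

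Next, choose $k' < 0$ with $|k'|p > C_1 + C_2 + C_3 + C_4 + p$. For any leaf $\wh \phi' \subset \wh D_{k'}$ and $0 \le j \le n$, the $x$-coordinates of $\wh f^j(\wh \phi')$ lie below $(k'+1)p + C_1 + C_2 + C_3 < -p$, whereas $\wh \phi \subset \wh D_0$ has $x$-coordinates above $-C_1 > -p$. So $\wh f^j(\wh \phi') \cap \wh \phi = \varnothing$ in $\R^2$, which lifts to $\tl f^j(\tl \phi') \cap \tl \phi = \varnothing$ in $\tl \D$. To pin down the side, for any $\tl w \in \tl \phi'$ I would use the transverse path: its projection $\wh I^j_{\wh \F}(\wh w)$ has diameter at most $C_4$, so by the same $x$-separation it avoids $\wh \phi$; hence the lifted path $\tl I^j_{\tl \F}(\tl w)$ misses $\tl \phi$, placing $\tl w$ and $\tl f^j(\tl w)$ in the same connected component of $\tl \D \setminus \tl \phi$. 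By the compatible choice of natural lifts, $\tl \phi' \subset \tl D_{k'} \subset \mathrm{R}_{\tl A}(\tl \phi) \subset \mathrm{R}(\tl \phi)$, so $\tl f^j(\tl w) \in \mathrm{R}(\tl \phi)$ for every $\tl w \in \tl \phi'$, giving $\tl f^j(\tl \phi') \subset \mathrm{R}(\tl \phi)$.

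The main obstacle is securing the uniform diameter bound $C_4$ for the transverse paths $\wh I^j_{\wh \F}(\wh w)$ uniformly in $\wh w$ over the fundamental domain $\wh D_0$: although $\wh D_0$ contains no singularities of $\wh \F$, its closure may touch singularities on $\partial \wh A$, so $\wh D_0$ is not compactly contained in $\mathrm{Dom}(\wh I)$. This is addressed by using $T_v$-equivariance to reduce the bound to a single fundamental domain, together with the fact that leaves inside $\wh A$ stay at positive distance from $\mathrm{Sing}(\wh \F)$, so continuity of the transverse-path construction provides the needed control.
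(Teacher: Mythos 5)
Your proposal follows essentially the same route as the paper: a uniform bound on the displacement of $\wh f^j$ for $0\le j\le n$ (coming from $\Z^2$-periodicity), the uniform leaf-diameter bound of Lemma~\ref{lem:UniformlyBoundedLeaves} making fundamental domains uniformly bounded translates of one another, and then choosing $k'<k$ so far in the past direction that the first $n$ iterates of $\wh\phi'$ cannot reach $\wh\phi$; the paper's proof is exactly this, with the threshold $\mathrm{d}(\wh D_{k'},\wh D_k)>2nM$, and it leaves the passage from disjointness in $\R^2$ to the side statement in $\tl\D$ implicit, which you rightly try to make explicit. Two blemishes in your write-up, both repairable: first, your side-determination goes through the transverse path $\wh I^j_{\wh\F}(\wh w)$ and the asserted bound $C_4$, and the justification you sketch (that leaves inside $\wh A$ stay at positive distance from $\mathrm{Sing}(\wh\F)$) is not correct in general, since a leaf of $\wh A$ can have singularities of $\partial\wh A$ in its $\alpha$- or $\omega$-limit set; this detour is unnecessary, because the actual isotopy trajectory $t\mapsto\wh I^t(\wh w)$, $0\le t\le j$, already has uniformly bounded displacement (it is $\Z^2$-periodic), its projection then avoids $\wh\phi$ by the same distance count, hence its lift avoids $\tl\phi$, and the connectedness argument places $\tl f^j(\tl w)$ in the same component $\mathrm{R}(\tl\phi)$ as $\tl w$. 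Second, the inequality $-C_1>-p$ (i.e.\ $C_1<p$) that you use to compare $x$-coordinates is not guaranteed by the construction of $p$, but this is immaterial: just enlarge the margin defining $k'$ so that the separation exceeds $C_1+C_2+C_3$ plus the diameter of $\wh D_k$, exactly as the paper does by measuring the distance between the two fundamental domains rather than fixing explicit coordinate thresholds.
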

	
	Let us write the proof for a horizontal CFS $\wh A^{\rar}$, as the other cases are analogous. 
	
	\begin{proof}
		Fix $n > 0$, a leaf $\phi \subset \wh A^{\rar} \in \wh{\mathcal{A}}^{\rar}$, and let $\wh D_k$ be the fundamental domain of $\wh A$ such that $\phi \subset \wh D^{\rar}_k$ (see the construction above). Given that $\wh f$ is the lift to $\R^2$ of an isotopic-to-the-identity torus homeomorphism (which is uniformly continuous), we have that there exists $M > 0$ such that
		
		\begin{equation}\label{eq:UniformlycontinuousWhF}
			\sup \limits_{\wh z \in \R^2} \mathrm{d}(\wh f(\wh z), \wh z) \leq M, \text{  which implies that  } \sup \limits_{\wh z \in \R^2} \mathrm{d}(\wh f^{n}(\wh z), \wh z) \leq nM.	
		\end{equation}

		Now, recall that by Lemma \ref{lem:UniformlyBoundedLeaves}, the diameter of leaves is uniformly bounded, which implies that the closure $\mathrm{cl}(\wh D^{\rar})$ of every fundamental domain is compact. Given fundamental domains are obtained by taking horizontal translations of one another, we recover that there exists $k' < k$, such that 
		\[\mathrm{d}(\wh z', \wh D^{\rar}_k) > 2nM, \ \text{for every } \wh z' \in \wh D^{\rar}_{k'}.\]
		
		This, together with Equation \ref{eq:UniformlycontinuousWhF}, implies that taking any leaf $\wh \phi' \subset \wh D^{\rar}_{k'}$, and a natural set of lifts $\tl \phi', \ \tl \phi$ of $\wh \phi'$ and $\wh \phi$ to $\tl \D$, we have that  
		\[\tl f^{j}(\tl \phi') \subset \mathrm{R}(\tl \phi), \text{ for every } 0 \leq j \leq n,\]
		which concludes the proof.   
	\end{proof}

	\section{Anchoring techniques}\label{section:stretching}
	
	The goal of this section is to prove Theorem \ref{PropC:maximumdiameter}, whose statement we now recall (see Definition \ref{def:DynamicalDiameter} for details),
	
	\medskip  
	
	\paragraph{\textbf{Theorem C}} \textit{Let $f \in \textnormal{Homeo}_0(\mathbb{T}^2)$, such that  $\rho(f)$ has nonempty interior. Then, there exists $M >0$ such that for any continuum $K \subset \mathbb{T}^2$, $\mathscr{D}_f(K) > M \implies\mathscr{D}_f(K) = \infty$.} 
	
	\medskip
	
	The idea of the proof resides in the canonically foliated strips we built in the previous section, and heavily uses Brouwer-Le Calvez's dynamically transverse decomposition. We will use the construction and notation of Section \ref{sec:CFS}, and recall that up to taking a power, we may assume that {\bck{$\vec{0} \in \mathrm{int}(\rho(\wh f))$.}} 
	
	\begin{remark}
		Let $f \in \mathrm{Homeo_0}(\mathbb{T}^2)$, a continuum $K \subset \mathbb{T}^2$ and $n > 0$. Then ${\mathscr{D}}_f(K) = \infty \iff {\mathscr{D}}_{f^n}(K) = \infty$
	\end{remark}
	
	This remark shows it is enough to prove Theorem \ref{PropC:maximumdiameter} for {\bck{some}} power of $f$. 
	
	\begin{remark}\label{rem:NicePowerOfF}
		Given each of the properties from Theorem \ref{thmA:semiconjugation} can be checked up to taking a power of $f$, we will assume that $f$ has a lift $\wh f$ with $\vec{0}$ in $\mathrm{int}(\rho(\wh f)) $, and we will work with such a lift from now on. 
	\end{remark}
	
	We will now prove Theorem \ref{PropC:maximumdiameter} for $f^{qp}$, where $q$ and $p$ are as in Equation \ref{eq:NiceRotatingPoint}. Let us then rename $f := f^{qp}$, $\wh f = (\wh f)^{qp}$ (which still has the origin in the interior of its rotation set). This implies that 
	
	\begin{equation}
		\wh f(\wh z^{\rar}) = \wh z^{\rar} + (p,0),
	\end{equation}
	
	and analogously for any other realizing point in any direction. The key fact here is 
	
	\begin{remark}\label{rem:FastAdvancement}
		Each realizing point from a CFS advances one fundamental domain per iteration of $\wh f$. 
	\end{remark}

	\subsection{Semianchors}\label{subsection:Preparingtheground}

	Note that if $\text{diam}(\widehat{K}) > M$, then we either have $\textnormal{Vdiam}(\widehat{K}) > \frac{\sqrt{2}M}{2}$ or $\textnormal{Hdiam}(\widehat{K}) > \frac{\sqrt{2}M}{2}$. This, together with the symmetry of this section's hypothesis, shows that in order to prove Theorem \ref{PropC:maximumdiameter}, it is enough to prove that there exists $M > 0$ such that for every continuum $K \subset \mathbb{T}^2$,
	$$ \textnormal{Vdiam}(K) > M \implies \mathscr{D}_f(K) = \infty. $$
	
	In particular, note that continua with large vertical diameter will be crossing, relative to some horizontal canonically foliated strip. Let us assume that $\textnormal{Vdiam}(\widehat{K})$ is sufficiently large so that there exists {\bck{$k_0 \in \mathbb{Z}$}} such that $\widehat{K}$ goes through $\widehat{A}^{\rightarrow}_{k_0}$ {\bck{(see Remark \ref{rem:LargeCrosses})}}. Whenever there is no ambiguity for this value of $k_0$, we will fix this horizontal strip and denote it as $\widehat{A}^{\rightarrow}$.  
	
	For the remainder of this subsection and the following one, let us assume that $K \subset \text{Dom}(I)$, that is, $K$ does not intersect any singularity.  
	
	\begin{lemma}\label{lemma:ThereExistsM_n_o}
		Let $n \in \mathbb{Z}^{+}$. Then, there exists $M_{n} > 0$ such that:
		
		For any continuum $\widehat{K}$ with $\textnormal{Vdiam}(\widehat{K}) \geq M_{n}$, there exists a horizontal $\textnormal{CFS}$ $\widehat{A}^{\rightarrow} \in \wh{\mathcal{A}}^{\rar}$, and two points $\widehat{z}_{\textnormal{TT}}, \widehat{z}_{\textnormal{BB}} \in \wh K$ such that
		
		\begin{equation}\label{equation:SemiAnchoringWidehat}
			\widehat{I}^t(\widehat{\phi}_{\widehat{z}_{\textnormal{TT}}}) \in \textnormal{T}(\widehat{A}^{\rightarrow}) \ \textnormal{and} \ \widehat{I}^t(\widehat{\phi}_{\widehat{z}_{\textnormal{BB}}}) \in \textnormal{B}(\widehat{A}^{\rightarrow}),
			\ \text{for every} \ -n \leq t \leq n.  
		\end{equation}	
		
	\end{lemma}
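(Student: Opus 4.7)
The plan is to leverage two boundedness facts. First, by Lemma~\ref{lem:UniformlyBoundedLeaves}, the leaves of $\wh{\F}$ have uniformly bounded diameter $D$. Second, since $\wh f$ is the lift of a torus homeomorphism, it is uniformly continuous, so (as in the estimate \eqref{eq:UniformlycontinuousWhF}) there exists $M_0 > 0$ with $\mathrm{d}(\wh f(\wh w), \wh w) \leq M_0$ for every $\wh w \in \R^2$, and hence $\mathrm{d}(\wh f^t(\wh w), \wh w) \leq n M_0$ for every integer $|t| \leq n$. Moreover, each $\wh A^{\rar}_k \in \wh{\mathcal{A}}^{\rar}$ is bounded in the vertical direction: its defining transverse path $\wh I^{\Z}_{\wh \F}(\wh z^{\rar})$ is invariant under the horizontal translation $(p,0)$ and thus has bounded $y$-extent, and saturating by leaves of diameter $\leq D$ keeps this $y$-extent finite. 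Hence there is a uniform $W > 0$ with $\textnormal{Vdiam}(\wh A^{\rar}_k) \leq W$ for all $k$, and by \eqref{eq:HorizontalCFS} the family $\wh{\mathcal{A}}^{\rar}$ tiles the vertical direction via the translations $(0, pk)$.

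I will set $M_n := 2(D + nM_0) + W + p$ and consider a continuum $\wh K$ with $\textnormal{Vdiam}(\wh K) \geq M_n$. Let $y_{\min}, y_{\max}$ be the extremes of the $y$-coordinates of points of $\wh K$, attained since $\wh K$ is compact. Writing $[a,b]$ for the vertical extent of $\wh A^{\rar}_0$ (so $b - a \leq W$), the condition $[a + pk_0, b + pk_0] \subset [y_{\min} + D + nM_0,\; y_{\max} - D - nM_0]$ constrains $k_0$ to an interval whose length is
\[
\frac{(y_{\max} - y_{\min}) - 2(D + nM_0) - (b-a)}{p} \;\geq\; \frac{M_n - 2(D + nM_0) - W}{p} \;=\; 1,
\]
so an integer $k_0$ exists. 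Set $\wh A^{\rar} := \wh A^{\rar}_{k_0}$, and pick $\wh z_{\TT}, \wh z_{\BB} \in \wh K$ realizing the $y$-coordinates $y_{\max}$ and $y_{\min}$ respectively.

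Finally I will verify the desired containments. The leaf $\wh \phi_{\wh z_{\TT}}$ has diameter at most $D$, so its points all have $y$-coordinate at least $y_{\max} - D$. Applying $\wh f^t$ for $|t| \leq n$ displaces each point by at most $n M_0$, hence every $y$-coordinate of $\wh I^t(\wh \phi_{\wh z_{\TT}}) = \wh f^t(\wh \phi_{\wh z_{\TT}})$ exceeds $y_{\max} - D - nM_0 > b + pk_0$ by the choice of $k_0$. Since $\wh A^{\rar}$ lies in the horizontal strip $\{a + pk_0 \leq y \leq b + pk_0\}$ and (as recalled in the preceding subsection) its complement splits into exactly two unbounded components $\TT(\wh A^{\rar})$ and $\BB(\wh A^{\rar})$, the connected set $\wh f^t(\wh \phi_{\wh z_{\TT}})$ must lie in $\TT(\wh A^{\rar})$. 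The symmetric argument applied to $\wh z_{\BB}$ yields $\wh f^t(\wh \phi_{\wh z_{\BB}}) \subset \BB(\wh A^{\rar})$ for every $|t| \leq n$.

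The only point that requires some care is the uniform vertical bound $W$ on the horizontal CFS, which is essentially the content of Lemma~\ref{lem:UniformlyBoundedLeaves} applied to the horizontally periodic transverse path defining $\wh A^{\rar}$; the rest is elementary vertical book-keeping with slack $2(D + nM_0)$, so I expect no genuine obstacle in this lemma. The real work in the section will be upgrading this single-strip \emph{semianchoring} to the genuine \emph{anchoring} needed for the proof of Theorem~\ref{PropC:maximumdiameter}.
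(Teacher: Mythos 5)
Your argument is essentially the paper's own proof: the same three ingredients (uniformly bounded leaf diameter, a uniform bound on the vertical extent of the horizontal strips, a uniform displacement bound over the time window $[-n,n]$) plus slack $p$ to locate a member of the $p$-spaced family $\wh{\mathcal{A}}^{\rar}$ in the gap between the two fattened leaves; the paper phrases the last step as choosing a realizing point with vertical coordinate in an interval of length $p$ rather than choosing the index $k_0$, but this is the same bookkeeping. Two small points to fix. First, the statement concerns $\wh I^t(\wh\phi_{\wh z_{\TT\TT}})$ for \emph{all} $t\in[-n,n]$, not only integer times, and your bound $nM_0$ is derived from the displacement of $\wh f$, so the identification $\wh I^t(\wh\phi)=\wh f^t(\wh\phi)$ and the estimate only cover integer $t$; at intermediate isotopy times the displacement is not controlled by $\sup_{\wh z}\mathrm{d}(\wh f(\wh z),\wh z)$. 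The repair is exactly the paper's bound $d_3$: by $\Z^2$-periodicity of $\wh I$ and compactness of $\T^2$, $\sup\{\|\wh I^t(\wh z)-\wh z\| : \wh z\in\R^2,\ |t|\le n\}$ is finite, and this constant should replace $nM_0$ throughout. Second, beware the orientation convention: for a rightward strip $\wh A^{\rar}$, $\TT(\wh A^{\rar})$ is by definition the component of the complement lying to the \emph{right} of the rightward-oriented realizing trajectory, i.e.\ the visually lower one (the visual picture matches the names only for $\wh A^{\lar}$), so the leaf through the point realizing $y_{\max}$ lands in $\BB(\wh A^{\rar})$ and the one through $y_{\min}$ in $\TT(\wh A^{\rar})$; since the statement is symmetric in the two points, this only amounts to swapping the labels $\wh z_{\TT\TT}$ and $\wh z_{\BB\BB}$ and does not affect the proof.
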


	\begin{proof}

		Fix $n > 0$. Let us then start by stating the three following facts:
		
		\begin{itemize}
			\item The vertical diameter of any CFS $\widehat{A}^{\rightarrow} \in \wh {\mathcal{A}}^{\rar}$ is uniformly upper-bounded. Let us then take $d_1$ an upper bound for that quantity. 
			\item As proven in Lemma \ref{lem:UniformlyBoundedLeaves}, the diameter of leaves is also uniformly bounded. Let us then take $d_2$ a bound for this diameter. 
			\item The length of the vectors $\{\widehat{I}^t(\widehat{z}) - \widehat{z} \ : \ \widehat{z} \in \mathbb{R}^2,  |t| < n \}$ is also uniformly bounded. This is because $\wh I$ is $\Z^2$-periodic, given it is defined as the lift of an isotopy in the torus, which is in turn compact. Let us then take $d_3$ an upper bound for this family of lengths.   
		\end{itemize}
		
		Finally, let us take
		\[M_n = 2(d_1 + d_2 + d_3) + p,\]
		where $p$ is as in Equation \ref{eq:NiceRotatingPoint}. Given a continuum $\wh K$ with $\mathrm{Vdiam}(\wh K) \geq M_n$, we may take {\bck{$\wh z_{\TT \TT}, \wh z_{\BB \BB} \in \wh K$}} with vertical distance equal to $M_n$. This means that  
		\[\mathrm{Vdist} \Bigl( \bigcup \limits_{|t| \leq n} \widehat{I}^t(\widehat{\phi}_{\widehat{z}_{\textnormal{TT}}}), \bigcup \limits_{|t| \leq n} \widehat{I}^t(\widehat{\phi}_{\widehat{z}_{\textnormal{BB}}}) \Bigr) \geq 2d_1 + p.\]
		This means that there exists a strip $\R \times [y,y+2d_1+p] \subset \R^2$ between these two sets, which allows us to take a realizing point $\wh z^{\rar}$ with vertical coordinate between $y+d_1$ and $y+d_1 + p$, whose associated canonically foliated strip $\wh A^{\rar}$ will satisfy Equation \ref{equation:SemiAnchoringWidehat}, which concludes the proof.  
	\end{proof}

	\begin{remark}
		An identical proof works with $\wh A^{\lar}$ for continua with large vertical diameter, and with $\wh A^{\uar}, \wh A^{\dar}$ for continua with large horizontal diameter. 
	\end{remark}
	
	\begin{definition}[\textbf{Semianchor}]
		Given $n \in \mathbb{Z}^+$, a CFS $\tl A$, a crossing continuum $\tl K$ and two leaves $\tl \phi_{\BB} \subset \partial_{\BB}\tl A, \phi_{\TT} \subset \partial_{\TT}\tl A$, we will say that the quartet $(\tilde{A}, \tilde{K}, \tilde{\phi}_{\textnormal{B}}, \tilde{\phi}_{\textnormal{T}})$ is an {\bck \emph{n-semianchor}} when there exist $\tl z_{\TT \TT}, \tl z_{\BB \BB} \in \tl K$ 
		such that 
		$$\tilde{f}^j(\tilde{\phi}_{\tilde{z}_{\textnormal{TT}}}) \subset \textnormal{T}_{\tilde{\phi}_{\textnormal{T}}}(\tilde{A}), \ \tilde{f}^j(\tilde{\phi}_{\tilde{z}_{\textnormal{BB}}}) \subset \textnormal{B}_{\tilde{\phi}_{\textnormal{B}}}(\tilde{A}), \ \text{for every} \ j \in \mathbb{Z}; \ |j| \leq n.$$
		
		{{In this context, we will say that $\tl K$ is an \emph{n-semianchored} continuum (or simply that it is \textit{n-semianchored}).}}

	\end{definition}
	
	The following consequence of Lemma \ref{lemma:ThereExistsM_n_o} is key to prove Theorem \ref{PropC:maximumdiameter}.
	
	\begin{lemma}\label{lemma:SemiAnchoring}
		Let $n \in \mathbb{Z}^{+}$. Then, there exists $M_n > 0$ such that for any $\tl K$ in $\tilde{\mathbb{D}}$ with $\textnormal{Vdiam}(\tl K) > M_n$, there exist $\tl A, \tl \phi_{\TT} \subset \partial_{\TT}\tl A, \tl \phi_{\BB} \subset \partial_{\BB}\tl A$, such that
		$$ (\tl A, \tl K, \tl \phi_{\BB}, \tl \phi_{\TT}) \text{ is an \textit{n}-semianchor.} $$

	\end{lemma}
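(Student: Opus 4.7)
The strategy is to reduce to Lemma \ref{lemma:ThereExistsM_n_o} and then transfer its conclusion to $\tl{\mathbb{D}}$ via the covering projection $\tl \pi:\tl{\mathbb{D}} \to \textnormal{Dom}(\wh I)$. Take $M_n$ exactly as provided by Lemma \ref{lemma:ThereExistsM_n_o}. Given $\tl K \subset \tl{\mathbb{D}}$ with $\textnormal{Vdiam}(\tl K) > M_n$, set $\wh K := \tl \pi(\tl K) \subset \textnormal{Dom}(\wh I)$; by the convention that lifted diameters on $\tl{\mathbb{D}}$ are measured after projection to $\R^2$, one has $\textnormal{Vdiam}(\wh K) = \textnormal{Vdiam}(\tl K) > M_n$. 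Since $\wh K$ lies inside $\textnormal{Dom}(\wh I)$ and therefore avoids singularities of $\wh{\mathcal{F}}$, Lemma \ref{lemma:ThereExistsM_n_o} applies and produces a horizontal CFS $\wh A^{\rar}$ and points $\wh z_{\textnormal{TT}}, \wh z_{\textnormal{BB}} \in \wh K$ verifying the anchoring condition \eqref{equation:SemiAnchoringWidehat}.

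Next, for any lift $\tl z_{\textnormal{TT}} \in \tl K$ of $\wh z_{\textnormal{TT}}$, I would form the connected set
\[\tl \Gamma_{\textnormal{TT}} := \bigcup_{|t|\leq n} \tl I^t(\tl \phi_{\tl z_{\textnormal{TT}}}),\]
obtained as a continuously parametrized union of connected sets. Its projection under $\tl \pi$ equals $\bigcup_{|t|\leq n} \wh I^t(\wh \phi_{\wh z_{\textnormal{TT}}}) \subset \textnormal{T}(\wh A^{\rar})$ by the anchoring condition, so $\tl \Gamma_{\textnormal{TT}}$ is contained in a single connected component of $\tl \pi^{-1}(\textnormal{T}(\wh A^{\rar}))$, which has the form $\textnormal{T}_{\tl \phi_{\textnormal{T}}}(\tl A^{(T)})$ for a specific lift $\tl A^{(T)}$ of $\wh A^{\rar}$ and boundary leaf $\tl \phi_{\textnormal{T}} \subset \partial_{\textnormal{T}} \tl A^{(T)}$. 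Symmetrically, any lift $\tl z_{\textnormal{BB}} \in \tl K$ of $\wh z_{\textnormal{BB}}$ determines a bottom pocket $\textnormal{B}_{\tl \phi_{\textnormal{B}}}(\tl A^{(B)})$.

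The main obstacle is forcing $\tl A^{(T)} = \tl A^{(B)}$, which is a compatibility condition on the two lifts. I would resolve it using the connectedness of $\tl K$: inside $\wh K$ take a minimal crossing subcontinuum $\wh K_0 \subset \wh K \cap \wh A^{\rar}$, which touches a single leaf of $\partial_{\textnormal{T}}\wh A^{\rar}$ and a single leaf of $\partial_{\textnormal{B}}\wh A^{\rar}$; a connected component of $\tl K \cap \tl \pi^{-1}(\wh K_0)$ then determines exactly one lift $\tl A$ of $\wh A^{\rar}$ that $\tl K$ crosses, together with a prescribed top and bottom pocket on each side. Because the proof of Lemma \ref{lemma:ThereExistsM_n_o} only requires two points of $\wh K$ vertically separated by at least $M_n$ and places $\wh A^{\rar}$ between them using the bounds $d_1,d_2,d_3$, I can re-select $\wh z_{\textnormal{TT}}, \wh z_{\textnormal{BB}}$ in $\wh K$ along the continuations of this crossing beyond $\wh A^{\rar}$, so that their unique lifts inside the corresponding connected component of $\tl K$ land in a top and bottom pocket of the common $\tl A$. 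This yields the desired $n$-semianchor $(\tl A, \tl K, \tl \phi_{\textnormal{B}}, \tl \phi_{\textnormal{T}})$; the delicate point is ensuring the vertical extent of this crossing subcontinuum still exceeds $M_n$, which is arranged by extracting $\wh K_0$ from a subcontinuum of $\wh K$ realizing its vertical diameter.
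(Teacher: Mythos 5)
Your overall route is the paper's: take $M_n$ from Lemma \ref{lemma:ThereExistsM_n_o}, apply that lemma to $\wh K=\tl\pi(\tl K)$, lift $\wh z_{\TT\TT},\wh z_{\BB\BB}$ into $\tl K$, and note that each connected set $\bigcup_{|t|\leq n}\tl I^t(\tl\phi_{\tl z_{\TT\TT}})$, resp. $\bigcup_{|t|\leq n}\tl I^t(\tl\phi_{\tl z_{\BB\BB}})$, lies in a single connected component of $\tl\pi^{-1}(\TT(\wh A^{\rar}))$, resp. $\tl\pi^{-1}(\BB(\wh A^{\rar}))$, hence in a single pocket of some lift of $\wh A^{\rar}$ (a minor imprecision: such a component is \emph{contained in} a pocket $\TT_{\tl\phi_{\TT}}(\tl A)$, it does not equal it). The paper compresses exactly this into ``take natural lifts'' and asserts the two containments for one and the same lift $\tl A$.

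The gap is in your resolution of the compatibility $\tl A^{(T)}=\tl A^{(B)}$, which you rightly identify as the crux. First, a connected component of $\tl K\cap\tl\pi^{-1}(\wh K_0)$ need not project onto $\wh K_0$, so it need not meet both boundary components of any single lift; the claim that it ``determines exactly one lift $\tl A$ of $\wh A$ that $\tl K$ crosses'' is unjustified, and $\tl K$ may in any case cross several lifts. More seriously, once you re-select $\wh z_{\TT\TT},\wh z_{\BB\BB}$ ``along the continuations of this crossing beyond $\wh A^{\rar}$'', the new points must still satisfy Equation \ref{equation:SemiAnchoringWidehat}, i.e.\ they must sit vertically beyond the buffers $d_2+d_3$ from the proof of Lemma \ref{lemma:ThereExistsM_n_o} so that the isotopy images of their leaves over $|t|\leq n$ stay in $\TT(\wh A^{\rar})$, resp. $\BB(\wh A^{\rar})$. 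Nothing guarantees that the branch of $\tl K$ attached to your chosen crossing, i.e.\ the part of $\tl K$ lying in the corresponding pockets of that particular lift, projects that far from the strip: the points of $\wh K$ realizing the large vertical diameter may lift into $\tl K$ adjacent to \emph{other} lifts of $\wh A^{\rar}$, which is precisely the ambiguity you set out to remove, so your closing sentence is circular. The repair should go the other way: keep the original $\wh z_{\TT\TT},\wh z_{\BB\BB}$ (they already carry the $n$-iterate buffer) and choose the lift adapted to their lifts in $\tl K$. Since the lifted boundary leaves are pairwise disjoint, properly embedded lines in $\tl{\mathbb{D}}$, each separating it, the lifts of $\wh A^{\rar}$ and the components of $\tl\pi^{-1}(\TT(\wh A^{\rar})\cup\BB(\wh A^{\rar}))$ are arranged in a tree-like pattern; along the chain of lifts separating the component containing $\tl z_{\TT\TT}$ from the component containing $\tl z_{\BB\BB}$, the type of the adjacent pocket must switch from top to bottom, and at the lift where it switches, $\tl z_{\TT\TT}$ lies in a top pocket and $\tl z_{\BB\BB}$ in a bottom pocket of the same $\tl A$. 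That choice is what the paper's ``natural lifts'' implicitly performs, and with it your Steps 1--3 already conclude the proof without any re-selection of points.
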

	
	\begin{proof}
		Take $\wh A$, $\wh z_{\text{TT}}, \wh z_{\text{BB}}$ as in Lemma \ref{lemma:ThereExistsM_n_o}, and take natural lifts $\tl A, \tl z_{\text{TT}}, \tl z_{\text{BB}}$. Note that $\tilde{z}_{\text{TT}} \in \text{T}_{\tilde{\phi}_{\text{T}}}(\tilde{A}), \tilde{z}_{\text{BB}} \in \text{B}_{\tilde{\phi}_{\text{B}}}(\tilde{A})$, for some unique leaves $\tilde{\phi}_{\text{T}} \subset \partial_{\text{T}}\tilde{A}, \  \tilde{\phi}_{\text{B}} \subset \partial_{\text{B}}\tilde{A}$. By the statement \ref{equation:SemiAnchoringWidehat} in Lemma \ref{lemma:ThereExistsM_n_o}, we immediately obtain that 
		$$\tilde{I}^j(\tilde{\phi}_{\tilde{z}_{\textnormal{TT}}}) \in \textnormal{T}_{\tilde{\phi}_{\textnormal{T}}}(\tilde{A}^{\rightarrow}) \ \textnormal{and} \ \tilde{I}^j(\tilde{\phi}_{\tilde{z}_{\textnormal{BB}}}) \in \textnormal{B}_{\tilde{\phi}_{\textnormal{B}}}(\tilde{A}^{\rightarrow}), \text{ for every } j \text{ with } |j| \leq n,$$
		which concludes the proof.
	\end{proof}

	\begin{lemma}\label{lemma:semianchorstillpushes}
		Let $n \in \mathbb{Z}^+$, $(\tilde{A}, \tilde{K}, \tilde{\phi}_{\textnormal{B}}, \tilde{\phi}_{\textnormal{T}})$ an $n-$semianchor, and $\tilde{z}$ a realizing point for $\tilde{A}$, with $\tilde{z} \in \textnormal{R}_{\tilde{A}}(\tilde{K})$. 
		
		Then, for every $j \in \mathbb{Z}$ with $|j| \leq n$, we have that $\tl f^{j}(\tl K)$ is a crossing continuum for $\tl A$, and  $\tilde{f}^j(\tilde{z}) \notin \textnormal{L}_{\tilde{A}}(\tilde{f}^{j}(\tilde{K}))$. 
	\end{lemma}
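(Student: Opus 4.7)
Part (1) is a direct translation of the semianchor condition into a statement about the iterates of $\tilde K$; part (2) is the substantive content and requires combining the semianchor with Lemma \ref{lem:PointsMoveSlowly} to keep the realizing point on the ``right side'' of the iterated continuum.

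For part (1): Since $\tilde z_{\TT\TT},\tilde z_{\BB\BB}\in\tilde K$, for every $|j|\le n$ we have $\tilde f^{j}(\tilde z_{\TT\TT}),\tilde f^{j}(\tilde z_{\BB\BB})\in\tilde f^{j}(\tilde K)$. The semianchor says the leaf through $\tilde f^{j}(\tilde z_{\TT\TT})$ lies in $\textnormal{T}_{\tilde\phi_{\TT}}(\tilde A)\subset\textnormal{T}(\tilde A)$, so in particular $\tilde f^{j}(\tilde z_{\TT\TT})\in\textnormal{T}(\tilde A)$; symmetrically $\tilde f^{j}(\tilde z_{\BB\BB})\in\textnormal{B}(\tilde A)$. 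As $\tilde f^{j}(\tilde K)$ is connected and meets both disjoint complementary regions of $\tilde A$, the continuum must cross $\partial_{\TT}\tilde A$ and $\partial_{\BB}\tilde A$, so it is a crossing continuum.

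For part (2): Fix $|j|\le n$. The plan is to construct a path inside $\tilde A\setminus\tilde f^{j}(\tilde K)$ joining $\tilde f^{j}(\tilde z)$ to a point arbitrarily close to the past end $r_{\tilde A}$, thereby placing $\tilde f^{j}(\tilde z)$ in $\textnormal{R}_{\tilde A}(\tilde f^{j}(\tilde K))$ (or on $\tilde f^{j}(\tilde K)$ itself), and in either case not in $\textnormal{L}_{\tilde A}(\tilde f^{j}(\tilde K))$. The path is assembled in two pieces. First, apply Lemma \ref{lem:PointsMoveSlowly} to the anchor leaves $\tilde\phi_{\TT},\tilde\phi_{\BB}$ with iteration count $n$ to obtain a fundamental domain $\tilde D$ of $\tilde A$ so far to the right that, for all $|i|\le n$, no $i$-iterate of a leaf in $\tilde D$ can reach either anchor leaf. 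Choose $M$ large enough that $\tilde f^{-M}(\tilde z)\in\tilde D$; then the transverse segment $\tilde I^{[-M,j]}_{\tilde F}(\tilde z)\subset\tilde A$ from $\tilde f^{-M}(\tilde z)$ to $\tilde f^{j}(\tilde z)$ remains to the right of both $\tilde\phi_{\TT}$ and $\tilde\phi_{\BB}$ throughout. A minimal crossing subcontinuum of $\tilde f^{j}(\tilde K)\cap\tilde A$ joining the point in $\textnormal{T}_{\tilde\phi_{\TT}}(\tilde A)$ to the point in $\textnormal{B}_{\tilde\phi_{\BB}}(\tilde A)$ must enter $\tilde A$ through $\tilde\phi_{\TT}$ and exit through $\tilde\phi_{\BB}$, and our segment lies to the right of both boundary leaves; so it avoids $\tilde f^{j}(\tilde K)$. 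Second, by the hypothesis $\tilde z\in\textnormal{R}_{\tilde A}(\tilde K)$ and by definition of the right component, enlarging $M$ if necessary we can extend the path from $\tilde f^{-M}(\tilde z)$ to a point arbitrarily close to $r_{\tilde A}$ inside $\textnormal{R}_{\tilde A}(\tilde K)$; this extension stays in a region far enough to the right that it is also disjoint from $\tilde f^{j}(\tilde K)\cap\tilde A$. Concatenating the two pieces yields the desired path.

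The main obstacle is the topological claim that ``any minimal crossing subcontinuum of $\tilde f^{j}(\tilde K)\cap\tilde A$ anchored between $\tilde\phi_{\TT}$ and $\tilde\phi_{\BB}$ is disjoint from a suitable far-right region of $\tilde A$.'' This is where the full strength of the semianchor is used: it constrains not only that $\tilde f^{j}(\tilde K)$ crosses $\tilde A$, but that one may locate its crossing between two fixed boundary leaves whose horizontal positions are controlled, which together with Lemma \ref{lem:PointsMoveSlowly} provides the separating region that renders the path construction possible.
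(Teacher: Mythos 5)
Your part (1) is correct and is the same observation the paper makes. The gap is in part (2), at the decisive step: you claim that because the transverse segment $\tl I^{[-M,j]}_{\tl\F}(\tl z)$ lies to the right of the two anchor leaves $\tl\phi_{\TT}$ and $\tl\phi_{\BB}$, it avoids $\tl f^{j}(\tl K)$. This is a non sequitur. The semianchor only constrains the iterates of the leaves through $\tl z_{\TT\TT}$ and $\tl z_{\BB\BB}$; it says nothing about the horizontal extent of $\tl f^{j}(\tl K)\cap\tl A$, which may run arbitrarily far to the right inside the strip before crossing from $\tl\phi_{\TT}$ to $\tl\phi_{\BB}$. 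Whether the orbit segment between $\tl f^{-M}(\tl z)$ and $\tl f^{j}(\tl z)$ meets $\tl f^{j}(\tl K)$ is exactly the question of which side of $\tl f^{j}(\tl K)$ the point $\tl f^{j}(\tl z)$ lies on, i.e. essentially the statement to be proved, so asserting the disjointness begs the question. In addition, the sub-claim that the segment stays to the right of both anchor leaves would require $\tl A\subset\RR(\tl\phi_{\TT})\cap\RR(\tl\phi_{\BB})$, which is the \textit{u}-anchor hypothesis and is precisely what a semianchor does not provide (this is the whole point of the case analysis in Lemma \ref{lemma:3semianchoredThenAnchored}); and Lemma \ref{lem:PointsMoveSlowly} is misapplied: it controls forward iterates ($0\le i\le n$) of leaves in a far-right fundamental domain relative to a fixed leaf of $\tl A$, and gives no information about the anchor leaves nor about where $\tl f^{j}(\tl K)$ meets the strip.

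The mechanism you are missing is the one the paper uses. Set $\Gamma_0=\tl\phi^{-}_{\tl z_{\TT\TT}}\cup\tl K\cup\tl\phi^{+}_{\tl z_{\BB\BB}}$; it separates $\tl\D$, with $\tl z$ and the end $r_{\tl A}$ in one component $\RR(\Gamma_0)$ and the other end in $\LL(\Gamma_0)$. The semianchor guarantees that for $|j|\le n$ the sets $\tl f^{j}(\tl\phi_{\tl z_{\TT\TT}})$ and $\tl f^{j}(\tl\phi_{\tl z_{\BB\BB}})$ remain in $\TT_{\tl\phi_{\TT}}(\tl A)$ and $\BB_{\tl\phi_{\BB}}(\tl A)$, hence off $\tl A$, so that $\tl f^{j}(\Gamma_0)\cap\tl A\subset\tl f^{j}(\tl K)$. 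Consequently $\LL_{\tl A}(\tl f^{j}(\tl K))$, being connected, disjoint from $\tl f^{j}(\Gamma_0)$, and containing the far-future iterates of $\tl z$, is contained in $\tl f^{j}(\LL(\Gamma_0))$, while $\tl f^{j}(\tl z)\in\tl f^{j}(\RR(\Gamma_0))$. This gives $\tl f^{j}(\tl z)\notin\LL_{\tl A}(\tl f^{j}(\tl K))$ directly, with no need to construct a path avoiding $\tl f^{j}(\tl K)$; any correct repair of your argument would end up reproducing this separation-by-$\Gamma_0$ step.
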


	\begin{proof}
		Note that $\Gamma_0 := \tilde{\phi}^{-}_{\tl z_{\text{TT}}} \cup \tilde{K} \cup \tilde{\phi}^+_{\tl z_{\text{BB}}}$ separates the ends of $\tilde{A}$, and leaves $\tilde{z}$ on the same component as $r_{\tilde{A}}$. Name $\text{R}(\Gamma_0)$ to this component, and let $\text{L}(\Gamma_0)$ be the one containing the other end. 
		
		{\bck{Given $(\tilde{A}, \tilde{K}, \tilde{\phi}_{\textnormal{B}}, \tilde{\phi}_{\textnormal{T}})$ is an $n-$semianchor,}} we know that if $|j| < n$, then $\tl f^{j}(\tl \phi_{z_{\TT \TT}}) \cap \tl A
		= \varnothing$ and also $\tl f^{j}(\tl \phi_{z_{\BB \BB}}) \cap \tl A = \varnothing$, which implies that $\tl f^{j}(\tl K)$ is crossing for $\tl A$ through the leaves $\tl \phi_{\TT}$ and $\tl \phi_{\BB}$. Furthermore, $\tl f^{j}(\tl z) \in \tl f^{j}(\text{R}(\Gamma_0))$, and $\textnormal{L}_{\tilde{A}}(\tilde{f}^{j}(\tilde{K})) \subset \tl f^{j}(\text{L}(\Gamma_0))$, which concludes the proof.   	 
	\end{proof}

	We do the whole dynamical arguments in the universal covering $\tl {\mathbb{D}}$, 
	but we measure distances in $\R^2$. Let us then define:
	
	\begin{definition}
		We will say that $(\wh A, \wh K, \wh \phi_{\BB}, \wh \phi_{\TT})$ is an \textit{n}-semianchor if there exist lifts $(\tl A, \tl K, \tl \phi_{\TT}, \tl \phi_{\BB})$ forming an \textit{n}-semianchor. We will then say that $\wh K$ is \textit{n}-semianchored.  
	\end{definition}

	\subsection{The Anchoring Lemma}
	
	\begin{lemma}[\textbf{Anchoring Lemma}]\label{lemma:AnchoringLemma}
		Let $\tilde{A}$ be a canonically foliated strip, $\tl K$ a crossing continuum, and $\tl \phi_{\TT} \subset \TT (\tl A), \tl \phi_{\BB} \subset \BB (\tl A)$ such that 
		\begin{itemize}
			\item $\tilde{K} \cap \tilde{\phi}_{\textnormal{T}} \neq \varnothing$, $\tilde{K} \cap \tilde{\phi}_{\textnormal{B}} \neq \varnothing$
			\item $\tilde{A} \subset \textnormal{R}(\tilde{\phi}_{\textnormal{T}}) \cap \textnormal{R}(\tilde{\phi}_{\textnormal{B}})$.  
		\end{itemize}
		
		Then, we have that 
		\begin{enumerate}
			\item For every $n > 0$, $\tl f ^n (\tl K)$ intersects both $\tl \phi_{\TT}$ and $\tl \phi_{\BB}$
			\item For every leaf $\tl \phi \subset \tl A$ with $\tl \phi_{\TT}, \tl \phi_{\BB} \subset \RR(\tl \phi)$, there exists $n_0 > 0$ such that 
			$$ \tl f^{n}(\tl K) \cap \tl \phi \neq \varnothing, \text{ for every } n \geq n_0.$$
		\end{enumerate}
	\end{lemma}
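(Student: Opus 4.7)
I will proceed by induction on $n \geq 0$, the base case being the hypothesis. The key input is the Brouwer line property of leaves of $\tilde{\mathcal{F}}$: positive transversality of the transverse path $\tilde{I}^t_{\tilde{\mathcal{F}}}(\tilde{z})$ forbids crossings from $L$ to $R$, so every leaf $\tilde{\phi}$ satisfies $\tilde{f}(L(\tilde{\phi}) \cup \tilde{\phi}) \subset L(\tilde{\phi})$, and in particular $\tilde{f}(\tilde{\phi}) \cap \tilde{\phi} = \varnothing$. The hypothesis $\tilde{A} \subset R(\tilde{\phi}_T) \cap R(\tilde{\phi}_B)$, together with $\tilde{\phi}_B \subset \partial_B \tilde{A} \subset \overline{\tilde{A}}$ and $\tilde{\phi}_B \cap \tilde{\phi}_T = \varnothing$, yields $\tilde{\phi}_B \subset R(\tilde{\phi}_T)$ and symmetrically $\tilde{\phi}_T \subset R(\tilde{\phi}_B)$. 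So $\tilde{\phi}_T, \tilde{\phi}_B$ are two disjoint properly embedded lines in $\tilde{\mathbb{D}} \cong \R^2$ that split the plane into exactly three open connected components: $L(\tilde{\phi}_T)$, the middle $M = R(\tilde{\phi}_T) \cap R(\tilde{\phi}_B) \supset \tilde{A}$, and $L(\tilde{\phi}_B)$, with the first and third being disjoint and non-adjacent. Assuming inductively that $\tilde{f}^n(\tilde{K})$ meets $\tilde{\phi}_T$ at some $\tilde{z}^{(n)}_T$ and $\tilde{\phi}_B$ at some $\tilde{z}^{(n)}_B$, the Brouwer property gives $\tilde{f}(\tilde{z}^{(n)}_T) \in L(\tilde{\phi}_T)$ and $\tilde{f}(\tilde{z}^{(n)}_B) \in L(\tilde{\phi}_B)$. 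The connected continuum $\tilde{f}^{n+1}(\tilde{K})$ therefore has points in two non-adjacent components of $\tilde{\mathbb{D}} \setminus (\tilde{\phi}_T \cup \tilde{\phi}_B)$, so it must traverse the middle region $M$ and hence cross both $\tilde{\phi}_T$ and $\tilde{\phi}_B$.

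\textbf{Proof plan for Part 2.} By Part 1, every $\tilde{f}^n(\tilde{K})$ contains $\tilde{z}^{(n)}_T \in \tilde{\phi}_T \subset R(\tilde{\phi})$, so by connectedness it suffices to produce, for every $n \geq n_0$, a point of $\tilde{f}^n(\tilde{K})$ in $L(\tilde{\phi})$. Pick a realizing point $\tilde{z}$ of $\tilde{A}$: its backward iterates $\tilde{f}^{-m}(\tilde{z})$ converge to the past end $r_{\tilde{A}}$, which lies in $R_{\tilde{A}}(\tilde{K})$, so after replacing $\tilde{z}$ by a suitable $\tilde{f}^{-m}(\tilde{z})$ we may assume $\tilde{z} \in R_{\tilde{A}}(\tilde{K})$. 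By Remark \ref{rem:FastAdvancement}, the orbit $\tilde{f}^j(\tilde{z})$ advances one fundamental domain of $\tilde{A}$ per iterate; since $\tilde{\phi}$ is only finitely many fundamental domains into the future of $\tilde{z}$ (using $\tilde{\phi}_T, \tilde{\phi}_B \subset R(\tilde{\phi})$), there is a finite $N_0$ with $\tilde{f}^j(\tilde{z}) \in L(\tilde{\phi}) \cap \tilde{A}$ for every $j \geq N_0$.

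\textbf{Main step and closing the argument.} The crucial remaining claim is a uniform-in-$j$ version of Lemma \ref{lemma:semianchorstillpushes}: for every $j \geq 0$, $\tilde{f}^j(\tilde{z}) \notin L_{\tilde{A}}(\tilde{f}^j(\tilde{K}))$. Granted this, for $j \geq N_0$ either $\tilde{f}^j(\tilde{z}) \in \tilde{f}^j(\tilde{K}) \cap L(\tilde{\phi})$ and we are done, or $\tilde{f}^j(\tilde{z}) \in R_{\tilde{A}}(\tilde{f}^j(\tilde{K})) \cap L(\tilde{\phi})$; in the latter case, assuming for contradiction that $\tilde{f}^j(\tilde{K}) \cap L(\tilde{\phi}) = \varnothing$, the connected open set $L(\tilde{\phi}) \cap \tilde{A}$ would be disjoint from $\tilde{f}^j(\tilde{K})$ and hence entirely contained in $R_{\tilde{A}}(\tilde{f}^j(\tilde{K}))$, contradicting that $L_{\tilde{A}}(\tilde{f}^j(\tilde{K}))$ must also meet the future part of the strip $L(\tilde{\phi}) \cap \tilde{A}$ (since $\tilde{f}^j(\tilde{K})$ is compact while $L(\tilde{\phi}) \cap \tilde{A}$ extends all the way to the future end $l_{\tilde{A}}$). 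Therefore $\tilde{f}^j(\tilde{K}) \cap L(\tilde{\phi}) \neq \varnothing$, and by connectedness $\tilde{f}^j(\tilde{K}) \cap \tilde{\phi} \neq \varnothing$. The main obstacle is establishing the uniform pushing claim without a finite-time semianchor hypothesis; I expect to obtain it by iterating the Brouwer line argument from Part 1 along the sequence of leaves $\tilde{\phi}_{\tilde{f}^j(\tilde{z})}$ traversed by the realizing point's transverse trajectory, using the permanent anchoring of $\tilde{f}^j(\tilde{K})$ at $\tilde{\phi}_T, \tilde{\phi}_B$ (given by Part 1) as a replacement for the finite-time $n$-semianchor condition used in Lemma \ref{lemma:semianchorstillpushes}.
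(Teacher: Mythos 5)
Part 1 of your plan is essentially the paper's argument (Brouwer-line property of the anchor leaves plus connectedness of $\tl f^n(\tl K)$), so no issue there. The problem is in Part 2, precisely at what you yourself flag as the main step: the claim that $\tl f^{j}(\tl z)\notin \LL_{\tl A}(\tl f^{j}(\tl K))$ for \emph{every} $j\geq 0$ is left unproved, and the mechanism you sketch (iterating the Brouwer-line argument along the leaves $\tl \phi_{\tl f^{j}(\tl z)}$ crossed by the realizing trajectory) does not yield it: those leaves only locate $\tl f^{j}(\tl z)$ relative to fixed leaves of $\tl \F$, whereas $\tl f^{j}(\tl K)$ is neither a leaf nor invariant, so leaf-by-leaf Brouwer arguments say nothing about which side of the \emph{moving} continuum the realizing orbit sits on. The paper closes exactly this point with the half-leaf extension $\Gamma_0=\tl\phi^{-}_{\tl z_{\TT}}\cup\tl K\cup\tl\phi^{+}_{\tl z_{\BB}}$ (the same device as in the proof of Lemma \ref{lemma:semianchorstillpushes}), which separates $\tl \D$ with $\tl z_0\in \RR(\Gamma_0)$: by plain equivariance $\tl f^{j}(\tl z_0)\in\tl f^{j}(\RR(\Gamma_0))$ for all $j$, and for $j>0$ the images of the two half-leaves lie in $\LL(\tl\phi_{\TT})\cup\LL(\tl\phi_{\BB})$, which is disjoint from $\tl A$ (here the $u$-anchor hypothesis $\tl A\subset\RR(\tl\phi_{\TT})\cap\RR(\tl\phi_{\BB})$ replaces, for all forward time, the finite-time semianchor hypothesis of Lemma \ref{lemma:semianchorstillpushes}); hence $\LL_{\tl A}(\tl f^{j}(\tl K))$, being a connected subset of $\tl A$ disjoint from $\tl f^{j}(\Gamma_0)$ and containing far-future realizing iterates (which lie in $\tl f^{j}(\LL(\Gamma_0))$), is contained in $\tl f^{j}(\LL(\Gamma_0))$, and so cannot contain $\tl f^{j}(\tl z_0)$. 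Without some such separation of the whole plane by a set containing $\tl K$, there is no route from the Brouwer property of individual leaves to your uniform pushing claim.

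Once that claim is supplied, the rest of your Part 2 is correct and runs parallel to the paper's proof: your concluding connectedness argument works (modulo the harmless imprecision of declaring $\tl f^{j}(\tl z)\in\RR_{\tl A}(\tl f^{j}(\tl K))$ — all you need is that the component of $\tl A\setminus\tl f^{j}(\tl K)$ containing $\tl f^{j}(\tl z)$, and hence all of $\LL(\tl\phi)\cap\tl A$ under your contradiction hypothesis, is not $\LL_{\tl A}(\tl f^{j}(\tl K))$, while $\LL(\tl\phi)\cap\tl A$ does meet $\LL_{\tl A}(\tl f^{j}(\tl K))$ because it contains far-future realizing iterates). The paper packages this slightly differently — it shows directly that $\tl f^{n}(\Gamma_0)$ must meet $\tl\phi$ for $n\geq n_0$ and then excludes the half-leaf images, which stay in $\LL(\tl\phi_{\TT})\cup\LL(\tl\phi_{\BB})\subset\RR(\tl\phi)$, so the intersection point lies on $\tl f^{n}(\tl K)$ — but the underlying idea is the one you are missing: anchor the argument on $\Gamma_0$, not on the leaves visited by the realizing point.
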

	
	\smallskip
	
	\begin{proof} Let $\tl z_{\TT} \in \tl \phi_{\TT} \cap \tl K, \tl z_{\BB} \in \tl \phi_{\BB} \cap \tl K$ 
		\begin{enumerate}
			\item Fix an integer $n>0$. Note that we have $$\tilde{f}^n(\tilde{z}_{\text{T}}) \in \text{L}(\tilde{\phi}_{\text{T}}) \textnormal{  and  }  \tilde{f}^n(\tilde{z}_{\text{B}}) \in \text{L}(\tilde{\phi}_{\text{B}}),$$ from where we obtain that $$\text{L}(\tilde{\phi}_{\text{T}}) \cap \tilde{f}^{n}(\tilde{K}) \neq \varnothing \textnormal{ and } \text{L}(\tilde{\phi}_{\text{B}}) \cap \tilde{f}^{n}(\tilde{K}) \neq \varnothing.$$
			Given that $\tilde{A}^{\rightarrow} \subset \text{R}(\tilde{\phi}_{\text{T}}) \cap \text{R}(\tilde{\phi}_{\text{B}})$, we conclude that $\tilde{f}^{n}(\tilde{K})$ is a crossing continuum for $\tilde{A}^{\rightarrow}$ which intersects both $\tilde{\phi}_{\text{T}}$ and $\tilde{\phi}_{\text{B}}$ {\bck{(see Figure \ref{figure:AnchoringLemma} for details).}}
			
			\item Let $\tilde{z}_0$ be a realizing point for $\tl A$ such that $\tl z_0 \in \text{R}_{\tl A}(\tl K)$, and let us name $\tl z_n = \tl f^{n}(\tl z_0)$.  Take two semi-leaves $\tilde{\gamma}_{\text{T}} = \tilde{\phi}^-_{\tilde{z}_\text{T}}, \tilde{\gamma}_{\text{B}} = \tilde{\phi}^+_{\tilde{z}_\text{B}}$, and note that   $$\Gamma_0 = \tilde{\gamma}_{\text{T}} \cup \tilde{K} \cup \tilde{\gamma}_{\text{B}}$$ 
			separates the disk $\tl \D$ into at least two connected components, one containing $\textnormal{L}_{\tilde{A}}(\tilde{K})$, which we will call $\text{L}(\Gamma_0)$, and another one containing $\textnormal{R}_{\tilde{A}}(\tilde{K})$, which we will call $\text{R}(\Gamma_0)$ {\bck{(once again, see Figure \ref{figure:AnchoringLemma}).}}
			
			Fix a leaf $\tl \phi \subset \tl A$ {\bck{with $\tl \phi_{\TT}, \tl \phi_{\BB} \subset \RR(\tl \phi)$}}. Take a sufficiently large value of $n_0$, such that $\tl z_{n_0} \in \text{L}_{\tl A}(\tl \phi)$. Note that for every $n>0$, $\tilde{z}_n$ still belongs to $\tilde{A}$ and also $$\tilde{z}_n \in \text{R}(\tilde{f}^{n}(\Gamma_0)) = \tilde{f}^{n} (\text{R}(\Gamma_0)),$$ which  {\bck{given that $\tl \phi_{\TT}, \tl \phi_{\BB} \subset \RR(\tl \phi)$}}, implies that 
			$$ \tl f^{n}(\Gamma_0)\cap \tl \phi \neq \varnothing, \text{ for every } n \geq n_0.$$ 
			Recalling that $\tilde{f}^{n}(\tilde{\phi}_{\text{T}}) \in \text{L}(\tilde{\phi}_{\text{T}})$,  $\tilde{f}^{n}(\tilde{\phi}_{\text{B}}) \in \text{L}(\tilde{\phi}_{\text{B}})$, we obtain that 
			$$ \tl f^{n}(\tl \phi_{\TT}) \cap \tl \phi = \varnothing; \ \tl f^{n}(\tl \phi_{\BB}) \cap \tl \phi = \varnothing \text{ for every }n > 0$$
			which concludes the proof.  
		\end{enumerate}
	\end{proof}	
	
	\begin{figure}[ht]
		\centering
		
		\def\svgwidth{.92\textwidth}
		\import{./Figures/}{TheAnchoringLemma.pdf_tex}
		
		\smallskip
		
		\caption {The sequence of iterates $\{\tilde{z}_n\}$ \emph{pushes} $\tilde{K}$, the leaves $\tilde{\phi}_{\text{T}}, \tilde{\phi}_{\text{B}}$ serve as anchors fixing it. The blue leaves in $\R^2$ define fundamental domains in the canonically foliated strip. The intersections between $\tl A^{\dar}_k$ and $\tl A^{\rar}$ are in purple, the intersections between $\tl A^{\uar}_k$ and $\tl A^{\rar}$ are included in the red regions (and similarly for their projections to $\R^2$.)}
		\label{figure:AnchoringLemma}
	\end{figure}
	
	\begin{remark}
		If we exchange $$\tilde{A} \subset \textnormal{R}(\tilde{\phi}_{\textnormal{T}}) \cap \textnormal{R}(\tilde{\phi}_{\textnormal{B}}) \text{ for } \tilde{A} \subset \textnormal{L}(\tilde{\phi}_{\textnormal{T}}) \cap \textnormal{L}(\tilde{\phi}_{\textnormal{B}}),$$
		in the hypothesis of the Anchoring Lemma, we may proceed in the exact same fashion iterating $\tl f$ to the past, to obtain that 
		\begin{enumerate}
			\item For every $n < 0$, $\tl f ^n (\tl K)$ intersects both $\tl \phi_{\TT}$ and $\tl \phi_{\BB}$
			\item For every leaf $\tl \phi \subset \tl A$ with $\tl \phi_{\TT}, \tl \phi_{\BB} \subset \text{L}(\tl \phi)$, there exists $n_0 < 0$ such that 
			$$ \tl f^{n}(\tl K) \cap \tl \phi \neq \varnothing, \text{ for every } n \leq n_0.$$
		\end{enumerate}
	\end{remark}	
	
	\begin{definition}[\textbf{Stable and Unstable Anchors}]
		\hspace{5cm}
		\begin{itemize}
			\item We will say the quartet $(\tl A, \tl K, \tl \phi_{\BB}, \tl \phi_{\TT})$ is an unstable anchor (or simply \textit{u}-anchor) when it satisfies the hypothesis of the Anchoring Lemma.
			
			\item Accordingly, we will say the quartet $(\tl A, \tl K, \tl \phi_{\BB}, \tl \phi_{\TT})$ is a stable anchor (or \textit{s}-anchor) when it satisfies the hypothesis of last remark.
			
			\item For these two cases, we will say respectively that $\tl K$ is \textit{u}-anchored (\textit{s}-anchored) to $\tl A$, by the leaves $\tl \phi_{\BB}$ and $\tl \phi_{\TT}$, or simply that $\tl K$ is \textit{u}-anchored (\textit{s}-anchored). 
			
			\item We will say that $\tl K$ is an anchored continuum (or that it is anchored) if it is \textit{u}-anchored or \textit{s}-anchored. 	
		\end{itemize}
		
	\end{definition}

	\begin{definition}
		We will say that $(\wh A, \wh K, \wh \phi_{\BB}, \wh \phi_{\TT})$ is a \textit{u}-anchor (respectively \textit{s}-anchor) whenever there exist respective lifts $(\tl A, \tl K, \tl \phi_{\BB}, \tl \phi_{\TT})$ forming a \textit{u}-anchor (respectively \textit{s}-anchor). 
	\end{definition}
	
	The following is an almost direct consequence of the Anchoring Lemma. 
	
	\begin{proposition}\label{prop:u-anchoredThenDynamicallyUnbounded}
		Let $(\wh A, \wh K, \wh \phi_{\BB}, \wh \phi_{\TT})$ be a \textit{u}-anchor. Then $$\lim \limits_{n \to +\infty} \textnormal{diam}(\wh f^{n}(\wh K)) = \infty \text{ and in particular, } \mathscr{D}_{\wh f}(\wh K) = \infty$$ 
	\end{proposition}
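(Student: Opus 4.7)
The plan is to lift to the universal covering $\tl{\mathbb{D}}$, invoke the Anchoring Lemma to stretch $\tl f^n(\tl K)$ across arbitrarily many fundamental domains of $\tl A$, and then project back to $\R^2$ to read off genuine diameter growth of $\wh f^n(\wh K)$. By symmetry of the four CFS families I may assume $\wh A = \wh A^{\rar} \in \wh{\mathcal{A}}^{\rar}$, so the realizing point $\wh z$ satisfies $\wh f(\wh z) = \wh z + (p,0)$ (after the reduction of Remark \ref{rem:NicePowerOfF}) and the fundamental domains tile $\wh A^{\rar}$ as $\wh D^{\rar}_k = \wh D^{\rar}_0 + (pk,0)$ for $k \in \Z$. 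Since $\wh A^{\rar}$ is a trivially foliated plane lying inside $\textnormal{Dom}(\wh I)$, the restriction $\tl\pi|_{\tl A}$ is a homeomorphism onto $\wh A^{\rar}$, and its preimages $\tl D^{\rar}_k$ form a genuine tiling of $\tl A$ by distinct fundamental domains indexed by $\Z$.

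Next, I would fix a lift $(\tl A, \tl K, \tl \phi_\BB, \tl \phi_\TT)$ of the $u$-anchor as furnished by the definition, and choose $k_0 \in \mathbb{Z}^+$ large enough that the anchor leaves $\tl \phi_\BB$ and $\tl \phi_\TT$ meet only fundamental domains $\tl D^{\rar}_k$ with $|k| \leq k_0$; this is possible by the uniform diameter bound on leaves (Lemma \ref{lem:UniformlyBoundedLeaves}). For each integer $k > k_0$, I would pick an arbitrary leaf $\tl \phi^{(k)} \subset \tl D^{\rar}_k$. Since $\tl D^{\rar}_k$ lies entirely in the future direction of the anchor leaves, one has $\tl \phi_\BB, \tl \phi_\TT \subset \RR(\tl \phi^{(k)})$, so part (2) of the Anchoring Lemma (Lemma \ref{lemma:AnchoringLemma}) yields $n_k > 0$ with $\tl f^n(\tl K) \cap \tl \phi^{(k)} \neq \varnothing$ for every $n \geq n_k$. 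Part (1) of the same lemma gives $\tl f^n(\tl K) \cap \tl \phi_\TT \neq \varnothing$ for every $n > 0$.

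Projecting by $\tl\pi$ and using $\tl\pi \circ \tl f = \wh f \circ \tl\pi$, for every $n \geq n_k$ the continuum $\wh f^n(\wh K) = \tl\pi(\tl f^n(\tl K))$ meets both $\wh\phi_\TT \subset \wh D^{\rar}_{k'_0}$ for some $|k'_0| \leq k_0$, and $\wh\phi^{(k)} = \tl\pi(\tl\phi^{(k)}) \subset \wh D^{\rar}_k$. Because $\wh D^{\rar}_k = \wh D^{\rar}_0 + (pk,0)$ and $\wh D^{\rar}_0$ is bounded, any two such points have horizontal distance at least $p(k - k_0) - \textnormal{diam}(\wh D^{\rar}_0)$, giving
\[
\textnormal{diam}(\wh f^n(\wh K)) \geq p(k - k_0) - \textnormal{diam}(\wh D^{\rar}_0) \quad \text{for all } n \geq n_k.
\]
Letting $k(n)$ be the largest integer with $n_{k(n)} \leq n$, one has $k(n) \to \infty$ as $n \to \infty$, yielding $\textnormal{diam}(\wh f^n(\wh K)) \to \infty$ and hence $\mathscr{D}_{\wh f}(\wh K) = \infty$.

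The crux of the argument is the third step: faithfully transferring the unbounded stretching that takes place inside $\tl A \subset \tl{\mathbb{D}}$ into genuine diameter growth in $\R^2$. The simple-connectedness of $\wh A^{\rar}$ is what makes this possible, ensuring that $\tl\pi|_{\tl A}$ is bijective rather than an infinite-sheeted covering, and hence that distinct fundamental domains of $\tl A$ correspond to distinct horizontal translates of $\wh D^{\rar}_0$ in $\R^2$ rather than being identified modulo deck transformations.
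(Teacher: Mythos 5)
Your argument is correct and is essentially the paper's proof: fix the anchor leaves, use both parts of the Anchoring Lemma together with the uniform bound on leaf diameters (Lemma \ref{lem:UniformlyBoundedLeaves}) to force $\wh f^{n}(\wh K)$ to meet leaves of $\wh A$ arbitrarily far along the strip while still meeting $\wh \phi_{\TT}$, and read off unbounded (horizontal) diameter. The only difference is bookkeeping — you phrase ``far along the strip'' via fundamental domains in $\tl{\mathbb{D}}$ and then project, whereas the paper works directly in $\R^2$ with leaves $\wh\phi_j$ at distance greater than $j$ from $\wh\phi_{\TT}$; note only the small imprecision that $\wh\phi_{\TT}$ is a boundary leaf of $\wh A$ rather than lying in a fundamental domain, which costs at most a bounded additive constant in your distance estimate.
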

	
	\begin{proof}
		Recall that by Lemma \ref{lem:UniformlyBoundedLeaves}, we know that {\bck{the}} diameter of leaves $\wh \phi \in \widehat{\mathcal{F}}$ is uniformly bounded. For every $j \in \mathbb{Z}^{+}$, let us take $\wh \phi _j \subset \wh A$ such that 
		$$ \wh \phi_{\TT}, \wh \phi_{\BB} \subset \partial \text{R}_{\wh A}(\wh \phi_j); \text{ and } \text{d} (\wh \phi_j , \wh \phi_{\TT}) > j$$
		
		By the Anchoring Lemma, there exists $n_j > 0$ such that for every $n > n_j$, 
		$$ \wh f^{n}(\wh K) \cap \wh \phi_{\TT} \neq \varnothing, \  \wh f^{n}(\wh K) \cap \wh \phi_j \neq \varnothing \ \implies \ \textnormal{Hdiam}(\wh f^{n}(\wh K)) > j$$
		
		which concludes the proof. 
	\end{proof}

	Proceeding in the same fashion we obtain:
	
	\begin{corollary}\label{coro:SAnchoredGrowsPast}
		Let $\wh K$ be an \textit{s}-anchored continuum. Then $$\lim \limits_{n \to -\infty} \textnormal{diam}(\wh f^{n}(\wh K)) = \infty, \text{ and in particular, } \mathscr{D}_{\wh f}(\wh K) = \infty$$
		
	\end{corollary}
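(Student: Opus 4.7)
The plan is to mirror the argument for Proposition \ref{prop:u-anchoredThenDynamicallyUnbounded}, replacing forward iteration by backward iteration and swapping the role of left and right. The statement of the Anchoring Lemma has a symmetric backward-time counterpart (made explicit in the remark following it) which applies precisely to $s$-anchors, so what is needed is mainly a bookkeeping transposition rather than a new dynamical idea.

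First I would fix lifts $(\tl A, \tl K, \tl \phi_{\BB}, \tl \phi_{\TT})$ realizing the $s$-anchor of $\wh K$, and recall from Lemma \ref{lem:UniformlyBoundedLeaves} that the leaves of $\wh \F$ have uniformly bounded diameter. Because $\wh A$ is invariant by a non-trivial integer translation and $\wh A \subset \textnormal{L}(\tl \phi_{\TT}) \cap \textnormal{L}(\tl \phi_{\BB})$ (projecting down to $\R^2$), I can choose, for each $j \in \Z^+$, a leaf $\wh \phi_j \subset \wh A$ such that
\[
\wh \phi_{\TT},\ \wh \phi_{\BB} \subset \partial \textnormal{L}_{\wh A}(\wh \phi_j), \qquad \textnormal{d}(\wh \phi_j, \wh \phi_{\TT}) > j,
\]
by picking $\wh \phi_j$ sufficiently far in the past-end direction of $\wh A$ (the uniform leaf-diameter bound guarantees that horizontal separation between leaves is comparable to the actual Euclidean separation).

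Next, I would apply the backward-time version of the Anchoring Lemma (the remark immediately following Lemma \ref{lemma:AnchoringLemma}) to the $s$-anchor $(\tl A, \tl K, \tl \phi_{\BB}, \tl \phi_{\TT})$: for each $j$ there exists $n_j < 0$ such that for every $n \leq n_j$,
\[
\wh f^{n}(\wh K) \cap \wh \phi_{\TT} \neq \varnothing \quad\text{and}\quad \wh f^{n}(\wh K) \cap \wh \phi_j \neq \varnothing,
\]
so that $\textnormal{Hdiam}(\wh f^{n}(\wh K)) > j$, and hence $\textnormal{diam}(\wh f^{n}(\wh K)) > j$. Since $j$ is arbitrary, this yields $\lim_{n \to -\infty} \textnormal{diam}(\wh f^{n}(\wh K)) = \infty$, from which $\mathscr{D}_{\wh f}(\wh K) = \infty$ follows immediately from the definition of dynamical diameter.

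I do not expect any genuine obstacle: every ingredient is already in place, and the only point worth stating carefully is the selection of the leaves $\wh \phi_j$ together with the observation that horizontal Euclidean distance inside the strip is controlled by the combinatorial position of the leaf, thanks to Lemma \ref{lem:UniformlyBoundedLeaves}. The proof is then one short paragraph, essentially a verbatim dualization of the proof of Proposition \ref{prop:u-anchoredThenDynamicallyUnbounded}.
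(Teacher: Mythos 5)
Your proposal is correct and is exactly the argument the paper intends: the paper proves Proposition \ref{prop:u-anchoredThenDynamicallyUnbounded} and then obtains Corollary \ref{coro:SAnchoredGrowsPast} by ``proceeding in the same fashion'', i.e.\ the same dualization via the backward-time remark following the Anchoring Lemma that you carry out explicitly. The choice of leaves $\wh \phi_j$ with $\wh \phi_{\TT}, \wh \phi_{\BB} \subset \partial \textnormal{L}_{\wh A}(\wh \phi_j)$ and $\mathrm{d}(\wh \phi_j, \wh \phi_{\TT}) > j$ is the right transposition, so no gap remains.
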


	\begin{remark}\label{remark:ThroughSingularitiesThenAnchored}
		If a continuum $\wh K$ is crossing for $\wh A$ \textit{through singularities}, that is, when we take $\tl K, \tl A$ we have that  
		$$ \text{either } \tl K \cap \partial_{\TT} \tl A = \varnothing \text{ or } \tl K \cap \partial_{\BB} \tl A = \varnothing,$$
		
		then the singularity serves as an anchor (remember that the boundary is fixed by $\tl f$), and therefore we may proceed in the exact same fashion as in Lemma \ref{lemma:AnchoringLemma} to obtain  $\mathscr{D}_{\wh f}(\wh K) = \infty$.  
	\end{remark}
	
	We can then assume without loss of generality, that our continua $\wh K$ cross our canonically foliated strips $\wh A$ through actual leaves of $\wh \F$. 
	
	\subsection{Proofs of Theorems C and E}.
	
	We have already proved that any anchored continuum has infinite dynamical diameter, and that every sufficiently large continuum is \textit{n}-semianchored for a certain $n$. Let us finish the proof of Theorem \ref{PropC:maximumdiameter} by gluing these two results together.

	\begin{lemma}\label{lemma:3semianchoredThenAnchored}
		For any 3-semianchored continuum $\tl K$, there exists $0 \leq j \leq 3$ such that $\tl f^{j}(\tl K)$ is anchored. 
	\end{lemma}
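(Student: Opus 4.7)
The hypothesis gives a $3$-semianchor $(\tl A, \tl K, \tl \phi_\BB, \tl \phi_\TT)$ with witnesses $\tl z_{\TT\TT}, \tl z_{\BB\BB} \in \tl K$. By Lemma~\ref{lemma:semianchorstillpushes}, each iterate $\tl f^j(\tl K)$ for $|j|\leq 3$ is a crossing continuum for $\tl A$; in particular it touches at least one leaf $\tl \psi_\TT^j \in \partial_\TT \tl A$ and at least one leaf $\tl \psi_\BB^j \in \partial_\BB \tl A$. Since each such boundary leaf is disjoint from the connected set $\tl A$, the strip lies entirely in $\RR(\tl \psi)$ or entirely in $\LL(\tl \psi)$; assign labels $\varepsilon_\TT^j, \varepsilon_\BB^j \in \{L,R\}$ accordingly. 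By definition, the quartet $(\tl A, \tl f^j(\tl K), \tl \psi_\BB^j, \tl \psi_\TT^j)$ is a $u$-anchor when $\varepsilon_\TT^j=\varepsilon_\BB^j=R$ and an $s$-anchor when $\varepsilon_\TT^j=\varepsilon_\BB^j=L$, so our task reduces to finding some $j\in\{0,1,2,3\}$ realising same-side labels on both boundaries.

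The dynamical input is the translation property of Remark~\ref{rem:FastAdvancement}: the realising point $\tl z$ of $\tl A$ advances exactly one fundamental domain toward the future end of $\tl A$ per iteration of $\tl f$. Combined with Lemma~\ref{lemma:semianchorstillpushes}, which states $\tl f^j(\tl z) \notin \LL_{\tl A}(\tl f^j(\tl K))$, this forces the set of boundary leaves of $\partial_\TT \tl A$ (resp.~$\partial_\BB \tl A$) touched by $\tl f^j(\tl K)$ to be choosable in a way that moves leftward, by at least one fundamental domain, as $j$ increases by one. Concretely, one chooses $\tl \psi_\TT^j, \tl \psi_\BB^j$ among the leaves touched by $\tl f^j(\tl K)$ that are adjacent to the fundamental domain containing $\tl f^j(\tl z)$, and the translate $\tl f(\tl z) - \tl z$ shifts this choice by a full fundamental domain.

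With this monotonic shift in hand, analyse the sequence of pairs $(\varepsilon_\TT^j, \varepsilon_\BB^j)$ for $j=0,1,2,3$: on each boundary the label can only transition from $R$ to $L$ (a leaf ceases to lie on one specific side of $\tl A$ after the shift crosses the appropriate threshold). Suppose for contradiction all four pairs are mixed, i.e.\ $(R,L)$ or $(L,R)$. Since each boundary can flip at most once along the four-step sequence and both flips must be non-trivial (otherwise the pair is constant and equals some same-side configuration after three shifts), the $\TT$ and $\BB$ flips occur at distinct indices; at the intermediate index both labels coincide, contradicting the mixed-pair assumption. Hence some $\tl f^j(\tl K)$ with $0\leq j\leq 3$ is anchored.

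The main obstacle is the careful bookkeeping tying the leaves of $\partial \tl A$ to fundamental domains and the precise quantification of how the leftward translation of $\tl f^j(\tl z)$ propagates to a leftward drift of the boundary leaves touched by $\tl f^j(\tl K)$. The constant $3$ appears because each of the two boundaries may independently ``lag'' by one iteration before flipping, and one additional step is needed to witness a coincident label pair; Remark~\ref{rem:FastAdvancement} ensures that one iteration of $\tl f$ buys exactly one fundamental domain, so three iterations are both necessary and sufficient to force the conclusion in all configurations.
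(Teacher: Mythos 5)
There is a genuine gap, and it lies at the heart of your argument. Your reduction is to find, among the boundary leaves of the \emph{single} strip $\tl A$ touched by $\tl f^{j}(\tl K)$, a pair with coincident side-labels, and your mechanism for producing such a pair is a claimed monotone drift: the touched leaves can be chosen near the fundamental domain of $\tl f^{j}(\tl z)$ and their labels can only flip from $R$ to $L$ as $j$ increases. Neither claim is justified. Lemma~\ref{lemma:semianchorstillpushes} gives exactly two facts: $\tl f^{j}(\tl K)$ crosses $\tl A$ \emph{through the same fixed pair of leaves} $\tl \phi_{\TT}, \tl \phi_{\BB}$ for all $|j|\leq 3$, and $\tl f^{j}(\tl z)\notin \LL_{\tl A}(\tl f^{j}(\tl K))$. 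It does not say that $\tl f^{j}(\tl K)$ touches any boundary leaf of $\tl A$ other than $\tl \phi_{\TT}$ and $\tl \phi_{\BB}$, let alone leaves adjacent to the fundamental domain of the advancing realizing point: the stretched continuum can run through the \emph{interior} of the strip to keep the point off its left without ever meeting $\partial \tl A$ again. Moreover, the leaves $\tl \phi_{\TT}, \tl \phi_{\BB}$ and the strip $\tl A$ are fixed objects in $\tl \D$; iterating $\tl K$ does not move them, so in the mixed case (say $\tl A\subset \RR(\tl \phi_{\TT})\cap \LL(\tl \phi_{\BB})$) their labels never change, and your flip-counting argument has nothing to act on. There is also no monotonicity statement anywhere (nor is one true in general) about how the side of $\tl A$ relative to its boundary leaves varies as one moves along $\partial_{\TT}\tl A$ or $\partial_{\BB}\tl A$.

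The missing idea is the one the paper uses to resolve the two mixed cases: the \emph{transverse} families of canonically foliated strips from Proposition~\ref{prop:ExistenceofCFS}. The vertical strips $\tl A^{\dar}_{k}$ crossing $\tl A^{\rar}$ provide barriers \emph{inside} the horizontal strip (the leaves $\tl \phi^{\dar}_{\TT,k}$, $\tl \phi^{\dar}_{\BB,k}$ bounding its fundamental domains), and Remark~\ref{rem:FastAdvancement} together with Lemma~\ref{lemma:semianchorstillpushes} forces $\tl f^{2}(\tl K)$ or $\tl f^{3}(\tl K)$ to hit one of these, producing an anchor whose two leaves typically come from the boundaries of \emph{two different} strips (e.g.\ one from $\partial_{\BB}\tl A^{\dar}_{k+1}$ and one from $\partial_{\BB}\tl A^{\rar}$), or a new top/bottom leaf of $\tl A^{\rar}$ lying inside a vertical strip. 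Restricting attention to $\partial \tl A$ alone, as you do, cannot reach these configurations, so the argument as written does not close the mixed case and the constant $3$ is not actually accounted for.
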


	\begin{proof}
		Let us assume without loss of generality that our 3-semianchor is given by $(\tilde{A}^{\rightarrow}, \tilde{K}, \tilde{\phi}_{\text{B}}^{\rightarrow}, \tilde{\phi}_{\text{T}}^{\rightarrow})$. 
		We will face one of the following four cases:
		\[ \tilde{A}^{\rightarrow} \subset \text{R}(\tilde{\phi}_{\text{T}}^{\rightarrow}) \cap  \text{R}(\tilde{\phi}_{\text{B}}^{\rightarrow}), \ \  \tilde{A}^{\rightarrow} \subset \text{L}(\tilde{\phi}_{\text{T}}^{\rightarrow}) \cap  \text{L}(\tilde{\phi}_{\text{B}}^{\rightarrow}),\]
		\[\tilde{A}^{\rightarrow} \subset \text{R}(\tilde{\phi}_{\text{T}}^{\rightarrow}) \cap  \text{L}(\tilde{\phi}_{\text{B}}^{\rightarrow}), \ \  \tilde{A}^{\rightarrow} \subset \text{L}(\tilde{\phi}_{\text{T}}^{\rightarrow}) \cap  \text{R}(\tilde{\phi}_{\text{B}}^{\rightarrow}).\]
		
		For the first two cases the lemma is proved, as the original quartet would already be respectively a \textit{u}-anchor or an \textit{s}-anchor. We will then assume that the fourth case holds, as the other one is analogous.
		
		Recall that $\tilde{A}^{\rightarrow}$ is crossed by infinitely many vertical CFS $\tilde{A}_k^{\downarrow}$ with their corresponding realizing points $\tilde{z}^{\downarrow}_k$ (see the construction of $\wh {\mathcal{A}}^{\dar}$ in Section \ref{sec:CFS} for details). Moreover, we may take leaves $\tilde{\phi}^{\downarrow}_{\text{T},k} {\color{black}\ \subset \ } \partial_{\text{T}}\tilde{A}^{\downarrow}_k \cap \tilde{A}^{\rightarrow}$ and recall the construction of fundamental domains $\tilde{D}^{\rar}_{k}$ of our strip $\tilde{A}^{\rightarrow}$, built in Section \ref{sec:CFS}: 
		$$\tilde{D}^{\rar}_k = \tilde{A}^{\rightarrow} \cap \text{L}(\tilde{\phi}^{\downarrow}_{\text{T},k}) \cap \text{R}(\tilde{\phi}^{\downarrow}_{\text{T},k+1})$$

		Write $\tilde{\phi}^{\downarrow}_{\textnormal{B},k} {\color{black}\ \subset \ }  \partial_{\text{B}}\tilde{A}^{\downarrow}_k \cap \tilde{A}^{\rightarrow}$ for the other boundary component of the connected component of $\tilde{A}^{\rightarrow} \cap \tilde{A}^{\downarrow}_{\color{black}k}$  which is bounded by $\tilde{\phi}^{\downarrow}_{\textnormal{T},k}$ (see Figure \ref{figure:AnchoringSubtle1}).  Note that $\tilde{\phi}_{\text{B}}^{\rightarrow}$ belongs to the closure of a fundamental domain $\tilde{D}^{\rar}_{k}$ for a certain value of $k$, similarly $\tilde{\phi}_{\text{T}}^{\rightarrow}$ belongs to $\tilde{D}^{\rar}_{k'}$ for a certain $k'$. 
		
		\medskip
		
		\paragraph{\textbf{Case 1.}} $\tilde{K} \not\subset \text{R}(\tilde{\phi}^{\downarrow}_{\text{B},k+1})$. For this case we obtain that $\tilde{K} \cap \tilde{\phi}^{\downarrow}_{\text{B},k+1} \neq \varnothing$ since $\tilde{K}$ is connected. Then, we have that 
		$$(\tilde{A}^{\downarrow}_{k+1}, \tilde{K},   \tilde{\phi}^{\downarrow}_{\text{B},k+1}, \tilde{\phi}^{\rightarrow}_{\text{B}}) \text{ is a \textit{u}-anchor,}$$ which finishes the proof for this case. 
		
		In identical fashion, we easily solve
		
		\medskip
		
		\paragraph{\textbf{Case 2.}} $\tilde{K} \not\subset \text{L}(\tilde{\phi}^{\downarrow}_{\text{T},k'})$. For this case we get that 
		$$ (\tilde{A}^{\downarrow}_{k'}, \tilde{K},   \tilde{\phi}^{\rightarrow}_{\text{T}}, \tilde{\phi}^{\downarrow}_{\text{T},k'}) \text{ is an \textit{s}-anchor,} $$
		which also ends this case's analysis. 
		
		\medskip
		
		\paragraph{\textbf{Case 3.}} $\tilde{K} \subset \text{R}(\tilde{\phi}^{\downarrow}_{\text{B},k+1})$ and $\tilde{K} \subset \text{L}(\tilde{\phi}^{\downarrow}_{\text{T},k'})$. For this case, we must have that $k' \leq k$: otherwise, with $k' > k$ we would have that $\tl K$ simultaneously satisfies the hypotheses of Cases 1 and 2, and it would then be $\textit{s}$-anchored and $\textit{u}$-anchored. See Figure \ref{figure:AnchoringSubtle1} for details. 
		\begin{figure}[h]
			\centering
			
			\def\svgwidth{.92\textwidth}
			\import{./Figures/}{Anchoringsubtle1.pdf_tex}
			
			\bigskip

			\caption{An example with $k' = k+1$. Note that $(\tilde{A}^{\downarrow}_{k'}, \tilde{K},   \tilde{\phi}^{\rightarrow}_{\text{T}}, \tilde{\phi}^{\downarrow}_{\text{T},k'})$ is an \textit{s}-anchor, and $(\tilde{A}^{\downarrow}_{k'}, \tilde{K},   \tilde{\phi}^{\downarrow}_{\text{B},k'}, \tilde{\phi}^{\rightarrow}_{\text{B}})$ is a \textit{u}-anchor. This implies that $\lim \limits_{n \to \pm \infty} \textnormal{diam} (\widehat{f}^{n}(\widehat{K})) = \infty$.}
			\label{figure:AnchoringSubtle1}
		\end{figure}
		
		\smallskip
		
		\textbf{3.1.} $k' = k$. Let us take $\tl z^{\rar}$ a realizing point for $\tl A^{\rar}$ with $\tilde{z}^{\rightarrow} \in \tilde{D}^{\rar}_{k-1}$, and note that we must have  $\tilde{z}^{\rightarrow} \in \text{R}_{\tilde{A}}(\tilde{K})$, because $\tl K \subset \text{L}(\tilde{\phi}^{\downarrow}_{\text{T},k'})$. 
		Now, given that 
		
		\begin{itemize}
			\item $\tilde{f}^3 (\tilde{z}^{\rightarrow}) \in \tilde{D}^{\rar}_{k+2} \subset \text{L}(\tilde{\phi}^{\downarrow}_{\text{B},k+1})$
			\item $(\tilde{A}^{\rightarrow}, \tilde{K}, \tilde{\phi}_{\text{B}}^{\rightarrow}, \tilde{\phi}_{\text{T}}^{\rightarrow})$ is a 3-semianchor,
		\end{itemize}
		we obtain that 
		\begin{itemize}
			\item $\tl f^{3}(\tilde{K}) \not\subset \text{R}(\tilde{\phi}^{\downarrow}_{\text{B},k+1})$ 
			\item {\bck{$\tl f^{3}(\tilde{K}) \cap \tilde{\phi}^{\downarrow}_{\text{B},k+1} \neq \varnothing, \ \tl f^{3}(\tilde{K}) \cap \tilde{\phi}^{\rightarrow}_{\text{B}} \neq \varnothing$}}
		\end{itemize}
		
		from where we proceed as in Case 1 and conclude that $$(\tilde{A}^{\downarrow}_{k+1}, \tilde{f}^{3}(\tilde{K}),  \tilde{\phi}^{\downarrow}_{\text{B},k+1}, \tilde{\phi}^{\rightarrow}_{\text{B}}) \text{ is a \textit{u}-anchor,}$$ which concludes the analysis for this case. See Figure \ref{figure:AnchoringSubtle2} for details.  
		
		\begin{figure}[h]
			\centering
			
			\def\svgwidth{.92\textwidth}
			\import{./Figures/}{Anchoringsubtle2.pdf_tex}
			
			\bigskip

			\caption{An example with $k' = k$. $\tilde{f}^3(\tilde{K})$ is \textit{u}-anchored in many different ways, in particular $(\tilde{A}^{\downarrow}_{k+1}, \tilde{f}^{3}(\tilde{K}),   \tilde{\phi}^{\downarrow}_{\text{B},k+1}, \tilde{\phi}^{\rightarrow}_{\text{B}})$ is a \textit{u}-anchor.}
			\label{figure:AnchoringSubtle2}
		\end{figure}
		
		\textbf{3.2.} $k' < k$. This implies that $\tilde{A}^{\downarrow}_{k}$ separates $\tilde{\phi}^{\rightarrow}_{\text{B}}$ from $\tilde{\phi}^{\rightarrow}_{\text{T}}$. Let us define $$\tilde{\phi}^{\rightarrow}_{\text{B},k} = \partial_{\text{B}}\tilde{A}^{\rightarrow} \cap \tilde{A}^{\downarrow}_{k}, \ \ \tilde{\phi}^{\rightarrow}_{\text{T},k} = \partial_{\text{T}}\tilde{A}^{\rightarrow} \cap \tilde{A}^{\downarrow}_{k}.$$ 
		
		If we have $\tilde{K} \cap \tilde{\phi}^{\rightarrow}_{\text{B},k} \neq \varnothing$ (See Figure \ref{figure:AnchoringSubtle3}), we then get that $$(\tilde{A}^{\rightarrow}, \tilde{K}, \tilde{\phi}^{\rightarrow}_{\text{B},k}, \tilde{\phi}^{\rightarrow}_{\text{T}}) \text{ is an \textit{s}-anchor}.$$
		
		Finally, let us assume that $\tilde{K} \cap \tilde{\phi}^{\rightarrow}_{\text{B},k} = \varnothing$. Let us take $\tilde{z}^{\downarrow} \in \text{R}(\tilde{\phi}^{\rightarrow}_{\text{B},k}) \cap \tilde{A}^{\downarrow}_{k}$, being the lift of $z^{\downarrow}$ which {\color{black}satisfies} $\tilde{f}^2(\tilde{z}^{\downarrow}) \in \text{L}(\tilde{\phi}^{\rightarrow}_{\text{T},k})$. 
		Given that $$(\tilde{A}^{\rightarrow}, \tilde{K}, \tilde{\phi}_{\text{B}}^{\rightarrow}, \tilde{\phi}_{\text{T}}^{\rightarrow}) \text{ is a 2-semianchor}$$ we obtain that $$(\tilde{A}^{\downarrow}_{k}, \tilde{K}, \tilde{\phi}^{\downarrow}_{\text{B},k}, \tilde{\phi}^{\downarrow}_{\text{T},k}) \text{ is also a 2-semianchor}$$
		
		This fact implies that $\tilde{f}^2(\tilde{z}^{\downarrow}) \notin \text{L}_{\tilde{A}^{\downarrow}_{k}}(\tilde{K})$, which in particular shows that 
		$$\tilde{f}^2(\tilde{K}) \cap \tilde{\phi}^{\rightarrow}_{\text{T},k} \neq \varnothing$$ 
		from where we obtain that  $$(\tilde{A}^{\rightarrow}, \tilde{f}^2(\tilde{K}), \tilde{\phi}^{\rightarrow}_{\text{B}}, \tilde{\phi}^{\rightarrow}_{\text{T},k}) \text{ is a \textit{u}-anchor},$$ which concludes the proof.
	\end{proof}
	
	\begin{figure}[h]
		\centering
		
		\def\svgwidth{.92\textwidth}
		\import{./Figures/}{Anchoringsubtle3.pdf_tex}
		
		\bigskip

		\caption{An example with $k' = k-1$. Either $\tilde{K}$ intersects $\tilde{\phi}^{\rightarrow}_{\text{B},k}$, or $\tilde{f}^2(\tilde{K})$ intersects $\tilde{\phi}^{\rightarrow}_{\text{T},k}$.} 
		\label{figure:AnchoringSubtle3}
	\end{figure}

	In Case 3.2, $\tl f^{2}(\tl K)$ is anchored to $\tl A^{\rar}$ by construction. For Cases 1, 2 and 3.1, $f^{j}(\tl K)$ is anchored to some $\tl A^{\dar}_k$ (with $j = 0$ in Cases 1 and 2, $j = 3$ in Case 3.1). Repeating the argument from the last step of Case 3.2, it is easy to check that up to using two more iterations, we can anchor either $\tl f^{j+2}(\tl K)$ or $\tl f^{j-2}(\tl K)$ to $\tl A^{\rar}$
	
	This implies that
	
	\begin{remark}\label{rem:AnchoredToThatParticularLeaf}
		If $\wh K$ is 3-semianchored to $\wh A$, then at least one of the following occurs:
		\begin{itemize}
			\item For every $j \geq 5$, $\wh f^j(\wh K)$ is \textit{u}-anchored to $\wh A$, and so $\wh f^j(\wh K) \cap \mathrm{cl}(\wh A) \neq \varnothing$,
			\item For every $j \leq -2$, $\wh f^j(\wh K)$ is \textit{s}-anchored to $\wh A$, and so $\wh f^j(\wh K) \cap \mathrm{cl}(\wh A) \neq \varnothing$.
		\end{itemize}
	\end{remark}
	
	Also, note that combining Lemmas \ref{lemma:SemiAnchoring} and \ref{lemma:3semianchoredThenAnchored}, we immediately obtain that 
	
	\begin{remark}\label{rem:LargeThenWillBeAnchored}
		Let $\wh f$ be a lift to $\R^2$ of $f \in \mathrm{Homeo}_0(\T^2)$, with $\vec{0} \in \mathrm{int}(\rho(\wh f))$. 
		
		Then, there exists $M' > 0$ such that, for any continuum with $\mathrm{diam}(\wh K) > M'$, there exists $0 \leq j \leq 3$ such that $\wh f ^j(\wh K) \text{ is anchored.}$ 
	\end{remark}

	Let us now reiterate this argument and put everything together. 
	
	\begin{proof}[Proof of Theorem \ref{PropC:maximumdiameter}]
			By Remark \ref{remark:ThroughSingularitiesThenAnchored}, we may assume that $\wh K$ goes through no singularities. Take $M = 2M_3$ from Lemma \ref{lemma:ThereExistsM_n_o} to see that any continuum $\wh K$ with $\text{diam}(\wh K) > M$ is 3-semianchored, then use Lemma \ref{lemma:3semianchoredThenAnchored} to see that $\wh f^{\color{black}j_0}(\wh K)$ is anchored for some $0 \leq {\color{black}j_0} \leq 3$, which by Proposition \ref{prop:u-anchoredThenDynamicallyUnbounded} implies that {\color{black}either $\lim \limits_{j \to +\infty} \textnormal{diam}(\wh f^{j}(\wh K)) = +\infty$ or $\lim \limits_{j \to -\infty} \textnormal{diam}(\wh f^{j}(\wh K)) = +\infty$,} which concludes the proof.
	\end{proof}
	
	Theorem \ref{thmE:UniformlyBoundedIslands} is now obtained as a consequence of Theorem \ref{PropC:maximumdiameter}. 
	
	\begin{proof}[Proof of Theorem \ref{thmE:UniformlyBoundedIslands}]
		By \cite[Theorem B]{korotal}, we know that the lifted diameter of any periodic topological disk is finite (there is a shorter improved version of the proof of this result in \cite[Theorem 6]{koropecki19triple}). This implies that this disk is dynamically bounded, which then implies that its closure {\bck{in $\T^2$}} must be a dynamically bounded (inessential) continuum, whose diameter is uniformly bounded from Theorem \ref{PropC:maximumdiameter}, which concludes the proof.
		
	\end{proof}

	\section{The essential factor}\label{sec:EssFactor}
	
	The goal of this section is to build an explicit monotone semiconjugacy, for a torus homeomorphism $f$ under the hypotheses of Theorem \ref{thmA:semiconjugation}, that is, a torus homeomorphism $f$ which is isotopic to the identity, and such that $\rho(f)$ has nonempty interior. For the remainder of this section, let us assume these hypotheses hold and use the notation of said theorem. We will dedicate the following section to the basic understanding of the dynamics in the resulting quotient. In particular, let us recall that the dynamical diameter $\mathscr{D}$ of a continuum $\wh K \subset \R^2$ is defined as the supremum of the diameters of the iterates of $\wh K$ by a lift $\wh f$.

	\subsection{Dynamically bounded splitting}

	Let us define a relation $\sim_{\wh f}$ in $\mathbb{R}^2$ as follows:
	
	\begin{definition}
		Let $\wh z,\wh z' \in \mathbb{R}^2$. We will say that $\wh z \sim_{\wh f} \wh z'$ if there exists a continuum {\bck{$\wh K \subset \mathbb{R}^2$}} such that $\wh z, \wh z' \in \wh K, \text{ and } \mathscr{D}_{\wh f}(\wh K) < \infty$. 
	\end{definition} 
	
	It follows immediately that $\sim_{\wh f}$ is an equivalence relation, where we write $[\wh z]_{\wh f}$ for the equivalence class of $\wh z$ under $\sim_{\wh f}$ (or simply $[z]$ and $\sim$ when there is no ambiguity). Let us now understand the structure of these classes.
	
	\begin{remark}
		For every $\wh z \in \R^2$, $[\wh z] = \bigcup \limits_{\wh w \in [\wh z]} \wh K_{\wh w}$, where $\wh K_{\wh w}$ is a dynamically bounded continuum containing both $\wh w$ and $\wh z$. This implies $[\wh z]$ is connected. 
	\end{remark}
	
	\begin{lemma}[Uniformly bounded classes]\label{lem:UniformlyBoundedClasses}
		There exists $M > 0$, such that for every $\wh z \in \mathbb{R}^2$, we have $\textnormal{diam}([\wh z]) < M$.  
	\end{lemma}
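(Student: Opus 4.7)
The plan is to argue by contradiction, taking the universal constant $M$ from Theorem~\ref{PropC:maximumdiameter} and showing that the same $M$ bounds the diameter of every equivalence class $[\wh z]$. Suppose, toward a contradiction, that $\textnormal{diam}([\wh z]) > M$ for some $\wh z \in \R^2$. Then I can pick two points $\wh w_1, \wh w_2 \in [\wh z]$ with $\|\wh w_1 - \wh w_2\| > M$, and by the very definition of $\sim_{\wh f}$ each $\wh w_i$ is contained, together with $\wh z$, in some continuum $\wh K_i \subset \R^2$ with $\mathscr{D}_{\wh f}(\wh K_i) < \infty$.

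Now set $\wh K := \wh K_1 \cup \wh K_2$. As a union of two compacts it is compact, and it is connected since both $\wh K_i$ contain the common point $\wh z$; so $\wh K$ is a continuum, and by construction
\[\textnormal{diam}(\wh K) \geq \|\wh w_1 - \wh w_2\| > M.\]
The crucial observation is that $\wh K$ is itself dynamically bounded. Indeed, for each $j \in \Z$ the iterate $\wh f^j(\wh K) = \wh f^j(\wh K_1) \cup \wh f^j(\wh K_2)$ is a connected set through $\wh f^j(\wh z)$, and so by the triangle inequality
\[\textnormal{diam}(\wh f^j(\wh K)) \leq \textnormal{diam}(\wh f^j(\wh K_1)) + \textnormal{diam}(\wh f^j(\wh K_2)) \leq \mathscr{D}_{\wh f}(\wh K_1) + \mathscr{D}_{\wh f}(\wh K_2) < \infty.\]
Taking a supremum in $j$ gives $\mathscr{D}_{\wh f}(\wh K) < \infty$, contradicting Theorem~\ref{PropC:maximumdiameter} since $\textnormal{diam}(\wh K) > M$. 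This contradiction forces $\textnormal{diam}([\wh z]) \leq M$ for every $\wh z$, finishing the proof.

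I do not expect any real obstacle here, because the only nontrivial input is Theorem~\ref{PropC:maximumdiameter} itself. The one bookkeeping point worth flagging is that the theorem must be applied to a continuum $\wh K$ living in $\R^2$, with Euclidean diameter, rather than to a continuum in $\T^2$ with its lifted diameter. This is not a problem: the argument of Section~\ref{section:stretching} is already carried out in $\R^2$ via semianchors and the Anchoring Lemma applied to a lift $\wh K \subset \R^2$, so the conclusion ``Euclidean diameter greater than $M$ implies $\mathscr{D}_{\wh f} = \infty$'' is exactly the form I use above. Morally, the lemma just says that the property of being dynamically bounded is preserved under taking unions of continua sharing a point, which combined with Theorem~\ref{PropC:maximumdiameter} promotes the existence of arbitrarily large connected dynamically bounded pieces into an outright contradiction.
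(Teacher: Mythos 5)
Your argument is correct and is essentially the paper's own proof: both take $M$ from Theorem~\ref{PropC:maximumdiameter} and derive a contradiction from a dynamically bounded continuum of diameter greater than $M$ joining two far-apart points of the class. The only cosmetic difference is that you build that continuum explicitly as $\wh K_1 \cup \wh K_2$ through $\wh z$, while the paper gets it directly from transitivity of $\sim_{\wh f}$ (which is proved by exactly this union argument), so the two proofs coincide in substance.
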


	\begin{proof}
		Take $M$ as in Theorem \ref{PropC:maximumdiameter}. Suppose by contradiction that we have $\textnormal{diam}([\wh z]) > M$. Then, there exist $\wh z_1, \wh z_2 \in [\wh z]$ such that $d(\widehat{z}_1, \widehat{z}_2) > M$. 
		
		This implies that any continuum $\widehat{K}$ which contains both $\widehat{z}_1$ and $\widehat{z}_2$ has $\mathrm{diam}(K) > M$. Using Theorem \ref{PropC:maximumdiameter}, we instantly get that {\bck{$\mathscr{D}_{\wh f}(\widehat{K}) = \infty$}}, which contradicts the fact that $\widehat{z}_1$ and $\widehat{z}_2$ belong to the same equivalence class. 
	\end{proof}
	
	\begin{corollary}
		For every $\wh z \in \R^2$, $[\wh z]$ is dynamically bounded. Furthermore, its closure is also dynamically bounded. This implies that $\textnormal{cl}([\wh z]) \subset [\wh z]$, and in particular every class is closed. 
	\end{corollary}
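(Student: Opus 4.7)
The plan is to leverage the uniform bound from Lemma \ref{lem:UniformlyBoundedClasses} together with the fact that $\sim_{\wh f}$ is $\wh f$-invariant. First I would note that the equivalence relation is preserved by $\wh f$: if $\wh K$ is a dynamically bounded continuum containing $\wh z$ and $\wh z'$, then for any $n \in \Z$ the set $\wh f^n(\wh K)$ is again a continuum containing $\wh f^n(\wh z)$ and $\wh f^n(\wh z')$, and trivially $\mathscr{D}_{\wh f}(\wh f^n(\wh K)) = \mathscr{D}_{\wh f}(\wh K) < \infty$. This yields the identity $\wh f^n([\wh z]_{\wh f}) = [\wh f^n(\wh z)]_{\wh f}$ for every $n \in \Z$.

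Next I would derive the dynamical bound for the class $[\wh z]_{\wh f}$ itself. By Lemma \ref{lem:UniformlyBoundedClasses} there is a uniform constant $M > 0$ such that $\textnormal{diam}([\wh w]_{\wh f}) < M$ for every $\wh w \in \R^2$. Applying this to $\wh w = \wh f^n(\wh z)$ and combining with the invariance above yields
\[
\textnormal{diam}(\wh f^n([\wh z]_{\wh f})) = \textnormal{diam}([\wh f^n(\wh z)]_{\wh f}) < M
\]
for every $n \in \Z$, so $\mathscr{D}_{\wh f}([\wh z]_{\wh f}) \leq M < \infty$.

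To pass to the closure I would use that $\textnormal{cl}([\wh z]_{\wh f})$ is compact and connected (being the closure of a connected set of bounded diameter), hence a continuum. Since $\wh f^n$ is a homeomorphism of $\R^2$, it commutes with closure, giving
\[
\wh f^n(\textnormal{cl}([\wh z]_{\wh f})) = \textnormal{cl}(\wh f^n([\wh z]_{\wh f})) = \textnormal{cl}([\wh f^n(\wh z)]_{\wh f}),
\]
whose diameter is at most $M$ by the previous step. Therefore $\mathscr{D}_{\wh f}(\textnormal{cl}([\wh z]_{\wh f})) \leq M < \infty$.

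Finally, $\textnormal{cl}([\wh z]_{\wh f})$ is now a dynamically bounded continuum containing $\wh z$, so by the definition of $\sim_{\wh f}$ every point of $\textnormal{cl}([\wh z]_{\wh f})$ is equivalent to $\wh z$; that is, $\textnormal{cl}([\wh z]_{\wh f}) \subset [\wh z]_{\wh f}$, proving that the class is closed. The only subtlety, and what I would expect to be the one delicate point, is the interplay between the set-theoretic class $[\wh z]_{\wh f}$ (which a priori need not be a continuum in the sense used by Theorem \ref{PropC:maximumdiameter}) and its closure; but this is resolved cleanly by using the uniform bound $M$, which lets us bypass any direct application of the theorem to $[\wh z]_{\wh f}$ itself and instead apply it through Lemma \ref{lem:UniformlyBoundedClasses} to every iterate simultaneously.
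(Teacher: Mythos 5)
Your proof is correct and follows essentially the same route as the paper: the paper's (much terser) argument likewise combines Lemma \ref{lem:UniformlyBoundedClasses} with the fact that $\wh f$ sends classes to classes to bound all iterates uniformly, and passes to the closure using that the diameter of a set equals the diameter of its closure. Your write-up just makes explicit the identity $\wh f^n([\wh z]) = [\wh f^n(\wh z)]$ and the fact that $\textnormal{cl}([\wh z])$ is a continuum, which is exactly what the paper leaves implicit.
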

	
	\begin{proof}
		The fact that every class is dynamically bounded is due to Lemma \ref{lem:UniformlyBoundedClasses} and the fact that $\wh f$ sends classes to classes. The closure of each class is also dynamically bounded because $\textnormal{diam}(\textnormal{cl}([\wh z])) = \textnormal{diam}([\wh z])$.  
	\end{proof}
	
	\begin{remark}
		For every $\wh z$, $[\wh z]$ is filled, because $\textnormal{diam}(\wh f^j([\wh z])) = \textnormal{diam}(\textnormal{Fill}(\wh f^j([\wh z])))$. 
	\end{remark}

	\begin{lemma}\label{lem:DynBoundedSetsAreInessential}
		Let $\wh K \subset \R^2$ be a dynamically bounded continuum for $\wh f$. Then, for every nontrivial deck transformation $T$, we have that $T(\wh K) \cap \wh K = \varnothing$. 
	\end{lemma}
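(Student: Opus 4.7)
The plan is to argue by contradiction, using Theorem~\ref{PropC:maximumdiameter} as the main tool. Suppose for contradiction that there exists a non-trivial deck transformation $T$ (translation by some $v\in\Z^2\setminus\{0\}$) with $T(\wh K)\cap\wh K\neq\varnothing$. I would first record the standard fact that, since $f\in\mathrm{Homeo}_0(\T^2)$, the lift $\wh f$ commutes with every deck transformation: both $\wh f\circ T$ and $T\circ\wh f$ are lifts of $f$, and since $f$ acts trivially on $H_1(\T^2)$ they must coincide. In particular $\wh f^j\circ T^k = T^k\circ \wh f^j$ for all $j,k\in\Z$.

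Next, for each $n\in\Z^+$ I would consider the set
\[
\wh K_n \;=\; \bigcup_{k=0}^{n} T^k(\wh K).
\]
Because $T(\wh K)\cap\wh K\neq\varnothing$, consecutive translates $T^k(\wh K)$ and $T^{k+1}(\wh K)$ share a point, so $\wh K_n$ is a (compact) continuum. Moreover $\mathrm{diam}(\wh K_n)\geq n|v|-\mathrm{diam}(\wh K)$, so choosing $n$ large enough gives $\mathrm{diam}(\wh K_n)>M$, where $M$ is the constant from Theorem~\ref{PropC:maximumdiameter}. That theorem then forces $\mathscr{D}_{\wh f}(\wh K_n)=\infty$.

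Now I would derive the contradiction by bounding $\mathscr{D}_{\wh f}(\wh K_n)$ directly. Let $D=\mathscr{D}_{\wh f}(\wh K)<\infty$. Using that $\wh f^j$ commutes with $T$,
\[
\wh f^j(\wh K_n)\;=\;\bigcup_{k=0}^{n} T^k\bigl(\wh f^j(\wh K)\bigr),
\]
and each piece $T^k(\wh f^j(\wh K))$ has diameter $\leq D$. The intersection condition is preserved by $\wh f^j$: from $\wh f^j(T(\wh K)\cap\wh K)\subseteq T(\wh f^j(\wh K))\cap\wh f^j(\wh K)$, consecutive pieces still meet. Hence the connected union $\wh f^j(\wh K_n)$ has diameter at most $(n+1)D$, \emph{uniformly in $j$}, so $\mathscr{D}_{\wh f}(\wh K_n)\leq (n+1)D<\infty$, contradicting the previous paragraph. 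Therefore $T(\wh K)\cap\wh K=\varnothing$ for every non-trivial deck transformation.

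The only mild subtlety is the commutation $\wh f\circ T=T\circ\wh f$, which is essential to transfer dynamical boundedness from $\wh K$ to its translates; everything else is just a careful bookkeeping of diameters versus the uniform bound provided by Theorem~\ref{PropC:maximumdiameter}.
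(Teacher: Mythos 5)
Your proof is correct, but it reaches the contradiction by a genuinely different route than the paper. The paper also starts from the necklace idea, but it takes the \emph{bi-infinite} necklace $\wh K_T=\bigcup_{n\in\Z}T^n(\wh K)$, uses equivariance to bound the $v^\perp$-projection of every iterate of the necklace by the dynamical bound of $\wh K$, and then gets the contradiction without Theorem~\ref{PropC:maximumdiameter}: after arranging $\vec 0\in\mathrm{int}(\rho(\wh f))$, it picks (via Franks' realization) two periodic points placed on opposite sides of the necklace whose rotation vectors have $v^{\perp}$-components of opposite signs, so their orbits eventually differ in the $v^{\perp}$-coordinate by more than the necklace's iterates can span, contradicting the separation. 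Your argument instead uses \emph{finite} necklaces $\wh K_n$ and invokes the uniform constant $M$ of Theorem~\ref{PropC:maximumdiameter}: large diameter forces $\mathscr{D}_{\wh f}(\wh K_n)=\infty$, while equivariance and the chain bound give $\mathscr{D}_{\wh f}(\wh K_n)\leq (n+1)\mathscr{D}_{\wh f}(\wh K)<\infty$. Two remarks on what each approach buys. Yours is shorter and reuses machinery that is in any case already invoked in the adjacent Lemma~\ref{lem:UniformlyBoundedClasses}, so it adds no new dependency; the paper's version is independent of Theorem~\ref{PropC:maximumdiameter}, relying only on Franks' realization plus a separation argument, which keeps this step more elementary in its inputs. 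Also note two points of care in your write-up, both handled correctly: (i) Theorem~\ref{PropC:maximumdiameter} is stated for continua of $\T^2$ with lifted diameter, but what its proof establishes (and what the paper itself uses, e.g.\ in Lemma~\ref{lem:UniformlyBoundedClasses}, via Remark~\ref{rem:LargeThenWillBeAnchored} and Proposition~\ref{prop:u-anchoredThenDynamicallyUnbounded}) is the planar statement for continua of $\R^2$, which is the form you need since $\wh K_n$ does not project injectively; (ii) working with the compact finite necklace is essential for your route, since the bi-infinite necklace already has infinite diameter in the $v$-direction for every iterate and Theorem~\ref{PropC:maximumdiameter} would then yield no contradiction, which is precisely why the paper's bi-infinite version must pass through the $v^{\perp}$-projection and the rotational separation argument instead.
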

	
	\begin{proof}
		Suppose by contradiction that there exists a deck transformation $T(\wh z) = \wh z + v$ where $v \in \mathbb{Z}^2$ {\bck{is}} non-zero, such that $T(\wh K) \cap \wh K \neq \varnothing$. Start by building a \textit{necklace} 
		$$\wh K_{T} = \bigcup \limits_{n \in \mathbb{Z}} T^n(\wh K),$$
		
		and note that $\mathrm{diam}(\wh K_{T}) = \infty$. Let us denote by $\pi_{v^{\perp}}$ to the projection onto the subspace generated by the vector $v^{\perp}$ which is orthogonal to $v$. Given that $\mathscr{D}(\wh K) < M$, we obtain that
		$$ \sup \limits_{j \in \mathbb{Z}} \textnormal{diam}(\pi_{v^\perp}(\wh f^j(\wh K_T))) = \sup \limits_{j \in \mathbb{Z}} \textnormal{diam}(\pi_{v^\perp}(\wh f^j(\wh K))) < M.$$
		
		Note that the complement of $\wh K_T$ has two unbounded connected components $U_+, U_-$ (think respectively of points with large positive and negative second coordinate in the positively-oriented base ${v, v^{\perp}}$). We shall use $\textnormal{coord}_{v^{\perp}}$ to denote this second coordinate. Again, up to taking a power and an adequate lift we will assume that $\vec{0} \in \mathrm{int}(\rho(\wh f))$. Then, we may take two periodic points $\wh z_+ \in U_-, \ \wh z_- \in U_+$ such that $\textnormal{coord}_{v^{\perp}}(\rho(\wh z_+)) > 0$ and $\textnormal{coord}_{v^{\perp}}(\rho(\wh z_-)) < 0$. This implies that for a sufficiently large value of $j$, we have that $\textnormal{coord}_{v^{\perp}}(\wh f^j(\wh z_+)) - \textnormal{coord}_{v^{\perp}}(\wh f^j(\wh z_-)) > 2M$, which in turn implies that $\textnormal{diam}(\pi_{v^\perp}(\wh f^j(\wh K_T))) > M$, which is a contradiction.    
	\end{proof}
	
	\begin{corollary}\label{cor:ClassesAreInessential}
		For every $\wh z$, we have that $\wh{\pi}([\wh z])$ is an inessential filled continuum, which is dynamically bounded.  
	\end{corollary}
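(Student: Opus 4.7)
The plan is to verify the four claimed properties of $\wh\pi[\wh z]$—continuum, inessential, dynamically bounded, and filled—in turn, with only the last requiring genuine work. By Lemma \ref{lem:UniformlyBoundedClasses} the class $[\wh z]$ is contained in a ball of radius $M$, and by the preceding remarks it is connected and closed, so it is a continuum in $\R^2$, and its continuous image $\wh\pi[\wh z]$ is a continuum in $\T^2$. Applying Lemma \ref{lem:DynBoundedSetsAreInessential} to $[\wh z]$ shows that $([\wh z]+v)\cap[\wh z]=\varnothing$ for every nonzero $v\in\Z^2$, which by Definition \ref{def:Inessential} means $\wh\pi[\wh z]$ is inessential and also forces $\wh\pi$ to be injective on $[\wh z]$. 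Consequently, for each $j\in\Z$ the lifted diameter of $f^{j}(\wh\pi[\wh z])=\wh\pi(\wh f^{j}[\wh z])$ coincides with $\mathrm{diam}(\wh f^{j}[\wh z])\le M$, giving $\mathscr{D}_{f}(\wh\pi[\wh z])\le M<\infty$.

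To prove that $K:=\wh\pi[\wh z]$ is filled, I would argue by contradiction, assuming $U$ is an inessential complementary component of $K$ in $\T^2$ and choosing a lift $\wh U\subset\R^2$. Then $\wh U$ is a bounded connected component of $\R^2\setminus\wh\pi^{-1}(K)=\R^2\setminus\bigcup_{v\in\Z^2}([\wh z]+v)$, the union being disjoint by the inessentiality proved above. Because $\mathrm{cl}(\wh U)$ is compact and each translate has diameter at most $M$, only finitely many translates $[\wh z]+v_1,\dots,[\wh z]+v_n$ meet $\mathrm{cl}(\wh U)$, and $\partial\wh U\subset\bigcup_{i=1}^{n}([\wh z]+v_i)$. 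I then form $L:=\mathrm{cl}(\wh U)\cup\bigcup_{i=1}^{n}([\wh z]+v_i)$, which is a continuum since each of the translates shares a point with the connected compact set $\mathrm{cl}(\wh U)$. If $L$ is dynamically bounded, then for any $\wh w\in\wh U\subset L$ we have $\wh w\sim_{\wh f}\wh z+v_1$, forcing $\wh w\in[\wh z]+v_1\subset\wh\pi^{-1}(K)$ and contradicting $\wh w\in\wh U$.

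The obstacle, and the only real work, is to establish the dynamical boundedness of $L$. Since $\wh f$ commutes with integer translations, $\wh f^{j}(\wh U)$ is a bounded component of $\R^2\setminus\bigcup_{v}([\wh f^{j}(\wh z)]+v)$ and its boundary satisfies $\wh f^{j}(\partial\wh U)\subset\bigcup_{i=1}^{n}([\wh f^{j}(\wh z)]+v_i)$; the crucial point is that the finite set $\{v_1,\dots,v_n\}$ is frozen by the initial choice of $\wh U$ and does not depend on $j$. Since each $[\wh f^{j}(\wh z)]+v_i$ has diameter at most $M$, this union has diameter at most $\max_{i,i'}|v_i-v_{i'}|+2M$, uniformly in $j$. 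An elementary planar fact—the diameter of a bounded open subset of $\R^2$ is at most the diameter of its boundary, verified by extending a ray from one diameter-realizing point through the other until it first exits the set—then transfers this bound to $\mathrm{diam}(\wh f^{j}(\wh U))$, and hence to $\mathrm{diam}(\wh f^{j}(L))$. This yields the required dynamical boundedness of $L$ and closes the argument.
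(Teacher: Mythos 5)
Most of what you write is correct, and in places more detailed than the paper, whose proof of this corollary simply combines Lemma \ref{lem:UniformlyBoundedClasses} and Lemma \ref{lem:DynBoundedSetsAreInessential} together with the preceding remarks (classes are connected, closed, and equal to their planar fill because $\mathrm{diam}(\wh f^j([\wh z]))=\mathrm{diam}(\mathrm{Fill}(\wh f^j([\wh z])))$). Your treatment of ``continuum'', ``inessential'' and ``dynamically bounded'' is fine, and the second half of your filled argument is also sound: the set $\{v_1,\dots,v_n\}$ is indeed frozen in $j$, the union $\bigcup_i([\wh f^j(\wh z)]+v_i)$ has diameter at most $M+\max_{i,i'}|v_i-v_{i'}|$, and a bounded open planar set does have diameter bounded by that of its boundary, so $L$ would be a dynamically bounded continuum and the contradiction follows.

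The gap is the sentence ``Then $\wh U$ is a bounded connected component of $\R^2\setminus\wh\pi^{-1}(K)$'', which you assert without proof. Inessentiality of $U$ only tells you that $\wh U$ is disjoint from its nonzero integer translates; it does not give boundedness. An open connected subset of $\T^2$ can be simply connected (hence inessential) and still have unbounded lifts — think of a thin open strip spiralling onto an essential circle — so boundedness has to be extracted from the fact that $U$ is a complementary component of the locally finite disjoint union $\bigcup_{v}([\wh z]+v)$ of uniformly small continua, and that is precisely the topological heart of the ``filled'' claim. The standard way to get it is a separation argument in $S^2$: the set $\wh\pi^{-1}(K)\cup\{\infty\}$ is compact and its connected components are the translates $[\wh z]+v$ and $\{\infty\}$, so if it separates two points of the plane then some single translate does (Janiszewski, or the theorem that a compact separator of $S^2$ has a component which already separates); hence every component of $\R^2\setminus\wh\pi^{-1}(K)$ other than the essential $\Z^2$-invariant one lies inside a bounded complementary component of one translate. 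Combined with the paper's remark that $[\wh z]$ coincides with its planar fill, this actually shows that $\T^2\setminus K$ has no inessential component at all, which is the implicit route of the paper. Without some such input your argument does not start: if $\wh U$ were unbounded, infinitely many translates would meet $\mathrm{cl}(\wh U)$ and your uniform diameter bound for $L$ disappears. Supplying (or citing) this separation step closes the proof; everything downstream of the boundedness assertion is correct.
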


	\begin{proof}
		This is due to Lemmas \ref{lem:DynBoundedSetsAreInessential} and \ref{lem:UniformlyBoundedClasses} put together. 
	\end{proof}

	\begin{proposition}
		The equivalence relation $\sim_{\wh f}$ is upper semicontinuous. 
	\end{proposition}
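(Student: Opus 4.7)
The plan is to verify upper semicontinuity via its sequential characterization, which (since each class has been shown to be compact) is equivalent to showing that the graph $\{(\wh z,\wh w) : \wh z \sim_{\wh f} \wh w\}$ is closed in $\R^2\times\R^2$: whenever $\wh z_n\to\wh z$ and $\wh w_n\to\wh w$ with $\wh z_n\sim_{\wh f}\wh w_n$ for every $n$, I must produce a dynamically bounded continuum containing both $\wh z$ and $\wh w$.

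The key preliminary step is to upgrade Theorem~\ref{PropC:maximumdiameter} to a uniform bound on the dynamical diameter itself. Let $M$ be the constant from Theorem~C. Since $\mathscr{D}_{\wh f}(\wh f^j(K))=\mathscr{D}_{\wh f}(K)$ for every $j\in\Z$, if $\mathscr{D}_{\wh f}(K)<\infty$ then every iterate $\wh f^j(K)$ is again dynamically bounded, and hence by Theorem~C satisfies $\mathrm{diam}(\wh f^j(K))\leq M$; taking the supremum over $j$ yields $\mathscr{D}_{\wh f}(K)\leq M$. Combined with Lemma~\ref{lem:UniformlyBoundedClasses} and the observations following it, each equivalence class $[\wh z]_{\wh f}$ is therefore a continuum of diameter at most $M$ whose dynamical diameter is also at most $M$.

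To finish, I would set $\wh K_n := [\wh z_n]_{\wh f}=[\wh w_n]_{\wh f}$, a continuum containing both $\wh z_n$ and $\wh w_n$. For large $n$ all of the $\wh K_n$ lie inside a fixed closed ball around $\wh z$, so by Blaschke's compactness theorem I can extract a subsequence with $\wh K_n\to\wh K$ in Hausdorff distance; the limit continuum $\wh K$ contains both $\wh z$ and $\wh w$. Since $\wh f^j$ is continuous on compact sets, $\wh f^j(\wh K_n)\to \wh f^j(\wh K)$ in the Hausdorff metric for each $j$, and the diameter is continuous with respect to Hausdorff convergence of compact sets, giving $\mathrm{diam}(\wh f^j(\wh K))\leq M$ for all $j\in\Z$. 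Hence $\mathscr{D}_{\wh f}(\wh K)\leq M<\infty$, and this $\wh K$ witnesses $\wh z\sim_{\wh f}\wh w$. The only genuinely dynamical ingredient is the uniform control on the dynamical diameter extracted from Theorem~C; once that is in hand, the rest is a routine Blaschke-plus-continuity argument, so I expect no significant obstacle.
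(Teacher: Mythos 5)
Your argument is correct and is essentially the paper's proof: both extract from Theorem \ref{PropC:maximumdiameter} the uniform bound $\mathscr{D}_{\wh f}([\wh z])\leq M$ (the paper via Lemma \ref{lem:UniformlyBoundedClasses} and its corollary, you directly from the contrapositive applied to each iterate) and then show that a Hausdorff limit of equivalence classes is dynamically bounded, using uniform continuity of $\wh f^j$ together with continuity of the diameter under Hausdorff convergence. Your closed-graph phrasing of upper semicontinuity is just a repackaging of the paper's reduction to the statement that Hausdorff limits of classes lie in a single class, so there is no substantive difference.
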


	\begin{proof}
		First, note that it suffices to prove that the Hausdorff limit of a sequence of equivalence classes is entirely contained in an equivalence class.
		
		Let {\bck{$\{[\wh z_n]\}_{n \in \mathbb{N}}$}} be a sequence of equivalence classes such that {\bck{$[\wh z_n] \underset{n \rightarrow \infty}{\rightarrow} \wh K$}} for the Hausdorff topology. This implies that for every $\delta > 0$, there exists $n_{\delta}$ such that {\bck{$\wh K \subset \mathrm{B}([\wh z_{n_{\delta}}], \delta)$}}. 
		
		Given that $\wh f$ is uniformly continuous, we get that for every $j \in \mathbb{N}$  
		there exists $\delta_{j}$ such that $$ \sup \limits_{|i| \leq j} \mathrm{d}_{\mathrm{H}}(f^{i}([\wh z]), f^{i}([\wh z'])) < {1}, \  \text{whenever} \ \mathrm{d}_{\mathrm{H}} ([\wh z], [\wh z']) < \delta_{j}.$$
		
		For every $j \in \mathbb{Z}^+$, given that {\bck{$\mathscr{D}_{\wh f}([\wh z_{n_{\delta_j}}]) \leq M$}} and {\bck{$\mathrm{d}_{\mathrm{H}} ([\wh z_{n_{\delta_j}}], \wh K) < \delta_j$}}, we obtain that 
		$$\sup \limits_{|i| \leq j} \mathrm{diam}(\wh f^j(\wh K)) < M +1,$$ and therefore $\mathscr{D}(\wh K) < M + 1$. We conclude that $\wh K$ is dynamically bounded, which concludes the proof.
	\end{proof}

	\paragraph{\textbf{Projecting the classes.}} We may now induce an equivalence class $\sim_f$ for the torus, defined as 
	$$ [z]_f = \wh{\pi}([\wh z]_{\wh f}), \text{ where } \wh z \text{ is any lift of } z,$$
	which is equivalent to saying $z \sim_f z'$ if there exists a continuum $K$ containing both $z$ and $z'$, which is dynamically bounded. 
	
	From the results of last subsection, we recover the following
	
	\begin{proposition}[\textbf{Dynamically bounded Splitting}]\label{prop:EqRelationEveryProperty}
		Let $\sim_f$ be an equivalence relation on the torus, whose classes are defined by $[z]_f = \wh \pi ([\wh z]_{\wh f})$. Then, 
		\begin{itemize}
			\item Each equivalence class of $\sim_f$ is an inessential filled continuum. 
			\item Equivalence classes of $\sim_f$ are uniformly dynamically bounded. 
			\item $\sim_f$ is upper semicontinuous. 
		\end{itemize}
	\end{proposition}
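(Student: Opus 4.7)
The overall plan is to transfer every property from the already-established relation $\sim_{\wh f}$ on $\R^2$ down to $\sim_f$ on $\T^2$, via the covering projection $\wh\pi$. The first preliminary step is to check that the definition $[z]_f = \wh\pi([\wh z]_{\wh f})$ is independent of the choice of lift. If $\wh z'$ is another lift of $z$, then $\wh z' = \wh z + v$ for some $v\in\Z^2$, and since $\wh f$ commutes with the deck transformation $T_v$, any continuum $\wh K \ni \wh z$ is dynamically bounded if and only if $\wh K + v \ni \wh z'$ is; hence $[\wh z']_{\wh f} = [\wh z]_{\wh f} + v$, and both project to the same set in $\T^2$.

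The first bullet (each class is an inessential filled continuum) is then immediate from Corollary \ref{cor:ClassesAreInessential}, which says exactly that $\wh\pi([\wh z]_{\wh f})$ is an inessential filled continuum. For the second bullet, note that $\sim_f$ is $f$-invariant (because $\sim_{\wh f}$ is $\wh f$-invariant), so $f^j([z]_f)$ is itself an equivalence class of $\sim_f$. Its lifted diameter equals the Euclidean diameter of any connected lift, which is $\wh f^j([\wh z]_{\wh f})$, again an equivalence class of $\sim_{\wh f}$; by Lemma \ref{lem:UniformlyBoundedClasses} this diameter is bounded by the universal constant $M$ coming from Theorem \ref{PropC:maximumdiameter}. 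Hence $\mathscr{D}_f([z]_f) \leq M$ uniformly in $z$.

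The slightly more delicate point is upper semicontinuity. Let $\{[z_n]_f\}_{n\in\N}$ be a sequence of equivalence classes converging in the Hausdorff topology on $\mathcal{K}(\T^2)$ to a continuum $L \subset \T^2$; I must show $L$ is contained in a single $\sim_f$-class. For each $n$, pick a lift $\wh z_n$ of $z_n$ in the fundamental domain $D = [0,1]^2$. By the uniform bound of Lemma \ref{lem:UniformlyBoundedClasses}, the lifts $[\wh z_n]_{\wh f}$ all lie inside the compact neighbourhood $\mathrm{B}(D,M)$ of $D$. Since $(\mathcal{K}(\mathrm{B}(D,M)),\dH)$ is compact, after passing to a subsequence we may assume $[\wh z_n]_{\wh f} \to \wh L$ for some compact connected $\wh L \subset \R^2$. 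The previous proposition (upper semicontinuity of $\sim_{\wh f}$) gives that $\wh L$ is contained in a single class $[\wh w]_{\wh f}$. Since $\wh\pi$ is continuous and $\dH$ is preserved by continuous surjections (in the sense that the pushed-forward classes still converge), we get $L = \wh\pi(\wh L) \subset \wh\pi([\wh w]_{\wh f}) = [\wh\pi(\wh w)]_f$, establishing upper semicontinuity.

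The only genuine obstacle is the coordination between Hausdorff convergence downstairs and upstairs in the upper-semicontinuity argument; the uniform diameter bound coming from Theorem \ref{PropC:maximumdiameter} is exactly what lets us compactify the lifted sequence and pass to a limit in $\R^2$ without it spreading out over multiple fundamental domains. Once that is secured, the proposition is essentially a repackaging of the results already proven for $\sim_{\wh f}$.
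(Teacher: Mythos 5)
Your proposal is correct and follows the same route as the paper, which states this proposition as a direct consequence of the results already established for $\sim_{\wh f}$ (Lemma \ref{lem:UniformlyBoundedClasses}, Corollary \ref{cor:ClassesAreInessential}, and the upper semicontinuity of $\sim_{\wh f}$). Your extra details — the lift-independence check and the use of the uniform bound $M$ to compactify the lifted sequence before passing the Hausdorff limit through $\wh\pi$ — simply make explicit the transfer that the paper leaves implicit, and they are sound.
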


	\subsection{Monotone quotient}
	
	We have built a partition given by an equivalence relation which is upper semicontinuous, and whose classes are inessential filled continua. By a main result in \cite{roberts}, we know that the quotient topological space  $\mathrm{T}^2 = \mathbb{T}^2 _{/ \sim}$, equipped with the metric $\mathrm{d}_{\sim}([z],[w]) = \min \{d(z',w')  : z' \sim z, w' \sim w \}$ is homeomorphic to $\mathbb{T}^2$ (this is actually a generalization of Moore's result for the sphere, which can be found in \cite{moore}).  Note that this induces a distance $\widehat{\mathrm{d}}_{\sim}$ on the universal covering $\mathrm{R}^2$ of $\mathrm{T}^2$ (which is in turn homeomorphic to $\mathbb{R}^2$), and allows us to measure the diameter of continua in $\mathrm{T}^2$. When there is no room for confusion, we will abuse notation and use $\mathrm{d}$ for all of these distances.

	Given that $f$ sends equivalence classes to equivalence classes, we immediately get natural dynamics $g$ on the quotient space $\mathrm{T}^2$, defined as $g([z]) = [f(z)]$. Note that the quotient map $\pi_{\sim}: \mathbb{T}^2 \rightarrow \mathrm{T}^2$ is monotone. We obtain the following diagram:

	\tikzset{every picture/.style={line width=0.75pt}} 
	
	\begin{tikzpicture}[x=0.75pt,y=0.75pt,yscale=-1,xscale=1]
		
		\draw    (273.07,38.23) -- (388.5,38.23) ;
		\draw [shift={(390.5,38.23)}, rotate = 180] [color={rgb, 255:red, 0; green, 0; blue, 0 }  ][line width=0.75]    (10.93,-3.29) .. controls (6.95,-1.4) and (3.31,-0.3) .. (0,0) .. controls (3.31,0.3) and (6.95,1.4) .. (10.93,3.29)   ;
		\draw    (273.67,121.19) -- (389.1,121.19) ;
		\draw [shift={(391.1,121.19)}, rotate = 180] [color={rgb, 255:red, 0; green, 0; blue, 0 }  ][line width=0.75]    (10.93,-3.29) .. controls (6.95,-1.4) and (3.31,-0.3) .. (0,0) .. controls (3.31,0.3) and (6.95,1.4) .. (10.93,3.29)   ;
		\draw    (253.83,50.25) -- (253.83,106.57) ;
		\draw [shift={(253.83,108.57)}, rotate = 270] [color={rgb, 255:red, 0; green, 0; blue, 0 }  ][line width=0.75]    (10.93,-3.29) .. controls (6.95,-1.4) and (3.31,-0.3) .. (0,0) .. controls (3.31,0.3) and (6.95,1.4) .. (10.93,3.29)   ;
		\draw    (412.54,50.86) -- (412.54,107.17) ;
		\draw [shift={(412.54,109.17)}, rotate = 270] [color={rgb, 255:red, 0; green, 0; blue, 0 }  ][line width=0.75]    (10.93,-3.29) .. controls (6.95,-1.4) and (3.31,-0.3) .. (0,0) .. controls (3.31,0.3) and (6.95,1.4) .. (10.93,3.29)   ;
		
		\draw (324.18,20.41) node [anchor=north west][inner sep=0.75pt]    {$f$};
		\draw (325.38,102.77) node [anchor=north west][inner sep=0.75pt]    {$g$};
		\draw (244.64,29.23) node [anchor=north west][inner sep=0.75pt]    {$\mathbb{T}^{2}$};
		\draw (400.34,28.63) node [anchor=north west][inner sep=0.75pt]    {$\mathbb{T}^{2}$};
		\draw (245.03,111.59) node [anchor=north west][inner sep=0.75pt]    {$\mathrm{T}^{2}$};
		\draw (402.54,111.59) node [anchor=north west][inner sep=0.75pt]    {$\mathrm{T}^{2}$};
		\draw (82,63.67) node [anchor=north west][inner sep=0.75pt]   [align=left] { \ \ \ };
		\draw (232,69.07) node [anchor=north west][inner sep=0.75pt]    {$\pi _{\sim }$};
		\draw (389,68.73) node [anchor=north west][inner sep=0.75pt]    {$\pi _{\sim }$};

	\end{tikzpicture}

	\begin{definition}\label{def:EssentialFactor}
		Given $f \in \mathrm{Homeo}_{0}(\mathbb{T}^2)$ with $\textnormal{int}(\rho(f)) \neq \varnothing$, and $\sim$ the dynamically bounded equivalence relation, we will say that $g = f _{/ \sim}$ is its essential factor.
	\end{definition}
	
	\begin{remark}
		Given $f$ under the General hypothesis and its essential factor $g$, we obtain lifts $\widehat{g}: \mathbb{R}^2 \rightarrow \mathbb{R}^2$, $\tilde{g}: \tilde{\mathbb{D}} \rightarrow \tilde{\mathbb{D}}$, proceeding in the same fashion as we did for $f$. 	
	\end{remark}
	
	\begin{remark}
		A continuum $K$ is essential in $\mathbb{T}^2$ if and only if $K_{\sim} = \pi_{\sim}(K)$ is essential in $\textnormal{T}^2$. 
	\end{remark} 
	
	Keep in mind that the distance functions in $\T^2$ and $\text{T}^2$ are very much different. However, we may recover the following facts from the construction:
	
	\begin{remark}
		Let $\gamma^{\downarrow}, \gamma^{\rightarrow}$ be the canonical $\pi_1$ generators of $\mathbb{T}^2$, based at the origin. Then, their projections $\gamma^{\downarrow}_{\sim}, \gamma^{\rightarrow}_{\sim}$ to the quotient, are a $\pi_1$ generator of $\textnormal{T}^2$. 
	\end{remark}
	
	\begin{remark}[\textbf{Canonical torus model}]\label{rem:TorusModel}
		Our new torus $\textnormal{T}^2$ is topologically \textcolor{black}{homeomorphic} to a canonical torus $\mathbb{S}^1 \times \mathbb{S}^1$. \textcolor{black}{We can further choose the homeomorphism $h:\textnormal{T}^2\to\T^2$ such that $h(\gamma^{\dar}_{\sim}) = \gamma^{\dar}$, and $h(\gamma^{\rar}_{\sim}) = \gamma^{\rar}$ {(that is, the action in the fundamental group induced by $h$ with the natural identifications, is the identity).}}   
	\end{remark} 
	
	\subsection{Quick dynamical consequences}
	
	We will start by stating some properties inherited by the essential factor $g$, due to the structure of $f$. The following is a purely topological fact. 
	
	\begin{claim}\label{claim:quotientdynamicallyunboundedcontinua}
		A continuum $K$ has $\mathscr{D}_f(K) = \infty$ if and only if its projection $K_{\sim}$ has $\mathscr{D}_g(K_\sim)=\infty$. 
	\end{claim}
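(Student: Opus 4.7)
The statement is essentially a purely metric/topological consequence of the construction of $\sim$, so the proof should amount to showing that the quotient map does not distort diameters of continua by more than a uniform additive constant. The plan is to transfer everything to the universal covers, where diameters are actually measured, and then exploit Lemma \ref{lem:UniformlyBoundedClasses}, which gives a uniform upper bound $M$ on the diameter of every equivalence class $[\widehat z]_{\widehat f}$ in $\R^2$.

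First I would fix a lift $\widehat K \subset \R^2$ of $K$ and the lift $\widehat \pi_\sim : \R^2 \to \R^2$ of the quotient projection $\pi_\sim$, chosen so that $\widehat \pi_\sim$ semiconjugates $\widehat f$ to $\widehat g$. Then $\widehat K_\sim := \widehat \pi_\sim(\widehat K)$ is a lift of $K_\sim$, and by the semiconjugacy relation $\widehat g^{\,j}(\widehat K_\sim) = \widehat\pi_\sim(\widehat f^{\,j}(\widehat K))$ for all $j \in \Z$. It therefore suffices to compare $\mathrm{diam}(\widehat f^{\,j}(\widehat K))$ with $\mathrm{diam}(\widehat\pi_\sim(\widehat f^{\,j}(\widehat K)))$ uniformly in $j$.

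The core estimate I would prove is: for \emph{any} continuum $\widehat C \subset \R^2$,
\[
\mathrm{diam}(\widehat C) - 2M \;\leq\; \mathrm{diam}(\widehat\pi_\sim(\widehat C)) \;\leq\; \mathrm{diam}(\widehat C).
\]
The upper bound follows from the definition of the quotient metric as $\widehat d_\sim([\widehat a],[\widehat b]) = \min\{d(\widehat a',\widehat b') : \widehat a' \sim \widehat a,\; \widehat b' \sim \widehat b\}$, which makes $\widehat\pi_\sim$ distance non-increasing. For the lower bound, given $\widehat z, \widehat w \in \widehat C$ with $d(\widehat z,\widehat w)$ close to $\mathrm{diam}(\widehat C)$, and any $\widehat z' \in [\widehat z]_{\widehat f}$, $\widehat w' \in [\widehat w]_{\widehat f}$, the triangle inequality combined with Lemma \ref{lem:UniformlyBoundedClasses} yields $d(\widehat z',\widehat w') \geq d(\widehat z,\widehat w) - 2M$; taking the infimum over such representatives gives $\widehat d_\sim(\widehat\pi_\sim(\widehat z), \widehat\pi_\sim(\widehat w)) \geq d(\widehat z,\widehat w) - 2M$, whence the claim.

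Applying this estimate to $\widehat C = \widehat f^{\,j}(\widehat K)$ for every $j$, we obtain
\[
\bigl|\mathscr{D}_f(K) - \mathscr{D}_g(K_\sim)\bigr| \;\leq\; 2M
\]
in the extended sense, so one supremum is infinite if and only if the other is, which is precisely the claim. I expect the only potentially delicate point to be confirming that the lifted quotient metric $\widehat d_\sim$ on the universal cover of $\mathrm{T}^2$ really does agree with the $\min$-formula locally (so that the lower bound argument applies for pairs of points whose classes are far apart in the cover); this follows because equivalence classes in $\R^2$ are just the connected components of the preimage of equivalence classes in $\T^2$, which are inessential (Corollary \ref{cor:ClassesAreInessential}) and hence lifted homeomorphically.
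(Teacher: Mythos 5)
Your core idea is sound and genuinely different from the paper's argument. The paper never touches the quotient metric: it characterizes $\mathscr{D}_f(K)=\infty$ topologically, as the existence of iterates of $K$ meeting arbitrarily many lifts of the generators $\gamma^{\downarrow},\gamma^{\rightarrow}$, and observes that $\pi_{\sim}$ carries these generators to generators of $\mathrm{T}^2$ with inessential fibres, so the same crossing counts hold for the iterates of $K_{\sim}$. You instead prove a quantitative bounded-distortion estimate for the lifted semiconjugacy, using the uniform bound $M$ on class diameters from Lemma \ref{lem:UniformlyBoundedClasses}. That is a legitimate alternative, and it can be made even more robust: after the identification of Remark \ref{rem:TorusModel}, the displacement $\widehat z\mapsto \widehat\pi_{\sim}(\widehat z)-\widehat z$ of the lifted quotient map is continuous and $\Z^2$-periodic, hence uniformly bounded, which gives your sandwich estimate directly in the metric actually used to measure $\mathscr{D}_g$, without ever deciding what the lifted quotient "metric" is. This matters because the delicate point you flag is real: the formula $\mathrm{d}_{\sim}([z],[w])=\min\{d(z',w')\}$ need not satisfy the triangle inequality for a decomposition into continua, and the diameters entering $\mathscr{D}_g$ are measured in the canonical model of Remark \ref{rem:TorusModel}, not with $\widehat{\mathrm{d}}_{\sim}$; the bounded-displacement formulation sidesteps this entirely.

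One further imprecision to repair: $\widehat\pi_{\sim}(\widehat f^{\,j}(\widehat K))$ is a connected subset of the preimage of $g^j(K_{\sim})$, but not necessarily a full connected component, and $\mathrm{diam}(g^j(K_{\sim}))$ is by definition the diameter of a component. Your lower bound is therefore fine (it only gets stronger), so the implication $\mathscr{D}_f(K)=\infty\Rightarrow\mathscr{D}_g(K_{\sim})=\infty$ goes through; but your upper bound only controls the image set, not the component, so it does not by itself give $\mathscr{D}_g(K_{\sim})=\infty\Rightarrow\mathscr{D}_f(K)=\infty$. That direction, however, is immediate from the definition of $\sim_f$: if $\mathscr{D}_f(K)<\infty$ then $K$ itself is a dynamically bounded continuum containing any two of its points, so $K$ lies in a single equivalence class, $K_{\sim}$ is a point, and $\mathscr{D}_g(K_{\sim})=0$. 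With these two adjustments your argument is complete and correct.
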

	
	\begin{proof}
		Having $\mathscr{D}_f(K) = \infty$ is equivalent to saying that there exists a sequence $\{n_j\}_{j \to \infty}$ such that $f^{n_j}(K)$ intersects a growing-to-infinity number of lifts $\widehat{\gamma}$ of either $\gamma^{\downarrow}$ or $\gamma^{\rightarrow}$. in turn, this is equivalent to having that $g^{n_j}(K_{\sim})$ intersects a growing-to-infinity number of lifts $\widehat{\gamma}_{\sim}$ of either $\gamma^{\downarrow}_{\sim}$ or $\gamma^{\rightarrow}_{\sim}$, which is equivalent to having $\mathscr{D}_g(K_{\sim}) = \infty$ 
	\end{proof}
	
	Note that our quotient has not only preserved the topological space, it has also preserved free homotopy classes of curves. We may also send the isotopy downstairs and note that 
	
	\begin{remark}\label{rem:IsotopicToId}
		The essential factor $g$ is isotopic to the identity. 
	\end{remark}
	
	\paragraph{\textbf{Rotation in the quotient.}} 
	Recall that from the classical definition, given a standard plane torus $\mathbb{T}^2$, we say that $v = (v_1,v_2) \in \rho(\widehat{f})$ if and only if there exist $\wh z_k \in \R^2$, $n_k \to + \infty$ such that 
	$$ \lim \limits_{k \to \infty} \frac{\widehat{f}^{n_k}(\widehat{z}_k) - \widehat{z}_k}{n_k} = v \ $$
	
	Note that this is equivalent to \textit{counting the intersections} with the canonical generators $\gamma^{\downarrow}, \gamma^{\rightarrow}$, that is, if we define $\widehat{\gamma}_k$ to be a curve from $\widehat{z}_k$ to $\widehat{f}^{n_k}(\widehat{z}_k)$ in general position respect to the infinite lifts $\widehat{\gamma}^{\downarrow}, \widehat{\gamma}^{\rightarrow}$, and take $\gamma_k = \wh \pi(\wh \gamma_k)$ its projection to the torus, then  
	\begin{equation}\label{equation:rotationvectorequivalent}
		\lim \limits_{k \to \infty} \frac{\widehat{f}^{n_k}(\widehat{z}_k) - \widehat{z}_k}{n_k} = v \  \iff \ \lim \limits_{k \to \infty}  \frac{\gamma^{\downarrow} \wedge \gamma_k}{n_k} = v_1; \  \lim \limits_{k \to \infty} \frac{\gamma^{\rightarrow} \wedge \gamma_k}{n_k} = v_2	
	\end{equation}

	where $\wedge$ denotes the classical intersection number with sign between two curves. This allows us to understand rotation, which is more of a topological quantity than a metric one, for a homeomorphism of any torus, in particular for $\text{T}^2$. Note that this notion is well defined, as the quotient yields the same result for the whole dynamically bounded class of $\widehat{z}_k$.

	\begin{proposition}\label{prop:elementprop.essfactor}
		Let $\wh f$ be a lift of $f \in \mathrm{Homeo}_{0}(\mathbb{T}^2)$ with $\textnormal{int}(\rho(f)) \neq \varnothing$, and let $g$ be its essential factor. Then,
		
		\begin{enumerate} 
			\item There exists a lift $\wh g$ of $g$ such that $\rho(\wh g) = \rho(\wh f)$.
			\item $\mathscr{D}_g(K_{\sim}) = \infty$ for every nontrivial continuum $K_{\sim} \subset \mathrm{T}^2$.
		\end{enumerate}
	\end{proposition}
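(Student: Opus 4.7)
The plan for part (1) is to construct the lift $\wh g$ through a lift of the quotient map and then exploit the boundedness of the correction term. Using the identification of Remark \ref{rem:TorusModel}, I view both $\T^2$ and $\mathrm{T}^2$ as $\R^2/\Z^2$; the monotone quotient $\pi_\sim$ is then homotopic to the identity (it preserves the canonical generators of $\pi_1$), so it lifts to a $\Z^2$-equivariant continuous surjection $\wh\pi_\sim:\R^2\to\R^2$ which can be written as $\wh\pi_\sim(\wh z)=\wh z+P(\wh z)$ with $P$ a $\Z^2$-periodic (hence bounded) continuous function. Because $\wh f$ permutes the $\sim_{\wh f}$-classes and these are precisely the fibres of $\wh\pi_\sim$, the formula $\wh g(\wh\pi_\sim(\wh z)):=\wh\pi_\sim(\wh f(\wh z))$ unambiguously defines a homeomorphism $\wh g$ of $\R^2$ that lifts $g$.

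To compare rotation sets I will iterate this identity: for every $\wh z\in\R^2$ and every $n\geq 1$,
\[
\wh g^n(\wh\pi_\sim(\wh z))-\wh\pi_\sim(\wh z)=\bigl(\wh f^n(\wh z)-\wh z\bigr)+\bigl(P(\wh f^n(\wh z))-P(\wh z)\bigr),
\]
and the last term is bounded by $2\|P\|_\infty$. Dividing by $n$ and passing to the limit along any sequence realising a vector in $\rho(\wh f)$ yields the inclusion $\rho(\wh f)\subset\rho(\wh g)$. Conversely, given a sequence $(\wh w_k,n_k)$ realising a vector $v\in\rho(\wh g)$, surjectivity of $\wh\pi_\sim$ lets me pick lifts $\wh z_k$ with $\wh\pi_\sim(\wh z_k)=\wh w_k$, and the same identity shows that the same $v$ is realised by $(\wh z_k,n_k)$ for $\wh f$. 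Hence $\rho(\wh g)=\rho(\wh f)$.

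For part (2), let $K_\sim\subset\mathrm{T}^2$ be a non-trivial continuum. I take two distinct points in $K_\sim$, which correspond to two $\sim_f$-inequivalent points $z,z'\in\T^2$. I set $K:=\pi_\sim^{-1}(K_\sim)$; since $\pi_\sim$ is a quotient map between compact spaces with connected fibres, $K$ is a continuum containing both $z$ and $z'$. By the very definition of $\sim_f$, no continuum containing two non-equivalent points can be dynamically bounded, so $\mathscr{D}_f(K)=\infty$. Applying Claim \ref{claim:quotientdynamicallyunboundedcontinua} with this $K$ gives $\mathscr{D}_g(K_\sim)=\mathscr{D}_g(\pi_\sim(K))=\infty$, as desired.

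The only slightly delicate step is the set-up of $\wh\pi_\sim$ in part (1): one has to verify that the chosen model for $\mathrm{T}^2$ really does make $\pi_\sim$ homotopic to the identity so that $\wh\pi_\sim$ can be taken equivariant with the same deck group, and that the induced $\wh g$ agrees with some honest lift of $g$. Once equivariance is in place, the periodic-correction argument is routine and both inclusions between the rotation sets follow immediately; part (2) is then almost formal given Claim \ref{claim:quotientdynamicallyunboundedcontinua}.
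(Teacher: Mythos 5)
Your proposal is correct, and item (2) is essentially the paper's own argument: you take $K=\pi_\sim^{-1}(K_\sim)$, note it is a continuum containing two $\sim_f$-inequivalent points, hence dynamically unbounded for $f$, and transfer this to $g$ via Claim \ref{claim:quotientdynamicallyunboundedcontinua}; you are simply writing out the sentence the paper leaves as an assertion. For item (1) you take a genuinely different route. The paper never lifts the semiconjugacy: it uses the reformulation of rotation vectors as asymptotic algebraic intersection numbers with the generators $\gamma^{\dar},\gamma^{\rar}$ (Equation \ref{equation:rotationvectorequivalent}) and the observation that $\pi_\sim$ preserves intersection numbers of curves, which gives $\rho(\wh g)=\rho(\wh f)$ with no metric estimate and no choice of lift of $\pi_\sim$. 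You instead fix the canonical model of Remark \ref{rem:TorusModel}, note that $\pi_\sim$ then induces the identity on $\pi_1$ and is therefore homotopic to the identity, lift it to a $\Z^2$-equivariant map $\wh\pi_\sim=\mathrm{Id}+P$ with $P$ periodic and bounded, and compare displacements through $\wh g^{\,n}\circ\wh\pi_\sim=\wh\pi_\sim\circ\wh f^{\,n}$ up to the error $2\|P\|_\infty$. Both arguments are sound; the paper's stresses that rotation is a purely topological quantity, while yours buys an explicit lift $\wh g$ together with the uniform bound on $\wh\pi_\sim-\mathrm{Id}$, the same kind of control that underlies Claim \ref{claim:quotientdynamicallyunboundedcontinua}. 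The one step you flag but should actually write out is that the fibres of $\wh\pi_\sim$ are precisely the classes $[\wh z]_{\wh f}$: each class is connected and is sent by $\wh\pi_\sim$ into a fibre of the covering projection of the quotient torus, which is discrete, hence to a single point, and by equivariance together with the fact that a class is disjoint from its non-trivial integer translates (Corollary \ref{cor:ClassesAreInessential}) distinct translates go to distinct points, so each fibre of $\wh\pi_\sim$ is exactly one class. This is what makes $\wh g$ well defined, bijective, and an honest ($\Z^2$-commuting) lift of $g$; with that verification in place the rest of your argument is complete.
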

	
	\begin{proof}
		\begin{enumerate}
			\item As we have just explained above, $v = (v_1,v_2) \in \rho(\wh g)$ if and only if $$\lim \limits_{k \to \infty}  \frac{\gamma^{\downarrow}_{\sim} \wedge \gamma_{\sim,k}}{n_k} = v_1; \  \lim \limits_{k \to \infty} \frac{\gamma^{\rightarrow}_{\sim} \wedge \gamma_{\sim,k}}{n_k} = v_2$$
			as in Equation \ref{equation:rotationvectorequivalent} which lets us obtain the desired result given that the quotient $\pi_{\sim}$ preserves the intersection number between curves, that is, 
			$$\gamma^{\downarrow} \wedge \gamma_k = \gamma^{\downarrow}_{\sim} \wedge \gamma_{\sim,k}; \ \gamma^{\rightarrow} \wedge \gamma_k = \gamma^{\rightarrow}_{\sim} \wedge \gamma_{\sim,k}.$$
			
			\item This is immediate from Claim \ref{claim:quotientdynamicallyunboundedcontinua} and the fact that  the preimage $\pi_{\sim}^{-1}(K_{\sim})$ of any nontrivial continuum $K_{\sim} \subset \textnormal{T}^2$ is dynamically unbounded.  
			
		\end{enumerate}
	\end{proof}
	
	From now on, and having taken the canonical model we already explained, we will abuse notation and write $\mathbb{T}^2$ for $\textnormal{T}^2$, and similarly for the coverings $\mathbb{R}^2$ and $\tilde{\mathbb{D}}$. We will also eliminate the subindex $\sim$ and write $K$ instead of $K_{\sim}$ for continua, $z$ instead of $z_{\sim}$ for points, and similarly for their lifts.

	Let us now prove that $g$ is tight, which follows from the fact that $g$ is continuum-wise expansive. {\color{black} For that proof, we will use a result from Continuum Theory, \cite[Theorem 5.6]{nadler92continuum}. We will adapt the notation to our context
		
		\begin{lemma}[Boundary Bumping Theorem II, \cite{nadler92continuum}]\label{lem:nadler}
			Let $S$ be a surface, let $K \subset S$ be a continuum, let $U \subset S$ be an open set such that $K \cap U \neq \varnothing$, $K \cap U^{\mathrm{C}} \neq \varnothing$, and let $z \in K \cap U$ be a point. Let $X$ be the connected component of $K \cap U$ which contains $z$.
			
			Then, $\overline{X} \cap \partial U \neq \varnothing$. 
		\end{lemma}
		
		\begin{corollary}\label{cor:SplittingContinua}
			Let $\varepsilon>0$ and let $K \subset \T^2$ be a continuum with $\mathrm{diam}(K) > 3 \varepsilon$. Then, there exist two subcontinua $K_0, K_1 \subset K$ such that 
			
			\begin{itemize}
				\item $\mathrm{diam}(K_0) \geq \varepsilon$, $\mathrm{diam}(K_1) \geq \varepsilon$
				\item $\forall \ z_0 \in K_0$ we have $\mathrm{d}(z_0, K_1) > \varepsilon$, and $\forall \ z_1 \in K_1$ we have $\mathrm{d}(z_1, K_0) > \varepsilon$. 
			\end{itemize}
		\end{corollary}
		
		\begin{proof}
			Take $z, z' \in K$ such that $\mathrm{d}(z,z') > 3 \varepsilon$. Let $U_0$ be the open disk centered in $z$, with radius equal to $\varepsilon$, and define $K_0$ as the closure of the connected component of $K \cap U_0$ which contains $z$. Define $U_1$ and $K_1$ in a similar fashion, using $z'$. 
			
			Now, the sets $K_0$ and $K_1$ {\color{black}satisfy} the second property because $K_0 \subset \overline{U_0}$, $K_1 \subset \overline{U_1}$, and they {\color{black}satisfy} the first property because of Lemma \ref{lem:nadler}, which concludes the proof. 
		\end{proof}
		
	}
	
	{\bck{Recall the definition of the two-sided entropy $h_{\pm}(g,K)$ for $g$, carried by a continuum $K$, given in Equation \ref{eq:TwoSidedEntropyK}.}}
	
	\begin{proposition}\label{prop:Tight}
		For each nontrivial continuum $K \subset \T^2$ we have that $h_{\pm}(g,K) > 0$.
	\end{proposition}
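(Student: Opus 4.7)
The plan is to leverage Proposition~\ref{prop:elementprop.essfactor}(2), which states that $g$ is infinitely continuum-wise expansive: $\mathscr{D}_g(L)=\infty$ for every non-trivial continuum $L\subset\T^2$. Since lifted diameter equals torus-diameter for (connected) inessential continua, any continuum with unbounded lifted diameter must eventually become essential under iteration; and essential continua have torus-diameter bounded below by some fixed constant $c_0>0$ (for instance, half the length of the shortest essential closed curve of $\T^2$). Consequently $g$ is continuum-wise expansive in the classical sense with expansivity constant $c_0$: no non-trivial continuum has its full orbit contained in sets of torus-diameter at most $c_0$.

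Fixing a non-trivial continuum $K\subset\T^2$, the plan is to follow a Kato-type argument (cf.\ \cite{jana04}). Take a finite open cover $\mathcal{U}$ of $\T^2$ by sets of diameter less than $c_0/2$, and consider the refined covers $\mathcal{U}_n=\bigvee_{k=0}^n g^{-k}\mathcal{U}$, whose atoms have diameter less than $c_0/2$ in the Bowen metric $d_n$. Continuum-wise expansivity with constant $c_0$ forbids any non-trivial subcontinuum of $K$ from being contained in a single atom of $\mathcal{U}_n$ for all large $n$: otherwise, a standard compactness argument in the hyperspace of subcontinua of $K$ would produce a non-trivial continuum $L\subset K$ with $\sup_{j\ge 0}\mathrm{diam}_{\T^2}(g^j(L))\le c_0/2$, contradicting CWE. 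This forces the number of $\mathcal{U}_n$-atoms meeting $K$ to grow exponentially in $n$, and via the standard Bowen duality between covering and separated numbers, $s_n(\varepsilon,K)\ge \alpha^n$ for some $\alpha>1$ and $\varepsilon>0$, yielding $h(g,K)>0$.

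Finally, the inequality $h_{\pm}(g,K)\ge h(g,K)>0$ follows at once: since $d_{\pm n}\ge d_n$, every $(n,\varepsilon)$-separated subset of $K$ is automatically $(\pm n,\varepsilon)$-separated, giving $s_{\pm n}\ge s_n$ and hence $h_{\pm}\ge h$. The main technical obstacle I foresee is the passage from the qualitative statement ``no non-trivial subcontinuum of $K$ stays in one $\mathcal{U}_n$-atom for $n$ large'' to the quantitative exponential growth of the covering number of $K$; this is handled by Kato's classical argument, exploiting that $K$ itself has positive $d_0$-diameter and that the sequence of covers $\mathcal{U}_n$ is nested and dynamically refined by $g$.
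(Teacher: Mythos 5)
Your reduction of infinite continuum-wise expansiveness to classical continuum-wise expansiveness with a constant $c_0$ is fine, but the central step of your Kato-type argument does not hold. You claim that CWE forbids a non-trivial subcontinuum $L\subset K$ with $\sup_{j\ge 0}\mathrm{diam}(g^j(L))\le c_0/2$. Continuum-wise expansiveness (and the property $\mathscr{D}_g(L)=\infty$ from Proposition \ref{prop:elementprop.essfactor}) is a \emph{two-sided} condition, a supremum over $j\in\Z$; a continuum whose \emph{forward} iterates all stay small is not a contradiction at all --- it is precisely a stable continuum, and the paper shows (Lemma \ref{lem:NonemptyStableSets}, via Kato/Rodr\'iguez Hertz) that every point of $\T^2$ carries a non-trivial $\varepsilon$-stable continuum. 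So your claim that no non-trivial subcontinuum of $K$ stays in a single atom of $\mathcal{U}_n=\bigvee_{k=0}^{n}g^{-k}\mathcal{U}$ fails, for instance when $K$ itself is a stable continuum; in that case the forward quantity $h(g,K)$ you are trying to bound from below is expected to vanish (compare a stable segment of a linear Anosov map, whose $(n,\varepsilon)$-separated numbers stay bounded as $n\to+\infty$), so no forward-only argument can prove the proposition for all non-trivial $K$. Your closing inequality $h_{\pm}(g,K)\ge h(g,K)$ is correct, but it transfers positivity in the direction you cannot establish.

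The paper's proof repairs exactly this point by working two-sidedly from the start: a compactness argument in the hyperspace, combined with $\mathscr{D}_g\equiv\infty$, yields for each $\varepsilon>0$ an $n_\varepsilon$ such that every continuum of diameter at least $\varepsilon$ has an iterate $g^{n_K}(K)$ with $-n_\varepsilon\le n_K\le n_\varepsilon$ of diameter greater than $3\varepsilon$; one then splits $g^{n_K}(K)$ into two subcontinua of diameter $\varepsilon$ lying at distance greater than $\varepsilon$ and iterates the construction, producing $2^j$ points of $K$ that are $(\pm jn_\varepsilon,\varepsilon)$-separated, hence $h_{\pm}(g,K)>0$ directly --- with no claim about the forward entropy $h(g,K)$. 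If you wish to keep a cover-counting formulation, you must refine by $\bigvee_{k=-n}^{n}g^{-k}\mathcal{U}$; then your hyperspace compactness argument does produce a non-trivial continuum with small diameter for all $j\in\Z$, which genuinely contradicts continuum-wise expansiveness, and the exponential growth you obtain is that of the two-sided separated numbers $s_{\pm n}(\varepsilon,K)$, as required by the statement.
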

	
	\begin{proof}
		Let us first prove that for every $\varepsilon > 0$, there exists $n_{\varepsilon} > 0$ such that: for every nontrivial continuum $K \subset \T^2$ with $\mathrm{diam}(K) \geq \varepsilon$, there exists $-n_{\varepsilon} \leq n_K \leq n_{\varepsilon}$ such that 
		\begin{equation}\label{eq:ContinuumWise}
			\mathrm{diam}(g^{n_K}(K)) > 3 \varepsilon
		\end{equation}

		Suppose by contradiction that is not true. Then, there would exist a sequence of continua $\{K_n\}_{n \in \mathbb{N}}$, such that for every $n \in \mathbb{N}$ we have that
		\[ \mathrm{diam}(K_n) \geq \varepsilon, \ \mathrm{diam}(g^j(K_n)) \leq 3 \varepsilon, \text{ for every } -n \leq j \leq n \] 
		In this case, up to taking a subsequence we may assume that $K_n \to K$ in the Hausdorff topology, {\bck{with $\textnormal{diam}(K) \geq \varepsilon$}}. Then by uniform continuity of $\wh g$ we would have that 
		\[ \mathscr{D}_g(K) \leq 3\varepsilon,\]
		which contradicts the fact that $g$ is infinitely continuum-wise expansive (see Proposition \ref{prop:elementprop.essfactor}). 
		
		Let us now use the notation from Equation \ref{eq:ContinuumEntropy}, and prove that for small values of $\varepsilon$, the number {\bck{$s_{\pm n}(\varepsilon, K)$}} grows exponentially, that is,
		{\bck{\[\limsup_{n \to +\infty} \frac{1}{n} \mathrm{log}(s_{\pm n}(\varepsilon,K)) > 0.\]}}
		
		We may assume that $\mathrm{diam}(K) \geq \varepsilon$ up to taking an iterate by some power of $g$. It is now enough to prove that for every $j \in \Z^+$ we have that
		\begin{equation}\label{eq:Separated}
			s_{\pm jn_{\varepsilon}}(\varepsilon,K) \geq 2^j
		\end{equation}

		Take $n_K$ from Equation \ref{eq:ContinuumWise}. Given that $\mathrm{diam}(g^{n_K}(K)) > 3\varepsilon$, we may take two subcontinua $K_0, K_1 \subset g^{n_K}(K)$ 
		{\color{black} as in Corollary \ref{cor:SplittingContinua}.}
		This means that we can take $z_0, z_1 \in K$ which are $(n_{\varepsilon}, \varepsilon)$-separated, that is {\bck{$s_{\pm n_{\varepsilon}}(\varepsilon, K) \geq 2$}}. 
		
		Reiterating this process for $K_0$ and $K_1$ {\color{black}and reusing Corollary \ref{cor:SplittingContinua},} we obtain two respective subcontinua $K_{00}, K_{01} \subset g^{n_{K_0}}(K_0)$, $K_{10}, K_{11} \subset g^{n_{K_1}}(K_1)$ of an iterate of each of them, such that points in $K_{00}$ and points in $K_{01}$ are at distance greater than $\varepsilon$ from each other, and similarly for points in $K_{10}$ with respect to points in $K_{11}$. This allows us to take four points $z_{00}, z_{01}, z_{10}, z_{11}$ in $K$ which are $(2n_{\varepsilon}, \varepsilon)$-separated. By repeating this process we obtain the equality from Equation \ref{eq:Separated}, which concludes the proof.  
		
	\end{proof}

	Guided by the two properties held by the essential factor which were proved in Proposition \ref{prop:elementprop.essfactor}, we shall introduce the following notion. 
	
	\begin{definition}\label{def:FullyChaotic}
		We will say a (torus) homeomorphism $g \in \mathrm{Homeo}_0(\T^2)$ is \emph{frice} (i.e. fully rotational, infinitely continuum-wise expansive) if 
		\begin{itemize} 
			\item $\mathrm{int}(\rho(g)) \neq \varnothing$,
			\item $\mathscr{D}_g(K) = \infty$ for every nontrivial continuum $K \subset \T^2$.	
		\end{itemize}
	\end{definition}
	
	By Proposition \ref{prop:elementprop.essfactor} and Remark \ref{rem:IsotopicToId}, we have that the essential factor of a torus homeomorphism in the General Hypothesis, is frice. Moreover, if we take a frice torus homeomorphism, then its essential factor will be the map itself, because the equivalence classes for $\sim$ will be trivial. We then have that 
	
	\begin{remark}
		A torus homeomorphism $g$ is frice, if and only if it is the essential factor of a torus homeomorphism $f$ in the General Hypothesis.  
	\end{remark}
	
	For the sake of clarity, we will generally state future results for the essential factor by using the definition of frice (torus) homeomorphism instead. 
	
	Throughout the following sections, we will use some of the techniques and results we developed in Sections \ref{sec:CFS} and \ref{section:stretching} for the context of frice homeomorphisms, as they satisfy the General Hypothesis. Keep in mind that in this context, any essential continuum is dynamically unbounded both to the past and to the future, therefore, all the anchoring techniques will be done for inessential continua.

	\section{Stable and unstable sets}\label{sec:StableSets}
	
	This brief section is devoted to introducing and studying the following three stability-related notions, which will be strongly used in Sections \ref{section:RotMixing} and \ref{sec:HPR}:
	
	\begin{itemize}
		\item Weakly stable and weakly unstable continua,  
		\item Stable and unstable sets of a point, 
		\item Stable and unstable continua.
	\end{itemize}
	
	\subsection{Weak stability}
	
	\begin{definition}
		Let $\wh f$ be a lift to $\R^2$ of a homeomorphism $f$ in the General Hypothesis. We will say that a continuum $\wh K$ is \textit{weakly stable} for $\wh f$ if there exists $n < 0$ such that $\wh f^{n} (\wh K)$ is \textit{s}-anchored, and analogously it is \textit{weakly unstable} for $\wh f$ if there exists $n > 0$ such that $\wh f^{n} (\wh K)$ is \textit{u}-anchored.
		
		In this context, the projection $K = \wh \pi(\wh K)$ to the torus will also be called respectively a weakly stable (unstable) continuum for $f$. 
	\end{definition}
	
	We will generally use $\wh K^{ws}$ and $\wh K^{wu}$ for weakly stable and weakly unstable continua of the plane, respectively.

	We saw in Proposition \ref{prop:elementprop.essfactor}, that the dynamical diameter for a frice torus homeomorphism $g$, of every nontrivial continuum, is infinite. This fact, together with the structure of canonically foliated strips we built in Section \ref{section:stretching} applied for $g$, gives us the following:

	\begin{lemma}\label{rem:EveryContinuumIsWeaklySomething}
		Let $\wh g$ be a lift to $\R^2$ of a frice torus homeomorphism $g$. 
		
		Then, any nontrivial continuum $\wh K \subset \R^2$ is either weakly stable or weakly unstable for $\wh g$ (it may very well be both).
	\end{lemma}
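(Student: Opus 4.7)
The plan is to combine the fact that every non-trivial continuum in $\R^2$ has infinite dynamical diameter under $\wh g$ with the anchoring machinery of Section \ref{section:stretching}. The first step is to verify the diameter reduction: for any non-trivial continuum $\wh K \subset \R^2$, $\sup_{j \in \Z} \mathrm{diam}(\wh g^j(\wh K)) = \infty$. If $\wh K$ meets some deck translate $\wh K + v$ with $v \in \Z^2 \setminus \{0\}$, this follows directly from the contrapositive of Lemma \ref{lem:DynBoundedSetsAreInessential} (which is legitimate for $\wh g$ since $g$, being frice, satisfies the General Hypothesis up to a choice of lift). Otherwise $\wh\pi$ restricts to a homeomorphism on $\wh K$; by $\wh g$-equivariance the same remains true for every iterate, so $\wh g^j(\wh K)$ is a full connected component of $\wh \pi^{-1}(g^j(K))$ with $K = \wh\pi(\wh K)$, and hence $\mathrm{diam}(\wh g^j(\wh K)) = \mathrm{diam}(g^j(K))$. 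The frice property (Proposition \ref{prop:elementprop.essfactor}) now yields $\mathscr{D}_g(K) = \infty$, so the supremum of planar diameters is infinite.

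Next I would fix the constant $M_3$ produced by Lemma \ref{lemma:ThereExistsM_n_o} and choose $m \in \Z$ with $\mathrm{diam}(\wh g^m(\wh K)) > \sqrt{2}\, M_3$. This forces either $\mathrm{Vdiam}(\wh g^m(\wh K)) > M_3$ or $\mathrm{Hdiam}(\wh g^m(\wh K)) > M_3$, and in either case Lemma \ref{lemma:ThereExistsM_n_o} (together with its horizontal/vertical analogs mentioned in the remark right after its statement) produces a canonically foliated strip $\wh A \in \wh{\mathcal{A}}$ such that $\wh g^m(\wh K)$ is $3$-semianchored to $\wh A$.

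The final step is to apply Remark \ref{rem:AnchoredToThatParticularLeaf} to this $3$-semianchor: one of the two alternatives must occur. Either $\wh g^{m+j}(\wh K)$ is $u$-anchored to $\wh A$ for every $j \geq 5$, and picking $j \geq \max\{5,\, 1-m\}$ gives an integer $n := m+j > 0$ with $\wh g^n(\wh K)$ $u$-anchored, showing $\wh K$ is weakly unstable; or $\wh g^{m+j}(\wh K)$ is $s$-anchored to $\wh A$ for every $j \leq -2$, and picking $j \leq \min\{-2,\, -1-m\}$ gives an integer $n := m+j < 0$ with $\wh g^n(\wh K)$ $s$-anchored, showing $\wh K$ is weakly stable.

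The proof is largely an orchestration of previously established facts, so no truly difficult step arises. The only mildly delicate point is the initial diameter reduction, where one has to relate the toral dynamical diameter $\mathscr{D}_g(K)$ (known to be infinite from the frice property) to the planar dynamical diameter of $\wh K$; the essential/inessential dichotomy for the projection $K$ is handled, respectively, by Lemma \ref{lem:DynBoundedSetsAreInessential} and by the injectivity of $\wh\pi$ on $\wh K$.
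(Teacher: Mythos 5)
Your argument is correct and is essentially the paper's proof: the frice property gives infinite dynamical diameter, a sufficiently large iterate is then $3$-semianchored and hence anchored (this is exactly what Remark \ref{rem:LargeThenWillBeAnchored} packages), and the anchored iterate witnesses weak stability or instability; your additional care about the sign of the time index and about relating $\mathscr{D}_g(K)$ to the planar diameters of $\wh g^j(\wh K)$ only makes explicit what the paper leaves implicit. The one thing to add is the paper's opening normalization --- up to taking a power of $g$ and changing the lift one assumes $\vec{0} \in \mathrm{int}(\rho(\wh g))$ --- since the CFS/anchoring machinery you invoke (Lemma \ref{lemma:ThereExistsM_n_o}, Remark \ref{rem:AnchoredToThatParticularLeaf}) is only set up under that hypothesis.
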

	
	\begin{proof}
		Up to taking a power and changing the lift, we assume that $\vec{0} \in \mathrm{int}(\rho(\wh g))$. Let $\wh K \subset \R^2$ be a nontrivial continuum. By Proposition \ref{prop:elementprop.essfactor}, we know that $\mathscr{D}_{\wh g}(\wh K) = \infty$. In particular, there exists {\bck{$j_0 \in \Z$}} such that $\mathrm{diam}(\wh g^{j_0}(\wh K)) > M'$, where $M'$ is taken as in Remark \ref{rem:LargeThenWillBeAnchored}. By this same remark, we obtain that there exists $j_0 \leq j \leq j_0+3$ such that $\wh g ^j(\wh K) \text{ is anchored,}$
		which concludes the proof. 
	\end{proof}

	\subsection{Shape and size}
	
	\begin{definition}
		Let $\wh f$ be a lift to $\R^2$ of a homeomorphism $f$ in the General Hypothesis, and let $\wh z \in \R^2$. Then,
		
		\begin{enumerate}
			\item The \textit{$\varepsilon$-stable set of $\wh z$ for $\wh f$}, is defined as 
			$$ {\color{black}X^{s}_{\varepsilon,f}(\wh z)} = \{\wh z' \in \R^2 \ : \ \text{d}(\wh f^j(\wh z'),\wh f^{j}(\wh z)) \leq \varepsilon, \text{ for every } j \geq 0\},$$ 
			\item The \textit{$\varepsilon$-unstable set of $\wh z$ for $\wh f$} is defined as
			$$ {\color{black}X^{u}_{\varepsilon,f}(\wh z)} = \{\wh z' \in \R^2 \ : \ \text{d}(\wh f^j(\wh z'),\wh f^{j}(\wh z)) \leq \varepsilon, \text{ for every } j \leq 0\},$$
			\item For $z \in \T^2, \ z = \wh \pi (\wh z)$, we define
			$$ X^{s}_{\varepsilon,f}(z)  = \wh \pi(X^{s}_{\varepsilon,f}(\wh z)), \ \ X^{u}_{\varepsilon,f}(z)  = \wh \pi(X^{u}_{\varepsilon,f}(\wh z)).$$	
			\item $K^s_{\varepsilon,f} (\wh z)$ is defined as the connected component of {\color{black}$X^{s}_{\varepsilon,f}(\wh z)$} containing $\wh z$, 
			\medskip
			
			\item $K^u_{\varepsilon,f} (\wh z)$ is defined as the connected component of {\color{black}$X^{u}_{\varepsilon,f}(\wh z)$} containing $\wh z$,
			\medskip
			
			\item For $z \in \T^2, \ z = \wh \pi (\wh z)$, we define
			$$K^s_{\varepsilon,f} (z) = \wh \pi (K^s_{\varepsilon,f} (\wh z)), \ \  K^u_{\varepsilon,f} (z) = \wh \pi (K^u_{\varepsilon,f} (\wh z)).$$			
		\end{enumerate}

	\end{definition}

	We will omit the homeomorphism as a subscript when there is no room for ambiguity. 
	\medskip
	
	{\color{black}
		\begin{definition}[\textbf{Stable set of a point}]\label{defi:StableSet}
			Let $\wh f$ be a lift to $\R^2$ of a torus homeomorphism $f$ in the General Hypothesis, and let $\wh z \in \R^2$. Then, 
			
			\begin{enumerate}
				\item the \textit{stable set of $\wh z$} for $\wh f$ is defined as $$ W^s(\wh z) = \bigcup \limits_{\varepsilon > 0} K^{s}_{\varepsilon}(\wh z)$$
				\item the \textit{unstable set of $\wh z$} for $\wh f$ is defined as $$W^u(\wh z) = \bigcup \limits_{\varepsilon > 0} K^{u}_{\varepsilon}(\wh z).$$
				
			\end{enumerate}
			
		\end{definition}
	}
	
	{\color{black}
		\begin{remark}
			Neither $W^s(\wh z)$ nor $W^{u}(\wh z)$ is necessarily a continuum. {\color{black}Indeed, it could be neither bounded nor closed.}
		\end{remark}
	}
	
	Note that the orbit of every $\wh z' \in {\color{black}W^s(\wh z)}$ remains at a bounded distance from the orbit of $\wh z$. Again, for $z \in \T^2$ with $z = \wh \pi (\wh z)$, we define 
	{\color{black}\[  W^s(z) = \wh \pi (W^s(\wh z)).\] Once again, this set is not necessarily a continuum.}

	\begin{definition}[\textbf{Stable Continuum}]\label{def:StableContinuum}
		Let $\wh f$ be a lift to $\R^2$ of a torus homeomorphism $f$ in the General Hypothesis. We will say that {\color{black}a continuum} $\wh K \subset \R^2$ is a \textit{stable continuum} if {\color{black}$\wh K \subset K^s_{\varepsilon}(\wh z)$ for some $ \wh z \in \mathbb{R}^2$, $\varepsilon > 0$}, and it will be an \textit{unstable continuum} if {\color{black}$\wh K \subset K^u_{\varepsilon}(\wh z)$ for some $\wh z \in \mathbb{R}^2$, $\varepsilon > 0$}.
		
		{\color{black}We will say that a continuum $K \subset \T^2$ is a \textit{stable continuum} if $K = \wh \pi (\wh K)$, where $\wh K$ is a stable continuum (same fashion for unstable continua).}
	\end{definition}
	
	{\color{black}
	Let us state some quick consequences of this definition.
		
	\begin{lemma}\label{lem:StableDoesNotGrow}
	Let $\wh f$ be a lift of a torus homeomorphism $f$ in the General Hypothesis. If $\wh K$ is a stable continuum for $\wh f$, then $\sup \limits_{j \in \Z^+} \textnormal{diam}(\wh f^{j}(\wh K)) < + \infty$. 
	\end{lemma}
	
	\begin{proof}
		By definition $\wh K \subset \wh K^s_{\varepsilon}(\wh z)$ for some $\wh z \in \R^2, \ \varepsilon > 0$. This implies that  
		\begin{equation}\label{eq:StableContinuumFutureDynBounded}
			\sup \limits_{j \in \Z^+} \ \mathrm{diam} (\wh g^j(\wh K)) \leq 2\varepsilon,
		\end{equation}
		which concludes the proof. 
	\end{proof}
	
	\begin{corollary}\label{cor:StableNotWeaklyUnstable}
		Let $\wh K \subset \R^2$ be a stable continuum. Then, $\wh K$ is not weakly unstable.   
	\end{corollary}
	
	\begin{proof}
		Suppose by contradiction that $\wh K$ is weakly unstable. By Proposition \ref{prop:u-anchoredThenDynamicallyUnbounded}, we have that $\lim \limits_{j \to +\infty} \textnormal{diam}(\wh f^{j}(\wh K)) = + \infty$, which contradicts Lemma \ref{lem:StableDoesNotGrow} and concludes the proof. 
	\end{proof}
	
	An analogous argument shows that an unstable continuum is not weakly stable. 
	
		{\color{black}
		\begin{corollary}\label{cor:StableContinuumInessential}
			Any (un)stable continuum $K \subset \T^2$ is inessential.
		\end{corollary}
		
		\begin{proof}
			Suppose by contradiction that $K$ is essential. By Lemma \ref{lem:DynBoundedSetsAreInessential}, we would obtain that $\lim \limits_{j \to +\infty} \textnormal{diam}(f^{j}(K)) = + \infty$, {\color{black}which contradicts Lemma \ref{lem:StableDoesNotGrow}} and concludes the proof.
		\end{proof}	
	}
	}
	\medskip
	
	While the definitions in this section can be applied in a broader context, the work in what follows will be done for frice torus homeomorphisms, which we shall denote by $g$. 
	
	Recall that by Proposition \ref{prop:elementprop.essfactor}, we have that any such $g$ is continuum-wise expansive (i.e. there exists a positive uniform lower bound for the dynamical diameter of nontrivial continua), and so is any lift $\wh g$ of $g$ to $\R^2$. We may then apply a result from {\color{black}Kato, which can be found in \cite[Theorem 1.6]{kato93continuumwiseexpansive}}, and obtain the following:
	
	\begin{lemma}\label{lem:NonemptyStableSets}
		Let $g$ be a frice torus homeomorphism. Then, for any $\varepsilon > 0$, there exists $\delta > 0$ such that
		$$ \inf \limits_{z \in \T^2} \mathrm{diam}(K^s_{\varepsilon}(z)) \geq \delta, \ \inf \limits_{z \in \T^2} \mathrm{diam}(K^u_{\varepsilon}(z)) \geq \delta.$$
		
		In particular, for every $z \in \T^2$ we have that ${\color{black}W^s(z)}$ is nonempty. 
	\end{lemma}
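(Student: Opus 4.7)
The plan is to reduce the statement to the cited theorem of Rodriguez Hertz \cite{jana04}, which asserts that any continuum-wise expansive homeomorphism of a compact metric space admits a uniform positive lower bound on the diameters of its $\varepsilon$-stable and $\varepsilon$-unstable continua, for every sufficiently small $\varepsilon$. The only non-routine task will be to verify that a frice homeomorphism $g$ is continuum-wise expansive with respect to the torus metric, since \emph{a priori} the frice property only controls the lifted $\R^2$-diameter via $\mathscr{D}_g$.

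For this verification, I will argue that for any non-trivial continuum $K \subset \T^2$ one has $\sup_{j \in \Z} \mathrm{diam}_{\T^2}(g^j(K)) \geq 1/2$. The key observation is that any open ball in $\T^2$ of radius less than $1/2$ is simply connected, since its preimage in $\R^2$ splits into disjoint Euclidean balls on which $\wh \pi$ restricts to an isometry. Consequently, if some iterate $g^j(K)$ is essential, it cannot fit in such a ball and thus has torus-diameter at least $1/2$. Otherwise every iterate is inessential; if additionally all had torus-diameter below $1/2$, then each connected lift $\wh g^j(\wh K)$ would sit inside an isometrically-embedded ball, so its $\R^2$-diameter would equal its torus-diameter and would stay bounded, contradicting $\mathscr{D}_g(K) = \infty$. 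Hence $g$ is continuum-wise expansive on $\T^2$ with constant $1/2$.

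Applying Rodriguez Hertz's theorem on the compact space $\T^2$ then produces, for every sufficiently small $\varepsilon$, a $\delta > 0$ such that $\inf_{z \in \T^2} \mathrm{diam}(K^s_\varepsilon(z)) \geq \delta$, and analogously for $K^u_\varepsilon(z)$. For small $\varepsilon$ the paper's definition (projecting the connected component from the lift) coincides with the intrinsic torus-metric one, by uniform continuity of $\wh g$ together with the identification of small balls of $\T^2$ with Euclidean balls via $\wh \pi$. Extending to arbitrary $\varepsilon > 0$ is then immediate from the monotonicity $K^s_\varepsilon(z) \supset K^s_{\varepsilon'}(z)$ for $\varepsilon \geq \varepsilon'$. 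Finally, since $K^s(z) \supset K^s_\varepsilon(z)$ for every $\varepsilon$, the set $K^s(z)$ contains a continuum of diameter at least $\delta$ and is in particular nonempty. The only substantive step is the dichotomy argument bridging the paper's lifted-diameter frice property with classical continuum-wise expansiveness; I do not foresee any deeper obstacle beyond that bookkeeping.
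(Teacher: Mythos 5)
Your proposal follows essentially the same route as the paper: observe that the frice property makes $g$ (and its lift) continuum-wise expansive and then invoke Rodríguez Hertz's theorem from \cite{jana04} to get the uniform lower bound on the diameters of $K^s_{\varepsilon}$ and $K^u_{\varepsilon}$. Your additional dichotomy (essential iterate versus small inessential iterates lifting isometrically) correctly supplies the bridge between the lifted-diameter definition of $\mathscr{D}_g$ and metric continuum-wise expansiveness on $\T^2$, a bookkeeping step the paper leaves implicit.
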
 
	
	Note that this result for the diameter of stable sets has its natural analogue in $\R^2$ when applied to a lift $\wh g$ of $g$. 
	
	{\color{black}
	\begin{lemma}\label{lem:StableThenNotUnstable}
		Let $\wh g$ be a lift to $\R^2$ of a frice torus homeomorphism. Let $\wh K \subset \R^2$ be a stable continuum which is also an unstable continuum. Then $\wh K$ is trivial (i.e. it is a point).
	\end{lemma}
	
	\begin{proof}
		Since $\wh K$ is both stable and unstable, by Lemma \ref{lem:StableDoesNotGrow} we obtain that
		$$\sup \limits_{j \in \Z^+} \textnormal{diam}(\wh f^{j}(\wh K)) < + \infty, \ \sup \limits_{j \in \Z^-} \textnormal{diam}(\wh f^{j}(\wh K)) < + \infty,$$
		which implies that $\wh K$ is dynamically bounded, which implies that $\wh K$ is trivial because $\wh g$ is infinitely continuum-wise expansive.  
	\end{proof}

	\begin{corollary}\label{cor:IntersectionStableUnstableTotDisconnected}
		For frice homeomorphisms, the intersection of stable and unstable continua is totally disconnected.
	\end{corollary}
	}
	
	\begin{lemma}\label{lem:StableSetsAreWeaklyStable}
		Let $\wh g$ be a lift to $\R^2$ of a frice torus homeomorphism. Then, any nontrivial stable continuum is also weakly stable, and any nontrivial unstable continuum is also weakly unstable. 
	\end{lemma}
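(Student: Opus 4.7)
I treat the stable case; the unstable case is symmetric, obtained by applying the same argument to $\wh g^{-1}$. Let $\wh K$ be a non-trivial stable continuum, so $\wh K \subset K^s_\varepsilon(\wh z)$ for some $\varepsilon>0$ and $\wh z \in \R^2$; in particular $\mathrm{diam}(\wh g^j(\wh K)) \leq 2\varepsilon$ for every $j \geq 0$. Since $g$ is frice, Proposition \ref{prop:elementprop.essfactor} gives $\mathscr{D}_{\wh g}(\wh K) = \infty$, and because forward iterates stay bounded, there must exist arbitrarily negative integers $n$ with $\mathrm{diam}(\wh g^n(\wh K)) > M'$, where $M'$ is the constant produced by Remark \ref{rem:LargeThenWillBeAnchored}. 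Choose such $n$ with $n < -3$; the remark yields $0 \leq j \leq 3$ for which $\wh g^{n+j}(\wh K)$ is anchored, and $n + j < 0$.

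Next I rule out the $u$-anchored alternative: were $\wh g^{n+j}(\wh K)$ $u$-anchored, Proposition \ref{prop:u-anchoredThenDynamicallyUnbounded} would force $\mathrm{diam}(\wh g^{n+j+m}(\wh K)) \to \infty$ as $m \to +\infty$, but for any $m \geq -(n+j)$ the exponent $n+j+m$ is non-negative, contradicting the forward bound $2\varepsilon$. Hence $\wh g^{n+j}(\wh K)$ is $s$-anchored with $n+j<0$, i.e., $\wh K$ is weakly stable.

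For inessentiality of $K := \wh \pi(\wh K)$, I reprise the argument of Lemma \ref{lem:DynBoundedSetsAreInessential}, replacing ``dynamically bounded'' by the one-sided bound on forward iterates that a stable continuum provides. Suppose toward contradiction that $\wh K \cap (\wh K + v) \neq \varnothing$ for some non-zero $v \in \Z^2$, and form the necklace $\wh L = \bigcup_{n \in \Z}(\wh K + nv)$. Since $\wh g$ commutes with integer translations, $\wh g^j(\wh L) = \bigcup_n(\wh g^j(\wh K)+nv)$, so the projection onto $v^\perp$ satisfies
\[
\mathrm{diam}\bigl(\pi_{v^\perp}(\wh g^j(\wh L))\bigr) = \mathrm{diam}\bigl(\pi_{v^\perp}(\wh g^j(\wh K))\bigr) \leq 2\varepsilon \qquad (j \geq 0).
\]
Being a translation-invariant necklace of bounded continua, $\wh L$ is contained in a strip about the line $\R v$, so its complement has two unbounded components $U_\pm$. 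Since $\vec 0 \in \mathrm{int}(\rho(\wh g))$, pick periodic points $\wh z_+ \in U_-$ and $\wh z_- \in U_+$ whose rotation vectors have strictly positive and strictly negative $v^\perp$-components respectively; for sufficiently large $j > 0$ the points $\wh g^j(\wh z_\pm)$ remain on opposite sides of $\wh g^j(\wh L)$ while their $v^\perp$-coordinates differ by more than $2\varepsilon$, forcing $\pi_{v^\perp}(\wh g^j(\wh L))$ to span more than $2\varepsilon$ --- a contradiction. Therefore the translates $\{\wh K + v\}_{v \in \Z^2}$ are pairwise disjoint; they are exactly the connected components of $\wh \pi^{-1}(K)$, and $K$ is inessential by the criterion following Definition \ref{def:Inessential}.

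The main obstacle is the $s$-vs-$u$ alternative in the first step, since Remark \ref{rem:LargeThenWillBeAnchored} merely guarantees an anchor of either type; the key device is choosing $n$ sufficiently negative so that a shift of at most three cannot push $n+j$ past zero, which lets the uniform forward bound rule out the $u$-anchored case. The inessentiality step is a close analogue of Lemma \ref{lem:DynBoundedSetsAreInessential}, but must be carried out with only half-sided control on diameter, which is precisely what is available for stable continua.
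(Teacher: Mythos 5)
Your argument is correct and, for the main claim, follows essentially the paper's route: the paper likewise starts from the forward bound $\sup_{j\ge 0}\mathrm{diam}(\wh g^j(\wh K))\le 2\varepsilon$ and uses Proposition \ref{prop:u-anchoredThenDynamicallyUnbounded} to exclude the $u$-anchored alternative, but then simply quotes the dichotomy of Lemma \ref{rem:EveryContinuumIsWeaklySomething}, whereas you rerun that dichotomy directly from Remark \ref{rem:LargeThenWillBeAnchored} at a time $n<-3$. Your variant has a small advantage: it produces the $s$-anchored iterate at a strictly negative time, exactly as the definition of weakly stable requires, a sign point that the bare citation of Lemma \ref{rem:EveryContinuumIsWeaklySomething} leaves implicit (otherwise one invokes the forward/backward propagation of anchors, cf.\ Remark \ref{rem:AnchoredToThatParticularLeaf}). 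You also prove the final clause on inessentiality, which the paper's proof does not address explicitly, by adapting the necklace argument of Lemma \ref{lem:DynBoundedSetsAreInessential}; your observation that only the one-sided (forward) diameter bound is needed there is exactly right, since the contradiction in that lemma is reached at a single large positive time. The only caveat, shared with the paper, is the implicit normalization $\vec 0\in\mathrm{int}(\rho(\wh g))$ (up to a power and an adequate lift), which the anchoring machinery and the choice of the two periodic points in your necklace step require.
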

	
	\begin{proof}
		It suffices to prove the result for stable continua, as the other case is analogous. Let $\wh K \subset {\color{black}W^s(\wh z)}$ be a nontrivial stable continuum for $\wh g$. 
		{\color{black}By Corollary \ref{cor:StableNotWeaklyUnstable}, 
		we obtain that $\wh K$ is not weakly unstable.} Then, Remark \ref{rem:EveryContinuumIsWeaklySomething} yields that $\wh K$ is weakly stable, which concludes the proof.  
	\end{proof}

	\begin{lemma}\label{lem:stableunstablecontinuadense}
		Let $\wh g$ be a lift to $\R^2$ of a frice torus homeomorphism. Then, for every open set $\widehat{V} \subset \mathbb{R}^2$, there exist two nontrivial continua $\wh K^{ws}, \wh K^{wu} \subset \wh V$, which are respectively weakly stable and weakly unstable continua for $\wh g$.  
	\end{lemma}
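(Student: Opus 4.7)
The plan is to extract the desired weakly stable and weakly unstable continua directly from the $\varepsilon$-stable and $\varepsilon$-unstable sets of a single point, exploiting the fact that these sets sit inside a ball of radius $\varepsilon$ automatically, while Lemma \ref{lem:NonemptyStableSets} prevents them from collapsing to a point.

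First I would pick any $\wh z \in \wh V$ and choose $r > 0$ small enough that $\overline{B(\wh z, r)} \subset \wh V$. The crucial elementary observation is that, directly from the definition of $W^{s}_{\varepsilon, \wh g}(\wh z)$, taking $j = 0$ in the inequality $\mathrm{d}(\wh g^j(\wh z'), \wh g^j(\wh z)) \leq \varepsilon$ forces $W^{s}_{\varepsilon, \wh g}(\wh z) \subset \overline{B(\wh z, \varepsilon)}$, and the same holds for the unstable side. Therefore, if I set $\varepsilon := r$, both continua $K^{s}_{\varepsilon, \wh g}(\wh z)$ and $K^{u}_{\varepsilon, \wh g}(\wh z)$ are contained in $\overline{B(\wh z, r)} \subset \wh V$ for free, with no further argument.

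Next, I would apply the lifted version of Lemma \ref{lem:NonemptyStableSets} to $\wh g$ (which the paper explicitly allows via the remark following that lemma): for this choice of $\varepsilon$ there exists $\delta > 0$ such that $\mathrm{diam}\bigl(K^{s}_{\varepsilon, \wh g}(\wh z)\bigr) \geq \delta$ and $\mathrm{diam}\bigl(K^{u}_{\varepsilon, \wh g}(\wh z)\bigr) \geq \delta$. In particular both continua are non-trivial. I would then set $\wh K^{ws} := K^{s}_{\varepsilon, \wh g}(\wh z)$ and $\wh K^{wu} := K^{u}_{\varepsilon, \wh g}(\wh z)$; by Definition \ref{def:StableContinuum} these are respectively a non-trivial stable and a non-trivial unstable continuum for $\wh g$. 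Lemma \ref{lem:StableSetsAreWeaklyStable} then promotes them to weakly stable and weakly unstable continua inside $\wh V$, completing the proof.

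There is really no hard step here beyond matching scales: the deep input is Lemma \ref{lem:NonemptyStableSets}, which itself relies on the continuum-wise expansivity of $\wh g$ (Proposition \ref{prop:elementprop.essfactor}) together with Rodríguez Hertz's theorem from \cite{jana04}. The only subtlety worth flagging is to note the automatic containment $W^{s,u}_{\varepsilon}(\wh z) \subset \overline{B(\wh z, \varepsilon)}$, which ensures that the continua produced by Lemma \ref{lem:NonemptyStableSets} do not escape the prescribed open set $\wh V$; the argument does not require any boundary-bumping or additional topological manoeuvre.
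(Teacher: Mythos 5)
Your proof is correct and follows essentially the same route as the paper: pick $\wh z \in \wh V$, take $\varepsilon$ smaller than the distance to $\partial \wh V$ so that $K^{s}_{\varepsilon}(\wh z)$ and $K^{u}_{\varepsilon}(\wh z)$ stay inside $\wh V$, invoke Lemma \ref{lem:NonemptyStableSets} for non-triviality and Lemma \ref{lem:StableSetsAreWeaklyStable} to upgrade them to weakly stable and weakly unstable continua. The only difference is that you make explicit the containment $W^{s,u}_{\varepsilon}(\wh z) \subset \overline{\mathrm{B}(\wh z,\varepsilon)}$, which the paper uses implicitly.
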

	
	\begin{proof}
		Let $\wh V \subset \R^2$ be an open set. Take $\wh z \in \wh V$, let $\varepsilon < \mathrm{d}(\wh z, \partial \wh V)$, and note that $K^s_{\varepsilon}(\wh z)$ is a stable continuum, which is nontrivial by Lemma \ref{lem:NonemptyStableSets}, and is also included in $\wh V$. By Lemma \ref{lem:StableSetsAreWeaklyStable}, we obtain that $\wh K^{ws} = K^s_{\varepsilon}(\wh z)$ is a weakly stable continuum. We proceed for weakly unstable continua in identical fashion and thus conclude the proof. 
	\end{proof}
	
	As a consequence we obtain the following two results:
	
	\begin{remark}
		Let $\wh g$ be a lift to $\R^2$ of a frice torus homeomorphism $g$. Then, any closed ball in $\R^2$ is simultaneously weakly stable and weakly unstable for $\wh g$.  
	\end{remark}
	
	\begin{remark}\label{rem:FilledStableContinuum}
		Let $\wh g$ be a lift to $\R^2$ of a frice torus homeomorphism $g$. Then, for every $\varepsilon > 0$, both $K^s_{\varepsilon}(\wh z)$ and $K^u_{\varepsilon}(\wh z)$ are nonempty, filled and have empty interior. Moreover, any stable or unstable continuum is filled and has empty interior. 
	\end{remark}

	\subsection{Dynamical properties}
	
	We will study some properties of the stable sets, which have their natural analogous versions for unstable sets. We obtain results using the anchoring techniques developed in Section \ref{section:stretching}, the most important one of the subsection being Proposition \ref{prop:HffLimitSmallstableSets}.
	
	\begin{lemma}\label{lemma:StableSetGoesToZero}
		For every $\wh z' \in {\color{black}W^s(\wh z)}$, we have that 
		$$\lim \limits_{j \to +\infty} d(\wh g^j(\wh z'),\wh  g^j(\wh z)) = 0$$
	\end{lemma}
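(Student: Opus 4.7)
The plan is to argue by contradiction: from a failure of convergence I will build a non-trivial continuum in $\T^2$ whose $g$-orbit has uniformly bounded lifted diameter, thereby contradicting the infinite continuum-wise expansiveness of $g$ (Proposition \ref{prop:elementprop.essfactor}).

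Suppose $\wh z' \in K^s(\wh z)$ but $d(\wh g^j(\wh z'), \wh g^j(\wh z)) \not\to 0$. By definition of $K^s(\wh z)$ there is some $\varepsilon > 0$ with $\wh z' \in K^s_\varepsilon(\wh z)$, and there exist $\delta > 0$ and a sequence $j_k \to +\infty$ with $d(\wh g^{j_k}(\wh z'), \wh g^{j_k}(\wh z)) \geq \delta$. A direct check from the definitions gives $\wh g^n(K^s_\varepsilon(\wh z)) \subseteq K^s_\varepsilon(\wh g^n(\wh z))$ for every $n \geq 0$, so the continua $C_k := \wh g^{j_k}(K^s_\varepsilon(\wh z))$ satisfy $\delta \leq \mathrm{diam}(C_k) \leq 2\varepsilon$, and for every $n \geq -j_k$ the iterate $\wh g^n(C_k) = \wh g^{n+j_k}(K^s_\varepsilon(\wh z))$ also has diameter at most $2\varepsilon$. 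I then translate each $C_k$ by a deck transformation $T_k$ so that $T_k(\wh g^{j_k}(\wh z)) \in [0,1)^2$; since $\wh g$ commutes with $T_k$, the same diameter bounds hold for $\wh g^n(T_k(C_k))$. The translated continua lie in a fixed bounded region of $\R^2$, so compactness of the Hausdorff topology on continua yields a subsequential Hausdorff limit $C_\infty$, which is a continuum of diameter at least $\delta$, and by continuity of each $\wh g^n$ satisfies $\mathrm{diam}(\wh g^n(C_\infty)) \leq 2\varepsilon$ for every $n \in \Z$.

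At this stage I invoke Lemma \ref{lem:DynBoundedSetsAreInessential} applied to $\wh g$, which satisfies the same General Hypothesis as $\wh f$ (so up to taking a power and changing lift, $\vec 0 \in \mathrm{int}(\rho(\wh g))$): each $\wh g^n(C_\infty)$ is a dynamically bounded continuum and is therefore disjoint from all its nontrivial integer translates, so $\wh \pi$ is injective on it. Consequently $K_\infty := \wh \pi(C_\infty)$ is a non-trivial, inessential continuum in $\T^2$ whose iterate $g^n(K_\infty)$ has lifted diameter equal to $\mathrm{diam}(\wh g^n(C_\infty)) \leq 2\varepsilon$, and hence $\mathscr{D}_g(K_\infty) \leq 2\varepsilon < \infty$, the desired contradiction.

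The main obstacle is that the initial $\varepsilon$ may well exceed the injectivity radius of $\wh \pi: \R^2 \to \T^2$, so a priori $K_\infty$ or some iterate could be essential and have infinite lifted diameter, which would defeat the dynamical-diameter bound. Lemma \ref{lem:DynBoundedSetsAreInessential} is precisely what bypasses this, and it is the only step where I genuinely use the nonempty interior of the rotation set of $\wh g$ beyond the continuum-wise expansiveness of $g$.
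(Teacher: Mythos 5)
Your proof is correct and follows essentially the same route as the paper: arguing by contradiction, extracting a Hausdorff limit of far-forward images of $K^s_{\varepsilon}(\wh z)$, and contradicting the infinite continuum-wise expansiveness of $g$ given by Proposition \ref{prop:elementprop.essfactor}. The only real difference is cosmetic: you take the limit upstairs after normalizing by deck transformations and then invoke Lemma \ref{lem:DynBoundedSetsAreInessential} to see that the limit projects to an inessential continuum of $\T^2$ with finite dynamical diameter, whereas the paper truncates the continua inside $\delta$-balls, projects first, and runs the uniform-continuity estimates directly in the torus.
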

	
	\begin{proof}
		{\bck{Suppose by contradiction that there exist $\delta, \varepsilon > 0$, a point $\wh  z' \in K^s_{\varepsilon}(\wh z)$ and a sequence $\{j_n\}_{n \in \mathbb{N}}$ of future iterates with $j_n \to +\infty$, such that $d(\wh g^{j_n}(\wh z'),\wh  g^{j_n}(\wh z)) \geq \delta$. This on its turn implies for every $n \in \mathbb{N}$, that 
		$$\text{diam}(\wh g^{j_n}(K^s_{\varepsilon}(\wh z))) \geq \delta.$$}
		}
		We shall use $\wh K_n$ to denote the connected component of $\wh  g^{j_n}(K^s_{\varepsilon}(\wh z)) \cap \text{B}(\wh g^{j_n}(\wh z),\delta)$ containing $\wh g^{j_n}(\wh z)$, which by Lemma \ref{lem:nadler} is a continuum of diameter between $\delta$ and $2\delta$, and we will write $K_n= \wh \pi (\wh K_n) \subset \T^2$. Now, given that the space of continua of a compact space, equipped with the Hausdorff topology, is compact, we have that up to taking a subsequence, 
		$$ K_n \xrightarrow[n \to +\infty]{\text{Hff}} K.$$
		where $\delta \leq \text{diam}(K) \leq 2\delta$. 
		
		Fix {\bck{$m \in \Z$}}, and let us check that $\text{diam}(g^m)(K) \leq 4\varepsilon$. For this, take $\delta_{m,\varepsilon} > 0$ from the uniform continuity of $g$, such that any two points $\delta_{m,\varepsilon}$-near, must be $\varepsilon$-near for $m$ iterates of $g$. Now, simply take $n$ large enough such that 
		{\bck{$$\text{d}_{\text{H}}(K_n,K) < \delta_{m,\varepsilon}, \text{ and } -j_n \leq m \leq j_n.$$}
		}
		Take any pair of points $w', w'' \in K$ and note that there exist $z', z'' \in K_n$ with 
		$$\text{d}(z',w') < \delta_{m,\varepsilon}, \text{ and } \text{d}(z'',w'') < \delta_{m,\varepsilon}.$$
		Moreover, {\bck{given that $z', z'' \in g^{j_n}(K^s_{\varepsilon}(z))$, we obtain that }}
		$$\text{d}(g^m(z'),g^m(z'')) \leq \text{d}(g^{m+j_n}(z),g^m(z'')) + \text{d}(g^m(z'),g^{m+j_n}(z)) \leq 2\varepsilon,$$
		from where we obtain that  
		$$ \text{d}(g^m(w'),g^m(w''))$$ 
		$$\leq \text{d}(g^m(w'),g^m(z')) + \text{d}(g^m(z'),g^m(z'')) + \text{d}(g^m(z''),g^m(w'')) \leq \varepsilon + 2\varepsilon + \varepsilon \leq 4\varepsilon.$$

		We have just constructed a nontrivial dynamically bounded continuum $K$, which is a direct contradiction with the second item in Proposition \ref{prop:elementprop.essfactor}. 
	\end{proof}

	
	{\color{black}Recall that by Corollary \ref{cor:StableContinuumInessential}, every stable or unstable continuum is inessential.}

	The following result strongly uses the techniques developed in Section \ref{section:stretching}. 
	
	\begin{lemma}
		Fix $d > 0$. Then, there exists $L_d > 0$, such that for every stable continuum $\wh K$ with $\textnormal{diam}(\wh K) > L_d$, we have that 
		$$ \textnormal{diam}(\wh g^{j}(\wh K)) > d, \text{ for every } j < 0.$$ 	
	\end{lemma}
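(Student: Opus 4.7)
The plan is to argue by contradiction using a uniform quantitative version of the Anchoring Lemma \ref{lemma:AnchoringLemma}. Suppose there is no such $L_d$: then for each $n$ one can take a stable continuum $\wh K_n$ with $\mathrm{diam}(\wh K_n) > n$ and some $j_n < 0$ with $\mathrm{diam}(\wh g^{j_n}(\wh K_n)) \leq d$. Since $\wh g$ is the lift of a torus homeomorphism, $|\wh g(z)-z|$ is uniformly bounded by some $M > 0$, so $\mathrm{diam}(\wh g(X)) \leq \mathrm{diam}(X) + 2M$ for every bounded $X \subset \R^2$. Iterating this bound $|j_n|$ times starting from $\wh g^{j_n}(\wh K_n)$ yields $\mathrm{diam}(\wh K_n) \leq d + 2|j_n| M$, which forces $|j_n| \to \infty$.

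The first key step is to show that each $\wh K_n$ (for $n$ large) is itself $s$-anchored. Since $\mathrm{diam}(\wh K_n) > M'$, by Remark \ref{rem:LargeThenWillBeAnchored} and Lemma \ref{lemma:3semianchoredThenAnchored} some $\wh g^i(\wh K_n)$ with $0 \leq i \leq 3$ is anchored. The stability of $\wh K_n$ rules out $u$-anchoring of $\wh K_n$ or of any of its (at most three) forward iterates: by Proposition \ref{prop:u-anchoredThenDynamicallyUnbounded}, any $u$-anchor would force the future diameters to tend to infinity, contradicting $\sup_{k \geq 0}\mathrm{diam}(\wh g^k(\wh K_n)) < \infty$. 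Inspecting the sub-cases in the proof of Lemma \ref{lemma:3semianchoredThenAnchored} under this restriction, the only remaining possibilities are the $s$-anchor sub-cases, each of which directly yields an $s$-anchor $(\wh A_n, \wh K_n, \wh \phi_{B,n}, \wh \phi_{T,n})$ of $\wh K_n$ itself, for some CFS $\wh A_n \in \wh{\mathcal{A}}$.

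The second step is a uniform quantitative version of the Anchoring Lemma. By Remark \ref{rem:FastAdvancement}, every realizing point in every CFS of $\wh{\mathcal{A}}$ advances exactly one fundamental domain of common size $p$ per iteration of $\wh g$. Tracking the proof of Lemma \ref{lemma:AnchoringLemma} part (2) in the $s$-anchor setting, the number $|n_0|$ of past iterations needed so that $\wh g^{n}(\wh K_n)$ meets a leaf $\wh \phi \subset \wh A_n$ at distance $D$ from the anchor leaves is of order $D/p$, with the implicit constants depending only on the universal leaf-width bound from Lemma \ref{lem:UniformlyBoundedLeaves}, and not on $n$, on the particular strip $\wh A_n$, or on the location of $\wh \phi_{T,n}, \wh \phi_{B,n}$. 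Fixing a leaf $\wh \phi_n \subset \wh A_n$ at distance $D = d + c$ from the anchors (with $c$ absorbing the bounded leaf widths) then yields a uniform threshold $N_d > 0$ such that $\mathrm{diam}(\wh g^{j}(\wh K_n)) > d$ for every $|j| \geq N_d$ and every $n$. Since $|j_n| \to \infty$, for $n$ large one has $|j_n| \geq N_d$, and hence $\mathrm{diam}(\wh g^{j_n}(\wh K_n)) > d$, contradicting the hypothesis $\mathrm{diam}(\wh g^{j_n}(\wh K_n)) \leq d$. The main obstacle is to rigorously extract this uniform threshold $N_d$ from the proof of the Anchoring Lemma, i.e.\ to verify that the quantitative rate of backward advancement depends only on the universal structural data of the four families of canonically foliated strips and not on the particular anchor configuration chosen for each $\wh K_n$.
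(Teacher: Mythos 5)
There is a genuine gap, and it sits exactly where you flagged it: the uniform threshold $N_d$ cannot be extracted from the proof of Lemma \ref{lemma:AnchoringLemma} in the way you describe. In that proof, the time $n_0$ after which the iterates of the anchored continuum meet a prescribed leaf $\tl \phi$ is determined by how long the pushing realizing point takes to sweep past $\tl \phi$, and that realizing point must be chosen in the component $\mathrm{L}_{\tl A_n}(\tl K_n)$ (resp.\ $\mathrm{R}_{\tl A_n}(\tl K_n)$) of $\tl A_n \setminus \tl K_n$ containing the appropriate end of the strip, i.e.\ beyond the whole portion of $\tl K_n$ lying inside the strip. Since it advances one fundamental domain per iterate, the threshold is of order $(\mathrm{ext}_n + D)/p$, where $\mathrm{ext}_n$ is the horizontal extent of $\wh K_n$ inside $\wh A_n$; along your contradiction sequence $\mathrm{diam}(\wh K_n) > n \to \infty$, so $\mathrm{ext}_n$ is completely uncontrolled and the threshold is not uniform in $n$ --- the constants depend on the anchor configuration of each $\wh K_n$, not only on the leaf-width bound and $p$. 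Nor can you close the resulting middle range of times with the trivial displacement estimate $\mathrm{diam}(\wh g^{j}(\wh K_n)) \geq \mathrm{diam}(\wh K_n) - 2|j|M$: since the realizing point is displaced by exactly $p$ per iterate one has $M \geq p$, so the range $|j| \leq (n-d)/(2M)$ covered by that estimate falls short of the anchoring threshold $\approx n/p$, leaving a window of order $n/p$ iterates where neither argument says anything. (Your first step, ruling out $u$-anchors of forward iterates via stability and extracting an $s$-anchor of $\wh K_n$ itself from the cases of Lemma \ref{lemma:3semianchoredThenAnchored}, is fine and is also how the paper argues.)

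The paper avoids any quantitative rate altogether, and this is the idea your proposal is missing: choose $L_d = 2M_3 + 2d + 1$, so that a stable continuum with $\mathrm{diam}(\wh K) > L_d$ is $3$-semianchored to \emph{two} parallel lifts $\wh A^{\rar}_k, \wh A^{\rar}_{k'}$ of the same canonically foliated strip whose closures are at mutual distance at least $2d$. Stability excludes the $u$-anchoring alternative in Remark \ref{rem:AnchoredToThatParticularLeaf} for each of the two strips, so \emph{every} past iterate $\wh g^{j}(\wh K)$ meets the closure of both strips (and for $|j|\leq 3$ the semianchoring itself, via Lemma \ref{lemma:semianchorstillpushes}, keeps $\wh g^{j}(\wh K)$ crossing both); hence $\mathrm{diam}(\wh g^{j}(\wh K)) \geq 2d$ for all $j<0$, with no threshold, no uniformity issue, and no contradiction scheme. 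If you want to salvage your argument, replace the single-strip anchor plus distant leaf by this two-strip semianchoring; as it stands, Step 2 does not go through.
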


	\begin{proof}
		As in Section \ref{section:stretching}, let us work with vertical diameter $\mathrm{Vdiam}$ without loss of generality. Take $\wh A^{\rar}$ a canonically foliated strip. Take $M_3$ from Lemma \ref{lemma:SemiAnchoring}, and taking
		$$ L_d = 2M_3 + 2d + 1,$$ 
		we obtain that any continuum $\wh K$ with $\textnormal{Vdiam}(\wh K) > L_d$ is 3-semianchored to two lifts $\wh A^{\rar}_{k}, \wh A^{\rar}_{k'}$ of the same canonically foliated strip $A^{\rar}$, and such that the vertical distance $\mathrm{d}_{\mathrm{V}}$ between the closure of these strips {\color{black}satisfies} 
		$$ \text{d}_{\text{v}}(\wh A^{\rar}_{k},\wh A^{\rar}_{k'}) \geq 2d. $$ 
		
		Given that no iterate of $\wh K$ can be \textit{u}-anchored since $\wh K$ is {\color{black} a stable continuum} {\color{black}(see Proposition \ref{prop:u-anchoredThenDynamicallyUnbounded} and Lemma \ref{lem:StableDoesNotGrow})}, we obtain from Corollary \ref{coro:SAnchoredGrowsPast} and Remark \ref{rem:AnchoredToThatParticularLeaf} that the past iterates of $\wh K$ intersect the closure of both strips, from where we obtain 
		$$ \textnormal{diam}(\wh g^{j}(\wh K)) \geq 2d, \text{ for every } j < 0,$$
		which concludes the proof.   
	\end{proof}
	
	The following result is the contrapositive of what we have just proved. 
	
	\begin{corollary}\label{corollary:SmallStableContinuaCantGrowMuch}
		Fix $d > 0$. Then, there exists $L_d > 0$, such that for any stable continuum $K$ with $\textnormal{diam}(\wh K) \leq d$, we have that 
		$$ \textnormal{diam}(\wh g^j(\wh K)) \leq L_d, \text{ for every } j > 0. $$
	\end{corollary}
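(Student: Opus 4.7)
The plan is to derive the corollary from the previous lemma by a time-shift contrapositive argument, exploiting the fact that the image of a stable continuum under $\wh g$ is again a stable continuum.

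More precisely, fix $d > 0$ and let $L_d$ be the constant provided by the previous lemma (for this same value of $d$). First I would observe that if $\wh K \subset K^s(\wh z)$ for some $\wh z$, then for every integer $j$, $\wh g^j(\wh K) \subset K^s(\wh g^j(\wh z))$; this is immediate from the definition of $K^s_\varepsilon$ since orbits that stay $\varepsilon$-close for all future times remain so after applying $\wh g^j$ to both. Hence every iterate $\wh g^j(\wh K)$ of a stable continuum is itself a stable continuum.

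Now I would argue by contradiction: suppose $\wh K$ is a stable continuum with $\mathrm{diam}(\wh K) \leq d$, but there exists some $j_0 > 0$ such that $\mathrm{diam}(\wh g^{j_0}(\wh K)) > L_d$. By the previous observation, $\wh K' := \wh g^{j_0}(\wh K)$ is a stable continuum. Applying the previous lemma to $\wh K'$, whose diameter exceeds $L_d$, yields
\[
\mathrm{diam}(\wh g^{j}(\wh K')) > d \quad \text{for every } j < 0.
\]
Choosing in particular $j = -j_0 < 0$ gives $\mathrm{diam}(\wh K) = \mathrm{diam}(\wh g^{-j_0}(\wh K')) > d$, which contradicts the hypothesis $\mathrm{diam}(\wh K) \leq d$. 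Therefore no such $j_0$ exists, i.e.\ $\mathrm{diam}(\wh g^{j}(\wh K)) \leq L_d$ for every $j > 0$, which is the desired conclusion.

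There is no genuine obstacle here: the entire content of the corollary is already encoded in the previous lemma, and the only thing to check carefully is the (trivial) fact that the class of stable continua is forward-invariant under $\wh g$, so that the previous lemma can legitimately be applied to the iterate $\wh g^{j_0}(\wh K)$.
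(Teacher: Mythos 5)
Your proof is correct and is essentially the paper's argument: the paper simply declares the corollary to be the contrapositive of the preceding lemma, and your write-up fleshes out exactly that, applying the lemma to $\wh g^{j_0}(\wh K)$ and shifting time back by $-j_0$. The only point you add explicitly --- that $\wh g^{j}(\wh K)$ is again a stable continuum, since $\wh g^{j}(K^s_{\varepsilon}(\wh z)) \subset K^s_{\varepsilon}(\wh g^{j}(\wh z))$ --- is left implicit in the paper and is verified correctly here.
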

	
	We now prove a key result for the construction in Section \ref{sec:HPR}. 
	
	\begin{proposition}[\textbf{Hausdorff limit of small stable continua}]\label{prop:HffLimitSmallstableSets}
		Let $\{\wh K_n\}_{n \in \mathbb{N}}$ be a sequence of stable continua $\wh K_n \subset {\color{black}W^s(\wh z_n)}$, such that $\textnormal{diam}(\wh K_n) \leq d$ for every $n > 0$ and some $d > 0$. Suppose that 
		$$\wh  K_n \xrightarrow[n \to +\infty]{\textnormal{Hff}} \wh K.$$
		Then, $\wh K$ is also a stable continuum. 
	\end{proposition}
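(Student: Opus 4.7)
The plan is to leverage Corollary \ref{corollary:SmallStableContinuaCantGrowMuch} to show that the whole sequence $\{\wh K_n\}$ satisfies a uniform bound on the diameter of all of its future iterates, and then pass this bound to the limit.

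First I would note that, since each $\wh K_n$ is a stable continuum with $\mathrm{diam}(\wh K_n) \leq d$, Corollary \ref{corollary:SmallStableContinuaCantGrowMuch} yields a constant $L_d > 0$ (depending only on $d$, not on $n$) such that
\[
\mathrm{diam}(\wh g^{j}(\wh K_n)) \leq L_d, \qquad \text{for every } j > 0 \text{ and every } n \in \mathbb{N}.
\]
Next, since the $\wh K_n$ have uniformly bounded diameter and converge in the Hausdorff metric, they are eventually contained in a common compact subset of $\R^2$; the limit $\wh K$ is then a (nonempty, connected, compact) continuum. For each fixed $j \in \mathbb{Z}$, the homeomorphism $\wh g^{j}$ is uniformly continuous on any compact set, hence $\wh g^{j}(\wh K_n)$ converges to $\wh g^{j}(\wh K)$ in the Hausdorff topology. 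Since the diameter is continuous with respect to $\dH$, we conclude
\[
\mathrm{diam}(\wh g^{j}(\wh K)) \leq L_d, \qquad \text{for every } j \geq 0.
\]

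Finally, pick any point $\wh w \in \wh K$. The uniform diameter bound above says that for every $\wh w' \in \wh K$ and every $j \geq 0$,
\[
\mathrm{d}(\wh g^{j}(\wh w'),\wh g^{j}(\wh w)) \leq \mathrm{diam}(\wh g^{j}(\wh K)) \leq L_d,
\]
so $\wh K \subset W^{s}_{L_d}(\wh w)$. Because $\wh K$ is connected and contains $\wh w$, it is contained in the connected component $K^{s}_{L_d}(\wh w)$ of $W^{s}_{L_d}(\wh w)$ through $\wh w$, and therefore in $K^{s}(\wh w) = \bigcup_{\varepsilon>0} K^{s}_{\varepsilon}(\wh w)$. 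By Definition \ref{def:StableContinuum}, $\wh K$ is a stable continuum.

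There is no serious obstacle here: the only slightly delicate point is ensuring that the Hausdorff limit exists in $\R^2$ (which is noncompact), but this is automatic from the uniform diameter bound $\mathrm{diam}(\wh K_n) \leq d$ together with the convergence hypothesis, which force the sequence into a common compact region.
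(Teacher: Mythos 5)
Your proof is correct and follows essentially the same route as the paper: both rest on Corollary \ref{corollary:SmallStableContinuaCantGrowMuch} to get the uniform bound $L_d$ on forward iterates of the $\wh K_n$, and both conclude by fixing $\wh w \in \wh K$ and using connectedness to place $\wh K$ inside $K^s_{L_d}(\wh w)$. The only difference is cosmetic: you pass the bound to the limit via Hausdorff-continuity of $\wh g^j$ and of the diameter, whereas the paper runs the explicit finite-window, uniform-continuity, triangle-inequality argument (as in Lemma \ref{lemma:StableSetGoesToZero}).
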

	
	The proof is essentially the same as in Lemma \ref{lemma:StableSetGoesToZero}, {\bck{using Corollary \ref{corollary:SmallStableContinuaCantGrowMuch}}.}
	
	\begin{proof}
		Fix $\wh w \in \wh K$. We will prove that for any other $\wh w' \in \wh K$ and every $j > 0$, we have that $\text{d}(\wh g^j(\wh w), \wh g^j(\wh w')) \leq L_d$, which is enough to finish the lemma as it implies that $\wh K \subset K^s_{L_d}(w)$. 
		
		Fix $j_0 > 0$ and $\varepsilon > 0$, and take $\delta_{j_0, \varepsilon}$ from the uniform continuity of $\wh g$, as in Lemma \ref{lemma:StableSetGoesToZero}. Take a large enough value of $n$ such that $\text{d}_{\text{H}}(\wh K_n,\wh K) < \delta_{j_0, \varepsilon}$. We may then finish with a similar argument to the one in Lemma \ref{lemma:StableSetGoesToZero}: first, take $z,z' \in \wh K_n$ such that 
		$$ \mathrm{d}(\wh z, \wh w) < \delta_{j_0, \varepsilon}, \ \ \mathrm{d}(\wh z',\wh w') < \delta_{j_0, \varepsilon},$$
		which means that 
		$$ \mathrm{d}(\wh g^j(\wh z),\wh g^j(\wh w)) < \varepsilon, \ \ \mathrm{d}(\wh g^j(\wh z'),\wh g^j(\wh w')) < \varepsilon, \text{ for every } 0 \leq j \leq j_0.$$
		By Corollary \ref{corollary:SmallStableContinuaCantGrowMuch}, we know that 
		$$ \textnormal{d}(\wh g^{j}(\wh z), \wh g^j(\wh z')) \leq L_d, \text{ for every } j \geq 0.$$

		Then, by the triangle inequality, we obtain that 
		$$ \textnormal{d}(\wh g^{j}(\wh w), \wh g^j(\wh w')) < L_d + 2\varepsilon, \text{ for every } 0 \leq j \leq j_0.$$
		Taking $j_0$ growing to $\infty$ and $\varepsilon$ going to 0, we obtain the desired result.	   
	\end{proof}

	We finish the section with a sufficient condition for continua to be weakly (un)stable. 
	
	\begin{lemma}\label{lemma:AnchorTypeA}
		Let $\wh f$ be a lift of a torus homeomorphism $f \in \mathrm{Homeo}_0(\T^2)$ with $\vec{0} \in \mathrm{int}(\rho(\wh f))$. Let $\wh z_1, \wh z_2$ be two periodic points with different associated rotation directions, and let {\bck{$d, m > 0$}}.

		Then, every continuum $\wh K$ such that 
		\[\wh K \cap K^s_m(\wh z_1) \neq \varnothing, \ \wh K \cap K^s_m(\wh z_2) \neq \varnothing,\]
		is weakly unstable. 
	\end{lemma}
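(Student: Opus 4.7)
The first step is to extract linear separation of the forward orbits of $\wh w_1, \wh w_2$ from the stable-set hypothesis. Pick $\wh w_i \in \wh K \cap K^s_m(\wh z_i)$ and let $v_i$ be the rotation vector of $\wh z_i$, which by hypothesis satisfies $v_1 \ne v_2$. Since $\wh z_i$ is periodic, there is a constant $C_i$ with $|\wh f^n(\wh z_i) - (\wh z_i + nv_i)| \le C_i$ for every $n \ge 0$. Combined with $|\wh f^n(\wh w_i) - \wh f^n(\wh z_i)| \le m$ this gives
\[
|\wh f^n(\wh w_i) - (\wh z_i + nv_i)| \le m + C_i, \qquad \forall n \ge 0,
\]
so that $|\wh f^n(\wh w_1) - \wh f^n(\wh w_2)| \ge n|v_1 - v_2| - |\wh z_1 - \wh z_2| - 2\max_i(m + C_i)$, which tends to $+\infty$. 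In particular $\mathrm{diam}(\wh f^n(\wh K))$ grows linearly in $n$.

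For every sufficiently large $n$, the continuum $\wh f^n(\wh K)$ then has diameter exceeding the threshold $M'$ of Remark~\ref{rem:LargeThenWillBeAnchored}, so it is $3$-semianchored to some CFS $\wh A \in \wh{\mathcal A}$ by Lemma~\ref{lemma:SemiAnchoring}. Remark~\ref{rem:AnchoredToThatParticularLeaf} then provides the dichotomy: either (I) $\wh f^{n+j}(\wh K)$ is $u$-anchored to $\wh A$ for every $j \ge 5$, or (II) $\wh f^{n+j}(\wh K)$ is $s$-anchored to $\wh A$ for every $j \le -2$. If case (I) occurs for some such $n$, then by definition $\wh K$ is weakly unstable and we are done.

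To rule out case (II) I would select the CFS $\wh A$ carefully from the four families $\wh{\mathcal A}^\rightarrow, \wh{\mathcal A}^\uparrow, \wh{\mathcal A}^\leftarrow, \wh{\mathcal A}^\downarrow$: choose the family whose realizing-point rotation is transverse to $v_1 - v_2$ (this is precisely the choice already made in the proof of Lemma~\ref{lemma:ThereExistsM_n_o}, which sandwiches a horizontal CFS between the top and bottom extremes of a large-vertical-diameter continuum). With this choice, the two points $\wh f^n(\wh w_1)$ and $\wh f^n(\wh w_2)$ sit on opposite sides of $\wh A$ and their future iterates remain, up to bounded error, on straight lines of directions $v_1$ and $v_2$. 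The $s$-anchor configuration $\wh A \subset \mathrm{L}(\tl\phi_T) \cap \mathrm{L}(\tl\phi_B)$ would force both anchoring leaves to lie in the future side of $\wh A$'s realizing point; using that this realizing point advances exactly one fundamental domain per iteration of $\wh f$ (Remark~\ref{rem:FastAdvancement}) and that $\wh f^k(\wh w_i)$ drifts linearly in direction $v_i$, one compares fundamental-domain indices and sees that, for $n$ large enough, the leaves where $\wh f^n(\wh K)$ hits $\partial \wh A$ lie instead on the past side, yielding a $u$-anchor. This contradicts (II) and hence (I) must hold, proving that $\wh K$ is weakly unstable.

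The hard part will be making this last geometric comparison precise: verifying that the linear, controlled drift of $\wh f^n(\wh w_i)$ in direction $v_i$ is genuinely incompatible with the $s$-anchor configuration in case (II). The argument is sensitive to the choice of CFS family among the four cardinal directions (dictated by $v_1 - v_2$), and to the bookkeeping of which lifts $\tl A, \tl\phi_T, \tl\phi_B$ in $\tl{\mathbb D}$ are witnessing the anchoring. An alternative, possibly cleaner route, would be to bypass the dichotomy of Remark~\ref{rem:AnchoredToThatParticularLeaf} entirely and construct a $u$-anchor directly, using $\wh z_1, \wh z_2$ themselves (via Proposition~\ref{prop:RotatingPointsNoTransverseIntersection}) as realizing-point data for an ad hoc CFS that separates $\wh f^n(\wh w_1)$ from $\wh f^n(\wh w_2)$ and anchors $\wh f^n(\wh K)$ from the past side by leaves already fixed by the two periodic orbits.
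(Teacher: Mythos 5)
Your opening step is sound and parallels the paper's own use of the hypotheses: since the $\wh z_i$ are periodic their lifted orbits stay at bounded distance from the lines $\wh z_i+nv_i$, so the forward iterates of $\wh K$ have linearly growing diameter, and for large $n$ you can semianchor $\wh f^n(\wh K)$ to a CFS sandwiched between $\wh f^n(\wh w_1)$ and $\wh f^n(\wh w_2)$ via Lemmas~\ref{lemma:ThereExistsM_n_o} and~\ref{lemma:SemiAnchoring}. The gap is in the crucial last step, which is exactly where the content of the lemma lies: your mechanism for deciding that the resulting anchor is an \emph{unstable} one does not work. The distinction between a $u$-anchor and an $s$-anchor is not about the anchoring leaves lying on a ``future side'' or ``past side'' of the realizing point, nor about fundamental-domain indices along the strip; it is about whether, in $\tl \D$, the relevant lift of the strip lies in $\RR(\tl \phi_{\TT})\cap\RR(\tl \phi_{\BB})$ or in $\LL(\tl \phi_{\TT})\cap\LL(\tl \phi_{\BB})$, i.e.\ about which boundary leaves of which lifts the continuum is caught between. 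Moreover the controlled drift of the two marked points $\wh w_1,\wh w_2$ gives no control over \emph{where} the whole (very large) continuum $\wh f^n(\wh K)$ meets $\partial\wh A$, so comparing fundamental-domain indices of those two points cannot identify the anchoring leaves. There is also a logical snag: the dichotomy of Remark~\ref{rem:AnchoredToThatParticularLeaf} is not exclusive (a continuum may be both weakly stable and weakly unstable), so ``ruling out'' the $s$-case needs an argument specific to the chosen strip. Such an argument does exist, but it is not the one you sketch: choosing the sandwiched strip near the image of a point whose relevant rotation coordinate is nonzero, the strip leaves every bounded set as $n\to\infty$, while case (II) of Remark~\ref{rem:AnchoredToThatParticularLeaf} would force every past iterate, in particular the compact set $\wh K$ itself, to meet $\mathrm{cl}(\wh A)$; that contradiction would complete your route, but it is absent from your proposal, and your stated replacement for it would fail.

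For comparison, the paper does not pass through the semianchor dichotomy at all. It forms the connected set $K^s_m(\wh z_1)\cup\wh K\cup K^s_m(\wh z_2)$, takes a strip $\wh A^{\rar}_k$ far away in the common drift direction, and exploits that the forward transverse trajectories $\wh I^{+}_{\wh \F}(\wh z_1)$ and $\wh I^{+}_{\wh \F}(\wh z_2)$ cross $\partial_{\BB}\wh A^{\rar}_k$ through \emph{disjoint} sets of leaves (different slopes plus uniformly bounded leaf diameter). After $j_0$ iterates both stable sets lie entirely beyond the strip; in $\tl \D$ they then sit beyond boundary leaves of two distinct lifts of the strip, each lift contained in the Bottom of the other, so that both leaves have a lift of the strip on their Right. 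Since $\tl f^{j_0}(\tl K)$ joins the two stable sets it touches both leaves, and the quartet is a $u$-anchor outright. This direct construction is what your proposal is missing: the leaves serving as anchors are produced by the transverse trajectories of $\wh z_1,\wh z_2$ themselves, not read off from where the continuum happens to hit a sandwiched strip.
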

	
	\begin{proof}
		
		The proof uses the anchoring techniques from Section \ref{section:stretching}. Let the respective rotation vectors for the periodic points be given by
		\[\rho(\wh z_1) = (a_1,b_1), \ \rho(\wh z_2) = (a_2,b_2), \text{ with } \frac{b_1}{a_1} \neq \frac{b_2}{a_2}.\]
		We will assume without loss of generality that the respective rotation vectors {\color{black}satisfy} $a_1, a_2, b_1, b_2 < 0$. The proof for the other cases is analogous, up to taking a different CFS {\bck{(possibly in the vertical direction, for example if either $b_1 = 0$ or $b_2=0$)}}. 
		
		Let us take a continuum $\wh K \subset \wh V$ such that 
		$$\wh K \cap K^s_m(\wh z_1) \neq \varnothing, \wh K \cap K^s_m(\wh z_2) \neq \varnothing$$ and note that 	
		\[\wh C := K^s_m(\wh z_1) \cup \wh K \cup K^s_m(\wh z_2) \text{ is connected.}\]
		
		Note that
		\[\sup_{j \in \Z^+} \mathrm{diam}(\wh f^j(K^s_m(\wh z_1))) \leq 2m, \sup_{j \in \Z^+} \mathrm{diam}(\wh f^j(K^s_m(\wh z_2))) \leq 2m.\]
		Up to taking sufficiently large negative values of $k$, we have that $\wh C \subset \mathrm{B}(\wh A^{\rar}_k)$. 
		
		Note that, given the diameter of leaves $\wh \phi \in \wh \F$ is uniformly bounded, and that the slopes of the vectors $\rho(\wh z_1)$ and $\rho(\wh z_2)$ are different, we know that we may take $k < 0$ with $\wh C \subset \mathrm{B}(\wh A^{\rar}_k)$, and such that the sets of leaves $\wh {\Phi}_1$ and $\wh {\Phi}_2$ are nonempty and disjoint (see Figure \ref{figure:Drift}), where 
		$$\wh {\Phi}_1 = \{\wh \phi \subset \partial_{\BB} \wh A^{\rar}_k : \ \wh \phi \cap \wh I^{+}_{\wh {\mathcal{F}}}(\wh z_1) \neq \varnothing\},$$ 
		$$\wh {\Phi}_2 = \{\wh \phi \subset \partial_{\BB} \wh A^{\rar}_k : \ \wh \phi \cap \wh I^{+}_{\wh {\mathcal{F}}}(\wh z_2) \neq \varnothing\}.$$
		
		\begin{figure}[h]
			\centering
			
			\def\svgwidth{.8\textwidth}
			\import{./Figures/}{Drift.pdf_tex}
			
			\smallskip
			
			\caption {The paths $\wh I^{+}_{\wh {\mathcal{F}}}(\wh z_1)$ and $\wh I^{+}_{\wh {\mathcal{F}}}(\wh z_2)$ drift linearly at the large scale, thus for a sufficiently distant CFS we obtain that $\wh {\Phi}_1$ and $\wh {\Phi}_2$ are disjoint.}
			\label{figure:Drift}
		\end{figure}	
		
		Given that $b_1, b_2 < 0$, we can take $j_0 > 0$ such that 
		\[\wh f^{j_0}(\wh z_1) \in \mathrm{T}(\wh A^{\rar}_k), \ \wh f^{j_0}(\wh z_2) \in \mathrm{T}(\wh A^{\rar}_k),\]
		\[ \dd (\wh f^{j_0}(\wh z_1), \wh A^{\rar}_k) > m, \ \dd (\wh f^{j_0}(\wh z_2), \wh A^{\rar}_k) > m,\]
		which means that 
		\begin{equation}\label{eq:ContinuumOnTheTop}
			\wh f^{j_0}(K^s_m(\wh z_2)) \subset \TT(\wh A^{\rar}_k), \ \wh f^{j_0}(K^s_m(\wh z_1)) \subset \TT(\wh A^{\rar}_k) 
		\end{equation} 
		
		Let us check that $\wh f^{j_0}(\wh K)$ is $u$-anchored. Take two leaves $\wh \phi_1 \in \wh {\Phi}_1, \ \wh \phi_2 \in \wh {\Phi}_2$. The key observation here is that given $\wh {\Phi}_1$ and $\wh {\Phi}_2$ are disjoint, we have that for every pair of lifts $\tl \phi_1, \tl \phi_2$ of $\wh \phi_1, \wh \phi_2 \subset \wh A^\rar_k$ which {\color{black}satisfy} $\tl I^{+}_{\tl {\mathcal{F}}}(\tl z_1) \cap \tl \phi_1 \neq \varnothing$, $\tl I^{+}_{\tl {\mathcal{F}}}(\tl z_2) \cap \tl \phi_2 \neq \varnothing$, we have that $\tl \phi_2 \subset \mathrm{R}(\tl \phi_1), \tl \phi_1 \subset \mathrm{R}(\tl \phi_2)$, which means that if we take two lifts $\tl A^{\rar}_{k,1}, \ \tl A^{\rar}_{k,2}$ of $\wh A^{\rar}_k$ such that 
		\[\tl I^{+}_{\tl {\mathcal{F}}}(\tl z_1) \cap \tl A^{\rar}_{k,1} \neq \varnothing, \ \ \tl I^{+}_{\tl {\mathcal{F}}}(\tl z_2) \cap \tl A^{\rar}_{k,2} \neq \varnothing,\]
		we will have that 
		\[ \tl A^{\rar}_{k,2} \subset \BB(\tl A^{\rar}_{k,1}), \ \ \tl A^{\rar}_{k,1} \subset \BB (\tl A^{\rar}_{k,2}).\] 
		See Figure \ref{figure:AnchorTypeA} for details.  
		
		\begin{figure}[h]
			\centering
			
			\def\svgwidth{.8\textwidth}
			\import{./Figures/}{AnchorA.pdf_tex}
			
			\smallskip
			
			\caption {Stable sets appear in light blue, the continua connecting each respective pair of stable sets appear in green. Note that both $(\tl A^{\rar}_{k,1}, \tl f^{j_0}(\tl K), \tl \phi^2_{\TT}, \tl \phi^1_{\TT})$ and $(\tl A^{\rar}_{k,2}, \tl f^{j_0}(\tl K), \tl \phi^1_{\TT}, \tl \phi^2_{\TT})$ are \textit{u}-anchors.}
			\label{figure:AnchorTypeA}
		\end{figure}

		This fact, together with equation \ref{eq:ContinuumOnTheTop}, shows that there exist two lifts $\tl A^{\rar}_{k,1}, \ \tl A^{\rar}_{k,2}$ of $\wh A^{\rar}_k$, two leaves $\wh \phi^1_{\TT}, \wh \phi^2_{\TT} \subset \TT(\wh A^{\rar}_k)$ with their respective lifts $\tl \phi^1_{\TT} \subset \TT(\tl A^{\rar}_{k,1})$ and $\tl \phi^2_{\TT} \subset \TT(\tl A^{\rar}_{k,2}) \subset \BB (\tl A^{\rar}_{k,1})$, such that by taking natural lifts we obtain
		\begin{equation}\label{eq:PAnchor}
			\tl f^{j_0}(K^s_m(\tl z_1)) \subset \TT_{\tl \phi^1_{\TT}}(\tl A^{\rar}_{k,1}), \ \ \   \tl f^{j_0}(K^s_m(\tl z_2)) \subset \TT_{\tl \phi^2_{\TT}}(\tl A^{\rar}_{k,2}) \subset \BB(\tl A^{\rar}_k),
		\end{equation}
		which implies that 
		\[ \tl f^{j_0}(\tl K) \cap \tl \phi^1_{\TT} \neq \varnothing, \ \tl f^{j_0}(\tl K) \cap \tl \phi^2_{\TT} \neq \varnothing.\]
		We also obtain that
		\[ \tl A^{\rar}_{k,1} \subset \RR (\tl \phi^1_{\TT}), \ \tl A^{\rar}_{k,1} \subset \RR (\tl \phi^2_{\TT}),\]
		which means that
		\[ (\tl A^{\rar}_{k,1}, \tl f^{j_0}(\tl K), \tl \phi^2_{\TT}, \tl \phi^1_{\TT}) \text{ is a \textit{u}-anchor},\]
		which shows that $\wh K$ is a weakly unstable continuum, and thus finishes the proof.  
		
	\end{proof}

	\section{The essential factor is rotationally mixing} \label{section:RotMixing}
	
	So far, we have obtained the following three of the six properties described in Theorem \ref{thmA:semiconjugation} for the essential factor, proved in Proposition \ref{prop:elementprop.essfactor}:
	\begin{itemize}
		\item There exists a lift $\wh g$ of $g$ such that $\rho(\wh g) = \rho(\wh f)$,
		\item $g$ is infinitely continuum-wise expansive.
		\item $g$ is tight: for each nontrivial continuum $K \subset \T^2$, we have $h_{\pm}(g,K) > 0$,
	\end{itemize} 
	
	One of the goals of the section is to prove the following property:
	\begin{itemize}
		\item $g$ is topologically mixing, and if $\vec{0} \in \mathrm{int}(\rho(\wh g))$, $\wh g$ is topologically mixing,
	\end{itemize}
	
	the other one being to prove Theorem \ref{thmD:TransitiveThenMixing}, which we will do at the end of the Section. 
	\medskip
	
	Let us show that for the first purpose, it is enough to prove Proposition \ref{prop:essfactor.isrot.mixing}, as we immediately discuss. 
	
	\begin{proposition}\label{prop:essfactor.isrot.mixing}
		Let $g \in \mathrm{Homeo}_0(\T^2)$ be a frice torus homeomorphism.
		Then, if $\wh g$ is a lift of $g$ with $\vec{0} \in \text{int}(\rho(\wh g))$, then $\wh g$ is topologically mixing.   
	\end{proposition}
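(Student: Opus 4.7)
The plan is to prove Proposition \ref{prop:essfactor.isrot.mixing} by combining Lemma \ref{lem:stableunstablecontinuadense} with a total anchoring refinement of the Anchoring Lemma. Fix bounded open sets $\wh U, \wh V \subset \R^2$; the aim is to produce $n_0$ so that $\wh g^n(\wh U) \cap \wh V \neq \varnothing$ for every $n \geq n_0$. By Lemma \ref{lem:stableunstablecontinuadense} I would pick a non-trivial weakly unstable continuum $\wh K^{wu} \subset \wh U$ and a non-trivial weakly stable continuum $\wh K^{ws} \subset \wh V$, and, at the cost of replacing each by a bounded number of iterates (which only shifts the eventual $n_0$ by an additive constant), assume that $\wh K^{wu}$ is $u$-anchored to some CFS in $\wh{\mathcal A}$ and $\wh K^{ws}$ is $s$-anchored to some CFS in $\wh{\mathcal A}$.

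The central step will be a total $u$-anchoring statement: there exist $N \geq 0$, a horizontal CFS $\wh A^h \in \wh{\mathcal A}^{\rar} \cup \wh{\mathcal A}^{\lar}$, and a vertical CFS $\wh A^v \in \wh{\mathcal A}^{\uar} \cup \wh{\mathcal A}^{\dar}$, such that $\wh g^N(\wh K^{wu})$ is simultaneously $u$-anchored to both $\wh A^h$ and $\wh A^v$. To establish it, iterating the initial horizontal $u$-anchor via Lemma \ref{lemma:AnchoringLemma} forces $\mathrm{diam}(\wh g^n(\wh K^{wu}))$ to go to infinity along the direction of $\wh A^h$; once this diameter exceeds both the period of $\wh{\mathcal A}^{\uar} \cup \wh{\mathcal A}^{\dar}$ and the horizontal extent of its members, $\wh g^n(\wh K^{wu})$ becomes a crossing continuum for some perpendicular CFS, and Lemma \ref{lemma:3semianchoredThenAnchored} applied to this crossing provides an anchor to it after at most three more iterations. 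A case analysis over the four cardinal families, using $\vec 0 \in \mathrm{int}(\rho(\wh g))$ so that both forward and backward rotation directions are available, will show that up to a further bounded iterate this new anchor can be arranged to be a $u$-anchor. A symmetric argument run in backward time produces total $s$-anchoring of some past iterate of $\wh K^{ws}$.

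Once total $u$-anchoring of $\wh g^N(\wh K^{wu})$ is in hand, applying the second clause of Lemma \ref{lemma:AnchoringLemma} to both anchors yields that for every leaf $\wh \phi \subset \wh A^h$ sufficiently far from the horizontal anchor and every leaf $\wh \psi \subset \wh A^v$ sufficiently far from the vertical anchor, there exists $n_0(\wh \phi, \wh \psi)$ such that $\wh g^{N+n}(\wh K^{wu})$ meets both $\wh \phi$ and $\wh \psi$ for every $n \geq n_0$. Connectedness then forces $\wh g^{N+n}(\wh K^{wu})$ to contain a subcontinuum sweeping across the rectangle $\wh A^h \cap \wh A^v$. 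Combining with the symmetric statement for the backward iterates of $\wh K^{ws}$, and using the availability of CFS families in all four cardinal directions so that the sweeping region eventually covers every bounded region of $\R^2$, one concludes that $\wh g^{N+n}(\wh K^{wu}) \cap \wh K^{ws} \neq \varnothing$ for every $n$ larger than some explicit combined $n_0$.

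The main obstacle I expect is the total anchoring step, particularly guaranteeing that the new perpendicular anchor is a $u$-anchor rather than an $s$-anchor. I plan to handle this through a careful enumeration of cases based on the relative positions of the anchoring leaves in the two CFS families and additional bounded-time iteration along the lines of Lemma \ref{lemma:3semianchoredThenAnchored}; the four cardinal directions and the symmetries between them should reduce everything to a single generic configuration. Once total anchoring is secure, Lemma \ref{lemma:AnchoringLemma} already supplies the crucial ``for all $n \geq n_0$'' statement, which is precisely the feature that turns the argument into topological mixing rather than just rotational recurrence.
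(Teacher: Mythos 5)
Your overall setup (pick a weakly unstable continuum in $\wh V_1$ and a weakly stable one in $\wh V_2$ via Lemma \ref{lem:stableunstablecontinuadense}, then upgrade the Anchoring Lemma to a ``total anchoring'' statement) is the right spirit and matches the paper's strategy, but the version of total anchoring you aim for is too weak, and the final step does not close. Anchoring $\wh g^N(\wh K^{wu})$ simultaneously to \emph{some} horizontal and \emph{some} vertical CFS only tells you that its forward images stretch across those two particular strips; it says nothing about the location of the fixed target $\wh K^{ws}\subset \wh V_2$, which may sit in a fundamental domain never touched by that stretching. The assertion that ``the sweeping region eventually covers every bounded region of $\R^2$'' is unjustified (the images extend along the strips they are anchored to, they do not fill the plane), and even if a covering-in-the-limit statement were true it would not give what mixing requires, namely $\wh g^{l}(\wh K^{wu})\cap \wh K^{ws}\neq\varnothing$ for \emph{every} large $l$, with $\wh K^{ws}$ the set at time zero, not its backward iterates. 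In short, nothing in your argument ever pins the stable continuum relative to the path of the growing unstable one.

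What the paper's Proposition \ref{prop:AnchoredEverywhere} actually provides, and what your plan is missing, is anchoring to an \emph{arbitrarily prescribed} CFS with anchor leaves lying to the right of an \emph{arbitrarily prescribed} leaf of that strip. This is exactly what lets one (Lemma \ref{lemma:WeaklyStableUnstableMixing}) put a backward iterate of $\wh K^{ws}$ $s$-anchored into a CFS $\wh A$, choose a leaf $\wh\phi$ with the $s$-anchor leaves on its left, and then $u$-anchor a forward iterate of $\wh K^{wu}$ to the \emph{same} $\wh A$ entirely to the right of $\wh\phi$. The intersection for all large times then comes from a separation argument: the $s$-anchored stable continuum together with two half-leaves forms a wall $\Gamma$ separating the ends of the strip; the Anchoring Lemma forces $\tl g^{l}(\tl K^{wu})$ to reach leaves beyond $\Gamma$ while it still meets leaves on the near side, so it must cross $\Gamma$; and since leaves are Brouwer lines, once $\tl g^{l_+}(\tl K^{wu})$ lies to the left of the stable anchor leaves its forward images can never touch them again, so the crossing must occur through $\wh K^{ws}$'s iterate itself. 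Without the ``same strip, prescribed relative position'' form of total anchoring and this wall/Brouwer-line argument (or a substitute for them), your case analysis of perpendicular anchors cannot produce the required intersection at all large times, so the proof as proposed has a genuine gap at its final step.
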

	
	\begin{corollary}\label{coro:EssFactorIsMixing}
		Any frice torus homeomorphism $g$ is rotationally mixing. In particular, {\color{black}the essential factor of a torus homeomorphism in the General Hypothesis is rotationally mixing.}  
	\end{corollary}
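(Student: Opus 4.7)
The plan is to deduce the corollary directly from Proposition \ref{prop:essfactor.isrot.mixing} by passing to an appropriate power of $g$ and choosing a lift whose rotation set contains the origin in its interior.

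First I would observe that, since $g$ is frice, we have $\textnormal{int}(\rho(g)) \neq \varnothing$. Fix any lift $\wh g$ of $g$. Then $\rho(\wh g)$ is a convex set with nonempty interior in $\R^2$, so it contains a rational point $(p/q, r/q) \in \mathbb{Q}^2$ with $q \in \Z^+$. Define the lift $\wh h$ of $g^q$ by
\[ \wh h(\wh z) := \wh g^{q}(\wh z) - (p,r). \]
A direct computation with the definition of the rotation set gives $\rho(\wh h) = q\cdot \rho(\wh g) - (p,r)$, which is again convex with nonempty interior and, by construction, has $\vec{0}$ in its interior.

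Next I would verify that $g^q$ is itself a frice homeomorphism, so that Proposition \ref{prop:essfactor.isrot.mixing} applies to it. The condition $\textnormal{int}(\rho(g^q)) \neq \varnothing$ is immediate from $\rho(g^q) = q\cdot \rho(g)$. Infinite continuum-wise expansivity is preserved by taking powers by the remark stated just before Remark \ref{rem:NicePowerOfF}: for any continuum $K$, $\mathscr{D}_g(K) = \infty$ if and only if $\mathscr{D}_{g^q}(K) = \infty$. Hence $g^q \in \textnormal{Homeo}_0(\T^2)$ is frice.

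Applying Proposition \ref{prop:essfactor.isrot.mixing} to $g^q$ with the lift $\wh h$ yields that $\wh h$ is topologically mixing. By Definition of rotationally mixing, this shows that $g$ is rotationally mixing, establishing the first assertion. For the \emph{in particular} clause, we recall that when $f$ satisfies the General Hypothesis, its essential factor $g$ is frice, as proved in Proposition \ref{prop:elementprop.essfactor}; hence the previous argument gives that such an essential factor is rotationally mixing. No step presents a real obstacle here: the entire work lies in Proposition \ref{prop:essfactor.isrot.mixing}, and the only subtlety is the bookkeeping needed to shift lifts so that $\vec{0}$ lands in the interior of the rotation set.
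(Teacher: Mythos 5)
Your argument is correct and follows essentially the same route as the paper: take a power and translate the lift so that $\vec{0}$ lies in the interior of the rotation set, check that the power is still frice, and apply Proposition \ref{prop:essfactor.isrot.mixing} (your explicit verification that $g^q$ is frice, via $\mathscr{D}_g(K)=\infty \iff \mathscr{D}_{g^q}(K)=\infty$, is a detail the paper leaves implicit). The only nitpick is that the rational point $(p/q,r/q)$ should be chosen in the \emph{interior} of $\rho(\wh g)$ (which is possible since the interior is nonempty and open) so that $\vec{0}\in\mathrm{int}(\rho(\wh h))$, as your conclusion tacitly assumes.
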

	
	\begin{proof}
		Recalling that $\rho(\wh g^j) = j\rho(\wh g)$, we know that there exists a power $g^j$ and an adequate lift $\wh {g^j}$ which has $\vec{0} \in \rho(\wh{g^j})$, and is therefore topologically mixing, which means that $g$ is rotationally mixing. 
	\end{proof}
	
	\begin{remark}
		Assuming Proposition \ref{prop:essfactor.isrot.mixing} holds, we have obtained that $g^j$ is topologically mixing, which implies that the original $g$ is also topologically mixing (see Lemma \ref{lem:MixingPower}).
	\end{remark}
	
	\begin{corollary}
		The essential factor $g$ is strictly toral, and $\mathrm{Ess}(g) = \T^2$. 
	\end{corollary}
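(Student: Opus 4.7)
The plan is to verify the two exclusion clauses of Definition~\ref{def:StrictlyToral} directly from the fact that every power of $g$ has a lift whose rotation set has nonempty interior, and then to combine strict torality with topological mixing via Remark~\ref{rem:TrasitiveIffEss}.

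First I would observe that $g$ is non-wandering: by Corollary~\ref{coro:EssFactorIsMixing}, $g$ is topologically mixing, so for every open $U$ one has $g^n(U)\cap U \neq \varnothing$ for all large $n$, which forces every point of $\T^2$ to be non-wandering. Hence $g$ falls under the scope of Definition~\ref{def:StrictlyToral}.

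Next I would rule out the two bad possibilities in that definition. Proposition~\ref{prop:elementprop.essfactor} supplies a lift $\wh g$ with $\rho(\wh g)=\rho(\wh f)$ of nonempty interior, and the rotation set of any lift of $g^k$ is (up to integer translation) $k\cdot\rho(\wh g)$, so every power $g^k$ has a rotation set of nonempty interior. This immediately excludes clause~(1): if some power $g^k$ were irrotational, its rotation set would collapse to $\{(0,0)\}$, contradicting nonempty interior. It also excludes clause~(2): if $g^k$ were annular, there would exist $v\in\Z^2_*$ and $M>0$ with $|\langle \widehat{g^k}^j(\wh z)-\wh z,v\rangle|<M$ for all $\wh z$ and all $j$, and dividing by $j$ and passing to the limit would force $\rho(\widehat{g^k})\subset v^\perp$, again incompatible with nonempty interior. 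Therefore $g$ is strictly toral.

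Finally, topological mixing of $g$ implies transitivity, so Remark~\ref{rem:TrasitiveIffEss} applied to the strictly toral map $g$ yields $\mathrm{Ess}(g)=\T^2$. There is no real obstacle here: the substantive work has already been absorbed in Proposition~\ref{prop:elementprop.essfactor} and Corollary~\ref{coro:EssFactorIsMixing}, and the corollary reduces to checking that nonempty interior of the rotation set obstructs both irrotational and annular behaviour for every iterate, together with the transitivity characterisation of full essentiality.
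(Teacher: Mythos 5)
Your proof is correct and follows essentially the same route as the paper: non-wandering from topological mixing, strict torality from the nonempty interior of the rotation set, and $\mathrm{Ess}(g)=\T^2$ from transitivity together with Remark~\ref{rem:TrasitiveIffEss}. The only difference is that you verify explicitly (and correctly) the fact, stated without proof in the preliminaries, that a nonempty-interior rotation set rules out both the irrotational and the annular clauses of Definition~\ref{def:StrictlyToral} for every power, whereas the paper simply invokes that remark.
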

	
	\begin{proof}
		Note that $g$ is non-wandering because it is topologically mixing. Given that $g$ already satisfies the General Hypothesis, we conclude that it is strictly toral. Moreover, $g$ is also transitive, again because it is topologically mixing. By Remark \ref{rem:TrasitiveIffEss}, we conclude that $\mathrm{Ess}(g) = \T^2$, as desired. 
	\end{proof}
	
	\medskip
	
	The idea for the proof of Proposition \ref{prop:essfactor.isrot.mixing} is to use the existence of stable and unstable sets for every point, together with a refinement of the anchoring techniques, mainly developed in Proposition \ref{prop:AnchoredEverywhere}. For this matter, we will use the results and notation developed in Sections \ref{sec:CFS} and \ref{section:stretching}.

	\subsection{Total anchoring}
	
	{\color{black} In this subsection, we will explain for homeomorphisms $\wh f$ with $\vec{0} \in \mathrm{int}(\rho(\wh f))$, how the orbit of any weakly unstable continuum is \textit{u}-anchored to \textit{every canonically foliated strip} in $\wh{\mathcal{A}}$, and even more, the pair of leaves which serve as anchors can be chosen to be almost anywhere in the boundary of the chosen strip.
		
	The proof of this fact is somewhat technical but it is at its heart, a careful iterative use of the anchoring techniques developed in Section \ref{section:stretching}. We will split the proof in four pieces so it becomes easier to understand. 
	
	{\color{black}Roughly speaking: consider $\wh K \subset \R^2$ a continuum which is \textit{u}-anchored to a horizontal CFS. The first piece, Lemma \ref{lem:InductiveAnchoring}, shows that some future iterate of $\wh K$ is \textit{u}-anchored to a vertical CFS. The second one, Lemma \ref{lem:VeryAnchored}, shows that future iterates of $\wh K$ are \textit{u}-anchored to every CFS. The third one, Lemma \ref{lem:TheRoute}, is a technical statement which allows us to control the leaves through which the future iterates of $\wh K$ will enter each CFS. The final one, Proposition \ref{prop:AnchoredEverywhere}, shows that future images of $\wh K$ can be made to be \textit{u}-anchored to any CFS before any prescribed fundamental domain.}

	For the remainder of the subsection, recall the numbering of the families of CFS given by Equations \ref{eq:HorizontalCFS} and \ref{eq:CFSThree}.

		\begin{lemma}[\textbf{Inductive Anchoring}]\label{lem:InductiveAnchoring}
			Let $\wh f$ be a lift to $\R^2$ of $f \in \mathrm{Homeo}_0(\T^2)$, with $\vec{0} \in \mathrm{int}(\rho(\wh f))$. Fix a CFS $\widehat{A}^{\rar} \in {\mathcal{A}}^{\rar}$, and a continuum $\widehat{K} \subset \R^2$ that is \textit{u}-anchored to $\wh{A}^{\rar}$.
			
			Then, there exists $k_0$ such that, for every $k \geq k_0$, there exists $j_k$ such that $\wh{f}^{j_k}(\wh K)$ is \textit{u}-anchored to $\wh{A}^{\dar}_k$ and it is also \textit{u}-anchored to $\wh{A}^{\uar}_k$.
		\end{lemma}
		
		\begin{proof}
			Let us take two leaves $\widehat{\phi}^{\rightarrow}_{\textnormal{B}}, \widehat{\phi}^{\rar}_{\textnormal{T}}$ such that  
			$$ (\widehat{A}^{\rightarrow}, \widehat{K}, \widehat{\phi}^{\rightarrow}_{\textnormal{B}}, \widehat{\phi}^{\rar}_{\textnormal{T}}) \text{ is a \textit{u}-anchor}.$$

			Fix a lift $\tilde{A}^{\rightarrow}$, and the natural lifts $\tilde{K}$, $\tilde{\phi}^{\rightarrow}_{\text{B}}, \tilde{\phi}^{\rightarrow}_{\text{T}}$ such that 
			$$(\tilde{A}^{\rightarrow}, \tilde{K}, \tilde{\phi}^{\rightarrow}_{\text{B}},  \tilde{\phi}^{\rightarrow}_{\text{T}}) \text{ is a \textit{u}-anchor.}$$ 
			
			As in Lemma \ref{lemma:3semianchoredThenAnchored}, recall that $\tilde{A}^{\rightarrow}$ is crossed by infinitely many vertical CFS $\tilde{A}_k^{\downarrow}$ with their corresponding realizing points $\tilde{z}^{\downarrow}_k$ (see the construction of $\wh {\mathcal{A}}^{\dar}$ in Section \ref{sec:CFS} for details). 
			Again as in Lemma \ref{lemma:3semianchoredThenAnchored}, we write $\tilde{\phi}^{\downarrow}_{\textnormal{B},k} \subset \partial_{\text{B}}\tilde{A}^{\downarrow}_k \cap \tilde{A}^{\rightarrow}$ 
			(see Figure \ref{figure:InductiveAnchoring}). 
			Note that here exists $k_0$ such that $\tilde{\phi}^{\rightarrow}_{\text{B}} \subset \TT(\tl A^{\dar}_{k_0}) \subset  \text{R}(\tilde{\phi}^{\downarrow}_{\text{B},k_0}), \  \tilde{\phi}^{\rightarrow}_{\text{T}} \subset \TT(\tl A^{\dar}_{k_0}) \subset \text{R}(\tilde{\phi}^{\downarrow}_{\text{B},k_0})$. By the Anchoring Lemma, we have that for every $k \geq k_0$, there exists $j_k > 0$ such that 
			$$ \tilde{f}^{j_k}(\tilde{K}) \cap \tilde{\phi}^{\dar}_{\text{B},k} \neq \varnothing. $$
			
			Thus, we can conclude that 
			\begin{equation}\label{equation:driftinganchor}
				(\tilde{A}^{\dar}_k, \tilde{f}^{j_k}(\tilde{K}), \tilde{\phi}^{\dar}_{\text{B},k}, \tilde{\phi}^{\rar}_{\text{T}}) \text{ is a \textit{u}-anchor,}
			\end{equation}
			
			which implies that $\wh{f}^{j_k}(\wh K)$ is \textit{u}-anchored to $\wh{A}^{\dar}_k$ (again, see Figure \ref{figure:InductiveAnchoring} for details). We proceed in the same fashion for the infinitely many vertical CFS $\tilde{A}_k^{\uar}$ that cross $\tilde{A}^{\rar}$, which concludes the proof.  
			
			\begin{figure}[h]
				\centering
				
				\def\svgwidth{.92\textwidth}
				\import{./Figures/}{InductiveAnchoringv2.pdf_tex}
				
				\smallskip
				
				\caption {Here, $k = k_0$. Notice that $(\tilde{A}^{\dar}_k, \tilde{f}^{j_k}(\tilde{K}), \tilde{\phi}^{\dar}_{\text{B},k}, \tilde{\phi}^{\rar}_{\text{T}})$ is a \textit{u}-anchor. Moreover, for every $k' \geq k$, a future iterate of $\tilde{K}$ will be \textit{u}-anchored to $\tilde{A}^{\dar}_{k'}$}
				\label{figure:InductiveAnchoring}
			\end{figure}	
		\end{proof}

		We now repeat the Inductive Anchoring with care to obtain the following:
		
		\begin{lemma}\label{lem:VeryAnchored}
			Let $\wh f$ be a lift to $\R^2$ of $f \in \mathrm{Homeo}_0(\T^2)$, with $\vec{0} \in \mathrm{int}(\rho(\wh f))$. Fix a CFS $\wh A \in \wh {\mathcal{A}}$, and a weakly unstable continuum $\widehat{K} \subset \R^2$.
			
			Then, there exists $j>0$ such that $\widehat{f}^{j}(\widehat{K})$ is \textit{u}-anchored to $\widehat{A}$.
		\end{lemma}

		\begin{proof}
			Let us assume without loss of generality that $\wh K$ is \textit{u}-anchored to $\wh A^{\rar}_0$. By Lemma \ref{lem:InductiveAnchoring}, we obtain for $k \geq k_0$ (where $k_0$ given by Lemma \ref{lem:InductiveAnchoring}) that the desired statement is true for every $\wh A^{\dar}_k$ and every $\wh{A}^{\uar}_k$. 
			
			As in Lemma \ref{lem:InductiveAnchoring} (see also Figure \ref{figure:InductiveAnchoring}), take $j_k$ such that
			\begin{equation}\label{equation:driftinganchor1}
				(\tilde{A}^{\dar}_k, \tilde{f}^{j_k}(\tilde{K}), \tilde{\phi}^{\dar}_{\text{B},k}, \tilde{\phi}^{\rar}_{\text{T}}) \text{ is a \textit{u}-anchor.}
			\end{equation}

			Now, note that $\tilde{A}^{\dar}_k$ is in turn partitioned in fundamental domains by taking lifts $\tl A^{\rar}_l$ of each of the respective $\widehat{A}^{\rar}_l$, which intersect $\tilde{A}^{\dar}_k$, and then taking leaves  $\tilde{\phi}^{\rar}_{\text{B},l} \in \partial_{\BB} \tl A^{\rar}_{l} \cap \tl A^{\dar}_k$ and defining: 
			$$\tilde{D}^{\dar}_l = \text{L}_{\tilde{A}^{\dar}_k }(\tilde{\phi}^{\rar}_{\text{B},l}) \cap \text{R}_{\tilde{A}^{\dar}_k }(\tilde{\phi}^{\rar}_{\text{B},l-1}).$$
			
			Observe that $\tilde{\phi}^{\downarrow}_{\text{B},k} \subset \tilde{D}^{\dar}_0$, and therefore for $l < 0$, we obtain that, 
			\[\tilde{\phi}^{\rar}_{\text{T}} \subset \text{R}(\tilde{\phi}^{\rar}_{\text{B},l}); \ \  \tilde{\phi}^{\downarrow}_{\text{B},k} \subset \text{R}(\tilde{\phi}^{\rar}_{\text{B},l}); \ \ 
			\tilde{\phi}^{\rar}_{\text{B},-1} \in \text{R}(\tl \phi^{\rar}_{\TT}) \cap \text{R}(\tl \phi^{\dar}_{\BB,k}).\]
			
			Again by the Anchoring Lemma, for every $l < 0$ there exists $j_l > j_k$ such that 
			$$ \tilde{f}^{j_l}(\tilde{K}) \cap \tilde{\phi}^{\rar}_{\text{T},l} \neq \varnothing, \ \text{  where } \ \  \tilde{\phi}^{\rar}_{\text{T},l} \subset \tl A^{\dar}_k \cap \partial_{\TT}\tl A^{\rar}_l,$$
			and we may then conclude (see Figure \ref{figure:VeryAnchored}) that
			$$ (\tilde{A}^{\rar}_{l}, \tilde{f}^{j_l}(\tilde{K}), \tilde{\phi}^{\rar}_{\text{T}}, \tilde{\phi}^{\rar}_{\text{T},l}) \text{ is a \textit{u}-anchor.} $$  
			
			\begin{figure}[h]
				\centering
				
				\def\svgwidth{.92\textwidth}
				\import{./Figures/}{VeryAnchored.pdf_tex}
				
				\smallskip
				
				\caption {Note that $\tilde{\phi}^{\rar}_{\TT} \subset \BB_{\tilde{\phi}^{\rar}_{\BB,l}}(\tilde{A}^{\rar}_l)$, and thus $(\tilde{A}^{\rar}_l, \tilde{f}^{j_l}(\tilde{K}), \tilde{\phi}^{\rar}_{\text{T}}, \tilde{\phi}^{\rar}_{\text{T},l})$ is a \textit{u}-anchor. Similarly, $(\tilde{A}^{\rar}_l, \tilde{f}^{j_l}(\tilde{K}), \tilde{\phi}^{\rar}_{\text{B}}, \tilde{\phi}^{\rar}_{\text{T},l})$ is also a \textit{u}-anchor}
				\label{figure:VeryAnchored}
			\end{figure}

			We have just proved that $\widehat{f}^{j_l}(\widehat{K})$ is \textit{u}-anchored for $\widehat{A}^{\rar}_l$, for every $l < 0$. In the same fashion, we obtain that $\widehat{f}^{j_m}(\widehat{K})$ is \textit{u}-anchored for $\widehat{A}^{\lar}_m$, for every $m < 0$ (see Figure \ref{figure:AnchoringEverywhere} for details). Using that a future iterate of $\wh K$ is \textit{u}-anchored to some $\wh A^{\uar}_k \in \mathcal{A}^{\uar}$, and repeating the technique from Lemma \ref{lem:InductiveAnchoring}, we recover the same results, but with $l, m>0$. 
			
			\begin{figure}[h]
				\centering
				
				\def\svgwidth{.92\textwidth}
				\import{./Figures/}{AnchoringEverywhere.pdf_tex}
				
				\smallskip
				
				\caption {Example with $m =-1$. Note that $(\tilde{A}^{\lar}_m, \tilde{f}^{j_m}(\tilde{K}), \tilde{\phi}^{\lar}_{\text{B},m}, \tilde{\phi}^{\rar}_{\text{T}})$ and $(\tilde{A}^{\lar}_m, \tilde{f}^{j_m}(\tilde{K}), \tilde{\phi}^{\lar}_{\text{B},m}, \tilde{\phi}^{\rar}_{\text{B}})$ are \textit{u}-anchors.}
				\label{figure:AnchoringEverywhere}
			\end{figure}	
			
			We have proven that the \textit{u}-anchoring of $\wh K$ to a horizontal CFS, implies for \textit{every} horizontal CFS, the \textit{u}-anchoring of some future iterate of $\wh K$. Then, using Equation \ref{equation:driftinganchor1} we recall that a future iterate of $\wh K$ is \textit{u}-anchored to some vertical CFS, which then implies for every vertical CFS, the \textit{u}-anchoring of some future iterate of $\wh K$, which concludes the proof.

			
		\end{proof}

		We need one more result before the central one.
		
		\begin{lemma}\label{lem:TheRoute}
			Let $\wh K$ a \textit{u}-anchored continuum, let $\wh A ^{\rar}, \wh A^{\uar} \in \wh{\mathcal{A}}$ be two CFS, and let $\wh \phi^{\rar}_{\TT} \subset \partial_{\TT}\wh A^{\rar} \cap \wh A^{\uar}$. 
			
			Then, there exist lifts $\tl A^{\rar}, \tl A^{\uar}, \tl \phi^{\rar}_{\TT}$, and some $j >0$ such that 
			\begin{enumerate}
				\item $\tl \phi^{\rar}_{\TT} \subset \partial_{\TT} \tl A^{\rar} \cap \tl A^{\uar}$,
				\item $\tl K \subset \TT_{\tl \phi^{\rar}_{\TT}}(\tl A^{\rar})$,
				\item $\tl \phi^{\rar}_{\BB} \subset \partial_{\BB} \tl A^{\rar} \cap \tl A^{\uar}$
				\item $\tl f^{j}(\tl K) \cap \tl \phi^{\rar}_{\BB} \neq \varnothing$ 
			\end{enumerate}
		\end{lemma}
		
		Note that there are natural analogues to this statement for any pair of transversal CFS (exchanging $\TT$ for $\BB$ in half of the cases). 
		
		
		\begin{proof}
			Let us assume without loss of generality that $\wh K$ is \textit{u}-anchored to $\wh A^{\rar}_0$, by $\wh \phi_{\BB}$ and $\wh \phi_{\TT}$. Take a natural set of lifts $(\tl A^{\rar}_0, \tl K, \tl \phi_{\BB}, \tl \phi_{\TT})$ is \textit{u}-anchor. Let us also assume that $\wh A^{\rar} = \wh A^{\rar}_l$, and that $\wh A^{\rar} \cap \wh A^{\uar} \subset \wh D^{\rar}_n$ (the \textit{n}-th fundamental domain of $\wh A^{\rar}$).
			
			Now, take $k \in \Z$ with $k > n$, and such that $\wh K \subset \TT(\wh A^{\dar}_k)$. Let us also take $m < l$. We will now build \textit{a route} (sequence of CFS) in $\wh \R^{2}$, for an iterate of $\wh K$ to follow, and then to enter $\wh A^\rar_l$ \textit{through} $\wh \phi_{\TT}$. This will show us the desired lift $\tl A^{\rar}$ of $\wh A^{\rar}$ we need to take.  
			
			The route in $\wh \R^2$ is the following: we take $\wh A^\rar_0$, $\wh A^{\dar}_k$, $\wh A^{\lar}_{m}$, $\wh A^{\uar}_n, \wh A^{\rar}_l$, chosen such that $\wh A^\rar_0\subset T(\wh A^{\lar}_{m}), \wh A^\dar_k\subset T(\wh A^{\uar}_{n})$ and $\wh A^\lar_m\subset T(\wh A^{\rar}_{l})$ (see Figure \ref{figure:TheRoute} for details).  Then, take a natural set of lifts such that each of the successive intersections $\tl A^\rar_0 \cap \tl A^{\dar}_k$, $\tl A^{\dar}_k \cap \tl A^{\lar}_{m}$, $\tl A^{\lar}_{m} \cap \tl A^{\uar}_n$, $\tl A^{\uar}_n \cap \tl A^{\rar}_l$ are nonempty (you may think of this by taking {\color{black}a positively transverse path $\wh \gamma$ from $\wh K$ to $\wh \phi^{\rar}_{\TT}$ which is contained in the union of the CFS in the route and intersects them in the given order -see Figure \ref{figure:TheRoute}-}, and then lifting it to $\tl \D$). Use this to define a lift $\tl \phi^{\rar}_{\TT}$ of $\wh \phi^{\rar}_{\TT}$, such that $\tl \phi^{\rar}_{\TT} \subset \partial_{\TT} \tl A^{\rar}_l \cap \tl \phi^{\uar}_n$, and notice that by construction, 
			$$ \tl K \subset \TT(\tl A^{\dar}_k) \subset \TT(\tl A^{\lar}_{m}) \subset \TT(\tl A^{\uar}_n) \subset \TT_{\tl \phi^{\rar}_{\TT}}(\tl A^{\rar}_l).$$
			
			So far, the first two of the three desired properties are held. Now, four iterations of the Inductive Anchoring will give us four leaves (see Figure \ref{figure:TheRoute}),
			$$\tl \phi^{\dar}_{\BB} \subset \partial_{\BB} \tl A^{\dar}_k \cap \tl A^{\rar}_0; \ \  \tl \phi^{\lar}_{\BB} \subset \partial_{\BB} \tl A^{\lar}_m \cap \tl A^{\dar}_k; \ \ \tl \phi^{\uar}_{\BB} \subset \partial_{\BB} \tl A^{\uar}_n \cap \tl A^{\lar}_m; \ \  \tl \phi^{\rar}_{\BB} \subset \partial_{\BB} \tl A^{\rar}_l \cap \tl A^{\uar}_n,$$
			and some $j > 0$ such that the following quartets are \textit{u}-anchors:
			$$(\tl A^{\dar}_k, \tl f^{j}(\tl K), \tl \phi^{\dar}_{\BB}, \tl \phi_{\TT}); \ \ (\tl A^{\lar}_m, \tl f^{j}(\tl K), \tl \phi^{\lar}_{\BB}, \tl \phi_{\TT}); \ \  (\tl A^{\uar}_n, \tl f^{j}(\tl K), \tl \phi^{\uar}_{\BB}, \tl \phi_{\TT}); \ \  (\tl A^{\rar}_l, \tl f^{j}(\tl K), \tl \phi^{\rar}_{\BB}, \tl \phi_{\TT}),$$ 
			
		where the last anchor shows that $\tl f^{j}(\tl K) \cap \tl \phi^{\rar}_{\BB} \neq \varnothing$, and thus concludes the proof. 

			\begin{figure}[h]
				\centering
				
				\def\svgwidth{\textwidth}
				\import{./Figures/}{TheRoute.pdf_tex}
				
				\smallskip
				
				\caption {We lift $\wh \gamma$ to $\tl \D$, to find the lifts of $\wh A^{\dar}_k, \wh A^{\lar}_m, \wh A^{\uar}_n$ and $\wh A^{\rar}_l$. Note that by the Inductive Anchoring, $\tl f^{j}(\tl K)$ intersects $\tl \phi^{\dar}_{\BB}, \tl \phi^{\lar}_{\BB}, \tl \phi^{\uar}_{\BB}$ and $\tl \phi^{\rar}_{\BB}$.}
				\label{figure:TheRoute}
			\end{figure}
		\end{proof}

		We finish the subsection with a key result.
		
		\begin{proposition}[\textbf{Total Anchoring}]\label{prop:AnchoredEverywhere}
			Let $\wh f$ be a lift to $\R^2$ of $f \in \mathrm{Homeo}_0(\T^2)$, such that $\vec{0} \in \mathrm{int}(\rho(\wh f))$. Fix a weakly unstable continuum $\widehat{K} \subset \R^2$, a CFS $\widehat{A} \in \wh {\mathcal{A}}$, and a leaf $\widehat{\phi} \subset \widehat{A}$. 
			
			Then, there exist two lifts $\tl A$, $\tl \phi $ of $\wh A$ and $\wh \phi$, 
			and some $j > 0$, such that 
			$\tl{f}^{j}(\tl{K})$ is \textit{u}-anchored to $\tl A$, and $\tl{f}^{j}(\tl{K}) \subset \textnormal{R}_{\tl{A}}(\tl{\phi}).$  
		\end{proposition}
		
		\begin{proof}
			Let us assume without loss of generality that the given CFS $\wh A$ {\color{black}satisfies} $\wh A \in \mathcal{A}^{\rar}$, and let us write $\wh A = \wh A^{\rar}$. 
			Let us also assume that $\wh K$ is \textit{u}-anchored to $\wh A^{\rar}_0$, and take a natural set of lifts such that $(\tl A^{\rar}_0, \tl K, \tl \phi_{\BB}, \tl \phi_{\TT})$ is \textit{u}-anchor. 
			Let us recover once again the structure of fundamental domains for $\wh A^{\rar}$:
			$$ \wh D^{\rar}_k = \text{L}_{\wh A^{\rar}}(\wh \phi^{\dar}_{\TT,k}) \cap \text{R}_{\wh A^{\rar}}(\wh \phi^{\dar}_{\TT,k+1}).$$
			We use Lemma \ref{lem:PointsMoveSlowly}, to obtain $n \in \Z$ such that for every leaf $\wh \phi'$ in $\text{R}_{\wh A^{\rar}}(\wh \phi^{\dar}_{\TT,n+2})$, and any set of lifts $\tl \phi'$, $\tl \phi$ to {\color{black}$\tl \D$} belonging to the same lift of $\wh A^{\rar}$, we have
			\begin{equation}\label{eq:Controlled}
				\tl f^{j}(\tl \phi') \subset \mathrm{R}(\tl \phi), \text{ for every } 0 \leq j \leq 3. 
			\end{equation}

		Let us take $\wh A^{\uar}_n$, such that $\wh A^{\uar}_n \cap \wh A^{\rar} \subset \wh D^{\rar}_n$, and let us take $\wh \phi^{\rar}_{\TT} \subset \partial_{\TT} \wh A^{\rar} \cap \wh A^{\uar}_n$, $\wh \phi^{\rar}_{\BB} \subset \partial_{\BB} \wh A^{\rar} \cap \wh A^{\uar}_n$. We use Lemma \ref{lem:TheRoute} to find lifts $\tl A^{\rar}$, $\tl A^{\uar}_n$ such that $\tl K \subset \TT_{\tl \phi^{\rar}_{\TT}}(\tl A^{\rar})$, and some $j' >0$ such that $\tl f^{j'}(\tl K) \cap \tl \phi^{\rar}_{\BB} \neq \varnothing$. 
		
		Given $\tl \phi^{\rar}_{\TT} \subset \partial \tl D^{\rar}_n \subset \partial \tl A^{\rar}$, once again by the Anchoring Lemma we obtain that future iterates of $\tl K$ will intersect every leaf contained in $\tl D_{n'}$, whenever $n'>n$. Recall that 
		$$ \tl D^{\rar}_{n'} = \text{L}_{\tl A^{\rar}}(\tl \phi^{\dar}_{\TT,n'}) \cap \text{R}_{\tl A^{\rar}}(\tl \phi^{\dar}_{\TT,n'+1}).$$ 
		and let us take $j'' >0$ the smallest integer such that $\tl f^{j''}(\tl K) \cap \tl \phi^{\dar}_{\TT, n+2} \neq \varnothing$ (see Figure \ref{figure:Cling} for details. For ease of understanding the figure shows $\wh \R^2$, the arguments are made in $\tl \D$).

		\begin{figure}[h]
			\centering
			
			\def\svgwidth{\textwidth}
			\import{./Figures/}{Cling.pdf_tex}
			
			\smallskip
			
			\caption {By the Anchoring Lemma, the natural lifts of $\wh \phi_0$ and $\wh \phi_1$ \textit{push} $\tl K$, and thus we have that $(\tl A^{\rar}, f^{j''+2}(\tl K), \tl \phi^{\rar}_{\BB,n+1},\tl \phi^{\rar}_{\TT,n+1})$ is a \textit{u}-anchor.}
			\label{figure:Cling}
		\end{figure}

		To finish, notice that 
		\begin{enumerate}
			\item $\tl f^{j''-1}(\tl K) \subset \RR(\tl \phi^{\dar}_{n+2})$, and thus $\tl f^{j''+2}(\tl K) \subset \RR(\tl \phi)$ (see Equation \ref{eq:Controlled}),
			\item $\tl f^{j''}(\tl K)$ is \textit{u}-anchored to $\tl A^{\uar}_{n+1}$ and to $\tl A^{\dar}_{n+1}$. By the Anchoring Lemma, $f^{j''+2}(\tl K)$ will intersect both $\tl \phi^{\rar}_{\BB,n+1} \subset \partial_{\BB}\tl A^{\rar} \cap \tl A^{\uar}_{n+1}$ and $\tl \phi^{\rar}_{\TT,n+1} \subset \partial_{\TT}\tl A^{\rar} \cap \tl A^{\dar}_{n+1}$, 
			\item $(\tl A^{\rar}, f^{j''+2}(\tl K), \tl \phi^{\rar}_{\BB,n+1},\tl \phi^{\rar}_{\TT,n+1})$ is a \textit{u}-anchor,
		\end{enumerate}
		so taking $j = j''+2$ concludes the proof. 
		\end{proof}

	Note that all techniques we have used, immediately yield analogous results for weakly stable continua.
		
	}

	\subsection{Mixing pairs of continua}
	
	Let us prove one last result before moving to the proof of Proposition \ref{prop:essfactor.isrot.mixing}. We emphasize that this result already has the mixing-like structure, and that it strongly uses Proposition \ref{prop:AnchoredEverywhere}.
	
	\begin{lemma}\label{lemma:WeaklyStableUnstableMixing}
		Let $\wh f$ be a lift to $\R^2$ of a torus homeomorphism $f \in \mathrm{Homeo}_0(\T^2)$, such that $\vec{0} \in \mathrm{int}(\rho(\wh f))$, and let $\wh K^{ws},\wh K^{wu}$ be a pair of respectively weakly stable and weakly unstable continua for $\wh f$. Then, there exists $l_0 \geq 0$ such that $$\widehat{f}^{l}(\wh K^{wu}) \cap \wh K^{ws} \neq \varnothing, \ \forall l\geq l_0.$$
	\end{lemma}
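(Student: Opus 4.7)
The overall strategy is to use the anchoring machinery already developed to place an iterate of $\wh K^{wu}$ in a position where its future iterates are forced to ``sweep across'' the s-anchored crossing determined by $\wh K^{ws}$.

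My first step would be a reduction. Since $\wh K^{ws}$ is weakly stable, some past iterate $\wh f^{-N}(\wh K^{ws})$ is $s$-anchored; after replacing $\wh K^{ws}$ by this past iterate (which only shifts the eventual $l_0$ by $N$) I may assume $\wh K^{ws}$ is itself $s$-anchored to some CFS $\wh A \in \wh{\mathcal A}$ via leaves $\wh \psi_{\BB} \subset \partial_{\BB}\wh A$ and $\wh \psi_{\TT} \subset \partial_{\TT}\wh A$. Fix once and for all compatible lifts $\tl A$, $\tl K^{ws}$, $\tl \psi_{\BB}$, $\tl \psi_{\TT}$ to $\tl{\D}$ so that $(\tl A, \tl K^{ws}, \tl \psi_{\BB}, \tl \psi_{\TT})$ is an $s$-anchor in $\tl \D$. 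In particular $\tl K^{ws}$ is a crossing continuum of $\tl A$ and $\tl A \subset L(\tl \psi_{\BB}) \cap L(\tl \psi_{\TT})$.

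Next I would invoke total anchoring. Pick a leaf $\wh \phi \subset \wh A$ whose lift $\tl \phi$ (in the same $\tl A$ as above, after adjusting by a deck transformation if needed) satisfies $\tl \psi_{\BB}, \tl \psi_{\TT} \subset L(\tl \phi)$; this is possible because the leaves of $\tl A$ are totally ordered and $\tl A$ extends indefinitely in both L and R directions. Proposition \ref{prop:AnchoredEverywhere}(2) applied to the weakly unstable $\wh K^{wu}$, the CFS $\wh A$, and $\wh \phi$ then yields $j' > 0$ and a $u$-anchor $(\tl A, \tl f^{j'}(\tl K^{wu}), \tl \phi_{\BB}, \tl \phi_{\TT})$ with $\tl \phi_{\BB}, \tl \phi_{\TT} \subset \partial R_{\tl A}(\tl \phi)$ and $\tl f^{j'}(\tl K^{wu}) \subset R_{\tl A}(\tl \phi)$. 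By transitivity of the L/R ordering in the strip, $\tl \phi_{\BB}, \tl \phi_{\TT} \subset R(\tl \psi_{\BB}) \cap R(\tl \psi_{\TT})$, so the $u$-anchor leaves are strictly to the R of the $s$-anchor leaves in $\tl A$.

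Now I would push the $u$-anchored iterate to the left using the Anchoring Lemma. Fix any leaf $\tl \phi' \subset \tl A$ lying strictly to the $L$ of both $\tl \psi_{\BB}$ and $\tl \psi_{\TT}$ (so $\tl \phi' \subset L_{\tl A}(\tl K^{ws})$, and $\tl \phi_{\BB}, \tl \phi_{\TT} \subset R(\tl \phi')$). Lemma \ref{lemma:AnchoringLemma} produces $n_0$ such that, for every $n \geq n_0$, $\tl f^{j'+n}(\tl K^{wu})$ is a crossing continuum of $\tl A$ intersecting both $\tl \phi_{\BB}$ and $\tl \phi'$; thus it contains a point in $\overline{R_{\tl A}(\tl K^{ws})}$ (coming from the persistent intersection with $\tl \phi_{\BB}$, which sits strictly to the R of $\tl K^{ws}$'s crossing) and a point in $L_{\tl A}(\tl K^{ws})$ (coming from the intersection with $\tl \phi'$).

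For the final topological step, the idea is to upgrade $\tl K^{ws}$ to a separator of $\tl \D$ along the lines of the auxiliary curve $\Gamma_0$ used in the proof of the Anchoring Lemma: concatenate $\tl K^{ws}$ with the appropriate semi-leaves of $\tl \psi_{\BB}$ and $\tl \psi_{\TT}$ emanating from $\tl K^{ws} \cap \tl \psi_{\BB}$ and $\tl K^{ws} \cap \tl \psi_{\TT}$ in the direction opposite to the $u$-anchor (i.e.\ the direction matching the $L(\tl \psi_{\BB,\TT})$ halfplanes where $\tl A$ sits), producing a separator $\Gamma^s$ of $\tl \D$. A bookkeeping of orientations then shows that $\tl \phi_{\BB}$ and $\tl \phi'$ sit in opposite components of $\tl \D \setminus \Gamma^s$, while the choice of semi-leaves guarantees that $\tl f^{j'+n}(\tl K^{wu})$ cannot intersect $\Gamma^s \setminus \tl K^{ws}$ (because its intersections with $\tl \psi_{\BB}, \tl \psi_{\TT}$ produced by the Anchoring Lemma lie on the other semi-leaves, those adjacent to the $u$-anchor side). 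Connectedness of $\tl f^{j'+n}(\tl K^{wu})$ then forces it to meet $\tl K^{ws}$. Projecting to $\R^2$ and setting $l_0 = j' + n_0 + N$ concludes the proof. The main obstacle I anticipate is precisely this last step: choosing the correct semi-leaves and verifying that no ``sneak-around'' intersections of $\tl f^{j'+n}(\tl K^{wu})$ with the auxiliary half-leaves of $\Gamma^s$ can substitute for an honest intersection with $\tl K^{ws}$.
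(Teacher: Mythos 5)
Your global strategy is the paper's: place an $s$-anchor for the stable side and, via Proposition \ref{prop:AnchoredEverywhere}(2), a $u$-anchor for the unstable side in the same canonically foliated strip, separated by a leaf $\tl\phi$, then push the $u$-anchored iterate leftwards with Lemma \ref{lemma:AnchoringLemma} and conclude by a separation argument in $\tl\D$. The genuine gap is the final step, precisely the one you flag as your main obstacle, and the mechanism you sketch for it does not work as stated. The Anchoring Lemma gives no information about whether, or where, the forward images of $\tl f^{j'}(\tl K^{wu})$ meet the $s$-anchor leaves $\tl\psi_{\BB},\tl\psi_{\TT}$, so there is no basis for the claim that such intersections are confined to the semi-leaves away from the $u$-anchor side; the semi-leaf bookkeeping cannot be completed from what you have. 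What closes the argument (and what the paper uses) is the Brouwer-line property of the leaves: by the $s$-anchor condition $\tl A\subset\LL(\tl\psi_{\BB})\cap\LL(\tl\psi_{\TT})$, and since $\tl f^{j'}(\tl K^{wu})\subset\RR_{\tl A}(\tl\phi)\subset\tl A$, the $u$-anchored iterate lies in the open left side of both leaves; as every leaf has its closed left side mapped by $\tl f$ into its open left side, \emph{all} further forward images are disjoint from $\tl\psi_{\BB}\cup\tl\psi_{\TT}$, so one may take the separator to be the full union $\tl\psi_{\BB}\cup\tl K^{ws}\cup\tl\psi_{\TT}$ with no choice of semi-leaves at all. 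A second unproved ingredient: you assert that the $u$-anchor leaf $\tl\phi_{\BB}$ sits strictly to the right of the crossing made by $\tl K^{ws}$, but nothing in your setup prevents the continuum $\tl K^{ws}$ from extending into $\RR(\tl\phi)$ and reaching the region of the $u$-anchor data, in which case your two witness points need not lie in different components of the complement of the separator. The paper gets this positional control by applying the weakly stable analogue of Proposition \ref{prop:AnchoredEverywhere} to $\wh K^{ws}$, obtaining an iterate contained in $\LL(\tl\phi)$; alternatively, since $\wh K^{ws}$ has finite diameter you can choose $\wh\phi$ in a fundamental domain of $\wh A$ strictly to the right of all those met by $\wh K^{ws}$, which forces $\tl K^{ws}\subset\LL(\tl\phi)$ (it is disjoint from $\tl\phi$ and meets $\tl\psi_{\BB}\subset\LL(\tl\phi)$). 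Either way, this containment must be established, and as written it is not.

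A secondary issue is your opening reduction: you assume the strip witnessing the $s$-anchoring of a past iterate of $\wh K^{ws}$ belongs to $\wh{\mathcal A}$. The definition of weak stability only provides an $s$-anchor to \emph{some} canonically foliated strip, whereas Proposition \ref{prop:AnchoredEverywhere}, which you then apply to $\wh K^{wu}$ with that strip as target, is stated only for strips of the fixed family $\wh{\mathcal A}$. The paper sidesteps this mismatch by using the time-reversed total anchoring on the stable side to bring its $s$-anchor into an arbitrarily chosen strip of $\wh{\mathcal A}$ (which simultaneously yields the positional control discussed above); you should either do the same or justify the membership assumption.
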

	
	\begin{proof}
		Fix a canonically foliated strip $\widehat{A} \in \wh{\mathcal{A}}$. By Proposition \ref{prop:AnchoredEverywhere}, there exists $l_{-} < 0$, and two leaves $\wh \phi ^{ws}_{\BB}, \wh \phi ^{ws}_{\TT}$  such that
		\[ (\wh A, \wh f^{l_-}(\wh K^{ws}), \wh \phi ^{ws}_{\BB}, \wh \phi ^{ws}_{\TT}) \text{ is a \textit{u}-anchor.}\] 
		
		Take {\bck{$\wh \phi \subset \wh A$}} such that 
		$$\wh \phi ^{ws}_{\BB}, \wh \phi ^{ws}_{\TT} \subset \partial \text{L}_{\wh A} (\wh \phi),$$
		and again by Proposition \ref{prop:AnchoredEverywhere}, there exists $l_{+} > 0$ and two leaves $\wh \phi ^{wu}_{\BB}, \wh \phi ^{wu}_{\TT}$ such that $\wh \phi ^{wu}_{\BB} \subset \partial \textnormal{R}_{\widehat{A}}(\widehat{\phi}), \wh \phi ^{wu}_{\TT} \subset \partial \textnormal{R}_{\widehat{A}}(\widehat{\phi})$ and such that
		\[ (\wh A, \wh f^{l_+}(\wh K^{wu}), \wh \phi ^{wu}_{\BB}, \wh \phi ^{wu}_{\TT}) \text{ is a \textit{u}-anchor.}\] 
		
		This implies that we may take natural lifts such that 
		\[ (\tl A, \tl f^{l_+}(\tl K^{wu}), \tl \phi ^{wu}_{\BB}, \tl \phi ^{wu}_{\TT}) \text{ is a \textit{u}-anchor,}\]
		\[ (\tl A, \tl f^{l_-}(\tl K^{ws}), \tl \phi ^{ws}_{\BB}, \tl \phi ^{ws}_{\TT}) \text{ is an \textit{s}-anchor,}\]
		and that there exists $\tl \phi \subset \tl A$ lift of $\wh \phi$ such that 
		\[ \tl \phi ^{wu}_{\BB} \subset \text{R}(\tl \phi); \  \tl \phi ^{wu}_{\TT} \subset \text{R}(\tl \phi); \ \tl \phi ^{ws}_{\BB} \subset \text{L}(\tl \phi) ; \  \tl \phi ^{ws}_{\TT} \subset \text{L}(\tl \phi).  
		\]
		\[\tl f^{l_+}(\tl K^{wu}) \subset \text{R}(\tl \phi); \  \tl f^{l_-}(\tl K^{ws}) \subset \text{L}(\tl \phi).\]
		See Figure \ref{figure:WeaklyStableUnstableMixing} for details.

		\begin{figure}[h]
			\centering
			
			\def\svgwidth{.92\textwidth}
			\import{./Figures/}{WeaklyStableUnstableMixing.pdf_tex}
			
			\smallskip
			
			\caption {Configuration after having anchored both $\widehat{K}^{ws}, \widehat{K}^{wu}$ to the same canonically foliated strip. For this particular example, we have that $ \tilde{f}^{l}(\tilde{K}^{wu}) \cap \tilde{K}^{ws} \neq \varnothing, \text{ for every } l \geq l_+ - l_- + 4$. }
			\label{figure:WeaklyStableUnstableMixing}
		\end{figure}

		Take a leaf $\tl \phi ' \subset \tl A$ such that $\tl \phi ^{ws}_{\BB} \subset \text{R}(\tl \phi ') ; \  \tl \phi ^{ws}_{\TT} \subset \text{R}(\tl \phi ')$ {\bck{(once again, see Figure \ref{figure:WeaklyStableUnstableMixing})}}. By the Anchoring Lemma, we know there exists $l'_+ > l_+$ such that
		\[ \tl f^{l}(\tl K^{wu}) \cap \tl \phi ' \neq \varnothing \text { for every } l \geq l'_+.\]
		
		This implies that
		{\bck{\[ \tl f^{l}(\tl K^{wu}) \cap (\tl \phi ^{ws}_{\BB} \cup \tl f^{l_-}(\tl K^{ws}) \cup \tl \phi ^{ws}_{\TT}) \neq \varnothing \text { for every } l \geq l'_+,\]}
		}
		as the latter union separates the disk $\tl \D$. Now, the key is to observe that
		
		\[\tl f^{l_+}(\tl K^{wu}) \subset \text{R}(\tl \phi) \subset \text{L}(\tl \phi ^{ws}_{\BB}) \cap \text{L}(\tl \phi ^{ws}_{\TT}),\]
		which implies that for large enough values of $l$ (say $l \geq l''_+ \geq l'_+)$, we have that 
		\[\tl f^{l}(\tl K^{wu}) \cap (\tl \phi ^{ws}_{\BB} \cup \tl \phi ^{ws}_{\TT}) = \varnothing,\]
		which in turn implies that
		\[\tl f^{l}(\tl K^{wu}) \cap \tl f^{l_-}(\tl K^{ws}) \neq \varnothing, \text{ for every } l \geq l''_+,\]
		from where we conclude that 
		\[ \wh f^{l}(\wh K^{wu}) \cap \wh K^{ws} \neq \varnothing \text{ for every } l \geq l_0 = l''_+ - l_- > 0,\]
		which concludes the proof.   
	\end{proof}
	
	Note that this result, together with Lemma \ref{lem:stableunstablecontinuadense}, show that 
	
	\begin{remark}\label{rem:denseStableSets}
		Let $\wh z$ be the lift of a periodic point of a frice torus homeomorphism $g$. Then ${\color{black}W^s(\wh z)}$ and ${\color{black}W^{u}(\wh z)}$ are dense in $\R^2$.  
	\end{remark}

	We now finish with the proof of Proposition \ref{prop:essfactor.isrot.mixing}.
	
	\begin{proof}[Proof of Proposition \ref{prop:essfactor.isrot.mixing}]
		Take two open sets $\wh V_1, \wh V_2 \subset \R^2$. By Lemma \ref{lem:stableunstablecontinuadense}, we may take $\widehat{K}^{wu} \subset \wh V_1, \widehat{K}^{ws} \subset \wh V_2$, respectively weakly unstable and weakly stable continua. By Lemma \ref{lemma:WeaklyStableUnstableMixing}, there exists $l_0 > 0$ such that 
		\[ \wh g^{l}(\wh K ^{wu}) \cap \wh K^{ws} \neq \varnothing \text{ for every } l \geq l_0,\]
		which implies that 
		\[ \wh g^{l}(\wh V_1) \cap \wh V_2 \neq \varnothing \text{ for every } l \geq l_0,\]
		which completes the proof.
	\end{proof}

	\subsection{Proof of Theorem D}.

	The following result is central for the proof of Theorem \ref{thmD:TransitiveThenMixing}, and it is nothing more than an adaptation of Lemma \ref{lem:stableunstablecontinuadense}, which worked for frice torus homeomorphisms, to the context of transitive ones.
	
	\begin{lemma}\label{lem:TransitiveThenWsWu}
		Let $\wh f$ be a lift of a transitive homeomorphism $f \in \mathrm{Homeo}_0(\T^2)$, with $\vec{0} \in \mathrm{int}(\rho(\wh f))$. Then, for every open set $\wh V \subset \R^2$, there exist two nontrivial continua $\wh K^{ws}, \wh K^{wu} \subset \wh V$, which are respectively weakly stable and weakly unstable continua for $f$. 
	\end{lemma}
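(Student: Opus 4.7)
The plan is to adapt the proof of Lemma \ref{lem:stableunstablecontinuadense} by transferring non-trivial stable sets from the essential factor $g$ (where they exist thanks to cw-expansivity) back to $\wh f$ through the monotone semiconjugacy $\pi_\sim:\T^2\to\T^2$ built in Section~\ref{sec:EssFactor}, with $\Z^2$-equivariant lift $\wh\pi_\sim:\R^2\to\R^2$ satisfying $\wh\pi_\sim\circ\wh f=\wh g\circ\wh\pi_\sim$. Since $f$ is transitive, so is $g$; and since $g$ is frice, all results of Section~\ref{sec:StableSets} apply to $\wh g$. I will describe the construction of a non-trivial weakly stable continuum inside $\wh V$; the weakly unstable case is symmetric.

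First I would pick $\wh z_f\in\wh V$ whose entire $\sim_{\wh f}$-class $[\wh z_f]_{\wh f}$ is contained in $\wh V$, and set $\wh z_g:=\wh\pi_\sim(\wh z_f)$. Since $\pi_\sim$ is closed, it is upper semicontinuous, so there is a neighborhood $\wh W\ni\wh z_g$ with $\wh\pi_\sim^{-1}(\wh W)\subset\wh V$. For $\varepsilon<d(\wh z_g,\partial\wh W)$ small enough, Lemma \ref{lem:NonemptyStableSets} applied to $\wh g$ furnishes a non-trivial stable continuum $\wh K^s_g:=K^s_{\varepsilon,g}(\wh z_g)\subset\wh W$, weakly stable for $\wh g$ by Lemma \ref{lem:StableSetsAreWeaklyStable}. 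I then set $\wh K^{ws}_f:=\wh\pi_\sim^{-1}(\wh K^s_g)\subset\wh V$: a monotone preimage of a non-trivial continuum, hence itself a continuum spanning several $\sim_{\wh f}$-fibers and therefore non-trivial.

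To verify weak stability of $\wh K^{ws}_f$ for $\wh f$, I use that any two points at quotient-distance $D$ lift to points at $\R^2$-distance at most $D+2M$, where $M$ is the uniform fiber bound of Lemma \ref{lem:UniformlyBoundedClasses}. Hence $\mathrm{diam}(\wh f^n(\wh K^{ws}_f))\leq 2\varepsilon+2M$ for all $n\geq 0$, while Claim \ref{claim:quotientdynamicallyunboundedcontinua} gives $\mathscr{D}_{\wh f}(\wh K^{ws}_f)=\mathscr{D}_{\wh g}(\wh K^s_g)=\infty$; the diameter growth must therefore occur in the past. Choosing $n\ll 0$ with $\mathrm{diam}(\wh f^n(\wh K^{ws}_f))>M'$ (the constant from Remark \ref{rem:LargeThenWillBeAnchored}), some $\wh f^{n+j}(\wh K^{ws}_f)$ with $0\leq j\leq 3$ is anchored, and the boundedness of future iterates combined with Proposition \ref{prop:u-anchoredThenDynamicallyUnbounded} rules out $u$-anchoring, forcing $s$-anchoring, and hence weak stability of $\wh K^{ws}_f$.

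The main obstacle is the initial selection of $\wh z_f\in\wh V$ with $[\wh z_f]_{\wh f}\subset\wh V$. This is immediate when $\wh V$ contains a ball of radius greater than $M$. For smaller $\wh V$ I would invoke density in $\T^2$ of the singleton $\sim_f$-classes, which should follow from the structure of the upper semicontinuous decomposition $\sim_f$ of $\T^2$ whose quotient is again homeomorphic to $\T^2$: if some $T_\delta=\{z\in\T^2:\mathrm{diam}([z]_f)\geq\delta\}$ had non-empty interior, the fibers of $\pi_\sim$ over this region would be connected continua of uniform diameter at least $\delta$, producing a dimension mismatch with the two-dimensional quotient. Making this dimensional argument precise in the present topological setting is the central technical point of the proof.
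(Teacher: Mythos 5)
Your verification step (bounded future diameters of $\wh\pi_\sim^{-1}(\wh K^s_g)$ via the uniform fiber bound, infinite dynamical diameter via Claim \ref{claim:quotientdynamicallyunboundedcontinua}, and then anchoring in the past to rule out $u$-anchoring) is fine, but the proof has a genuine gap exactly where you flag it, and the proposed repair does not work. The claim that some $\sim_{\wh f}$-class lies entirely inside an arbitrarily small open set $\wh V$ (equivalently, that points with small classes are dense) cannot follow from a dimension-mismatch argument about the upper semicontinuous decomposition: there exist monotone, even continuous, decompositions of the plane into \emph{nondegenerate} continua (Lewis--Walsh decompositions into pseudo-arcs) whose quotient is again a plane, and on compact regions those fibers have diameter uniformly bounded below. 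So "$T_\delta$ has nonempty interior" produces no topological contradiction, and the central technical point of your plan is a false principle. A second symptom of the same gap is that your argument never uses transitivity of $f$, whereas the statement is false without it: if $\wh V$ sits inside a lift of a dynamically bounded periodic disk, every continuum in $\wh V$ has finite dynamical diameter and hence is neither weakly stable nor weakly unstable. Transitivity must therefore enter precisely at the step you leave open, and it would have to be a dynamical argument, not a decomposition-theoretic one.

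The paper takes a route that avoids locating whole fibers inside $\wh V$ altogether. It fixes two $f$-periodic points $z_1,z_2$ with distinct rotation directions (Franks realization), observes that the preimages $\pi_\sim^{-1}(K^s(\wh z_{i\sim}))$ of the factor's stable sets are ``fat'' (they meet every fundamental domain, by density of stable sets of periodic points for the frice factor, Remark \ref{rem:denseStableSets}), and then uses transitivity through $\mathrm{Ess}(f)=\T^2$ (\cite{korotal}): finitely many iterates of $V$ form a fully essential set, so after replacing $z_1,z_2$ by suitable points of their orbits and adjusting lifts, both preimages meet $\wh V$, and in fact so do $K^{'s}_m(\wh z_i)=\pi_\sim^{-1}(K^s_m(\wh z_{i\sim}))$ for some $m$. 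The weakly unstable continuum is then simply a curve $\wh\gamma\subset\wh V$ joining the two preimages: since each $K^{'s}_m(\wh z_i)$ has uniformly bounded forward iterates, it sits in $K^s_d(\wh z_i)$ for some $d$, and Lemma \ref{lemma:AnchorTypeA} (a continuum meeting stable sets of two periodic points with different rotation directions is weakly unstable) applies; the weakly stable continuum is obtained symmetrically. If you want to salvage your construction, you would need to prove the density of small $\sim_f$-classes for transitive $f$ as a separate dynamical statement, which is neither proved nor needed in the paper.
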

	
	\begin{proof}
		Let $V = \wh \pi (\wh V)$, and assume up to taking a connected component, that $\wh V$ is connected. By the realization result in \cite{franks89}, let us take the lifts $\wh z_1, \wh z_2$ of two $f$-periodic points $z_1,z_2$ with respective rotation vectors $(a_1,b_1), (a_2,b_2)$ with $a_1, b_1, b_2 < 0, a_2 > 0$. 
		
		Take the respective projections $\wh z_{1\sim} = \pi_{\sim}(\wh z_1),\wh z_{2\sim} = \pi_{\sim}(\wh z_2)$ to $\R^2_{\sim}$ for the essential factor. By Remark \ref{rem:denseStableSets}, we know that ${\color{black}W^s(z_{1\sim})}, {\color{black}W^s(z_{2\sim})}$ are both dense in $\R^2_{\sim}$. In particular, by taking their preimage by the semiconjugacy $\pi_{\sim}$, we will obtain that 
		\[{\color{black}W^{'s}(\wh z_1)} = \pi_{\sim}^{-1}({\color{black}W^s(\wh z_{1\sim})}) \text{ goes through every fundamental domain,}\]
		and similarly for ${\color{black}W^{'s}(\wh z_2)} = \pi_{\sim}^{-1}({\color{black}W^s(\wh z_{2\sim})})$  
		
		Given that $f$ is transitive, by \cite[Corollary E]{korotal} we know that $\mathrm{Ess}(f) = \mathbb{T}^2$, so there exists $k > 0$ such that 
		\[\underline{V} := \bigcup \limits_{j=0}^{j=k} f^j(V) \text{ is fully essential.}\]
		
		which means there exist two respective points in the finite orbits of $z_1$ and $z_2$ (which we will give the same name for the sake of simplicity and without loss of generality), such that 
		\[{\color{black}W^{'s}(z_1)} \cap V \neq \varnothing, \ {\color{black}W^{'s}(z_2)} \cap V \neq \varnothing.\]
		
		Up to retaking the lifts $\wh z_1, \wh z_2$, we may assume without loss of generality that ${\color{black}W^{'s}(\wh z_1)} \cap \wh V \neq \varnothing, {\color{black}W^{'s}(\wh z_2)} \cap \wh V \neq \varnothing$. In particular, there exists $m > 0$ such that 
		\[K^{'s}_m(\wh z_1) \cap \wh V \neq \varnothing, \  K^{'s}_m(\wh z_2) \cap \wh V \neq \varnothing,\]
		where we define 
		{\bck{\[ K^{'s}_m(\wh z_1) = \pi_{\sim}^{-1}(K^s_m(\wh z_{1\sim})), \ K^{'s}_m(\wh z_2) = \pi_{\sim}^{-1}(K^s_m(\wh z_{2\sim})).\]
		}
		} 
		Let us take a curve $\wh \gamma \subset \wh V$ which goes from a point in $K^{'s}_m(\wh z_1)$ to a point in $K^{'s}_m(\wh z_2)$. 
		{\bck{Note that there exists some $d > 0$ such that}}
		\[\sup_{j \in \Z^+} \mathrm{diam}(\wh f^j(K^{'s}_m(\wh z_1))) \leq d, \sup_{j \in \Z^+} \mathrm{diam}(\wh f^j(K^{'s}_m(\wh z_2))) \leq d, \]
		because each iterate intersects a uniformly bounded number of fundamental domains. This shows that 
		\[K^{'s}_m(\wh z_1) \subset K^s_d(\wh z_1), \ K^{'s}_m(\wh z_2) \subset K^s_d(\wh z_2),\]
		which allows us to use Lemma \ref{lemma:AnchorTypeA} and conclude that $\wh \gamma$ is a weakly unstable continuum. In identical fashion we can build $\wh \gamma' \subset \wh V$ a weakly stable continuum, which concludes the proof.

	\end{proof}

	We now finish the section with the proof of Theorem \ref{thmD:TransitiveThenMixing}. Let us recall the desired statement:
	
	\paragraph{\textbf{Theorem D}} Let $f \in \textnormal{Homeo}_0(\mathbb{T}^2)$ be a transitive homeomorphism. 
	
	\begin{enumerate}
		\item If $\rho(f)$ has nonempty interior, then $f$ is topologically mixing. 
		\item Furthermore, if $f$ has a lift $\wh f$ with $\vec{0} \in \mathrm{int}(\rho(\wh f))$, then $\wh f$ is also topologically mixing.
	\end{enumerate}
	
	\begin{proof}
		Note that the second item implies the first. {\bck{Indeed,}} suppose we have proven the second: we know that a power $f^j$ of $f$ has a lift $\wh{f^j}$ with $\vec{0}$ in the interior of its rotation set, which by the second item will be topologically mixing, which implies that $f^j$ is topologically mixing, which by Lemma \ref{lem:MixingPower} then implies that $f$ is also topologically mixing. 
		
		Let us then take $\wh f$ with $\vec{0} \in \mathrm{int}(\rho(\wh f))$, and two open sets $\wh V_1, \wh V_2 \subset \R^2$. By Lemma \ref{lem:TransitiveThenWsWu}, we can take $\wh K^{wu} \subset \wh V_1$, $\wh K^{ws} \in \wh V_2$, and by Lemma \ref{lemma:WeaklyStableUnstableMixing}, there exists $l_0 > 0$ such that 
		\[ \wh f^{l}(\wh K ^{wu}) \cap \wh K^{ws} \neq \varnothing \text{ for every } l \geq l_0,\]
		which implies that 
		\[ \wh f^{l}(\wh V_1) \cap \wh V_2 \neq \varnothing \text{ for every } l \geq l_0,\]
		which completes the proof. 
	\end{proof}

	\section{Heteroclinic pseudo-rectangles}\label{sec:HPR}
	
	We have already proven four out of the six properties in Theorem \ref{thmA:semiconjugation} for the essential factor $g$. In this section we construct the main tool for the two properties which are still left to prove:
	
	\begin{itemize}
		\item For each open set $U \subset \T^2$, $g$ has a Markovian horseshoe $X \subset U$, 
		\item $g$ is area preserving. 
	\end{itemize} 
	
	Assuming we have proven the density of Markovian horseshoes, the proof of the second item appears as a consequence almost automatically; the extensive proof is given in Proposition \ref{prop:AreaPreservingConjugate}.
	
	The proof of the first item is much more technical, and will require fine use of the anchoring techniques together with the construction of a new tool, to which this section is devoted. 
	
	The idea is to build a family of filled continua, which we will call fitted heteroclinic pseudo-rectangles (FHPR). Proposition \ref{prop:ChaoticRectanglesAreDense}, which shows the existence of an FHPR inside any open set of the plane for any frice torus homeomorphism, is the key result of the section. This will be crucial to prove the existence of a dense family of Markovian horseshoes for the essential factor $g$: Proposition \ref{prop:EssentialFactorHasDenseHorseshoes} {\bck{yields the existence of a Markovian horseshoe contained in each FHPR.}} 
	{\bck{We will adapt}} to this context, the classical notion of heteroclinic intersection for stable-unstable manifolds of hyperbolic periodic points.

	\subsection{Topological structure}
	
	\begin{definition}
		Let $S$ be a surface. We will say $R \subset S$ is a \textit{rectangle} if it is homeomorphic to $[0,1]^2$, as the image of a homeomorphism $h:[0,1]^2 \to h([0,1]^2) = R \subset S$. We will call \textit{sides} of $R$ to the image of the sides of $[0,1]^2$ by $h$. We will call \textit{horizontal} and \textit{vertical sides} to the sets $R_{\text{H}}$ and $R_{\text{V}}$, respectively the images of the horizontal and vertical sides of $[0,1]^2$ by $h$. 
	\end{definition}

	\begin{definition}[\textbf{Heteroclinic pseudo-rectangle}]\label{defi:ChaoticRectangle}
		An inessential continuum $P \subset \T^2$ will be a \emph{heteroclinic pseudo-rectangle} (HPR) for a torus homeomorphism $f$ in the General Hypothesis with associated MDTD $(f,I,\mathcal{F})$ if:
		\begin{enumerate}
			\item $P$ contains no singularities of $\F$,
			\item $P$ is the closure of an open disk $D$, 
			\item For any $z \in \partial P$, any neighbourhood $U$ of $z$ intersects the fully essential component $P_{\infty}$ of the complement of $P$, 
			\item (Sides). {\color{black}$\partial P = K^s_1 \cup K^s_2 \cup K^u_1 \cup K^u_2$}, respectively two stable and two unstable continua, contained in the (un)stable sets of four periodic points $z^s_1, z^s_2, z^{u}_1, z^{u}_2$, with different nontrivial associated rotation vectors.
			\item $K^{s}_i \cap K^u_j \neq \varnothing, \text{ for every } i,j = 1,2.$
		\end{enumerate}
	\end{definition}

	\begin{remark}
		From the fourth item of last definition, and the definition of stable set, we automatically obtain that $K^{\sigma}_1 \cap K^{\sigma}_2 = \varnothing, \text{ where } \sigma = s,u$. 
	\end{remark}

	Note that any lift $\wh P$ of a heteroclinic pseudo-rectangle is homeomorphic to $P$, we will then say that $\wh P \subset \R^2$ is a heteroclinic pseudo-rectangle if it is a connected component of $\wh{\pi}^{-1} (P)$, where $P$ is a heteroclinic pseudo-rectangle. For $\wh P \subset \R^2$, the third condition is equivalent to saying that for any neighbourhood $\wh U$ of a point $\wh z \in \partial \wh P$, the set $\wh U$ intersects the only unbounded component $\wh P_{\infty}$ of the complement of $\wh P$. In the same fashion, $\tilde P$ is a heteroclinic pseudo-rectangle when it is a lift of a heteroclinic pseudo-rectangle $\wh P$.

	\medskip
	
	\paragraph{\textbf{Prime end model.}} Given a heteroclinic pseudo-rectangle $P = \mathrm{cl}(D)$, we will denote by $\breve{P} \simeq \mathrm{cl}(\D)$ to the Caratheodory compactification (see Definition \ref{def:Caratheodory}) of the disk $D$, 
	and we will call $h$ the Riemann map from $\D$ to $D$.  
	
	{\color{black}
	\begin{definition}[Prime end Side]\label{def:PrimeEndSide}
		Let us define $\breve X^s_1 \subset \partial \D$ as the set of accessible prime ends $\breve{z}$ in $\partial \breve{P}$ for which there exists a ray $r$ landing at $\breve{z}$, with $h(r)$ landing at $z \in K^s_1$ {\color{black}(recall Definitions \ref{def:ray} through \ref{def:AccessiblePrimeEnd})}. Then, we define the \textit{prime end side} $\breve{K}^s_1 := \mathrm{cl}(\breve X^s_1)$. We define $\breve{K}^s_2$, $\breve{K}^u_1$, and $\breve{K}^u_2$ in the same fashion. 
	\end{definition}

	Let us state a result we will use in this section. 
	
	\begin{lemma}{\label{lem:Accessible}}
		Let $\breve{P}$ be the prime end model of an HPR. Let $\breve{z}, \breve{w}, \breve{z}', \breve{w}' \in \partial \D$ be four cyclically-ordered accessible prime ends, let $z, w, z', w' \in \partial D$ be their respective impressions, and let $X \subset \partial D$ be a closed set. Suppose that $z,z' \in X$, and that $w, w' \notin X$. 
		
		Then, $X$ is not connected.  
	\end{lemma}
	
	\begin{proof}
		Let us take four rays $r_{\breve{z}}, r_{\breve{w}}, r_{\breve{z}'}, r_{\breve{w}'}$ landing respectively at {\color{black}$\breve{z}, \breve{w}, \breve{z}', \breve{w}'$}, such that 
		
		\begin{itemize}
			\item The rays $h(r_{\breve{z}})$, $h(r_{\breve{w}})$, $h(r_{\breve{z}'})$, $h(r_{\breve{w}'})$ respectively land at {\color{black} accessible points $z, \ w, \ z', \ w'$ (see Remark \ref{rem:AccessibleImpression})}.  
			\item $ r_{\breve{z}}^{-1} \cdot r_{\breve{z}'} \ \wedge  \ r_{\breve{w}}^{-1} \cdot r_{\breve{w}'} = 1 \implies h(r_{\breve{z}}^{-1}) \cdot h(r_{\breve{z}'}) \ \wedge  \ h(r_{\breve{w}}^{-1}) \cdot h(r_{\breve{w}'}) = 1$
		\end{itemize}
		
		Since $X$ is closed, we know that $\textnormal{d}(w,X) > 0$ and $\textnormal{d}(w',X) > 0$, we then take two sufficiently small neighbourhoods {\color{black} $U_w, U_{w'} \subset \T^2$} respectively of $w, w'$ such that they intersect neither the crosscut $ h(r_{\breve{z}}^{-1}) \cdot h(r_{\breve{z}'})$ nor {\color{black}$X$}. Note that from the heteroclinic pseudo-rectangle definition, both $U_w$ and $U_{w'}$ intersect $P_{\infty}$. This means we can build a curve $\gamma$ from $\partial U_{w'}$ to $\partial U_w$ which is contained in $P_{\infty}$ except for its endpoints, and is therefore disjoint from $h(r_{\breve{z}}^{-1}) \cdot h(r_{\breve{z}'})$. We may then complete it with two simple arcs $\eta_w, \ \eta_{w'}$ respectively contained in $U_w, \ U_{w'}$ except for their endpoints, to obtain a simple closed curve $\gamma'$ which {\color{black}satisfies}
		$$h(r_{\breve{z}}^{-1}) \cdot h(r_{\breve{z}'}) \wedge \gamma' = h(r_{\breve{z}}^{-1}) \cdot h(r_{\breve{z}'}) \wedge h(r_{\breve{w}}^{-1}) \cdot h(r_{\breve{w}'})= 1, \textnormal{ and } \gamma' \cap X = \varnothing,$$
		which implies that $\gamma'$ separates $z$ from $z'$, and shows that {\color{black}$X$} is not connected. 
	\end{proof}
	
		\begin{figure}[h]
		\centering
		
		\def\svgwidth{\textwidth}
		\import{./Figures/}{Accessible.pdf_tex}
		
		\smallskip
		
		\caption {The curve $\gamma'$ separates $z$ from $z'$. }
		\label{figure:Accessible}
	\end{figure}	
	
	}

	The following {\color{black}lemma states that $\breve{P}$ has a rectangle-like structure}. 
	
	\begin{lemma}\label{lemma:PrimeEndChaoticRectangle}
		$\breve{K}^\sigma_i$ is a closed interval for every $\sigma \in \{s,u\}, \ i \in \{1,2\}$, and their one-to-one intersections are either empty or single points. 
	\end{lemma}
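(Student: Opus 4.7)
I will first establish the covering $\partial \breve{P} = \breve{K}^s_1 \cup \breve{K}^u_1 \cup \breve{K}^s_2 \cup \breve{K}^u_2$. By Theorem \ref{thm:milnor} accessible prime ends are dense in $\partial \breve{P}$, and each accessible prime end lands on an accessible point of $\partial P = K^s_1 \cup K^u_1 \cup K^s_2 \cup K^u_2$, so it belongs to some $X^\sigma_i \subseteq \breve{K}^\sigma_i$; density yields the cover. Along the way I will record the upper semi-continuity of the impression map, which implies that for any $\breve z \in \breve{K}^\sigma_i$ one has $\mathrm{Imp}(\breve z) \cap K^\sigma_i \neq \varnothing$.

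Next, I will show each $\breve{K}^\sigma_i$ is connected. Suppose for contradiction that $\breve{K}^s_1$ is disconnected; then there exists an open arc $U \subset \mathbb{S}^1 \simeq \partial\breve{P}$ disjoint from $\breve{K}^s_1$ whose endpoints $\breve a, \breve b$ lie in $\breve{K}^s_1$. Approximating, I may take $\breve a, \breve b \in X^s_1$ accessible with landing points $a, b \in K^s_1$. I construct a crosscut $\gamma \subset D$ joining $a$ and $b$ whose lift through the Riemann map has endpoints $\breve a, \breve b$; it separates $D$ into two cross-sections $N_1, N_2$ corresponding to the two arcs of $\mathbb{S}^1 \setminus \{\breve a, \breve b\}$. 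The $\partial P$-closures $Y_1 = \mathrm{cl}(N_1) \cap \partial P$ and $Y_2 = \mathrm{cl}(N_2) \cap \partial P$ are each sub-continua of $\partial P$ (as unions of impressions over a connected arc, upper semi-continuously varying), both containing $\{a,b\}$, with $Y_1 \cup Y_2 = \partial P$. Since $K^s_2$ is connected, disjoint from $\{a,b\} \subset K^s_1$, it lies entirely in one of them, say $Y_2$. Using the disjointness $K^s_1 \cap K^s_2 = \varnothing$ I can place $\gamma$ inside a small enough planar neighborhood $V$ of $K^s_1$ with $V \cap K^s_2 = \varnothing$, and further refine near the corners $c_{11} = K^s_1 \cap K^u_1$, $c_{12} = K^s_1 \cap K^u_2$ so that $Y_1 \subset K^s_1$. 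Any accessible prime end in the arc of $\mathbb{S}^1$ bounding $N_1$ then has its ray eventually contained in $N_1$, lands in $Y_1 \subset K^s_1$, and so belongs to $X^s_1 \subset \breve{K}^s_1$; by density this entire arc is in $\breve{K}^s_1$, contradicting that $U$ (or its complementary arc, in the symmetric case) is a gap.

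Finally, I turn to the pairwise intersections. Each $\breve{K}^\sigma_i$ being now a closed arc, if $\breve{K}^\sigma_i \cap \breve{K}^{\sigma'}_{i'}$ contains more than one point it contains an open sub-arc $W$. Accessible prime ends are dense in $W$, and each has a unique landing point lying simultaneously in $K^\sigma_i$ and in $K^{\sigma'}_{i'}$, hence in $K^\sigma_i \cap K^{\sigma'}_{i'}$. If $\sigma = \sigma'$ and $i \neq i'$ the intersection is empty, a contradiction. If $\sigma \neq \sigma'$, the landing points all lie in the corner $c_{ij} = K^s_i \cap K^u_j$; applying the same crosscut/cross-section argument as above with $K^s_1$ replaced by a small neighborhood of $c_{ij}$ (which does not exhaust either $K^s_i$ or $K^u_j$ since both are non-trivial continua containing the other corner as well), one finds an accessible prime end in $W$ whose ray is forced to land outside $c_{ij}$ either in $K^s_i \setminus c_{ij}$ or in $K^u_j \setminus c_{ij}$, a contradiction.

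The main obstacle will be the refined crosscut construction in the connectedness step: arranging simultaneously that the cross-section sits in a tube around $K^s_1$ avoiding $K^s_2$, and that near the corners $c_{11}, c_{12}$ the trace on $\partial P$ does not leak into $K^u_1 \cup K^u_2$. This requires a careful local analysis at the corners, using that every neighborhood of a boundary point of $P$ meets the unbounded component of $\T^2\setminus P$ (property (3) in Definition~\ref{defi:ChaoticRectangle}) to control how $K^u_j$ approaches the corner inside $\partial P$. The same subtlety recurs in the intersection-is-a-point argument.
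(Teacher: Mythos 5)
There is a genuine gap, and it sits in the decisive step: showing that no intersection $\breve{K}^\sigma_i \cap \breve{K}^\tau_j$ contains an interval. Your argument for this is purely topological, but the statement is not a purely topological fact about the four sides: the paper's proof at this point is dynamical. From a hypothetical interval $I \subset \breve{K}^s_1 \cap \breve{K}^u_1$ it produces arcs $h(\theta_n)$ joining two accessing rays and accumulating on $I$, shows (via the separation argument through the complement) that their Hausdorff limit is a non-trivial continuum contained in $K^s_1$, and then extracts a non-trivial subcontinuum contained in $K^s_1 \cap K^u_1$; such a continuum would be simultaneously stable and unstable, hence dynamically bounded, contradicting the infinite continuum-wise expansiveness of $g$ (Proposition \ref{prop:elementprop.essfactor}). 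Your replacement argument has two problems. First, you treat the ``corner'' $c_{ij} = K^s_i \cap K^u_j$ as a single point, but Definition \ref{defi:ChaoticRectangle} only guarantees this intersection is nonempty; a priori it can be a large (totally disconnected) set, and ruling out a non-trivial continuum inside it is exactly the dynamical input above. Second, even granting the setup, no contradiction follows from finding an accessible prime end in $W$ landing in $K^s_i \setminus K^u_j$: membership in $\breve{K}^u_j = \mathrm{cl}(X^u_j)$ is membership in a \emph{closure}, so an accessible prime end of $\breve{K}^u_j$ need not have its landing point in $K^u_j$ at all. The same closure issue undermines your claim that accessible prime ends in $W$ land simultaneously in both sides, which is also what your ``$\sigma=\sigma'$'' case relies on.

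The connectedness step also has a real (and self-acknowledged) hole. You need a crosscut joining $a,b \in K^s_1$ that stays in a small tube $V$ around $K^s_1$, avoids $K^s_2$, and does not ``leak'' into $K^u_1 \cup K^u_2$ near the corners, so that $Y_1 \subset K^s_1$. This is precisely the hard point and it is doubtful it can be arranged: $K^u_j$ may accumulate along $K^s_1$ away from any single corner point, $D \cap V$ need not connect the two access channels to $a$ and $b$, and nothing forces the trace of the resulting cross-section on $\partial P$ into $K^s_1$. The paper's route avoids all of this: it takes four cyclically ordered accessible prime ends (two in $\breve{K}^\sigma_i$, two outside), their rays, and then uses item (3) of Definition \ref{defi:ChaoticRectangle} to join small neighbourhoods of the two ``outside'' landing points by an arc running through the fully essential complementary component $P_\infty$, closing it to a simple closed curve whose intersection number with the crosscut $h(r_{\breve z})^{-1}\cdot h(r_{\breve z'})$ is $1$; this curve separates $z$ from $z'$ inside $K^\sigma_i$, contradicting connectedness. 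So where you plan a delicate local analysis near the corners, the paper goes \emph{outside} $P$ and uses an intersection-number argument; and where you stay topological at the intersection step, the paper must (and does) invoke the frice property. As written, your proposal does not close either step.
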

	
	\begin{proof}
		Recall for the proof that $K^{\sigma}_i, K^{\sigma}_j$ are at a positive distance. Note that by construction, each $\breve{K}^\sigma_i$ is a closed subset of the circle. 
		\begin{enumerate}
			\item \textbf{$\breve{K}^\sigma_i \neq \mathbb{S}^1$.} 
			
			Take $z \in K^{\sigma}_j$ on the \textit{opposite side} of $P$, and take a neighbourhood $U$ of $z$ which does not intersect $K^{\sigma}_i$ (note that $\mathrm{d}(K^{\sigma}_i, K^{\sigma}_j) > 0$). Given the set of accessible points in $\partial P$ is dense (see Remark \ref{rem:AcessiblePoints}), take an accessible point $z' \in U$, and note that $\breve{z}' \notin \breve{K}^{\sigma}_i$.
			
			\item \textbf{$\breve{K}^{\sigma}_i \neq \varnothing$.} 
			
			By symmetry, it is enough to prove this for $\breve{K}^s_1$. Note that there exists $z \in K^s_1$ with a neighbourhood $U$ of $z$ such that it intersects neither $K^{u}_1$ nor $K^{u}_2$, as we could otherwise write $K^s_1$ as the union of the two {\color{black}nonempty} disjoint closed sets $K^s_1 \cap K^{u}_1, \ K^s_1 \cap K^{u}_2$ {\color{black}(See Definition \ref{defi:ChaoticRectangle}, item (5))}, which would be a contradiction, because $K^s_1$ is connected. 
			Then, by Remark \ref{rem:AcessiblePoints}, $U$ must contain an accessible point $z' \in K^s_1$, and we then have $\breve{z}' \in \breve{K}^s_1$. 
			
			\item $\breve{K}^s_1 \cup \breve{K}^{s}_2 \cup \breve{K}^{u}_1 \cup \breve{K}^{u}_2 = \mathbb{S}^1.$
			
			This is because the set of accessible prime ends is dense in $\partial \breve{P} = \mathbb{S}^1$ (See Theorem \ref{thm:milnor}). 
			
			\item $\breve{K}^{\sigma}_i$ is connected. 
			
			Suppose by contradiction that it has two different connected components. {\color{black} By Definition \ref{def:PrimeEndSide}, these are two disjoint closed sets in $\partial \D$, each containing at least one accessible prime end by Theorem \ref{thm:milnor}. The complement of their union is an open set, and thus again by Theorem \ref{thm:milnor}, each of its connected components has an accessible prime end}. Let us then take four cyclically-ordered accessible prime ends $\breve{z},\breve{w},\breve{z}',\breve{w}'$, such that $\breve{z}, \breve{z}' \in \breve{K}^{\sigma}_i$, and $\breve{w}, \breve{w}' \notin \breve{K}^{\sigma}_i$. {\color{black}Again by Definition \ref{def:PrimeEndSide}, we may then take four rays $r_{\breve{z}}, r_{\breve{w}}, r_{\breve{z}'}, r_{\breve{w}'}$ landing respectively at {\color{black}$\breve{z}, \breve{w}, \breve{z}', \breve{w}'$}, such that 
			their images $h(r_{\breve{z}})$, $h(r_{\breve{w}})$, $h(r_{\breve{z}'})$, $h(r_{\breve{w}'})$ respectively land at {\color{black} accessible points} $z, \ w, \ z', \ w'$ with $z, \ z' \in K^{\sigma}_i$, $w, w', \notin K^{\sigma}_i$ {\color{black}(see Remark \ref{rem:AccessibleImpression})}. 
			
			Then, we can apply Lemma \ref{lem:Accessible} and conclude that $K^{\sigma}_i$ is not connected, which is a contradiction and concludes the proof.  
			}

			
			
			\item No intersection $\breve{K}^{\sigma}_i, \breve{K}^{\tau}_j$ contains an interval.
			
			Again by symmetry, let us suppose by contradiction that there exists an interval $I \subset \breve{K}^s_1 \cap \breve{K}^{u}_1$. Take two accessible prime ends $\breve{z},\breve{z}' \in \breve{K}^s_1 \cap I$, and respective rays $r_{\breve{z}}$, $r_{\breve{z}'}: [0,1] \to \breve{P}$ as in last item. Let $\theta_n$ be an arc of circumference centered at the origin, connecting  $r_{\breve{z}}$ to $r_{\breve{z}'}$ and at a distance at most  $1/n$ of  $I$. Take their images $h(\theta_n)$ by $h$. Up to taking a subsequence, the Hausdorff limit $\Theta$ of $h(\theta_n)$ is a nontrivial continuum. Provided this continuum is contained in $K^s_1$, we obtain our contradiction: we could repeat the construction with $\breve{K}^{u}_1$ and we could take subarcs $\theta'_n \subset \theta_n$ converging to a subcontinuum $\Theta' \subset K^{u}_1 \cap K^{s}_1$, which would mean we would have a dynamically bounded nontrivial continuum for $g$ {\color{black}(see Corollary \ref{cor:IntersectionStableUnstableTotDisconnected}).} 
			
			Let us then prove that the Hausdorff limit of $\theta_n$ is contained in $K^s_1$. This is again a {\color{black}connectedness} argument. Suppose by contradiction that there exists $w \notin K^s_1$ and $w_n \in \theta_n$ such that $w_n \to w$. We would have a neighbourhood $U_w$ of $w$ which is disjoint from $K^s_1$. We then take $n$ sufficiently large, such that $w_n \in U_w$, and build a curve from {\color{black}$h(r_{\breve{z}}(0))$} to $w$, which is disjoint from $K^s_1$. {\color{black}As in Lemma \ref{lem:Accessible}}, we may complete this curve to a simple closed curve $\gamma'$ which would separate $K^s_1$ {\color{black}and yield the desired contradiction}. This finishes the proof.        
		\end{enumerate}
	\end{proof}

	We will strongly use the following two lemmas for the two main {\color{black}results} of Section \ref{sec:DenseTopHorseshoes}: Proposition \ref{prop:EssentialFactorHasDenseHorseshoes} and Proposition \ref{prop:DenseRotationalHorseshoe}. 
	
	\begin{lemma}\label{lemma:RectangleNearPseudoRectangle}
		Let $P$ be a heteroclinic pseudo-rectangle. For every $\varepsilon > 0$ there exists a rectangle $R \subset P$ with sides $R^s_1, \ R^s_2, \ R^{u}_1, \ R^{u}_2$ such that 
		{\color{black}$$ \text{d}(z^{\sigma}_i, K^{\sigma}_i) < \varepsilon, \text{ for every } z^{\sigma}_i \in R^{\sigma}_i; \ \sigma = s,u; \ i = 1,2. $$ 
		}
	\end{lemma}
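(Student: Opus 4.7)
The plan is to construct $R$ using the Carath\'eodory compactification of $D=\mathrm{int}(P)$. Let $h:\mathbb{D}\to D$ be a Riemann uniformization, extending to a homeomorphism $\breve{h}:\mathrm{cl}(\mathbb{D})\to\breve{P}$. By Lemma~\ref{lemma:PrimeEndChaoticRectangle}, the four prime-end sides $\breve{K}^\sigma_i$ pull back to four cyclically arranged closed arcs on $\partial\mathbb{D}$, meeting at four corner prime ends $e^{i\theta_1},\ldots,e^{i\theta_4}$. For small $\delta>0$, I take $R=h(\{|w|\le 1-\delta\})$, a closed topological disk in $D\subset P$, and partition its boundary circle $C_\delta=\{|w|=1-\delta\}$ into four arcs $\alpha^s_1,\alpha^u_1,\alpha^s_2,\alpha^u_2$ by cutting at four vertices $(1-\delta)e^{i\psi_k}$, perturbations of the radial projections of the corners described below. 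The sides of $R$ are then $R^\sigma_i:=h(\alpha^\sigma_i)$.

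Two estimates are needed. The first, $\partial R\subset B(\partial P,\varepsilon)$, is elementary: the set $E_\varepsilon=\{z\in D:\mathrm{d}(z,\partial P)\ge\varepsilon\}$ is compact in $D$, hence $h^{-1}(E_\varepsilon)$ is compact in $\mathbb{D}$ and contained in the open disk of radius $1-\delta$ once $\delta$ is small. The second estimate, $R^\sigma_i\subset B(K^\sigma_i,\varepsilon)$, I will argue by contradiction: assume there are $w_n=(1-\delta_n)e^{i\phi_n}\in\alpha^\sigma_i$ with $\delta_n\to 0$ and $\mathrm{d}(h(w_n),K^\sigma_i)\ge\varepsilon$. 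After passing to subsequences, $\phi_n\to\phi$ with $e^{i\phi}\in\breve{h}^{-1}(\breve{K}^\sigma_i)$, and $h(w_n)\to p\in\partial P$. Then $p$ lies in the impression $\mathrm{Imp}(\breve{z})$ of the prime end $\breve{z}$ at $e^{i\phi}$; approximating $\breve{z}$ by accessible prime ends in $X^\sigma_i$ whose landing points are in $K^\sigma_i$, and using upper semi-continuity of the impression map, gives $\mathrm{Imp}(\breve{z})\cap K^\sigma_i\ne\emptyset$. When $e^{i\phi}$ is a non-corner prime end, a connection argument analogous to point~(5) in the proof of Lemma~\ref{lemma:PrimeEndChaoticRectangle} shows that the connected set $\mathrm{Imp}(\breve{z})\subset\partial P=K^s_1\cup K^u_1\cup K^s_2\cup K^u_2$ is forced to lie in $K^\sigma_i$, producing the desired contradiction.

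The main obstacle is handling the corner case, when $e^{i\phi}=e^{i\theta_k}$: the impression $\mathrm{Imp}(\breve{c}_k)$ may be large and extend into the adjacent side $K^\tau_j$, so the connection argument above breaks down. I circumvent this by perturbing each cutting vertex $(1-\delta)e^{i\theta_k}$ to $(1-\delta)e^{i\psi_k}$, where $\breve{p}_k=e^{i\psi_k}\in X^\sigma_i$ is an accessible non-corner prime end whose landing point lies within $\varepsilon/4$ of the physical corner $K^\sigma_i\cap K^\tau_j$; existence of such $\breve{p}_k$ arbitrarily close to $\breve{c}_k$ in the prime-end topology follows from Remark~\ref{rem:AcessiblePoints}, Theorem~\ref{thm:milnor}, and the density of landing points near any point of $K^\sigma_i$. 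After this perturbation, every prime end with angle between the perturbed vertices is non-corner, the interior case argument applies to the full arc $\alpha^\sigma_i$, and the small portions of $R^\sigma_i$ originating near the corners are within $\varepsilon/2$ of the physical corner $K^\sigma_i\cap K^\tau_j\subset K^\sigma_i$ by continuity of $\breve{h}$ in the prime-end topology.
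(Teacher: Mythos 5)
Your construction breaks down at the corners, and the two devices you introduce to handle them do not work. First, cutting the circle at only four perturbed angles $\psi_k$ does not remove the corner prime ends from your arcs: each true corner angle $\theta_k$ still lies strictly between two consecutive cut points, hence in the \emph{interior} of one of the four arcs $\alpha^\sigma_i$, so the claim ``every prime end with angle between the perturbed vertices is non-corner'' is false. Worse, each vertex can be pushed into only one of the two adjacent sides, so the other adjacent arc necessarily contains a sliver of angles belonging to the wrong side $\breve{K}^\sigma_i$ together with the corner itself; for points $w_n$ on that arc with $\phi_n$ tending into this sliver, the cluster set of $h(w_n)$ lies in $K^\sigma_i$ or in $\mathrm{Imp}(\breve{c}_k)$, neither of which is close to $K^\tau_j$, and your contradiction argument gives nothing there. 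Second, the final step ``within $\varepsilon/2$ of the physical corner by continuity of $\breve{h}$ in the prime-end topology'' conflates the prime-end topology with the Euclidean one: if $w\to e^{i\psi_k}$ then $\breve{h}(w)\to\breve{p}_k$ in $\breve{P}$, but the Euclidean cluster set of $h$ at $e^{i\psi_k}$ is the whole impression $\mathrm{Imp}(\breve{p}_k)$, which even for an accessible prime end need not be near its landing point. Nothing in the HPR setting bounds the diameter of impressions (only the crosscuts of a fundamental chain shrink, not the cross-sections; and the dynamics only forbids nontrivial continua lying in a stable \emph{and} an unstable set, not large subcontinua of one side), so points of $C_\delta$ near a vertex or near a corner angle can be mapped far from both adjacent sides for every $\delta$.

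This is exactly the difficulty the paper's proof is built around: there, each corner of $R$ is routed through a crosscut $\gamma_{ij}$ of diameter $<\varepsilon/2$ whose endpoints lie on $K^s_i$ and $K^u_j$ (such a crosscut exists because the corner prime end is defined by a chain of crosscuts with diameters tending to $0$), and the long sides are taken as subarcs of curves $h(\breve{\theta}^\sigma_{i,n})$ whose Hausdorff limits lie in $K^\sigma_i$, so every point of $R^\sigma_i$ is within $\varepsilon$ of $K^\sigma_i$ by construction. Your ``round'' rectangle $h(\{|w|\le 1-\delta\})$ offers no analogous control near the corners; to repair the argument you would have to replace the corner portions of $C_\delta$ by such small crosscuts (or excise the corners with two cuts per corner and still control the resulting gaps), which essentially reproduces the paper's construction. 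The interior-case part of your argument (cluster sets of non-corner prime ends of $\breve{K}^\sigma_i$ lie in $K^\sigma_i$, via the connection argument of Lemma~\ref{lemma:PrimeEndChaoticRectangle}) is fine in spirit, but by itself it does not yield the lemma.
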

	
	\begin{proof}
		Go to the {\color{black}prime end} compactification $\breve{P} = \text{cl}(\mathbb{D})$, and let us assume without loss of generality that $\breve{K}^s_1, \ \breve{K}^{u}_1, \ \breve{K}^s_2, \  \breve{K}^{u}_2$ are positively and cyclically ordered. For every pair of values $(i,j) \in \{1,2\}^2$, let $\breve{z}_{ij}$ be the only prime end in $\breve{K}^s_i \cap \breve{K}^{u}_j$. Fix a pair $(i,j)$. Given that $\text{Imp}(\breve{z}_{ij})$ is defined by a chain of crosscuts with diameter going to 0, we may take $\gamma_{ij}$ a crosscut with $\text{diam}(\gamma_{ij}) < \frac{\varepsilon}{2}$, and note that the endpoints belong respectively to $K^s_i$ and $K^{u}_j$ provided $\varepsilon$ is sufficiently small, because the endpoints $\breve{z}^{s}_{ij}$, $\breve{z}^{u}_{ij}$ of $\breve{\gamma}_{ij}$, belong respectively to $\breve{K}^s_1$ and $\breve{K}^{u}_j$. We shall use $\breve{I}_{ij} \subset \partial \D$ to denote the interval determined by the endpoints of $\breve{\gamma}_{ij}$, which contains $\breve{z}_{ij}$. 
		
		Given that the set of accessible prime ends is dense, we may take accessible prime ends $\breve{z}^{'s}_{ij} \in \breve{K}^s_i \cap \breve{I}_{ij}, \ \breve{z}^{'u}_{ij} \in \breve{K}^u_i \cap \breve{I}_{ij}$ such that 
		$$ \breve{z}^{s}_{ij}, \ \breve{z}^{'s}_{ij}, \ \breve{z}^{'u}_{ij}, \ \breve{z}^{u}_{ij} \text{ are cyclically ordered,}$$
		and take disjoint respective rays $\breve{r}^s_{ij}, \ \breve{r}^u_{ij}$ from the origin to $\breve{z}^{'s}_{ij}, \ \breve{z}^{'u}_{ij}$, such that their images by $h$ land at the respective points $z^{'s}_{ij} \in K^s_i, z^{'u}_{ij} \in K^{u}_j$, and such that they each intersect $\breve{\gamma}_{ij}$ exactly once.
		
		Take $\breve{U}^s_1$ the angular region determined by $\breve{r}^s_{12}$ and $\breve{r}^s_{11}$ whose boundary is contained in the interior of $\breve{K}^s_1$. We now proceed as in Item (5) from Lemma \ref{lemma:PrimeEndChaoticRectangle}: take a sequence of curves $\{\breve{\theta}^s_{1,n}\}_{n \in \mathbb{Z}^+}$ from $\breve{r}^s_{12}$ to $\breve{r}^s_{11}$ and uniformly approaching the boundary, and note that up to taking a subsequence, the Hausdorff limit of $h(\breve{\theta}^s_{1,n})$ is included in $K^s_1$. This means we may take $n^s_1$ such that
		\begin{itemize}
			\item $h(\breve{\theta}^s_{1,n^s_1}) \subset B(K^s_1, \frac{\varepsilon}{2})$,
			\item $\breve{\theta}^s_{1,n} \cap \breve{\gamma}_{12} \neq \varnothing, \ \breve{\theta}^s_{1,n} \cap \breve{\gamma}_{11} \neq \varnothing$.
		\end{itemize} 
		
		Now, define $\breve{\theta}^s_1$ as a minimal-for-inclusion subarc of $\breve{\theta}^s_{1,n^s_1}$ going from $\breve{\gamma}_{11}$ to $\breve{\gamma}_{12}$, and define $\breve{\theta}^u_1, \ \breve{\theta}^s_2$, and $\breve{\theta}^u_2$ in the same fashion. Define $\breve{\gamma}'_{ij}$ as the subarc of $\breve \gamma_{ij}$ which goes from $\breve{\theta}^s_i$ to $\breve{\theta}^{u}_j$. See Figure \ref{figure:Waiter} for details. 
		
		\begin{figure}[h]
			\centering
			
			\def\svgwidth{.54\textwidth}
			\import{./Figures/}{TheWaiter.pdf_tex}
			
			\bigskip

			\caption{The stable and unstable sides of the HPR appear respectively in light blue and pink. The sides of the new rectangle appear in blue and red. Dotted blue and red lines come from taking curves near the sides in the {\color{black}prime end model}. Solid blue and red lines come from subarcs of crosscuts whose respective cross-sections contain each of the vertices of the rectangle in the {\color{black}prime end} model.} 
			\label{figure:Waiter}
		\end{figure}
		
		Finally, note that taking the correct orientation for each of these eight arcs, we have that 
		$$ \breve{\theta}^s_1 \cdot \breve{\gamma}'_{11} \cdot \breve{\theta}^u_1 \cdot \breve{\gamma}'_{21} \cdot \breve{\theta}^s_2 \cdot \breve{\gamma}'_{22} \cdot \breve{\theta}^u_2 \cdot \breve{\gamma}'_{12}$$
		is a simple closed curve, and moreover we may define 
		$$ R^s_1 = h(\breve{\theta}^s_1 \cdot \breve{\gamma}'_{11}), \ R^u_1 = h(\breve{\theta}^u_1 \cdot \breve{\gamma}'_{21}), \ R^s_2 = h(\breve{\theta}^s_2 \cdot \breve{\gamma}'_{22}), \ R^u_2 = h(\breve{\theta}^u_2 \cdot \breve{\gamma}'_{12}),$$ 
		and note that it is a rectangle satisfying the required properties.    
	\end{proof}
	
	{\color{black}We will slightly relax the definition of HPR in order to state a general lemma that can be applied in subtly different contexts.
	
	\begin{definition}\label{def:GeneralizedRectangle}
		Let $S$ be a surface. An inessential continuum $G \subset \T^2$ will be a \emph{generalized rectangle} if 
		\begin{enumerate}
			\item $G$ is the closure of an open disk $D$, 
			\item (Sides). $\partial G = K^{\textnormal{h}}_1 \cup K^{\textnormal{h}}_2 \cup K^{\textnormal{v}}_1 \cup K^{\textnormal{v}}_2$, each of them a continuum, with $K^{\textnormal{h}}_1 \cap K^{\textnormal{h}}_2 = \varnothing$, $K^{\textnormal{v}}_1 \cap K^{\textnormal{v}}_2 = \varnothing$. 
			\item The prime end sides associated to each of the continua $K^{\textnormal{h}}_1, K^{\textnormal{h}}_2, K^{\textnormal{v}}_1,  K^{\textnormal{v}}_2$ as in Definition \ref{def:PrimeEndSide}, define a rectangle in the prime end model $\breve{G}$ of $G$, as in Lemma \ref{lemma:PrimeEndChaoticRectangle}.

		\end{enumerate}	
		In this context, we will call $K^{\textnormal{h}}_1, K^{\textnormal{h}}_2$ \emph{horizontal sides}, and $K^{\textnormal{v}}_1, K^{\textnormal{v}}_2$ \emph{vertical sides} of $G$. 
	\end{definition}
	}
	
	\begin{lemma}\label{lemma:SeparateThenConnect}
		{\color{black}
		Let $S$ be a surface, and let $G \subset S$ be a generalized rectangle. Let $K$ be a continuum, and let $Q = K \cap G$ be a compact set such that 
		}
		\begin{itemize}
			\item {\color{black}$Q \cap K^{\textnormal{h}}_1 = \varnothing, \ Q \cap K^{\textnormal{h}}_2 = \varnothing$,
			
			\item $Q$ \textit{separates the horizontal sides:} for any continuum $Q' \subset P$ from $K^{\textnormal{h}}_1$ to $K^{\textnormal{h}}_2$, we have that $Q' \cap Q \neq \varnothing$.
			}	
		\end{itemize}
		Then, there exists a subcontinuum $Q^{\textnormal{h}} \subset Q$ connecting the {\color{black}vertical} sides, that is,
		{\color{black}
		$$ Q^{\textnormal{h}} \cap K^{\textnormal{v}}_1 \neq \varnothing, \ Q^{\textnormal{h}} \cap K^{\textnormal{v}}_2 \neq \varnothing$$}    
	\end{lemma}
		
	\begin{proof}
		Let $\delta_1 > 0$ be the distance between $Q$ and the union of the {\color{black}horizontal} sides. Define $Q_1$ as the closure of the union of every connected component of $Q$ which intersects {\color{black}$K^{\textnormal{v}}_1$}, and define $Q_2$ in the same fashion. Suppose by contradiction that these two compact sets are disjoint, and obtain that $\text{d}(Q_1,Q_2) = \delta_2 >0$. Take $\delta = \frac{\min \{\delta_1, \delta_2\}}{4}$, and take respective finite coverings $\mathscr{U}_1, \ \mathscr{U}_2$ of $Q_1$ and $Q_2$, by balls of radius $\delta$. Let $U_1$ be the union of the balls from $\mathscr{U}_1$, define $U_2$ in the same fashion, and note that $\text{cl}(U_1) \cap \text{cl}(U_2) = \varnothing$ by construction. 
		
		Going to the {\color{black}prime end} compactification {\color{black}$\breve{G}$} of {\color{black}$G$}, and because of Lemma \ref{lemma:PrimeEndChaoticRectangle}, we can take two crosscuts $\breve{\gamma}_1$ with endpoints in {\color{black}$\breve{K}^{\textnormal{v}}_1$}, $\breve{\gamma}_2$ with endpoints in {\color{black}$\breve{K}^{\textnormal{v}}_2$}, such that they are disjoint and each of them separates $U_1$ from $U_2$ {\color{black}as subsets of $G$}. This means that the remaining connected component of the complement of these two crosscuts contains both {\color{black}$\breve{K}^{\textnormal{h}}_1$} and {\color{black}$\breve{K}^{\textnormal{h}}_2$}, which means there {\color{black}exist} two accessible prime ends and then a crosscut $\breve{\gamma}_2$ from {\color{black}$\breve{K}^{\textnormal{h}}_1$} to {\color{black}$\breve{K}^{\textnormal{h}}_2$}, such that its image by $h$ is an arc disjoint from $Q$ and connecting {\color{black}$K^{\textnormal{h}}_1$} to {\color{black}$K^{\textnormal{h}}_1$}, which contradicts the hypothesis.  
	\end{proof}
	
	{\color{black}Note that an HPR and any of their lifts to $\R^2$ are generalized rectangles.}

	\subsection{Dense family of fitted heteroclinic pseudo-rectangles} 
	We will dedicate this subsection to prove that inside any open set in $\R^2$ there is a \textit{fitted} heteroclinic pseudo-rectangle, we only need the fitted condition to make the arguments in Section \ref{sec:DenseTopHorseshoes} simpler. Although our stable and unstable sets are dynamically defined, the proof of this density is mostly topological. 
	
	\begin{definition}\label{def:FittedHPR}
		We will say a heteroclinic pseudo-rectangle is \emph{fitted} if 
		$$ \rho(z^s_1) = (a^s_1,b^s_1), \ \rho(z^s_2) = (a^s_2,b^s_2), \ \rho(z^u_1) = (0,b^u_1),  \ \rho(z^u_2) = (0,b^u_2),$$ 
		where
		$$ a^s_1 < 0, \ a^s_2 < 0, \ b^s_1 < 0, \  b^s_2 < 0, \ b^{u}_1 < 0, \ b^{u}_2 > 0, \text{ and } \frac{b^s_1}{a^s_1} \neq \frac{b^s_2}{a^s_2}.$$
		
		We will sometimes call them FHPR for the sake of simplicity.
	\end{definition}
	
	Note from Definition \ref{defi:ChaoticRectangle} that an HPR for a power $g^j$ of $g$, is also an HPR for $g$. Therefore we may assume up to taking some power, that $g$ has a lift $\wh g$ such that $\vec{0} \in \rho(\wh g)$. We will then prove that: 
	
	\begin{proposition}\label{prop:ChaoticRectanglesAreDense}
		Let $\wh g$ be a lift to $\R^2$ of a frice torus homeomorphism $g$. If $\vec{0} \in \mathrm{int}(\rho(\wh g))$, then for every open set $\wh U \subset \R^2$, there exists a fitted heteroclinic pseudo-rectangle $\wh P \subset \wh U$ for $\wh g$.   
	\end{proposition}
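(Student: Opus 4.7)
The plan is to explicitly realize the four sides of $\wh P$ as stable/unstable continua of suitably chosen periodic points inside $\wh U$. The main steps are: (a) select periodic points with the rotation vectors dictated by Definition~\ref{def:FittedHPR}; (b) place four ``corner'' points inside a small sub-disk of $\wh U$, each lying simultaneously in the appropriate stable and unstable sets; (c) connect consecutive corners through the corresponding stable or unstable sets to form a simple closed curve; (d) verify the HPR conditions on the resulting closed disk.

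For (a), Franks' realization (as used in Proposition~\ref{prop:RotatingPointsNoTransverseIntersection}) together with $\vec 0 \in \mathrm{int}(\rho(\wh g))$ produces four $\wh g$-periodic points $\wh z^s_1, \wh z^s_2, \wh z^u_1, \wh z^u_2$ with rotation vectors satisfying the sign and slope conditions of Definition~\ref{def:FittedHPR}. By Remark~\ref{rem:denseStableSets}, each of the sets $K^s(\wh z^s_i)$ and $K^u(\wh z^u_j)$ is dense in $\R^2$, and by Lemma~\ref{lem:NonemptyStableSets} local stable and unstable continua have uniformly positive diameter. For (b), I would fix inside $\wh U$ a small open disk $\wh V$ disjoint from the (discrete) set of $\wh{\mathcal F}$-singularities, together with four pairwise disjoint neighborhoods $\wh W_{ij}$ of the vertices of an inscribed topological square, arranged cyclically as $(1,1),(1,2),(2,2),(2,1)$. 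Using Lemma~\ref{lem:stableunstablecontinuadense} to produce small weakly stable/unstable continua near each $\wh z^\sigma_k$, topological mixing of $\wh g$ (Proposition~\ref{prop:essfactor.isrot.mixing}) to transport these continua into $\wh W_{ij}$ in forward and backward time, and Lemma~\ref{lemma:WeaklyStableUnstableMixing} combined with the Hausdorff-limit closure of stable continua (Proposition~\ref{prop:HffLimitSmallstableSets}) and its unstable analogue, one extracts points $\wh c_{ij} \in \wh W_{ij} \cap K^s(\wh z^s_i) \cap K^u(\wh z^u_j)$.

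For (c), connect consecutive corners by chains of small stable or unstable continua: $\wh c_{11}$ to $\wh c_{12}$ through $K^s(\wh z^s_1)$, $\wh c_{12}$ to $\wh c_{22}$ through $K^u(\wh z^u_2)$, and analogously for the two remaining sides. Density of each $K^s(\wh z^s_i)$ and $K^u(\wh z^u_j)$ in $\wh V$ allows one to cover a curve in $\wh V$ joining the two corners by finitely many small stable (resp.\ unstable) continua of diameter less than a prescribed $\delta$, and a Hausdorff-limit argument via Proposition~\ref{prop:HffLimitSmallstableSets} yields a limit continuum still contained in $K^s(\wh z^s_i)$ (resp.\ $K^u(\wh z^u_j)$). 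After replacing each side by a minimal-for-inclusion subcontinuum between its two designated corners, the four sides $\wh K^s_1, \wh K^s_2, \wh K^u_1, \wh K^u_2$ concatenate into a simple closed curve bounding a topological disk $\wh D \subset \wh V$, and we set $\wh P = \mathrm{cl}(\wh D)$. Step (d) is then bookkeeping: $\wh P$ is inessential since it sits in $\wh V \subset \wh U$, contains no $\wh{\mathcal F}$-singularity by the choice of $\wh V$, and each boundary point is approached from the unique unbounded complementary component because $\wh P$ is the closure of a disk strictly inside a larger open disk.

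The principal difficulty is controlling the connection step (c): the stable set of a periodic point is not in general a topological manifold, so concatenating small stable continua may produce a wild continuum, and a priori sides of different types can intersect each other outside the prescribed corners. The distinct-rotation-slopes hypothesis on the four periodic points, via Lemma~\ref{lemma:AnchorTypeA}, together with taking the $\wh W_{ij}$ and the covering continua small enough compared to the mutual distances between the four stable/unstable sets, is what rules out such pathologies and allows the final boundary to be a simple closed curve.
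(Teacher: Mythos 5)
There is a genuine gap, and it sits exactly at the point your construction is supposed to deliver: the localized heteroclinic ``corners''. You claim to extract points $\wh c_{ij}\in \wh W_{ij}\cap K^s(\wh z^s_i)\cap K^u(\wh z^u_j)$ from density of the stable/unstable sets (Remark~\ref{rem:denseStableSets}), mixing, Lemma~\ref{lemma:WeaklyStableUnstableMixing} and Proposition~\ref{prop:HffLimitSmallstableSets}. None of these gives that: two dense, filled, empty-interior sets need not meet at all, and Lemma~\ref{lemma:WeaklyStableUnstableMixing} only produces intersections of a far-forward iterate of an unstable continuum with a stable continuum \emph{somewhere}, with no control on the location; forward iterates of pieces of $K^s(\wh z^s_i)$ drift off to infinity along $\rho(\wh z^s_i)$, so ``transporting'' them back into a prescribed $\wh W_{ij}\subset\wh U$ is not available. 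Producing stable--unstable intersections \emph{inside} the given open set is precisely the hard core of the statement, and the paper obtains it by a different mechanism: it accumulates a sequence of pairwise disjoint, minimal stable crossing continua (with distinct rotation slopes) of an annulus inside $\wh U$, converging in the Hausdorff sense to a stable continuum, intercalates unstable continua of alternating vertical rotation in the gaps, and then argues that if these unstable continua avoided the nearby stable continua, a Hausdorff limit (Proposition~\ref{prop:HffLimitSmallstableSets} and its unstable analogue) would be a nontrivial continuum that is simultaneously stable and unstable, hence dynamically bounded, contradicting Proposition~\ref{prop:elementprop.essfactor}. The intersections are forced by connectedness inside $\wh U$; they are never located at prescribed points.

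Two further steps of your plan would also fail as written. First, connecting two prescribed corners ``through $K^s(\wh z^s_1)$'' while staying in $\wh V$ is not justified: a finite cover of a curve by small stable continua is not a connected subset of the stable set, Proposition~\ref{prop:HffLimitSmallstableSets} does not assemble one, and the subcontinuum of $K^s(\wh z^s_1)$ joining the two points may well leave $\wh U$; the paper avoids this by taking the sides to be minimal crossing continua of an annulus contained in $\wh U$ from the start. Second, the boundary of an HPR need not (and in general cannot) be a simple closed curve --- the sides are wild filled continua, which is exactly why Section~\ref{sec:HPR} works with prime ends --- and condition (3) of Definition~\ref{defi:ChaoticRectangle} is not ``bookkeeping'': the closure of an open disk strictly inside a larger disk can perfectly well have bounded complementary pockets whose boundary points are not accessible from the unbounded component. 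The paper proves (3) through the minimality of the chosen side continua and the resulting identities of the type $\wh K^s_n\subset\partial\wh V_{n-1}\cap\partial\wh V_n$; Lemma~\ref{lemma:AnchorTypeA} and ``taking things small'' do not substitute for that argument.
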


	\begin{proof}
		This is quite an intricate construction, so we will divide it into steps. 
		
		\medskip
		
		\paragraph{\textbf{Step 1. Accumulating small stable continua.}}
		$\hbox{}$ \newline
		
		Let $(\wh g, \wh I, \wh \F)$ be an MDTD for $\wh g$. Recall that the set $\mathrm{Sing}(\wh \F)$ of singularities of the foliation, has no interior and does not disconnect the plane, because every nontrivial continuum is dynamically unbounded for $g$ (see Proposition \ref{prop:elementprop.essfactor}). Let us then assume up to taking a smaller set that $\wh U$ is an Euclidean ball $\wh U = B(\wh z, \varepsilon)$ with no singularities inside, and take $\wh U' = B(\wh z,\frac{\varepsilon}{2})$. Given that $\vec{0} \in \rho(\wh g)$, and using the realization of rational vectors in $\mathrm{int}(\rho(\wh g))$ in \cite{franks89}, we shall take $\{\wh z_n\}_{n \in \mathbb{Z}^+}$ a sequence of lifts of periodic points for $g$, such that 
		$$\rho_g(\wh z_n) = (a_n,b_n),$$
		such that $a_n < 0$ and $b_n < 0$ for every $n$, and such that the sequence of slopes $\{\frac{b_n}{a_n}\}_{n \in \mathbb{Z}^+}$ is injective. 
		Notice that 
		\begin{itemize}
			\item Each ${\color{black}\wh W^s(\wh z_n)}$ is dense in $\R^2$ {\color{black} by Remark \ref{rem:denseStableSets}}. 
			\item ${\color{black}\wh W^s(\wh z_n)} \cap {\color{black}\wh W^s(\wh z_m)} = \varnothing$ whenever $m \neq n$, because of Lemma \ref{lemma:StableSetGoesToZero} and the fact that our $\wh z_n$ have different rotation vectors, and therefore their orbits separate linearly in $\R^2$. 
		\end{itemize} 
		
		In particular, ${\color{black}\wh W^s(\wh z_n)} \cap \wh U' \neq \varnothing$ for every $n > 0$, {\color{black}then by Lemma \ref{lem:nadler}} we may take a sequence of stable continua $\{\wh K^s_n\}_{n \in \mathbb{Z}^+}$, with $\wh K^s_n \subset {\color{black}\wh W^s(\wh z_n)}$ and such that $\wh K^s_n$ is a stable continuum with $$\wh K^s_n \cap  \partial \wh U \neq \varnothing, \  \wh K^s_n \cap \partial \wh U' \neq \varnothing,$$ 
		minimal for the inclusion. Note that 
		$$\frac{\varepsilon}{2} \leq \text{diam}(\wh K^s_n) \leq 2\varepsilon \text{ for every } n,$$
		which implies that, up to taking a subsequence, we have that there exists $\wh K^s$ such that  
		$$ \wh K^s_n \xrightarrow[n \to +\infty]{\textnormal{Hff}}\wh K^s.$$
		\medskip
		We remark that $\wh K^s$ cannot intersect {\color{black}both} $\wh K^s_{n_1}$ and $\wh K^s_{n_2}$ for $n_1\not= n_2$: if it did intersect both, it would be a weakly unstable continuum by Lemma \ref{lemma:AnchorTypeA}, but we know $\wh K^s$ is a stable continuum by Proposition \ref{prop:HffLimitSmallstableSets}, so it would be a contradiction {\color{black}by Corollary \ref{cor:StableNotWeaklyUnstable}}. We may therefore assume that $\wh K^s$ is disjoint from all $\wh K^s_n$.  
		
		\medskip
		
		\paragraph{\textbf{Step 2. Understanding the local picture.}} 
		$\hbox{}$ \newline
		
		Notice that $\wh K^s$ intersects both $\partial \wh U$ and $\partial \wh U'$. Moreover,  $\wh K^s$ is a stable continuum by Proposition \ref{prop:HffLimitSmallstableSets}, and is therefore filled and has empty interior by Remark \ref{rem:FilledStableContinuum}, which implies that it is inessential when seen as a subset of the annulus $\wh S = \mathrm{cl}(\wh U) \backslash \wh U'$. This means that up to a change of coordinates, $\wh K^s$ is disjoint from an open \textit{radial region} $\wh V_0 \subset \wh S$ (i.e. an open region contained between two disjoint simple curves from $\partial \wh U$ to $\partial \wh U'$). 
		
		\textcolor{black}{$\wh S \backslash \wh K^s$ may have more than a single connected component, but there is only one that intersects both $\partial \wh U$ and $\partial \wh U'$. Indeed, if this was not the case, we could find two arcs $\gamma_1$ and $\gamma_2$, joining $\partial \wh U$ and $\partial \wh U'$, and lying in different connected components of $\wh S \backslash \wh K^s$. The two connected components of the complement of these arcs in the annulus must both contain points of  $\wh K^s$ which contradicts its connectedness. }
		Then, up to taking a subsequence, every continuum $\wh K^s_n$ belongs to the same connected component $\wh V$ of $\wh S \backslash  (\wh V_0 \cup \wh K^s)$. 
		
		{\color{black}Moreover, we obtain that $\wh K^s \subset \partial \wh V$, because $\displaystyle \bigcup \limits_{n \in \Z^{+}} \wh K^s_n \subset \wh V$, and $\wh K^{s}_n \xrightarrow[n \to +\infty]{\textnormal{Hff}} \wh K^{s}$ {\color{black}(see Figure \ref{figure:LocalPicture} for details).}
		{\color{black} By this construction, we obtain} that $\wh V$ is a disk, and that its boundary can be written as $\partial \wh V = \partial_{\textnormal{R}} \wh V \cup \partial_{\textnormal{T}} \wh V \cup \partial_{\textnormal{L}} \wh V \cup \partial_{\textnormal{B}} \wh V$ ,} where
		\begin{itemize}
			\item $\partial_{\textnormal{R}} \wh V$ is a closed arc of $\partial \wh U$,
			\item $\partial_{\textnormal{T}} \wh V$ is a closed arc from $\wh U$ to $\wh U'$, whose interior is in $\wh S$,   
			\item $\partial_{\textnormal{L}} \wh V$ is a closed arc of $\partial \wh U'$,
			\item $\partial_{\textnormal{B}} \wh V = \wh K^s$.  
		\end{itemize}

		We can take a closed arc $\wh \eta$ contained in $\wh S \backslash \mathrm{cl}(\wh V)$, except for its endpoints which belong respectively to $\partial_{\LL}\wh V \cap \wh K^s$, $\partial_{\RR}\wh V \cap \wh K^s$: {\color{black} To this end, we select two arcs, contained in $\partial \wh U$ and $\partial \wh U'$, respectively, each having exactly one endpoint on $\wh K^s$ (which is its unique point of intersection with $\wh K^s$) and the other endpoint on {\color{black}$\partial \wh V_0$}. We then join these arcs by a third simple curve contained in {\color{black}$\wh V_0$} except for its endpoints, thus obtaining the desired curve $\wh \eta$ (see Figure \ref{figure:LocalPicture}). We then} denote by $\wh V'$ the rectangle whose sides are $\partial_{\RR}\wh V$, $\partial_{\TT}\wh V$, $\partial_{\LL}\wh V$ and $\wh \eta$. By the Jordan-Schoenflies Theorem, we can take a new change of coordinates sending $\wh V'$ to the square $[0,1] \times [0,1]$; we can therefore assume that $\wh V$ is \textit{almost} the square $[0,1] \times [0,1]$, more precisely 
		$$ \partial_{\textnormal{R}} \wh V = \{1\} \times [0,1], \ \partial_{\textnormal{T}} \wh V = [0,1] \times \{1\} , \ \partial_{\textnormal{L}} \wh V = \{0\} \times [0,1], \ \partial_{\textnormal{B}} \wh V \subset [0,1] \times [0,1].$$
		
		See Figure \ref{figure:LocalPicture} for details. 
		
		Let us use the following notation:  
		$$\partial_{\textnormal{L}} \wh K^s_n = \wh K^s_n \cap \partial_{\textnormal{L}} \wh V, \ \partial_{\textnormal{R}} \wh K^s_n = \wh K^s_n \cap \partial_{\textnormal{R}} \wh V.$$
		
		Each of these two sets is naturally contained in a respective closed segment $I^{\textnormal{L}}_n, I^{\textnormal{R}}_n$, which are minimal for the inclusion. Again, up to taking a subsequence, we may assume that 
		$$\{I^{\textnormal{L}}_n\}_{n \in \mathbb{Z}^+}, \ \{I^{\textnormal{R}}_n\}_{n \in \mathbb{Z}^+} \text{ are decreasing-to-zero sequences of disjoint closed segments}.$$
		
		Note that
		$$\text{d}(\wh K^s_n, \wh K^s_{n+1}) =  \delta_n > 0$$ 
		because they are disjoint compact sets. Then, there exists an open connected component $\wh V_n$ \textit{between} $\wh K_n$ and $\wh K_{n+1}$, that is, {\color{black}there exists a curve from $\partial_{\textnormal{L}} \wh V$ to $\partial_{\textnormal{R}} \wh V$} which separates $\wh K_n$ from $\wh K_{n+1}$. Note that there exists only one of these components for each value of $n$, as they would otherwise separate $\wh K^s_n$ or $\wh K^s_{n+1}$. \textcolor{black}{Note that $\wh V_n$ is a generalized rectangle, with $\wh K_n$ and $\wh K_{n+1}$ as vertical sides and subarcs of $\partial \wh U$ and $\partial \wh U'$ as horizontal sides.}
		
		\begin{figure}[h]
			\centering
			
			\def\svgwidth{.92\textwidth}
			\import{./Figures/}{LocalPicture.pdf_tex}
			
			\bigskip

			\caption{The complement of every $\wh K^s_n$ has well-defined upper and lower components, each of them accumulating at $\partial_{\LL}\wh V$ and $\partial_{\RR}\wh V$.} 
			\label{figure:LocalPicture}
		\end{figure}
		
		We will call $\wh V^+_n$ to the connected component of $\wh V \backslash \wh K^s_n$ which contains $\wh V_n$, and we will call $\wh V^-_n$ to the connected component of $\wh V \backslash \wh K^s_n$ which contains $\wh V_{n-1}$.
		
		Fix $n > 0$, and note that if $\delta < \delta_n$, then the filling of the $\delta$-neighbourhood of $\wh K^s_n$ does not intersect $\wh K^s_{n+1}$. This means there exists a curve $\gamma_n$ included in this filled neighbourhood except for its endpoints (in particular $\gamma_n \subset \wh V_n$ except for its endpoints), which respectively belong to $\partial_{\LL} \wh V_n$, and $\partial_{\RR} \wh V_n$. 
		
		This proves that we may take a sequence of curves $\{\gamma^{i}_n\}_{i \in \mathbb{Z}^{+}}$, each of them contained in $\wh V_n$ except for its endpoints, and intersecting both $\partial_{\textnormal{L}} \wh V$ and $\partial_{\textnormal{R}} \wh V$, such that the Hausdorff limit is a continuum $\wh K^+_n \subset \wh K^s_n$ which also intersects $\partial_{\textnormal{L}} \wh V$ and $\partial_{\textnormal{R}} \wh V$. In a identical fashion, by approximating $\wh K_n$ with curves contained in $\wh V_{n+1}$ except for its endpoints, we obtain $\wh K^-_n \subset \wh K^s_n$ which also intersects $\partial_{\textnormal{L}} \wh V$ and $\partial_{\textnormal{R}} \wh V$. Given that $\wh K^s_n$ is minimal for the inclusion, we obtain that $\wh K^+_n = \wh K^s_n$, and similarly $\wh K^-_n = \wh K^s_n$. This proves that 
		\begin{equation}\label{equation:InsideBoundaryEqualsOutsideBoundary}
			\wh K^s_n \subset \partial \wh V_{n-1}, \ \wh K^s_n \subset \partial \wh V_n
		\end{equation}  
		
		\medskip
		
		\paragraph{\textbf{Step 3. Intercalating unstable continua.}} 
		$\hbox{}$ \newline
		
		For each of these $\wh V_n$ {\color{black}with $n$ large enough}, in our current coordinates, take a small euclidean ball {\color{black}$\wh U_n$} contained in this component, centered at $w_n$ with $x-$coordinate equal to $\frac{1}{2}$, and radius much smaller than $\frac{1}{4}$.
		
		We now take a sequence $\{\wh z'_n\}_{n \in \mathbb{Z}^+}$ of periodic points such that 
		$$ \rho_g(\wh z'_n) = \Bigl ( 0,\frac{(-1)^n}{n} \Bigr ),$$
		As in Step 1, we obtain that 
		\begin{itemize}
			\item Each ${\color{black}\wh W^u(\wh z'_n)}$ is dense in $\R^2$, 
			\item ${\color{black}\wh W^u(\wh z'_n)} \cap {\color{black}\wh W^u(\wh z'_m)} = \varnothing$ whenever $m \neq n$
		\end{itemize}  
		In particular ${\color{black}\wh W^u(\wh z'_n)}$ must intersect {\color{black}$\wh U_n$}. Let us then take a sequence of unstable continua $\{\wh K^u_n\}_{n \in \mathbb{Z}^+}$ with $\wh K^{u}_n \subset \wh K^{u}(\wh z'_n)$, such that 
		$$ \wh K^{u}_n \cap \partial \wh V \neq \varnothing, \ \wh K^{u}_n \cap \partial \wh W_n \neq \varnothing,$$
		minimal for the inclusion, with that property. 
		
		\medskip
		
		\paragraph{\textbf{Step 4. Finding many transversal stable-unstable intersections.}} 
		$\hbox{}$ \newline
		
		Fix an integer $j > 0$. {\color{black}For any $n$ large enough}, let us define $\wh V^j_n$ as the connected component of $\wh V \backslash (\wh K^s_{n-j} \cup \wh K^s_{n+j})$ which is \textit{between} $\wh K^s_{n-j}$ and $\wh K^s_{n+j}$, that is, the only component of $\wh V \backslash (\wh K^s_{n-j} \cup \wh K^s_{n+j})$ where there is a curve joining $\partial_{\LL} \wh V$ to $\partial_{\RR} \wh V$, which separates $\wh K^s_{n-j}$ from $\wh K^s_{n+j}$.
		
		More over, let use the following notation: 
		$$ \partial_{\LL} \wh V^j_n = \partial \wh V^j_n \cap \partial_{\LL} \wh V, \ \partial_{\RR} \wh V^j_n = \partial \wh V^j_n \cap \partial_{\RR} \wh V,$$
		which are both segments respectively included in $\partial_{\LL} \wh V$ and $\partial_{\RR} \wh V$.
		
		We will prove the following: 
		
		\begin{lemma}\label{lem:TransversalSUIntersections}
			For up to finitely many values of $n$,
			\begin{equation}\label{equation:StableUnstableTransversal}
				\text{either } \wh K^{u}_n \cap \wh K^s_{n-j} \neq \varnothing, \text{ or } \wh K^{u}_n \cap \wh K^s_{n+j} \neq \varnothing,
			\end{equation}	
			\begin{equation}\label{equation:DontTouchTheSides}
				\wh K^{u}_n \cap \partial_{\LL} \wh V^j_n = \varnothing, \ \wh K^{u}_n \cap \partial_{\RR} \wh V^j_n = \varnothing
			\end{equation}
		\end{lemma}
		
		\begin{proof}

			Suppose by contradiction that the result from Equation \ref{equation:StableUnstableTransversal} does not hold. Then, up to taking a subsequence, we would have that $\wh K^{u}_n$ is included in $\wh V^j_n$. Note that this implies that 
			\begin{equation}\label{equation:IntercalatedUnstableSetsLimitDistance}
				d_n = \sup \{\textnormal{d}(z, \wh K^s) : \ z \in \wh K^{u}_n\} \xrightarrow{n \to \infty} 0.
			\end{equation} 
			
			We may assume up to taking a new subsequence, that every {\color{black}$\wh K^{u}_n$} would intersect {\color{black}$\partial_{\LL} \wh V^j_n$}. \textcolor{black}{Thus, if Equation~\ref{equation:StableUnstableTransversal} does not hold, then neither does Equation~\ref{equation:DontTouchTheSides}, so it suffices to prove the latter holds.}

Given that in the current coordinates, we have that for every $n > 0$,  
			{\color{black}
			\begin{equation}\label{eq:ControlledSizeKun}
				\frac{1}{4} \leq \textnormal{diam}(\wh K^{u}_n) \leq \sqrt{2},
			\end{equation}
			then}  
			we would have by Proposition \ref{prop:HffLimitSmallstableSets}, that {\color{black}up to taking a subsequence,} 
			$$ \wh K^{u}_n \xrightarrow[n \to +\infty]{\textnormal{Hff}} \wh K^{u}.$$
			
			{\color{black}
			We then obtain that $\wh K^{u}$ would be an unstable continuum contained in $\wh K^s$ by Equation \ref{equation:IntercalatedUnstableSetsLimitDistance}, which would also be nontrivial by Equation \ref{eq:ControlledSizeKun}. 
			}
			
			Then, our set {\color{black}$\wh K^{u}$} would be simultaneously stable and unstable, therefore being dynamically bounded for $g$, which would be a contradiction by Corollary \ref{cor:IntersectionStableUnstableTotDisconnected}. This proves that the result from  \ref{equation:StableUnstableTransversal} holds. {\color{black}If we suppose by contradiction that Equation \ref{equation:DontTouchTheSides} does not hold, the argument is identical. This concludes the proof.} 
			
			\smallskip

		\end{proof}
		\medskip
		
		\paragraph{\textbf{Step 5. Locating the sides.}} $\hbox{}$ \newline
		
		Start by applying Lemma \ref{lem:TransversalSUIntersections} to the value $j = 5$. Take a sufficiently large value of $n$ such that we avoid the finitely many pathological values from Step 4, that is, such that Equations \ref{equation:StableUnstableTransversal} and \ref{equation:DontTouchTheSides} are held for every $n' \geq n$. We suppose without loss of generality that $n$ is even, and we will work with the strip between $\wh K^s_n$ and $\wh K^s_{n+9}$.
		
		By Equation \ref{equation:StableUnstableTransversal}, we have that one of the following is true:
		\[\wh K^{u}_{n+4} \cap \wh K^{s}_n \neq \varnothing, \text{ or } \ \wh K^{u}_{n+4} \cap \wh K^{s}_{n+9} \neq \varnothing.\]
		
		We assume that the first one holds, as the proof for the other case is analogous. This implies that
		\[\wh K^{u}_{n+4} \cap \wh K^{s}_{n+1} \neq \varnothing, \ \wh K^{u}_{n+4} \cap \wh K^{s}_{n+3} \neq \varnothing, \ \wh K^{u}_{n+4} \cap \wh K^{s}_{n+4} \neq \varnothing.\]
		
		Again by Equation \ref{equation:StableUnstableTransversal}, we have that 
		\[\wh K^{u}_{n+1} \cap \wh K^{s}_n \neq \varnothing, \text{ or } \ \wh K^{u}_{n+1} \cap \wh K^{s}_{n+4} \neq \varnothing.\]
		
		Once again we assume that the first one holds, as the analysis for the other case is analogous. This implies that 
		\[\wh K^{u}_{n+1} \cap \wh K^{s}_n \neq \varnothing.\]
		
		Putting everything together, so far we have obtained that
		\begin{equation}
			\wh K^{u}_{n+4} \cap \wh K^{s}_n \neq \varnothing, \ \wh K^{u}_{n+4} \cap \wh K^{s}_{n+1} \neq \varnothing, \ \wh K^{u}_{n+1} \cap \wh K^{s}_n \neq \varnothing, \ \wh K^{u}_{n+1} \cap \wh K^{s}_{n+1} \neq \varnothing.
		\end{equation}
		
		Let $m = n+4$, and let us take an unstable subcontinuum $\wh K^{'u}_m \subset \wh K^{u}_m$ intersecting both $\wh K^s_n$ and $\wh K^s_{n+1}$, minimal for the inclusion. Each of these continua are contained in the closure of $\wh V_n$. For $m' = n+1$, define $\wh K^{'u}_{m'} \subset \wh K^{u}_{m'}$ in identical fashion. 
		
		Proceeding as in Step 4, we will define
		$$ \partial_{\LL} \wh V_n = \partial \wh V_n \cap \partial_{\LL} V, \ \partial_{\RR} \wh V_n = \partial \wh V_n \cap \partial_{\RR} V $$ 
		
		By Equation \ref{equation:DontTouchTheSides} we have that
		$$ \wh K^{'u}_m \cap \partial_{\LL} \wh V_n = \varnothing, \ \wh K^{'u}_m \cap \partial_{\RR}\wh V_n = \varnothing, \ \wh K^{'u}_{m'} \cap \partial_{\LL}\wh V_n = \varnothing, \ \wh K^{'u}_{m'} \cap \partial_{\RR}\wh V_n = \varnothing.$$ 
		
		This implies that, when seen as a subset of the closure of $\wh V_n$, the unstable continuum $\wh K^{'u}_m$ (and similarly $\wh K^{'u}_{m'}$) separates $\partial_{\LL} \wh V_n$ from $\partial_{\RR} \wh V_n$: otherwise {\color{black}by Lemma \ref{lemma:SeparateThenConnect}} there would exist a curve from $\partial_{\LL} \wh V_n$ to $\partial_{\RR} \wh V_n$ included in $\wh V_n$ which would separate $\wh K^{'u}_m$, which would be a contradiction.

		Thus, one of the following symmetrical statements holds:
		\begin{equation}
			\wh K^{'u}_{m} \text{ separates } \partial_{\LL} \wh V_n \text{ from } \wh K^{'u}_{m'}, \text{ and } \wh K^{'u}_{m'} \text{ separates } \partial_{\RR} \wh V_n \text{ from } \wh K^{'u}_{m},
		\end{equation}
		\begin{equation}
			\wh K^{'u}_{m'} \text{ separates } \partial_{\LL} \wh V_n \text{ from } \wh K^{'u}_{m}, \text{ and } \wh K^{'u}_{m} \text{ separates } \partial_{\RR} \wh V_n \text{ from } \wh K^{'u}_{m'}.
		\end{equation}
		Once again we can assume that the first one holds without loss of generality. We shall then say $\wh K^{'u}_{m}$ is \textit{on the left} of $\wh K^{'u}_{m'}$.

		\medskip
		
		\paragraph{\textbf{Step 6. Building the heteroclinic pseudo-rectangle.}} 
		$\hbox{}$ \newline
		
		We will now concentrate exclusively on the connected component $\wh V_n$ and the four continua 
		$$\wh K^s_n, \  \wh K^s_{n+1}, \ \wh K^{'u}_m \ \text{ and } \ \wh K^{'u}_{m'}.$$ Recall that {\color{black}$\wh K^{'u}_m$} is on the left of {\color{black}$\wh K^{'u}_{m'}$}, and that $\wh V_n$ is a disk. 
		
		Note that $\wh K^{'u}_m$ separates the disk $\wh V_n$ into (at least) two connected components, which we denote $\text{L}(\wh K^{'u}_m), \ \text{R}(\wh K^{'u}_m)$, containing respectively $\partial_{\text{L}} \wh V_n$ and $\partial_{\text{R}} \wh V_n$. Each of these connected components is a disk, as they are simply connected because their boundary is connected.
		
		Note that, again, $\wh K^{'u}_{m'}$ separates $\text{R}(\wh K^{'u}_m)$ into at least two connected components $\text{L}(\wh K^{'u}_{m'}) \cap \text{R}(\wh K^{'u}_m)$ and $\text{R}(\wh K^{'u}_{m'})$
		
		Given that both $\wh K^{'u}_m$ and $\wh K^{'u}_{m'}$ are minimal for the inclusion, we may replicate the argument from Equation \ref{equation:InsideBoundaryEqualsOutsideBoundary}:	as in the very end of Step 2, take a sequence of curves $\{\gamma^{'i}_m\}_{i \in \mathbb{Z}^+}$ such that each of them belongs to $\LL(\wh K^{'u}_m)$, except for its endpoints, which are respectively in $\wh K^s_n$ and $\wh K^s_{n+1}$, and such that there is a subsequence of $\{\gamma^{'i}_m\}$ which converges to a subcontinuum of $\wh K^{'u}_m$ in the Hausdorff topology, and recall that $\wh K^{'u}_m$ is minimal for inclusion by construction. Repeat this process for the other components to obtain 
		{\color{black}
		\begin{equation}\label{equation:InsideBoundaryEqualsOutsideBoundary2}
			\wh K^{'u}_m \subset \partial \text{L}(\wh K^{'u}_m), \ \wh K^{'u}_m \subset \partial \text{R}(\wh K^{'u}_m), \ \wh K^{'u}_{m'} \subset \partial \text{L}(\wh K^{'u}_{m'}), \ \wh K^{'u}_{m'} \subset \partial{R}(\wh K^{'u}_{m'})
		\end{equation}
		}
		Let us define {\color{black}$\wh D = \text{R}(\wh K^{'u}_m) \cap \text{L}(\wh K^{'u}_{m'})$}, $\wh P = \text{cl}(\wh D)$, and let us prove that it is a fitted heteroclinic pseudo-rectangle.

		Take $\wh K^{'s}_n = \partial \wh D \cap \wh K^{s}_n $ and $\wh K^{'s}_{n+1} = \partial \wh D \cap \wh K^s_{n+1}$. Each of these sets is connected: if $\wh K^{'s}_n$ is not connected, then it would be contained in the closure of two disjoint closed angular regions $R_1, \ R_2$ of the disk $\wh D$, which on their turn define two disjoint open regions $R'_1, \ R'_2$, which are the connected components in $\wh D$ of the complement of the regions $R_1, \ R_2$. Then, because of Equations \ref{equation:InsideBoundaryEqualsOutsideBoundary} and \ref{equation:InsideBoundaryEqualsOutsideBoundary2}, we can repeat the argument in Item (4) of Lemma \ref{lemma:PrimeEndChaoticRectangle} to obtain two simple curves $\gamma, \gamma': [0,1] \to \mathrm{cl}(\wh V)$ which are included in $\wh V$ except for their endpoints, such that 
		\begin{itemize}
			\item $\gamma \cap \gamma'$ is a single transversal intersection contained in $D$ (and therefore $\gamma \wedge \gamma' = \pm 1$),
			\item $\gamma$ has both endpoints in $\partial_{\text{T}} \wh V$, and $\gamma'$ has both endpoints in $\partial_{\text{L}} \wh V \cup \partial_{\text{R}} \wh V$,
		\end{itemize}
		which would be a contradiction. Proceeding in identical fashion we get that $\wh K^{'s}_{n+1}$ is also connected.  
		
		\medskip
		
		We finally have that
		\begin{itemize}
			\item $\wh P$ contains no singularities as $\wh U$ contains no singularities, 
			\item $\wh P$ is the closure of an open disk $\wh D$,
			\item $\partial \wh P = \wh K^{'u}_m \cup \wh K^{'u}_{m'} \cup \wh K^{'s}_n \cup \wh K^{'s}_{n+1}$, 
			\item Each neighbourhood of a boundary point intersects the unbounded component of the complement of $P$, because of Equations \ref{equation:InsideBoundaryEqualsOutsideBoundary} and \ref{equation:InsideBoundaryEqualsOutsideBoundary2}.
			\item $\wh K^{'s}_n \cap \wh K^{'u}_m \neq \varnothing$ by construction, and the same happens for the other three desired intersections. 
		\end{itemize}
		
		The five items from Definition \ref{defi:ChaoticRectangle} are then held, as well as the condition from Definition \ref{def:FittedHPR} because $m$ is even and $m'$ is odd, and therefore $\wh P$ is a fitted heteroclinic pseudo-rectangle, which concludes the proof.

	\end{proof}

	\section{The essential factor has a dense family of horseshoes}\label{sec:DenseTopHorseshoes}
	
	The goal of this section is to prove the following:
	
	\begin{proposition}\label{prop:EssentialFactorHasDenseHorseshoes}
		Let $f \in \text{Homeo}_0(\mathbb{\T}^2)$ with $\textnormal{int}(\rho(f)) \neq \varnothing$, and let $\wh g$ be a lift of the essential factor, with $\vec{0} \in \mathrm{int}(\rho(\wh g))$. Then, for every open set $\wh U \subset \mathbb{R}^2$, $\wh g$ has a Markovian horseshoe $\wh X \subset \wh U$. 
	\end{proposition}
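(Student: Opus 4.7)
The plan is to combine the density of fitted heteroclinic pseudo-rectangles (Proposition \ref{prop:ChaoticRectanglesAreDense}) with the anchoring machinery from Sections \ref{section:stretching} and \ref{section:RotMixing} to construct a Markovian horseshoe inside any FHPR. First, given $\wh U \subset \R^2$, apply Proposition \ref{prop:ChaoticRectanglesAreDense} to produce a fitted heteroclinic pseudo-rectangle $\wh P \subset \wh U$ for $\wh g$, with stable sides $K^s_1, K^s_2$ coming from periodic points $z^s_1, z^s_2$ (with rotation vectors having both coordinates negative and different slopes) and unstable sides $K^u_1, K^u_2$ coming from periodic points $z^u_1, z^u_2$ (with rotation vectors $(0, b^u_1)$, $(0, b^u_2)$ of opposite signs). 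Since $\wh P$ is inessential and contains no singularities, it lifts injectively to $\tl{\mathbb{D}}$. Apply Lemma \ref{lemma:RectangleNearPseudoRectangle} to extract a topological rectangle $R \subset \wh P$ whose four sides are $\varepsilon$-close to the corresponding sides of $\wh P$, labeling the ``horizontal'' sides $R^{\pm}$ as the ones close to $K^u_1, K^u_2$. Observe that $R$ is a weakly unstable continuum by Lemma \ref{lemma:AnchorTypeA}, since it meets both $K^s(z^s_1)$ and $K^s(z^s_2)$ at the corners inherited from $\wh P$.

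Next, choose $j$ to be a large common multiple of the periods of $z^s_1, z^s_2, z^u_1, z^u_2$, so that (a) by the total anchoring Proposition \ref{prop:AnchoredEverywhere} together with the Anchoring Lemma \ref{lemma:AnchoringLemma}, the image $\wh g^j(R)$ is $u$-anchored to some vertical canonically foliated strip $\wh A^{\uar}$ and stretches across at least $m \geq 2$ fundamental domains in the $y$-direction; and (b) by Lemma \ref{lemma:StableSetGoesToZero}, the two sides of $R$ close to $K^s_1, K^s_2$ contract under $\wh g^j$ to small continua near the displaced orbit points $\wh g^j(z^s_1), \wh g^j(z^s_2)$. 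Choose deck transformations $W_1, \ldots, W_m$ of the form $W_i = T_{\mathrm{drift}} \circ T_y^{n_i p}$, where $T_{\mathrm{drift}}$ is a fixed integer translation compensating for the horizontal drift of the stable orbits under $\wh g^j$, $T_y$ is the unit vertical translation, $p$ comes from Proposition \ref{prop:ExistenceofCFS}, and the $n_i$ are distinct integers placing each $W_i(R)$ on a different floor of the stretched strip $\wh g^j(R)$.

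Then verify the Markovian conditions of Definition \ref{def:MarkovianIntersection} for each pair $(\wh g^j(R), W_i(R))$. The condition that $\wh g^j(R^+)$ lies above and $\wh g^j(R^-)$ lies below each $W_i(R)$ follows directly from the Anchoring Lemma, which forces $\wh g^j(R^{\pm})$ to extend arbitrarily far in their respective vertical directions. The main obstacle is the containment condition requiring $\wh g^j(R)$ to remain inside a horizontal strip around each $W_i(R)$, since the two stable orbits $\wh g^j(z^s_1), \wh g^j(z^s_2)$ sit at distinct horizontal positions and so $\wh g^j(R)$ could a priori bulge horizontally. Handle this via Lemma \ref{lemma:SeparateThenConnect} applied inside $W_i(R)$: any subcontinuum of $\wh g^j(R) \cap W_i(R)$ violating containment would separate the unstable sides of $W_i(R)$, forcing a connection between its stable sides that, combined with Proposition \ref{prop:HffLimitSmallstableSets}, contradicts the weak stability structure inherited from $\wh P$. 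Once all $m$ Markovian intersections are verified, the set $\wh X = \bigcap_{n \in \Z} \wh g^{nj}(R) \subset R \subset \wh U$ is the desired Markovian horseshoe in the sense of Definition \ref{def:RotationalHorseshoe}.
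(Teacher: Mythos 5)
There is a genuine gap, and it sits exactly where the statement gets its strength. In this paper a ``Markovian horseshoe'' for an element of $\mathrm{Homeo}_0(\T^2)$ is, by the adaptation in Definition \ref{def:RotationalHorseshoe}, a horseshoe for the restriction of (a lift of a power of) $g$ to $\mathrm{Dom}(\wh I)$: the rectangle and the Markovian intersections $W_i(\tl R)\cap \tl g^{j}(\tl R)$ must live in the universal covering $\tl \D$ of $\mathrm{Dom}(\wh I)$, and the deck transformations $W_1,\dots,W_m$ must generate a free group — this is precisely the hypothesis of Proposition \ref{prop:topologicalhorseshoe} that produces periodic points, and it is why the paper's construction (Proposition \ref{prop:DenseRotationalHorseshoe}) is carried out in $\tl\D$, with the $W_i$ chosen as loops around distinct collections of singularities. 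Your $W_i = T_{\mathrm{drift}}\circ T_y^{n_i p}$ are integer translations of $\R^2$: they commute, they are not deck transformations of $\tl\D \to \mathrm{Dom}(\wh I)$, and with them you can invoke neither the free-group hypothesis nor Proposition \ref{prop:topologicalhorseshoe}; the object you build is not a Markovian horseshoe in the sense the proposition asserts, and in particular the density of periodic points does not follow from it.

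The verification of the containment condition of Definition \ref{def:MarkovianIntersection} also does not work as described. Since $\rho(z^s_1)$ and $\rho(z^s_2)$ have different slopes, Lemma \ref{lemma:StableSetGoesToZero} makes the images of the two stable sides shrink but drift apart at linear speed, so $\wh g^{j}(R)$ genuinely does escape sideways from any bounded vertical band around a translate of $R$; there is no contradiction to be extracted from Lemma \ref{lemma:SeparateThenConnect} together with Proposition \ref{prop:HffLimitSmallstableSets}, because the ``bulging'' is real. The paper's remedy is structurally different: after placing two copies of the pseudo-rectangle in the ``Initial Configuration'' of Proposition \ref{prop:DenseRotationalHorseshoe} (built from the transverse-crossing setup of Proposition \ref{prop:HeteroclinicPseudoRectangleHasTopHorseshoe} and the total anchoring of Proposition \ref{prop:AnchoredEverywhere}), one intersects the image rectangle with a single leaf $\tl\phi'$ of the strip, applies Lemma \ref{lemma:SeparateThenConnect} to extract a crossing curve, and uses that curve to cut out a sub-rectangle $\underline{\tl R}'$ of the image; the Markovian character of $\tl g^{j}(\underline{\tl R}')\cap \underline{\tl R}$ is then forced by the anchor leaves $\tl\phi_{\BB},\tl\phi_{\TT}$ separating the vertical sides, not by confining the whole image $\wh g^{j}(R)$. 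Without passing to $\tl \D$, without the free deck transformations, and without this cutting step, the proposed construction does not yield the statement; what it could at best give is a Kennedy--Yorke topological horseshoe as in Proposition \ref{prop:HeteroclinicPseudoRectangleHasTopHorseshoe}, which carries entropy but no periodic points.
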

	
	The idea resides on using the density of the family of fitted heteroclinic pseudo-rectangles, and proving that inside each one of these, there exists a Markovian horseshoe. Unlike the proof  of Proposition \ref{prop:ChaoticRectanglesAreDense}, which is almost purely topological, this one heavily relies on dynamical notions, and we will use the techniques and results developed in Sections \ref{section:stretching} and \ref{section:RotMixing}. 
	
	\medskip
	
	The path for this result is somewhat technical, so we divide it into two big steps, dedicating one of the following subsections to each of them:
	\begin{enumerate}
		\item Proving that inside every FHPR there exists a topological horseshoe, in the sense of \cite{kennedy} (this is Proposition \ref{prop:HeteroclinicPseudoRectangleHasTopHorseshoe}). These horseshoes capture some of the rotational behaviour of $g$. 
		\item To ensure the density of periodic points, we prove in a similar way that inside each FHPR there is a Markovian horseshoe, as in Definition \ref{def:RotationalHorseshoe}. This is done in Proposition \ref{prop:DenseRotationalHorseshoe}.  
	\end{enumerate}

	\subsection{Topological horseshoes}
	
	The central result to be proved in this subsection is the following:
	
	\begin{proposition}\label{prop:HeteroclinicPseudoRectangleHasTopHorseshoe}
		Let $P$ be a fitted heteroclinic pseudo-rectangle. Then, for every $m > 0$, there exists $j > 0$ such that $P$ contains a topological horseshoe in $m$ symbols for $g$, with period $j$.
	\end{proposition}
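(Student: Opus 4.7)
The plan is to apply Lemma \ref{lemma:RectangleNearPseudoRectangle} to obtain, inside the FHPR $P$, a rectangle $R$ whose sides $R^{\sigma}_i$ lie in arbitrarily small neighborhoods of the corresponding sides $K^{\sigma}_i$ of $P$, and then to set $C = R$ with $C_0 = R^u_1$, $C_1 = R^u_2$. The target then reduces to showing that, for any fixed $m > 0$, the crossing number of $g^j$ for this $R$ is at least $m$ whenever $j$ is sufficiently large; Theorem \ref{thm:TopHorseshoe} will immediately yield the desired topological horseshoe in $m$ symbols with period $j$.

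The driving geometric observation is that, because $P$ is fitted, the unstable corners $z^u_1, z^u_2$ rotate straight down and straight up respectively, that is $\rho(z^u_1) = (0, b^u_1)$ with $b^u_1 < 0$ and $\rho(z^u_2) = (0, b^u_2)$ with $b^u_2 > 0$. Lifting to $\widehat R \subset \R^2$, this forces the unstable side $\widehat g^j(R^u_1)$ to dip downward by roughly $j|b^u_1|$ in its interior, while $\widehat g^j(R^u_2)$ rises upward by roughly $j b^u_2$; the four corners, lying in stable sets, drift into the third quadrant at a controlled linear rate. Consequently, the image disk $\widehat g^j(\widehat R)$ has vertical extent of order $j(b^u_2 - b^u_1)$, and its projection to $\T^2$ wraps vertically many times, intersecting $R$ in a comparable number of components.

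To convert this picture into an actual count of preconnections, I would fix an arbitrary connection $K \subset R$ from $R^u_1$ to $R^u_2$, lift it to $\widehat K \subset \widehat R$, and analyze $\widehat g^j(\widehat K)$, whose $y$-coordinate spans an interval of length proportional to $j$. For each integer level $k$ in this range, I would use Proposition \ref{prop:AnchoredEverywhere} (Total Anchoring), applied to the weakly unstable continua $R^u_1$ and $R^u_2$, to pinpoint a shift $v_k \in \Z^2$ such that $\widehat g^j(\widehat K)$ crosses the translate $\widehat R + v_k$ transversally, entering through $\widehat R^u_1 + v_k$ and exiting through $\widehat R^u_2 + v_k$. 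Pulling each such crossing back under $\widehat g^{-j}$ produces a subcontinuum of $\widehat K$ whose projection to $\T^2$ is a preconnection in $K$; distinct translates $v_k$ give pairwise disjoint preconnections, and their number grows linearly in $j$, eventually exceeding $m$.

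The principal technical obstacle I foresee is verifying rigorously that the $m$ candidate crossings are indeed transverse passages from the $R^u_1$-side to the $R^u_2$-side of each translate, rather than mere touchings or re-entries through a stable side. Because the stable corners' rotations $\rho(z^s_i) = (a^s_i, b^s_i)$ have $a^s_i < 0$, the horizontal drift of $\widehat g^j(\widehat R)$ grows linearly in $j$, so the shifts $v_k$ must be chosen coherently. To handle this, I would intercalate separating leaves from the families $\widehat{\mathcal{A}}^{\uparrow}$ and $\widehat{\mathcal{A}}^{\downarrow}$ of vertical canonically foliated strips between consecutive translates $\widehat R + v_k$, and invoke the anchoring machinery from Section \ref{section:stretching} to trap each crossing between a prescribed pair of such leaves. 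The fitted condition, with its two distinct stable slopes and its two opposite vertical unstable directions, is precisely what makes this separation possible, and it is also what allows one to carry it out simultaneously for all $m$ crossings.
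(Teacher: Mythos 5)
There is a genuine gap, and it sits at the foundation of your setup: you chose the unstable sides as the Kennedy--Yorke pair, $C_0 = R^u_1$, $C_1 = R^u_2$, so that a connection is a continuum joining the unstable sides. But a continuum joining the unstable sides is exactly the ``stable-like'' object of this picture: it links (neighbourhoods of) the unstable sets of $z^u_1$ and $z^u_2$, and the rotation vectors of those points only control \emph{backward} orbits of their unstable sets. Under forward iteration such a connection need not stretch at all; indeed the stable sides themselves join $K^u_1$ to $K^u_2$, and a connection lying near $R^s_1$ is (close to) a stable continuum, whose forward iterates have uniformly bounded diameter by Corollary \ref{corollary:SmallStableContinuaCantGrowMuch} and simply drift along the orbit of $z^s_1$; for large $j$ its image need not even meet $R$, so it contains no preconnections whatsoever and the crossing-number estimate fails. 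The same confusion undermines your driving heuristic: the claim that $\wh g^{\,j}(R^u_1)$ ``dips downward by roughly $j|b^u_1|$'' misuses the fitted rotation data, since unstable sets track the periodic orbit only for $j\le 0$; for $j>0$ their images stretch without any such translation control, so the assertion that $\wh g^{\,j}(\wh R)$ has vertical extent of order $j(b^u_2-b^u_1)$ is unjustified.

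The paper's proof makes the opposite (and correct) choice: $C_0,C_1$ are the \emph{stable} sides, so a connection joins $R^s_1$ to $R^s_2$; by the mechanism of Lemma \ref{lemma:AnchorTypeA} (two stable sets with different rotation slopes) such a connection becomes $u$-anchored after a uniformly chosen forward iterate, and the Anchoring Lemma forces its forward images to sweep across a prescribed strip, hence across $m$ horizontal translates of $P$. Dually, the vertical and opposite rotation vectors of $z^u_1,z^u_2$ are used on \emph{backward} iterates: after $\wh g^{\,j_-}$ the unstable sides are pushed apart vertically, which pins every continuum joining them between two fixed leaves ($s$-anchoring), uniformly over all such continua; Lemma \ref{lemma:SeparateThenConnect} then converts each forced intersection into a genuine side-to-side subcontinuum, i.e.\ a preconnection. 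Your sketch also leaves the uniformity of $j$ over all connections unaddressed (Total Anchoring as you invoke it produces an iterate depending on the continuum), but that is secondary: to repair the argument you would essentially have to swap the roles of the stable and unstable sides and run the anchoring in the directions the paper does.
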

	
	\medskip
	
	As we have seen in Section \ref{section:Preliminaries}, we will use the notion of topological horseshoe defined by Kennedy and Yorke in \cite{kennedy}, for the restriction to a rectangle $R \subset P$ (taken from Lemma \ref{lemma:RectangleNearPseudoRectangle}), of a positive iterate $g^j$ of $g$. We need to check the 5 \textit{Horseshoe Hypothesis}, that is
	
	\begin{enumerate}
		\item $\T^2$ is a separable metric space.
		\item $R \subset P \subset \T^2$ is locally connected and compact.
		\item The map $g: R \to \T^2$ is continuous.
		\item The sets $R^s_1 \subset P$ and $R^s_2 \subset P$ are disjoint, nonempty and compact.
		\item (\textit{Multiple crossing.}) $g^j |_R$ has crossing number greater than or equal to $m>1$.
	\end{enumerate}
	
	that is, every connection contains at least $m$ mutually disjoint preconnections. {\color{black}Note that the first four properties are guaranteed by construction, we then only have to prove the multiple crossing property.}
	
	We recall that, from Lemma \ref{lemma:RectangleNearPseudoRectangle}, each of the sides of the rectangle $R$ are $\varepsilon$-near the respective stable and unstable continua $K^s_1, K^s_2, K^{u}_1, K^{u}_2$. Given $\varepsilon$ can be supposed to be as small as we want, we will proceed to prove the \textit{multiple crossing} for the actual HPR $P$: the actual needed value for $\varepsilon$ will be automatic from the construction, and specified at the end of it.  
	
	Thus, in our context, a connection will be a continuum $Q^{u} \subset P$ intersecting both $K^s_1$ and $K^s_2$. Note that $Q^{u}$ is not necessarily an unstable continuum, it is simply parallel to the unstable sides in a weak sense. 
	
	\begin{proof}[Proof of Proposition \ref{prop:HeteroclinicPseudoRectangleHasTopHorseshoe}]
		Fix $m > 0$. Let us divide this proof into steps.
		
		\medskip
		
		\paragraph{\textbf{Step 1. Reduction to transverse crossing. }}
		$\hbox{}$ \newline
		
		{\color{black}By Lemma \ref{lemma:SeparateThenConnect}}, it is enough to prove the 
		
		\smallskip
		
		\textit{Transverse crossing property.} For every connection $\wh Q^{u}$, there exists an integer $j > 0$ and $m$ different lifts $\wh P_1, ..., \wh P_m$ of $P$ to $\R^2$, such that for every integer $l$ with $1 \leq l \leq m$, and every continuum $\wh Q^s_l \subset \wh P_l$ connecting the unstable sides $\wh K^u_{1,l}, \ \wh K^u_{2,l} \subset \wh P_l$, we have that
		$$\wh g^j(\wh Q^{u}) \cap \wh Q^s_l \neq \varnothing.$$

		\medskip
		
		\paragraph{\textbf{Step 2. Past anchoring of the stable sides. }}
		$\hbox{}$ \newline
		
		{\color{black}Using Zorn's lemma}, let us define an \textit{extended stable side} $\wh K^{\ast s}_1 \subset {\color{black}W^s(\wh z^s_1)}$ as a stable continuum which is minimal for the inclusion, such that 
		$$\wh K^{s}_1 \subset \wh K^{\ast s}_1, \ \wh z^s_1 \in \wh K^{\ast s}_1,$$
		and let {\color{black}us define}
		$$d^s_1 = \text{diam}(\wh K^{\ast s}_1).$$ 
		
		Define in a similar fashion $\wh K^{\ast s}_2$, $\wh K^{\ast u}_1$, $\wh K^{\ast u}_2$, $d^s_2$, $d^u_1$, $d^u_2$, respectively the other extended stable side and the two extended unstable sides {\color{black}and their respective diameters. We then define} $\overline{d}=\max\{d^s_1, d^s_2, d^u_1, d^u_2\}$. Let us write 
		$$\wh P^{\ast} = \wh P \cup \wh K^{\ast s}_1 \cup \wh K^{\ast s}_2 \cup \wh K^{\ast u}_1 \cup \wh K^{\ast u}_2,$$
		and call it the \textit{extended heteroclinic pseudo-rectangle.} 
		
		Now, using the result and notation from Proposition \ref{prop:ExistenceofCFS} we can take two CFS $\wh A^{\rar}_k, \wh A^{\rar}_{k'} \in \wh {\mathcal{A}}^{\rar}$, such that 
		\begin{equation}\label{equation:HPRInTheMiddleOfStrips}
			\wh P^{\ast}  \subset \BB(\wh A^{\rar}_k), \ \wh P^{\ast} \subset \TT(\wh A^{\rar}_{k'}).	
		\end{equation}

		Now, take a sufficiently large negative integer $j_- < 0$, such that	
		\begin{equation}
			\wh g^{j_-}(\wh z^u_1) \subset \BB(\wh A^{\rar}_{k'}), \ \wh g^{j_-}(\wh z^u_2) \subset \TT(\wh A^{\rar}_{k}),
		\end{equation}
		\begin{equation}
			\text{d}(\wh g^{j_-}(\wh z^u_1), \wh A^{\rar}_{k'}) > L_{\overline{d}}, \ \text{d}(\wh g^{j_-}(\wh z^u_2), \wh A^{\rar}_{k}) > L_{\overline{d}}, 
		\end{equation}
		where $L_{\overline{d}}$ is given by Corollary \ref{corollary:SmallStableContinuaCantGrowMuch}. This implies that 
		\begin{equation}\label{eq:ContinuaFarAway}
			\wh g^{j_-}(\wh K^u_1) \subset \wh g^{j_-}(\wh K^{\ast u}_1) \subset \BB(\wh A^{\rar}_{k'}), \ \  \wh g^{j_-}(\wh K^u_2) \subset \wh g^{j_-}(\wh K^{\ast u}_2) \subset \TT(\wh A^{\rar}_{k}).
		\end{equation}
		
		\medskip
		
		Take a natural set of lifts $\tl P^{\ast}, \ {\color{black}\tl K^{{\ast} s}_1}, \ \tl K^{{\ast} s}_2, \ \tl K^{{\ast} u}_1, \ \tl K^{{\ast}u}_2, \ \tl A^{\rar}_k, \ \tl A^{\rar}_{k'}$ to $\tl \D$, respectively of $\wh P^{\ast}, \ {\color{black}\wh K^{{\ast} s}_1, \ \wh K^{{\ast} s}_2,} \ \wh K^{{\ast} u}_1, \ \wh K^{{\ast}u}_2, \ \wh A^{\rar}_k$ and $\wh A^{\rar}_{k'}$.  
		
		Given that the \textit{past} transverse path by the isotopy of any point, only crosses leaves from left to right, and by {\color{black}Equations \ref{equation:HPRInTheMiddleOfStrips} and \ref{eq:ContinuaFarAway}}, we obtain that there exist two leaves 
		$$\tl \phi^s_{\TT} \subset \partial_{\TT}\tl A^{\rar}_k, \ \tl \phi^s_{\BB} \subset \partial_{\BB}\tl A^{\rar}_{k'},$$ 
		such that
		\begin{equation}\label{equation:ChaoticRectangleAnchoredPast}
			\tl g^{j_-}(\tl K^{u}_1) \subset \BB_{\tl \phi^s_{\BB}} (\tl A^{\rar}_{k'}) \subset \BB(\tl A^{\rar}_k), \ \tl g^{j_-}(\tl K^{u}_2) \subset \TT_{\tl \phi^s_{\TT}} (\tl A^{\rar}_{k}) \subset \mathrm{T}(\tl A^{\rar}_{k'}),
		\end{equation}
		\begin{equation}
			\tl A^{\rar}_k \cup \tl A^{\rar}_{k'} \subset \LL(\tl \phi^s_{\BB}), \ \tl A^{\rar}_k \cup \tl A^{\rar}_{k'} \subset \LL(\tl \phi^s_{\TT}).
		\end{equation}

		This conveniently implies that any continuum intersecting both $\tl \phi^s_{\BB}$ and $\tl \phi^s_{\TT}$, will be automatically \textit{s}-anchored to $\tl A^{\rar}_k$ and $\tl A^{\rar}_{k'}$ by those same two leaves. See Figure \ref{figure:AnchorB} for details. 
		
		\begin{figure}[h]
			\centering
			
			\def\svgwidth{.85\textwidth}
			\import{./Figures/}{AnchorB.pdf_tex}
			
			\bigskip
			
			\caption{Stable sides are solid light blue, unstable sides are solid pink. Dotted lines complete the extended stable and unstable sides. Note that any continuum $\tilde{Q}^{s} \subset \tl g^{j_-}(\tl P)$ from $\tl g^{j_-}(\tl K^{u}_1) \text{ to } g^{j_-}(\tl K^{u}_2)$ {\color{black}satisfies} $(\tilde{A}^{\rar}_k, \tilde{Q}^s, \tilde{\phi}^s_{\BB}, \tilde{\phi}^s_{\TT})$ is an \textit{s}-anchor.}
			\label{figure:AnchorB}
		\end{figure}
		
		Now, note that from Equation \ref{equation:ChaoticRectangleAnchoredPast}, and the heteroclinic pseudo-rectangle definition, we obtain the following key partial result:
		\begin{equation}\label{equation:ChaoticRectangleKeyPartialQs}
			\text{Any } \tl Q^s \subset \tl g^{j_-}(\tl P) \text{ from } \tl g^{j_-}(\tl K^{u}_1) \text{ to } g^{j_-}(\tl K^{u}_2), \text{ must intersect } \tl \phi^s_{\BB} \text{ and } \tl \phi^s_{\TT}. 
		\end{equation}
		
		Note that the value $j_- = j_-(k,k') < 0$ exists for every pair $(k,k')$ satisfying Equation \ref{equation:HPRInTheMiddleOfStrips}. Be aware that we will use this fact in the next step, where we potentially retake the value of $k$. 
		
		\medskip
		
		\paragraph{\textbf{Step 3. Anchoring of the unstable sides to the same canonically foliated strip.}} 
		$\hbox{}$ \newline
		
		The rest of the proof is similar to the ending of Lemma \ref{lemma:WeaklyStableUnstableMixing}. 
		
		Nevertheless, we need one more key observation: there exists $M > 0$ such that if $\textnormal{d}(\wh P^{\ast}, \wh A^{\rar}_k) > M,$ then 
		\begin{equation}\label{equation:DisjointsSetsLeaves}
			\text{the sets of leaves} \ \wh {\Phi}^s_1, \ \wh {\Phi}^s_2 \ \text{ and } \  \wh {\Phi}^u_2 \ \text{are disjoint,}
		\end{equation} 
		where 
		$$\wh {\Phi}^s_1 = \{\wh \phi \subset \partial_{\BB} \wh A^{\rar}_k : \ \wh \phi \cap \wh I^{+}_{\wh {\mathcal{F}}}(\wh z^s_1) \neq \varnothing\},$$ 
		$$\wh {\Phi}^s_2 = \{\wh \phi \subset \partial_{\BB} \wh A^{\rar}_k : \ \wh \phi \cap \wh I^{+}_{\wh {\mathcal{F}}}(\wh z^s_2) \neq \varnothing\},$$
		$$\wh {\Phi}^u_2 = \{\wh \phi \subset \partial_{\BB} \wh A^{\rar}_k : \ \wh \phi \cap \wh I^{-}_{\wh {\mathcal{F}}}(\wh z^u_2) \neq \varnothing\}.$$ 
		
		This is due to the following facts put together:
		
		\begin{itemize}
			\item The diameter of the leaves is uniformly bounded,
			\item Fundamental domains from canonically foliated strips are also bounded
			\item The slopes of the rotation vectors of the periodic points in the extended stable sides are different, so the $x-coordinate$ of the intersection of the two transverse paths with horizontal lines separate linearly.
		\end{itemize}
		
		Take a value of $k$ satisfying Equation \ref{equation:DisjointsSetsLeaves}, and take $j_{+} = j_{+}(k) > 0$ such that

		\begin{equation}
			\wh g^{j_+}(\wh z^s_1) \subset \TT(\wh A^{\rar}_{k}), \ \wh g^{j_+}(\wh z^s_2) \subset \TT(\wh A^{\rar}_{k})
		\end{equation}
		\begin{equation}
			\text{d}(\wh g^{j_+}(\wh z^s_1), \wh A^{\rar}_{k}) > L_{\overline{d}}, \ \text{d}(\wh g^{j_+}(\wh z^s_2), \wh A^{\rar}_{k}) > L_{\overline{d}}, 
		\end{equation}
		
		where $L_{\overline{d}}$ is once again taken from Corollary \ref{corollary:SmallStableContinuaCantGrowMuch}. As in Step 2, this implies that 
		
		\begin{equation}\label{eq:StableContinuaFarAway}
			\wh g^{j_+}(\wh K^s_1) \subset \wh g^{j_+}(\wh K^{\ast s}_1) \subset \TT(\wh A^{\rar}_{k}), \ \  \wh g^{j_+}(\wh K^s_2) \subset \wh g^{j_+}(\wh K^{\ast s}_2) \subset \TT(\wh A^{\rar}_{k}).
		\end{equation}

		Recall that, from {\color{black}Lemma \ref{lem:StableDoesNotGrow}}, the future orbit of the extended stable sides will be at a uniformly bounded distance from their respective periodic points. This, together with {\color{black}Equations \ref{equation:DisjointsSetsLeaves} and \ref{eq:StableContinuaFarAway}},  implies that the future orbits of $\tl K^s_1$ and $\tl K^s_2$ will only go through different lifts of $\wh A^{\rar}_k$ in $\tl \D$, let us call them respectively $W^{'-1} \tl A^{\rar}_k$, and $W^{-1} \tl A^{\rar}_k$, {\color{black}in particular, this shows that there exist two leaves}
		
		
		\begin{equation}
			\tl \phi^{u}_{\BB} \subset \partial_{\TT} W^{'-1}\tl A^{\rar}_k \subset \BB(W^{-1} \tl A^{\rar}_{k}), \ \ \ \tl \phi^{u}_{\TT} \subset \partial_{\TT} W^{-1}\tl A^{\rar}_k,
		\end{equation}
		such that
		\begin{equation}\label{equation:ChaoticRectangleAnchoredFuture}
			\tl g^{j_+}(\tl K^{s}_2) \subset \LL(\tl \phi^{u}_{\BB}) \subset \BB(W^{-1} \tl A^{\rar}_k), \ \tl g^{j_+}(\tl K^{s}_1) \subset \LL(\tl \phi^{u}_{\TT}) = \TT_{\tl \phi^{u}_{\TT}} (W^{-1}\tl A^{\rar}_{k}).
		\end{equation}
		
		Once again, see Figure \ref{figure:AnchorB} for details. It can be helpful to compare it with Figure \ref{figure:AnchorTypeA}, from Lemma \ref{lemma:AnchorTypeA}. 
		
		Now, by retaking the lifts $\tl P' = W \tl P$, and therefore taking the corresponding lifts $\tl g^{j_+} (\tl P') = \tl g^{j_+} (W\tl P), \  \tl \phi^{'u}_{\BB} = W \tl \phi^{u}_{\BB}, \ \tl \phi^{'u}_{\TT} = W \tl \phi^{u}_{\TT},$ {\color{black}$\tl K^{'s}_1 = W \tl K^{s}_1$, $\tl K^{'s}_2 = W \tl K^{s}_2$} we obtain analogous results to the ones in Equation \ref{equation:ChaoticRectangleAnchoredFuture}, more precisely
		\begin{equation}
			\tl g^{j_+}(\tl K^{'s}_1) \subset \LL(\tl \phi^{'u}_{\BB}) \subset \BB(\tl A^{\rar}_k), \ \tl g^{j_+}(\tl K^{'s}_2) \subset \LL(\tl \phi^{'u}_{\TT}) = \TT_{\tl \phi^{u}_{\TT}} (\tl A^{\rar}_{k}).
		\end{equation}
		
		Moreover, {\color{black}since $\LL(\tl \phi^{s}_{\TT})$ and $\LL(\tl \phi^{s}_{\BB})$ are positively $f$-invariant,} we also have that 
		\begin{equation}
			\bigl( \tl \phi^{'u}_{\BB} \cup \tl g^{j_+}(\tl P') \cup \tl \phi^{'u}_{\TT} \bigr) \subset \LL(\tl \phi^{s}_{\BB}), \ \bigl( \tl \phi^{'u}_{\BB} \cup \tl g^{j_+}(\tl P') \cup \tl \phi^{'u}_{\TT} \bigr) \subset \LL(\tl \phi^{s}_{\TT})
		\end{equation}, 
		which in turn implies that 
		\begin{equation}\label{eq:OnTheLeft}
			\tl g^{j_+}(\tl P') \subset \LL(\tl \phi^{s}_{\BB}),  \  \tl g^{j_+}(\tl P') \subset \LL(\tl \phi^{s}_{\TT}), \text{ for every } j \geq j_+
		\end{equation}
		provided $k$ is a sufficiently large negative integer. It may help the heuristics of the proof to see that the configuration we have just obtained is virtually the same as the one obtained in Figure \ref{figure:WeaklyStableUnstableMixing}, from Lemma \ref{lemma:WeaklyStableUnstableMixing}.
		
		Note that we immediately obtain an analogous of Equation \ref{equation:ChaoticRectangleKeyPartialQs}, that is
		\begin{equation}\label{equation:ChaoticRectangleKeyPartialQu}
			\text{Any connection } \tl Q^u \subset \tl g^{j_+}(\tl P'), \text{ must intersect } \tl \phi^{'u}_{\BB} \text{ and } \tl \phi^{'u}_{\TT}.
		\end{equation}
		
		See Figure \ref{figure:TransverseCrossing} for details. 
		
		\medskip
		
		\paragraph{\textbf{Step 4. Finding the transverse crossing.} }
		$\hbox{}$ \newline
		
		The remainder of the proof is very similar to the one in Lemma \ref{lemma:WeaklyStableUnstableMixing}.
		
		Take any connection $\tl Q^{u} \subset \tl g^{j_+}(\tl P')$ (i.e. a continuum connecting the {\color{black}images of the} stable sides), and any continuum $\tl Q^s \subset \tl g^{j_-}(\tl P)$ connecting the respective unstable sides. Take a minimal-for-inclusion subcontinuum $\tl Q^{'s}$ intersecting both $\tl \phi^s_{\BB}$ and $\tl \phi^s_{\TT}$. Now, as in the Anchoring Lemma, take $$\tl z^s_{\BB} \in \tl Q^{'s} \cap \tl \phi ^s_{\BB}, \  \tl z^s_{\TT} \in \tl Q^{'s} \cap \tl \phi ^s_{\TT},$$ and define 
		$$ \Gamma _s = \tl \phi^+_{\tl z^s_{\BB}} \cup \tl Q^{'s} \cup \tl \phi^-_{\tl z^s_{\TT}}.$$
		
		We may then take a new leaf $\tl \phi_1 \subset \tl A^{\rar}_k$ such that 
		$$ \Gamma_s \subset \RR(\tl \phi_1),$$ 
		which in particular means that $\Gamma_s$ separates $\tl \phi^{'u}_{\BB}$ and $\tl \phi^{'u}_{\TT}$, from $\tl \phi_1$. Note that 
		\begin{equation}
			(\tl A^{\rar}_k, \tl Q^{u}, \tl \phi^{'u}_{\BB}, \tl \phi^{'u}_{\TT}) \text{ is a \textit{u}-anchor,}	
		\end{equation}
		from where we obtain that there exists $j_1 > 0$ such that
		{\color{black}
		\begin{equation}
			\tl g^{j} (\tl Q^{u}) \cap \tl \phi_1 \neq \varnothing, \text{ for every } j > j_1.	
		\end{equation}
		}
		This, together with Equation \ref{eq:OnTheLeft} and the fact that $\tl Q^{s} \subset \RR(\tl \phi_1)$, lets us conclude that
		{\color{black} 
		\begin{equation}
			\tl g^{j} (\tl Q^{u}) \cap \tl Q^s \neq \varnothing, \text{ for every } j \geq j_1.
		\end{equation}
		}
		
		\medskip
		
		Let us write $\wh P_1 = \wh P$. To finish proving the Transverse Crossing Property, simply take $m-1$ horizontal translates $\wh P_2, \hdots , \wh P_{m}$ of $\wh P$, where $\wh P_l = \wh P + (l-1,0)$, and note that taking the natural lifts $\tl P_2, \hdots, \tl P_{m}$ such that $ \tl g^{j_-}(\tl P_2), \hdots , \tl g^{j_-}(\tl P_{m})$ are \textit{s}-anchored to $\tl A^{\rar}_k$, we may take a new leaf $\tl \phi_m \subset \tl A^{\rar}_k$, which is \textit{much further to the visual right}, such that 
		$$ \tl g^{j_-} (\tl P_l) \subset \RR (\tl \phi_m), \text{ for every } l \text{ with } 1 \leq l \leq m,$$
		and conclude that there exists $j_{m} > 0$ such that 
		{\color{black}
		$$ \tl g^{j}(\tl Q^{u}) \cap \tl Q^s_l \neq \varnothing, \text{ for every } j \geq j_m, \ 0 \leq l \leq m$$
		}
		where $\tl Q^s_l \subset \tl g^{j_-}(\tl P_l)$ {\color{black}is any continuum connecting the unstable sides of $\tl g^{j_-}(\tl P_l)$}, (See Figure \ref{figure:TransverseCrossing} for details). This proves the Transverse Crossing Property for every $j \geq j_+ - j_- + j_{m}$. 
		
		Note that we can take $\varepsilon$ in Lemma \ref{lemma:RectangleNearPseudoRectangle}, such that the new sides $\tl R^s_1,\tl  R^s_2,\tl  R^{u}_1,\tl  R^{u}_2$ of the rectangle $\tl R$ still get anchored in the same way as the sides $\tl K^s_1,\tl  K^s_2,\tl  K^{u}_1,\tl  K^{u}_2$ of the HPR $\tl P$ (we use the uniform continuity of $\wh g$ and the fact that we have to control distances for a finite number of iterates of $\wh g$). This concludes the proof. 
		
		\begin{figure}[h]
			\centering
			
			\def\svgwidth{.92\textwidth}
			\import{./Figures/}{NewTransverseCrossing.pdf_tex}
			
			\bigskip

			\caption{Once again, stable sides are light blue, unstable sides are pink. The unstable sides of $\tl g^{j_-}(\tl P)$ and its translates, are included in the green regions, the stable sides of $\tl g^{j_+}(\tl P')$ are included in the purple regions. The future orbit of $\tl g^{j_+}(\tl P')$ can not intersect any of the green leaves.}
			\label{figure:TransverseCrossing}
		\end{figure}

	\end{proof}
	
	Recall that, from the Kennedy-Yorke definition of topological horseshoe, we obtain positive topological entropy for $g|_P$, but we do not automatically recover the existence of periodic points inside $P$.

	\subsection{Markovian horseshoes}

	The key result from this subsection comes next. It is heavily based on the techniques displayed at Proposition \ref{prop:HeteroclinicPseudoRectangleHasTopHorseshoe}
	
	\begin{proposition}\label{prop:DenseRotationalHorseshoe}
		Let $\wh P \subset \R^2$ be a fitted heteroclinic pseudo-rectangle and let $m > 0$ be an integer. Then, there exists a rectangle $\wh R \subset \wh P$ which contains a Markovian horseshoe in $m$ symbols for $\wh g$.   
	\end{proposition}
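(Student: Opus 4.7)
The strategy is to adapt the proof of Proposition~\ref{prop:HeteroclinicPseudoRectangleHasTopHorseshoe}, upgrading its Kennedy--Yorke multiple-crossing output into the Markovian picture of Definition~\ref{def:MarkovianIntersection}. First, apply Lemma~\ref{lemma:RectangleNearPseudoRectangle} to obtain a rectangle $\wh R \subset \wh P$ whose horizontal sides $\wh R^u_1, \wh R^u_2$ lie within $\varepsilon$ of the unstable sides $K^u_1, K^u_2$ and whose vertical sides $\wh R^s_1, \wh R^s_2$ lie within $\varepsilon$ of $K^s_1, K^s_2$, where $\varepsilon > 0$ will be fixed later. Fix a lift $\tl R \subset \tl{\mathbb{D}}$ and select deck transformations $W_1, \ldots, W_m$ of $\tl{\mathbb{D}} \to \mathrm{Dom}(\wh I)$ so that the $W_l(\tl R)$ play the role of the $m$ horizontal translates used in Step~4 of the proof of Proposition~\ref{prop:HeteroclinicPseudoRectangleHasTopHorseshoe}. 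Since $\pi_1(\mathrm{Dom}(\wh I))$ is a free group (infinitely generated, as $\mathrm{Dom}(\wh I)$ is a plane minus a discrete set), the $W_l$ can be arranged to generate a free subgroup as required by Proposition~\ref{prop:topologicalhorseshoe}.

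For $\varepsilon$ small enough, the anchoring constructions of Steps~2 and~3 in the proof of Proposition~\ref{prop:HeteroclinicPseudoRectangleHasTopHorseshoe} apply verbatim to the sides of $\tl R$: we obtain integers $j_- < 0 < j_+$, a canonically foliated strip $\tl A^\rar_k$, and leaves $\tl\phi^s_B, \tl\phi^s_T$ together with leaves $\tl\phi^{'u}_B(l), \tl\phi^{'u}_T(l)$ (one pair per $l$) satisfying the analogues of (\ref{equation:ChaoticRectangleAnchoredPast}) and (\ref{equation:ChaoticRectangleAnchoredFuture}). Setting $j = j_+ - j_- + N$ for a large integer $N$ to be determined, the crux is to verify that for each $l \in \{1, \ldots, m\}$ the intersection $\tl g^j(\tl R) \cap W_l(\tl R)$ is Markovian. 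Chart a neighborhood of $W_l(\tl R)$ sending it to $[0,1]^2$ with the horizontal sides $W_l(\tl R^u_i)$ mapped to $[0,1] \times \{0,1\}$; the two required conditions are (a) $\tl g^j(\tl R^u_1) \subset \{y > 1\}$ and $\tl g^j(\tl R^u_2) \subset \{y < 0\}$ (up to swapping labels), and (b) $\tl g^j(\tl R) \subset \{y > 1\} \cup [0,1]^2 \cup \{y < 0\}$. Condition (a) follows because $z^u_1, z^u_2$ have vertical rotation components of opposite signs (Definition~\ref{def:FittedHPR}): for $N$ large the orbits $\tl g^j(\tl z^u_i)$ have drifted past $W_l(\tl R)$ on opposite sides, and Corollary~\ref{corollary:SmallStableContinuaCantGrowMuch} confines $\tl g^j(\tl R^u_i)$ to a bounded neighborhood of $\tl g^j(K^u_i)$.

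The main obstacle will be (b). The transverse-crossing arguments of Proposition~\ref{prop:HeteroclinicPseudoRectangleHasTopHorseshoe} only yield intersection of arcs, whereas Markovian cleanness requires that $\tl g^j(\tl R)$ not leak out of $W_l(\tl R)$ through its vertical sides $W_l(\tl R^s_j)$. The plan here is to argue by contradiction: a connected arc of $\tl g^j(\tl R) \cap W_l(\tl R)$ exiting through $W_l(\tl R^s_j)$ would, upon applying $\tl g^{j_- - j}$, produce a continuum in $\tl g^{j_-}(\tl R)$ joining $\tl g^{j_-}(\tl R^u_1)$ to $\tl g^{j_-}(\tl R^u_2)$ while missing one of the anchor leaves $\tl\phi^s_B, \tl\phi^s_T$, contradicting the rectangle analogue of (\ref{equation:ChaoticRectangleKeyPartialQs}); a symmetric argument using (\ref{equation:ChaoticRectangleKeyPartialQu}) and the future anchoring handles the dual case. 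Combined with the quantitative horizontal separation of the stable-side images coming from the distinct slopes $b^s_1/a^s_1 \neq b^s_2/a^s_2$ in Definition~\ref{def:FittedHPR} (so that for $N$ large, $\tl g^j(\tl R^s_i)$ lies far from $W_l(\tl R)$ and from its counterpart $\tl g^j(\tl R^s_{3-i})$), this yields (b). Once (a) and (b) hold for every $l$, Definition~\ref{def:RotationalHorseshoe} is satisfied with period $j$ and alphabet of size $m$, completing the proof.
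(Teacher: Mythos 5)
Your proposal has a genuine gap at the decisive step, namely condition (b) of Definition \ref{def:MarkovianIntersection}. The contradiction you sketch does not work: a piece of $\tl g^j(\tl R)$ that leaks out of $W_l(\tl R)$ through one of its vertical sides at mid-height is just some subcontinuum of the image, and there is no reason it should pull back to a continuum of $\tl g^{j_-}(\tl R)$ joining the two unstable sides; the transverse-crossing statements (\ref{equation:ChaoticRectangleKeyPartialQs}) and (\ref{equation:ChaoticRectangleKeyPartialQu}) only constrain \emph{connections} (continua running from one pair of sides to the other), never arbitrary pieces of the image. In fact the full image $\tl g^j(\tl R)$ of the fixed rectangle is expected to invade the lateral regions of $W_l(\tl R)$: the rectangle is $u$-anchored, its forward images have diameter tending to infinity and sweep across entire strips, so no choice of $\varepsilon$ or of $N$ makes $\tl g^j(\tl R)\cap W_l(\tl R)$ Markovian for the original $R$ of Lemma \ref{lemma:RectangleNearPseudoRectangle}. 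This is precisely why the paper does not try to control the whole image. Instead, in the actual proof one builds an ``Initial Configuration'' via the total anchoring of Proposition \ref{prop:AnchoredEverywhere}, uses the Anchoring Lemma plus Lemma \ref{lemma:SeparateThenConnect} to extract an arc $\gamma^{'s}$ of the image joining the images of the vertical sides on the far side of a prescribed leaf, and cuts the image rectangle along $\gamma^{'s}$ to define a \emph{sub}-rectangle $\underline{\tl R}$; its vertical sides sit beyond the barrier leaves $\tl\phi_{\BB},\tl\phi_{\TT}$, which the relevant image provably never meets (the left-of relations as in Equation \ref{eq:OnTheLeft}), so the no-leaking condition holds by construction for $\tl g^{j}(\underline{\tl R}')\cap\underline{\tl R}$ rather than being verified for the original rectangle. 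Your argument is missing this shrinking-and-shielding mechanism, which is the heart of the step from topological to Markovian horseshoe.

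Two secondary problems. First, you invoke Corollary \ref{corollary:SmallStableContinuaCantGrowMuch} to confine the \emph{forward} images of the sides $\wh R^u_i$ near the unstable sets: that corollary (and its unstable analogue) controls forward images of stable continua and backward images of unstable ones, so it does not bound $\tl g^{j}(\tl R^u_i)$; the paper only ever pushes the (near-)stable sides forward and the (near-)unstable sides backward. Second, realizing the $m$ copies as horizontal translates is not how the simultaneity and the algebra are obtained: in the paper the deck transformations $W_1,\dots,W_m$ come from the loops produced by Proposition \ref{prop:AnchoredEverywhere} around increasing sets of singularities, chosen precisely so that they satisfy no relations (needed for Proposition \ref{prop:topologicalhorseshoe}); deck transformations associated to integer translates lifted to $\tl{\mathbb{D}}$ are not automatically free of rank $m$, and your ``can be arranged'' leaves this unaddressed, as well as the requirement that a single iterate $j$ make all $m$ intersections Markovian simultaneously, which the paper handles with the nested Initial Configurations and separating leaves $\tl\phi_i$.
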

	

	Once again we will make good use of the anchoring techniques. For the sake of clarity, we will divide the proof into steps. 
	
	\begin{proof}[Proof of Proposition \ref{prop:DenseRotationalHorseshoe}]
		$\hbox{}$ \newline
		
		\paragraph{\textbf{Step 1. Constructing an Initial Configuration. }} $\hbox{}$ \newline 
		
		Fix a lift $\tl P$ of $\wh P$, and take the lifts $\tl K^s_1, \tl K^s_2, \tl K^{u}_1, \tl K^{u}_2$ of its sides. Start by recovering a partial configuration we obtained in Proposition \ref{prop:HeteroclinicPseudoRectangleHasTopHorseshoe}, that is, take $$\tl A^{\rar}_k, \ \tl A^{\rar}_{k'}, \ \tl \phi^s_{\BB}, \ \tl \phi^s_{\TT}, \ \tl \phi^u_{\BB}, \ \tl \phi^u_{\TT}, \ j_- < 0, \ j_+ > 0, W,$$ such that they {\color{black}satisfy} every result from Equation \ref{equation:ChaoticRectangleAnchoredPast} to Equation \ref{equation:ChaoticRectangleAnchoredFuture}. See Figure \ref{figure:AnchorC} for details. 
		
		The key idea is to use Proposition \ref{prop:AnchoredEverywhere}. From its statement, let us take $\wh g^{j_+}(\wh P) = \tl \pi (\tl g^{j_+}(\tl P))$ as an unstable continuum, $\wh A^{\rar}_k = \tl \pi (\tl A^{\rar}_k)$ as our CFS, and a leaf $\wh \phi \subset \wh A^{\rar}_k$ such that $\wh g^{j_-}(\wh P) \cap \wh A^{\rar}_k \subset \LL_{\wh A^{\rar}_k}(\wh \phi)$. Then, Proposition \ref{prop:AnchoredEverywhere} allows us, by looking at the future iterates of $\tl g^{j_+}(\tl P)$ by $\tl g$, to find a new lift $W_1 \tl P$ by a deck transformation $W_1$ -think of $W_1$ as the deck transformation associated to a large loop going through four CFS, one in each of the four families in $\wh{\mathcal{A}}$, {\color{black}as in Lemma \ref{lem:TheRoute} (see also Figure \ref{figure:TheRoute})}, some $j_1 > 0$ and three leaves $$\tl \phi \subset W_1 \tl A^{\rar}_k, \ \tl \phi^{'u}_{\BB} \subset \partial_{\BB} W_1 \tl A^{\rar}_k, \  \tl \phi^{'u}_{\TT} \subset  \partial_{\TT} W_1 \tl A^{\rar}_k,$$
		(where $\tl \phi$ is the lift of $\wh \phi$ which belongs to $W_1 \tl A^{\rar}_k$, and $\tl \phi^{'u}_{\BB}, \tl \phi^{'u}_{\TT}$ are respectively the \textit{anchoring} leaves which appear in the statement of Proposition \ref{prop:AnchoredEverywhere} as $\tl \phi_{\BB}$ and $\tl \phi_{\TT}$), such that the following five equations hold:
		
		\begin{equation}\label{equation:InitialConfiguration1}
			W_1 \tl A^{\rar}_k \subset \RR(\tl \phi^{'u}_{\BB}) \cap \RR(\tl \phi^{'u}_{\TT}), \  W_1 \tl A^{\rar}_k \subset \LL(W_1 \tl \phi^{s}_{\BB}) \cap \LL(W_1 \tl \phi^{s}_{\TT}), 
		\end{equation}
		\begin{equation}\label{equation:InitialConfiguration2}
			\tl g^{j_-}(W_1 \tl P) \subset \LL(\tl \phi), \  \tl g^{j_+ + j_1}(\tl P) \subset \RR(\tl \phi), 
		\end{equation}  
		\begin{equation}\label{equation:InitialConfiguration3}
			\tl g^{j_-}(W_1 \tl K^{u}_1) \subset \RR(W_1 \tl \phi^{s}_{\BB}), \ \tl g^{j_-}(W_1 \tl K^{u}_2) \subset \RR(W_1 \tl \phi^{s}_{\TT}),
		\end{equation}
		\begin{equation}\label{equation:InitialConfiguration4}
			\tl g^{j_+ + j_1}(\tl K^{s}_1) \subset \LL (\tl \phi^{u}_{\BB}) \subset \TT(W_1 \tl A^{\rar}_k), \ \tl g^{j_+ + j_1}(\tl K^{s}_2) \subset \LL (\tl \phi^{u}_{\TT}) \subset \TT(W_1 \tl A^{\rar}_k),  
		\end{equation}
		\begin{equation}\label{equation:InitialConfiguration5}
			\text{For any continuum } \tl Q^u \subset \tl P \text{ from } \tl K^s_1 \text{ to } \tl K^s_2, \text{ we have that}  
		\end{equation}
		$$ (W_1 \tl A^{\rar}_k, \tl g^{j_+ + j_1}(\tl Q^{u}), \tl \phi^{'u}_{\BB}, \tl \phi^{'u}_{\TT}) \text{ is a \textit{u}-anchor.} $$
		
		To check Equations \ref{equation:InitialConfiguration1} and \ref{equation:InitialConfiguration2}, think of $W_1$ as the covering transformation associated to a loop in $\R^2$ as in Proposition \ref{prop:AnchoredEverywhere}: once we project to $\R^2$ and get that $\wh g^{j}(\wh P)$ is \textit{u}-anchored to $\wh A^{\rar}_k$, we use four CFS, one in each of our four directions and make the anchored continuum \textit{turn to the right} each time it enters a new CFS, until it enters again $\wh A^{\rar}_k$ coming from $\TT(\wh A^{\rar}_k)$. This is thoroughly described in Lemma \ref{lem:TheRoute} and Proposition \ref{prop:AnchoredEverywhere}.
		
		{\color{black}Equations \ref{equation:InitialConfiguration3} and \ref{equation:InitialConfiguration4}} come respectively as a consequence from Equations \ref{equation:ChaoticRectangleKeyPartialQs} and \ref{equation:ChaoticRectangleAnchoredFuture}, both appearing in Proposition \ref{prop:HeteroclinicPseudoRectangleHasTopHorseshoe}.  
		
		For {\color{black}Equation \ref{equation:InitialConfiguration5}}, we are applying Proposition \ref{prop:AnchoredEverywhere} to the continuum $\tl Q^{u}$, and using the fact that $\tl g^{j_{+}}(\tl Q^{u})$ is \textit{u}-anchored to $W^{-1} \tl A^{\rar}_k$. See Figures \ref{figure:AnchorC} and \ref{figure:InitialConfiguration} for details. 
		
		\begin{figure}[h]
			\centering
			
			\def\svgwidth{.92\textwidth}
			\import{./Figures/}{AnchorC.pdf_tex}
			
			\bigskip

			\caption{We recover the configuration and notation from Proposition \ref{prop:HeteroclinicPseudoRectangleHasTopHorseshoe}, use the Total Anchoring from Proposition \ref{prop:AnchoredEverywhere} to anchor a future iterate of $\tilde{g}^{j_+}(\tilde{P})$ to the CFS $W_1 \tilde{A}^{\rar}_k$. the deck transformation $W$ is taken as in Proposition \ref{prop:HeteroclinicPseudoRectangleHasTopHorseshoe}. Compare this with Figure \ref{figure:AnchorB}}.
			\label{figure:AnchorC}
		\end{figure}
		
		\bigskip

		For the sake of simplicity, let us rename $\tl O \coloneqq \tl g^{j_-}(W_1 \tl P),$ and  $\tl O' \coloneqq \tl g^{j_+ + j_1}(\tl P)$ for the remainder of the proof. Then, provided 
		\begin{enumerate}
			\item the statements from Equation \ref{equation:InitialConfiguration1} to Equation  \ref{equation:InitialConfiguration5} are held, 
			\item $\tl O' = \tl g^{l}(W_1^{-1} \tl O)$, where $l > 0$, and $W_1$ is a deck transformation,   
		\end{enumerate}
		we will say that $(\tl O, \tl O')$ is a pair in the \textit{Initial Configuration} relative to $W_1$, and with $\tl O$ \textit{on the left} of $\tl O'$. Note that any set $\tl Q \subset \tl O$ has its natural copy $\tl Q' \coloneqq \tl g^{l}(W^{-1} \tl Q) \subset \tl O'$, and vice versa.   
		
		Let us also rename $\tl \phi_{\BB} \coloneqq W_1 \tl \phi^{s}_{\BB}, \  \tl \phi_{\TT} \coloneqq  W_1 \tl \phi^{s}_{\TT}$, and $\tl A \coloneqq W _1 \tl A^{\rar}_k$. See Figure \ref{figure:InitialConfiguration}.  
		\medskip
		
		\begin{figure}[h]
			\centering
			
			\def\svgwidth{.92\textwidth}
			\import{./Figures/}{NewInitialConfiguration.pdf_tex}
			
			\bigskip

			\caption{An Initial Configuration, with $\tilde{O}' = \tl g^l (W_1^{-1}\tl O)$. The leaf $\tl \phi$ separates $\tilde{O}$ from $\tilde{O}'$, and the leaf $\tl \phi '$ helps us find the desired Markovian intersection.}
			\label{figure:InitialConfiguration}
		\end{figure}
		
		\paragraph{\textbf{Step 2. Finding Markovian intersections.} } $\hbox{}$ \newline
		
		We will prove that given a pair $(\tl O, \tl O')$ in the Initial Configuration, there exists $j > 0$ and a rectangle $\tl{\underline{R}} \subset \tl O$ (with its natural copy $\underline{\tl R}' \subset \tl O'$) such that $\tl g^{j}(\underline {\tl R}') \cap \underline{\tl R}$ is Markovian. 
		
		First, note that by Lemma \ref{lemma:RectangleNearPseudoRectangle}, we may take a rectangle $\tl R \subset \tl O$, with horizontal sides $\tl I^s_1$, $\tl I^s_2$ sufficiently close to the stable sides $\tl K^s_1$, $\tl K^s_2$ of the pseudo-rectangle $\tl O$, and similarly vertical sides $\tl I^u_1$, $\tl I^u_2$ sufficiently close to the unstable sides $\tl K^u_1$, $\tl K^u_2$, and their corresponding copies $\tl R' \subset \tl O'$ with sides $\tl I^{'s}_1, \ \tl I^{'s}_2, \ \tl I^{'u}_1, \ \tl I^{'u}_2$, such that, (recalling Equations \ref{equation:InitialConfiguration3} and \ref{equation:InitialConfiguration4}), we obtain
		\begin{equation}\label{InitialConfigurationRectangle1}
			\tl I^{'s}_1 \subset \LL(\tl \phi^{u}_{\BB}) \subset \TT(\tl A), \  \tl I^{'s}_2 \subset \LL(\tl \phi^{u}_{\TT}) \subset \TT(\tl A), 
		\end{equation}    
		\begin{equation}\label{InitialConfigurationRectangle2}
			\tl I^{u}_1 \subset \RR(\tl \phi_{\BB}), \ \tl I^{u}_2 \subset \RR(\tl \phi_{\TT}),
		\end{equation}
		and therefore we have that
		\begin{equation}\label{equation:InitialConfigurationRectangle3}
			\text{For any continuum } \tl Q^{'u} \subset \tl R' \text{ from } \tl I^{'s}_1 \text{ to } \tl I^{'s}_2,
		\end{equation}
		$$ (\tl A, \tl Q^{'u}, \tl \phi^{'u}_{\BB}, \tl \phi^{'u}_{\TT}) \text{ is a \textit{u}-anchor.}$$
		
		Take a leaf $\tl \phi ' \subset \tl A$, such that $\tl O \subset \RR(\tl \phi')$. By the Anchoring Lemma (\ref{lemma:AnchoringLemma}), there exists $j > 0$ such that $\tl g^{j}(\tl Q^{'u}) \cap \tl \phi' \neq \varnothing$, for any such a continuum $\tl Q^{'u}$ as in Equation \ref{equation:InitialConfigurationRectangle3}. Given that $\tl R' \subset \RR (\tl \phi) \subset \LL(\tl \phi_{\BB}) \cap \LL(\tl \phi_{\TT})$, we obtain that $\tl g^{j}(\tl R') \subset \LL(\tl \phi_{\BB}) \cap \LL(\tl \phi_{\TT})$, which implies that $\tl g^{j}(\tl R')$ intersects neither $\tl \phi_{\BB}$ nor $\tl \phi_{\TT}$. Let us show that using this fact and Lemma \ref{lemma:SeparateThenConnect}, there must be a curve $\gamma^{'s} \subset \tl g^{j}(\tl R')$ parallel to the stable sides (i.e. going from one unstable side to the other, without intersecting the stable sides), such that $\gamma^{'s} \subset \LL_{\tl A}(\tl R)$. 
		
		The application of Lemma \ref{lemma:SeparateThenConnect} is as follows: take $\tl Q := \tl \phi' \cap \tl g ^{j} (\tl R')$. Now, note that $\tl Q$ is in the hypothesis of Lemma \ref{lemma:SeparateThenConnect}, as it does not intersect the stable sides and it also \textit{separates the stable sides} (i.e. it intersects every continuum going from one stable side to the other). Then by this same lemma we obtain that $\tl Q$ must have a subcontinuum $\tl Q' \subset \tl g ^{j} (\tl R')$ connecting the stable sides. As $\tl \phi'$ is a line (i.e. homeomorphic to $\R$ by a proper application) in $\tl \D$, we recover that $\tl Q'$ can be taken as a curve, which will be our desired $\gamma^{'s}$.     
		
		
		Observe that $\gamma^{'s}$ divides $\tl g^{j} (\tl R')$ into two complementary rectangles. Define $\tl g^{j}(\underline{\tl R}')$ as any of the two, say for example the one with horizontal sides $\tl g^{j}(\tl I^{'s}_1), \ \gamma^{'s}$ and for vertical sides the corresponding two subarcs $\gamma^{'u}_1, \ \gamma^{'u}_2$ of $\tl g^{j}(\tl I^{'u}_1), \tl g^{j}(\tl I^{'u}_2)$, which go from $\tl g^{j}(\tl I^{'s}_1)$ to $\gamma^{'s}$. 
		
		Finally, define $\underline{\tl R}$ as the natural copy of $\tl g^{j}(\underline{\tl R}')$ which is included in $\tl R$, 
		{\color{black} with respective horizontal sides $\tl I^s_1, \gamma^s$, and vertical sides $\gamma^{u}_1 \subset \RR (\tl \phi_{\BB})$, $\gamma^{u}_2 \subset \RR (\tl \phi_{\TT})$, which are respectively subarcs of $\tl I^{u}_1, \tl I^{u}_2$. 
		Note that $\tl \phi_{\BB}$ and $\tl \phi_{\TT}$ respectively separate each of the vertical sides of $\underline{\tl R}$ from $\tl g^{j}(\underline{\tl R}')$ (see Figure \ref{figure:InitialConfiguration}). This means we can complete both horizontal sides of $\underline{\tl R}$ to lines $\tl \eta_1$, $\tl \eta_2$, using for each side two arcs respetively contained in $\RR (\tl \phi_{\BB})$ and $\RR (\tl \phi_{\TT})$, note that there is a homeomorphism $h'$ from $\partial \underline{\tl R} \cup \tl \eta_1 \cup \tl \eta_2$ to its image in $\R^2$, satisfying 
		\[h'(\underline{\partial \tl R}) = \partial [0,1]^2, \  h'(\tl \eta_1) = \R \times \{0\}, \ h'(\tl \eta_2) = \R \times \{1\}.\] 
		
		Then, by construction of the configuration and Lemma \ref{lem:homma}, we get that 
		$$  \tl g^{j}(\underline{\tl R}') \cap \underline{\tl R} \text{ is Markovian.}$$
		}

		\medskip
		
		\paragraph{\textbf{Step 3. Iterating the process. }} $\hbox{}$ \newline
		
		The rest of the proof is inductive. We will give all the details to go from $m=1$ to $m=2$, and we will show this is enough. Recall that $\text{Dom}(\wh I)$ is an infinitely-punctured plane, and therefore the group of deck transformations of the universal covering is free. Given that every subgroup of a free group is also free, it is enough to find $l > 0$, two deck transformations $W_1$, $W_2$ with no relations whatsoever (take for example the ones induced by \textit{looping} around increasing sets of singularities), and using again Proposition \ref{prop:AnchoredEverywhere} obtain two pairs $(\tl O, \tl O')$, $(W_2 \tl O, \tl g^{l}(\tl O'))$ which are in the Initial Configuration respectively relative to $W_1$ and $W_2W_1$ in the respective canonically foliated strips $\tl A$, $\tl W_2 \tl A$, and a leaf $\tl \phi \subset \tl A$ such that
		\begin{equation}
			\tl O \subset \RR(\tl \phi), \ W_2 \tl O \subset \LL(\tl \phi).
		\end{equation}

		To finish the proof, observe that given $(W_2 \tl O, \tl g^{l}(\tl O'))$ is in the Initial Configuration, there exists $j > 0$ and a rectangle $\underline{\tl R}' \subset \tl O'$ such that 
		\begin{equation}
			\tl g^{l+j}(\underline{\tl R}') \cap W_2 \tl R \text{ is Markovian.}
		\end{equation} 
		{\color{black}By construction, we have that $W_2 \tl A \subset \LL(\tl \phi)$ (see Figure \ref{figure:IteratingTheProcess}), and by the Anchoring Lemma (same argument as in Figure \ref{figure:AnchorC}) we may take} the corresponding new stable (horizontal) side $\gamma^{'s} \subset \tl g^{l+j}(\underline{\tl R}')$ {\color{black}satisfying}
		$$\gamma^{'s} \subset \LL_{W_2 \tl A}(W_2 \tl R) \subset \mathrm{L}(\tl \phi).$$
		{\color{black} {\color{black}By construction, we know that the horizontal (stable) side $\tl I^s_1 \subset \partial \underline{\tl R}$, intersects both $\RR(\tl \phi_{\BB})$ and $\RR(\tl \phi_{\TT})$}. This means that $\RR(\tl \phi_{\BB}) \cup \underline{\tl R} \cup \RR(\tl \phi_{\TT})$ separates the stable sides of $g^{l+j}(\underline{\tl R}')$, which again by Lemma \ref{lem:homma} shows that 
		}
		\begin{equation}
			\tl g^{l+j}(\underline{\tl R}') \cap \underline{\tl R} \text{ is also Markovian,}
		\end{equation} 
		and finishes the proof for $m = 2$. Figure \ref{figure:IteratingTheProcess} illustrates this phenomenon. 
		
		\begin{figure}[h]
			\centering
			
			\def\svgwidth{.92\textwidth}
			\import{./Figures/}{IteratingTheProcessFinalOne.pdf_tex}
			
			\bigskip

			\caption{We use a similar strategy to the one in Proposition \ref{prop:AnchoredEverywhere}: \textit{push} the pseudo-rectangle until it enters a new copy of the same canonically foliated strip, and then anchor it there.}
			\label{figure:IteratingTheProcess}
		\end{figure}
		
		\medskip
		
		For $m > 2$, the argument is identical: take successive deck transformations $W_1, \hdots , W_m$ with no relations, and positive integers $l_2, \hdots , l_m$ such that 
		\begin{enumerate}
			\item $(\tl O, \tl O')$ is in the Initial Configuration relative to $W_1$, in the canonically foliated strip $\tl A$,  
			\item There exists a leaf $\tl \phi \subset \tl A$ such that $\tl O \subset \RR(\tl \phi), \ W_2 \tl O \subset \LL(\tl \phi)$,
			\item For every $2 \leq i \leq m$, $(\tl g^{l_i}(\tl O'), W_i \hdots W_2 \tl O)$ is in the Initial Configuration relative to $W_i \hdots W_1$, in the canonically foliated strip $W_i \hdots W_2 \tl A$, 
			\item For every $2 \leq i \leq m-1$ there exists a leaf $\tl \phi_i \subset W_i \hdots W_2 \tl A$ such that $W_i \hdots W_2 \tl O \subset \RR(\tl \phi), \ W_{i+1}W_i \hdots W_2 \tl O \subset \LL(\tl \phi)$. 
		\end{enumerate}
		
		To finish the proof, take $j > 0$ and a rectangle $\tl R'$ such that $$\tl g^{l_m+j}(\underline{\tl R}') \cap W_m \hdots W_2 (\underline{\tl R}) \text{ is Markovian,}$$
		which {\color{black}using item (4)} implies that 
		$$ \tl g^{l_m+j}(\underline{\tl R}') \cap W_i \hdots W_2 (\underline{\tl R}) \text{ is Markovian for every } 2 \leq i \leq m,$$
		and that  
		$$ \tl g^{l_m+j}(\underline{\tl R}') \cap \underline{\tl R} \text{ is also Markovian.}$$
		
		This finishes the proof.
		
	\end{proof}
	
	\medskip
	
	\begin{proof}[Proof of Proposition \ref{prop:EssentialFactorHasDenseHorseshoes}]
		By Proposition \ref{prop:ChaoticRectanglesAreDense}, we know that fitted heteroclinic pseudo-rectangles are dense, and by Proposition \ref{prop:DenseRotationalHorseshoe} we know that inside each of these there is a Markovian horseshoe, which concludes the proof. 
	\end{proof}

	As each Markovian horseshoe $\wh R \subset \R^2$ contains periodic points (at least one per periodic sequence by the two-sided shift $\sigma_m$ in $m$-symbols), we conclude that 
	
	\begin{corollary}
		If $\vec{0} \in \mathrm{int}(\rho(\wh g))$, then $\textnormal{Per}(\wh g)$ is dense in $\R^2$
	\end{corollary}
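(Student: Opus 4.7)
The plan is to derive the density of $\mathrm{Per}(\wh g)$ directly by combining Proposition \ref{prop:EssentialFactorHasDenseHorseshoes} with Proposition \ref{prop:topologicalhorseshoe}. Let $\wh U \subset \R^2$ be an arbitrary nonempty open set; it suffices to find a periodic point of $\wh g$ inside $\wh U$. By Proposition \ref{prop:EssentialFactorHasDenseHorseshoes}, $\wh g$ possesses a Markovian horseshoe $\wh X \subset \wh U$, which in the paper's conventions means that, when one passes to the universal covering $\tl{\mathbb D}$ of $\mathrm{Dom}(\wh I)$, there exist a rectangle $\tl R$, an integer $j \geq 1$, and deck transformations $W_1,\ldots,W_m$ such that each intersection $W_i(\tl R) \cap \tl g^{\,j}(\tl R)$ is Markovian.

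Next I would verify that $W_1,\ldots,W_m$ generate a free subgroup of the group of deck transformations of $\tl{\mathbb D} \to \mathrm{Dom}(\wh I)$; this is immediate because $\mathrm{Dom}(\wh I) \subset \R^2$ is an infinitely-punctured plane, so its fundamental group is free, and any finite family of its elements generates a free subgroup. With this in hand, Proposition \ref{prop:topologicalhorseshoe} applies to $\tl g^{\,j}$ on the invariant compact set $\bigcap_{n\in\Z}\tl g^{\,nj}(\tl R)$: the action semiconjugates to the full Bernoulli shift $\sigma$ on $m$ symbols, and the preimage under the semiconjugacy of any periodic sequence of $\sigma$ contains a periodic point of $\tl g$.

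Pick such a periodic point $\tl z$ of $\tl g$, so that $\tl g^{\,jn}(\tl z)=\tl z$ for some $n \geq 1$. Projecting via $\tl \pi : \tl{\mathbb D} \to \mathrm{Dom}(\wh I) \subset \R^2$ and using the lift relation $\tl\pi\circ\tl g = \wh g\circ\tl\pi$, one obtains $\wh g^{\,jn}(\tl\pi(\tl z)) = \tl\pi(\tl z)$, so $\tl\pi(\tl z)$ is a periodic point of $\wh g$. Since $\tl\pi(\tl z)$ lies in $\wh X \subset \wh U$, we conclude that $\wh U \cap \mathrm{Per}(\wh g) \neq \varnothing$, and density follows as $\wh U$ was arbitrary. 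No substantial new difficulty arises; the corollary is essentially a bookkeeping step, because the hard work (the existence of a dense family of Markovian horseshoes whose deck transformations form a free group) is already encapsulated in Proposition \ref{prop:EssentialFactorHasDenseHorseshoes} and its proof.
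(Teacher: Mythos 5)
Your proof is correct and is essentially the paper's own argument: the corollary follows immediately by combining the density of Markovian horseshoes (Proposition \ref{prop:EssentialFactorHasDenseHorseshoes}) with Proposition \ref{prop:topologicalhorseshoe}, which produces a periodic point of $\wh g$ inside each horseshoe, hence inside the given open set $\wh U$. One small caveat: the freeness hypothesis on $W_1,\dots,W_m$ is better justified by the construction itself (in Proposition \ref{prop:DenseRotationalHorseshoe} the deck transformations are explicitly chosen with no relations, e.g.\ loops around disjoint increasing sets of singularities) rather than by invoking Nielsen--Schreier, which only yields that the generated subgroup is free and not that $W_1,\dots,W_m$ form a free basis; since the paper's horseshoes come equipped with relation-free $W_i$, this does not affect the conclusion.
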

	
	which in particular implies that 
	
	\begin{corollary}
		If $\vec{0} \in \mathrm{int}(\rho(\wh g))$, then $\textnormal{Per}_0(g)$ is dense in $\mathbb{T}^2$
	\end{corollary}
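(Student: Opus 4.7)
The plan is to deduce this corollary directly from the preceding one, which asserts that $\textnormal{Per}(\wh g)$ is dense in $\R^2$. The key observation is that a point $z \in \T^2$ lies in $\textnormal{Per}_0(g)$ precisely when it has a lift $\wh z \in \R^2$ such that $\wh g^n(\wh z) = \wh z$ for some $n \geq 1$: the equality of $\wh z$ and $\wh g^n(\wh z)$ (with no translation by a non-zero element of $\Z^2$) is exactly the condition that the rotation vector associated to the orbit of $z$ under $\wh g$ is the zero vector.

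Given any non-empty open set $U \subset \T^2$, I would pick any connected open lift $\wh U \subset \wh\pi^{-1}(U)$. By the previous corollary, there exists some $\wh z \in \wh U \cap \textnormal{Per}(\wh g)$, i.e., a point $\wh z \in \wh U$ and an integer $n \geq 1$ with $\wh g^n(\wh z) = \wh z$. Projecting, $z := \wh\pi(\wh z) \in U$, and from $\wh g^n(\wh z) = \wh z$ we get $g^n(z) = z$, so $z \in \textnormal{Per}(g)$; moreover the displacement $\wh g^n(\wh z) - \wh z = \vec{0}$ shows that the rotation vector of $z$ is zero, so $z \in \textnormal{Per}_0(g)$. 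Since $U$ was arbitrary, $\textnormal{Per}_0(g)$ is dense in $\T^2$.

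There is no real obstacle here: the work has already been done in the construction of Markovian horseshoes in $\R^2$ for $\wh g$ via Proposition \ref{prop:EssentialFactorHasDenseHorseshoes} and the resulting density of $\textnormal{Per}(\wh g)$. The only subtlety one has to flag is ensuring that the periodic points produced by the Markovian horseshoes are genuinely periodic for the lift $\wh g$ and not merely for a power $\wh g^k$ composed with a deck transformation; but this is built into Definition \ref{def:RotationalHorseshoe} and Proposition \ref{prop:topologicalhorseshoe}, which provide periodic points of the lift itself, whose rotation vector is therefore automatically $\vec{0}$.
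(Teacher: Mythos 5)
Your proof is correct and takes essentially the same route as the paper, which deduces the statement directly from the density of $\textnormal{Per}(\wh g)$ in $\R^2$: a point fixed by $\wh g^n$ (with no integer translation) projects to a $g$-periodic point whose rotation vector is $\vec{0}$. Your added remark that the horseshoe periodic points are genuinely periodic for the lift $\wh g$ itself — because the deck transformations in Definition \ref{def:RotationalHorseshoe} act on the covering of $\mathrm{Dom}(\wh I)$, not on $\R^2\to\T^2$ — is exactly the point the paper signals by noting these orbits ``turn around the singularities of the isotopy.''
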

	
	where $\textnormal{Per}_0(g)$ is the set of periodic points of $g$ with trivial associated rotation vector. Note that these periodic points \textit{do} turn around the singularities of the isotopy.
	
	\begin{remark}\label{rem:HypothesisForSection8}
		Apart from the General Hypothesis holding for $g$ (namely $g \in \textnormal{Homeo}_0(\T^2), \ \textnormal{int}(\rho(g)) \neq \varnothing$), and that $\vec{0} \in \mathrm{int}(\rho(\wh g))$, the properties of $g$ on which the construction of this section depend on are the ones in Section \ref{sec:EssFactor}, in particular
		\begin{itemize} 
			\item $\mathscr{D}_g(K) = \infty$ for every nontrivial continuum $K$ (which gives us the existence of stable and unstable continua for every point, which have uniformly large diameter), 
		\end{itemize}
		and the ones derived from it, as the existence of stable sets and the total anchoring of Proposition \ref{prop:AnchoredEverywhere}.
	\end{remark}
	
	This allows us to recover the following result, which was suggested by Pierre-Antoine Guihéneuf. 
	
	\begin{corollary}
		Let $v$ be a rational vector in $\mathrm{int}(\rho(\wh g))$. Then, $$\{\wh z \in \R^2 : \exists q \in \mathbb{Z}, w = qv \in \mathbb{Z}^2 \textnormal{ s.t. } \wh g^q(\wh z) = \wh z + w\}  \text{ is dense in }\R^2.$$
	\end{corollary}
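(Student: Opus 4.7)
The plan is to reduce the claim to the corollary immediately preceding it (density of $\mathrm{Per}(\wh g)$ when $\vec 0 \in \mathrm{int}(\rho(\wh g))$) by an appropriate change of lift and a power of the map. Given a rational vector $v \in \mathrm{int}(\rho(\wh g))$, pick $q \in \Z^+$ with $w := qv \in \Z^2$, and consider the lift $\wh h := T_{-w} \circ \wh g^{\,q}$ of the iterate $g^{q}$. Since the deck transformation $T_w(\wh z) = \wh z + w$ commutes with any lift of a torus map, an easy induction gives $\wh h^{k} = T_{-kw} \circ \wh g^{\,qk}$ for every $k \in \Z$. In particular $\rho(\wh h) = q\,\rho(\wh g) - w$, so $\vec 0 \in \mathrm{int}(\rho(\wh h))$ by the choice of $v$.

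The second step is to verify that $g^{q}$ is still a frice torus homeomorphism, so that the machinery applies to $\wh h$ as described in Remark~\ref{rem:HypothesisForSection8}. The only non-tautological point is the infinite continuum-wise expansivity: if $K \subset \T^{2}$ is a non-trivial continuum and $\mathscr{D}_{g^{q}}(K) < \infty$, then using that $\wh g - \mathrm{Id}$ is $\Z^{2}$-periodic and hence uniformly bounded by some $M > 0$, one has
\[\mathrm{diam}(\wh g^{\,qm+r}(\wh K)) \leq \mathrm{diam}(\wh g^{\,qm}(\wh K)) + 2rM\]
for every $0 \leq r \leq q-1$ and every $m \in \Z$, which would force $\mathscr{D}_{g}(K) < \infty$, contradicting the fact that $g$ is frice. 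Hence $\mathscr{D}_{g^{q}}(K) = \infty$ and the previous corollary, together with Remark~\ref{rem:HypothesisForSection8}, applies to $g^{q}$ with lift $\wh h$ and yields that $\mathrm{Per}(\wh h)$ is dense in $\R^{2}$.

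To conclude, I would just translate back the periodicity condition: a point $\wh z$ is periodic for $\wh h$ of period $k$ if and only if
\[\wh z = \wh h^{k}(\wh z) = T_{-kw}(\wh g^{\,qk}(\wh z)) \iff \wh g^{\,qk}(\wh z) = \wh z + kw = \wh z + (qk)v.\]
Setting $q' := qk$ and $w' := kw = q'v \in \Z^{2}$ places $\wh z$ in the set whose density we want, so density of $\mathrm{Per}(\wh h)$ transfers immediately. I do not expect a real obstacle here; the entire argument is a one-line lift-twisting trick plus the routine check that passing to a power preserves the frice property.
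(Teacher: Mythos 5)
Your proposal is correct and follows essentially the same route as the paper: pass to the twisted lift $\wh g^q - w$ of the power $g^q$, observe that $\vec 0$ lies in the interior of its rotation set and that the hypotheses of Remark \ref{rem:HypothesisForSection8} still hold, and then invoke the density of periodic points for that lift. Your explicit check that infinite continuum-wise expansivity passes to the power $g^q$ (via the uniform bound on $\wh g - \mathrm{Id}$) is a detail the paper leaves implicit, but the argument is the same.
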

	
	\begin{proof}
		Let $q$ be an integer such that $w = qv \in \mathbb{Z}^2$, and note that $w \in \textnormal{int}(\rho(\wh g^q))$. Now, take a new lift $\wh g^q_{w} = \wh g^q - w$, which is naturally isotopic to the identity as $\wh g^q$ also is. Last, note that $\wh g^q_{w}$ {\color{black}satisfies} every property from Remark \ref{rem:HypothesisForSection8}, which means that 
		\[X_v(\wh g) = \textnormal{Per}(\wh g^q_w) \text{ is dense in } \R^2,\]
		which concludes the proof.    
	\end{proof}
	
	Let us define $\textnormal{Per}_v(g) = \wh \pi (X_v(\wh g))$, where $\wh g$ is a lift of $g$, i.e. the set of periodic points of $g$ with rotation vector equal to $v$. We have then obtained that 
	
	\begin{corollary}
		For every rational vector $v \in \textnormal{int}(\rho(\wh g))$, we have that $\textnormal{Per}_v(g)$ is dense. 
	\end{corollary}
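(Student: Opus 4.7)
The plan is to derive this density statement as an immediate consequence of the previous corollary, which already gave us that $X_v(\wh g) = \mathrm{Per}(\wh g^{q}_{w})$ is dense in $\R^2$ whenever $v \in \mathrm{int}(\rho(\wh g))$ is rational and $w = qv \in \Z^2$. So the only work left is to transfer this density from the universal covering down to the torus.

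First I would recall that the covering projection $\wh \pi : \R^2 \to \T^2$ is a continuous open surjection, since it is a local homeomorphism. A standard topological fact is that open continuous surjections map dense sets to dense sets: if $D \subset \R^2$ is dense and $U \subset \T^2$ is any nonempty open set, then $\wh \pi^{-1}(U)$ is a nonempty open subset of $\R^2$, hence intersects $D$, and so $U$ intersects $\wh \pi(D)$. Applying this to $D = X_v(\wh g)$ gives $\wh \pi(X_v(\wh g)) = \mathrm{Per}_v(g)$ dense in $\T^2$.

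The only small thing I would want to double-check is the correct interpretation of $\mathrm{Per}_v(g)$: by the definition preceding the corollary, $\mathrm{Per}_v(g) = \wh \pi(X_v(\wh g))$ is exactly the set of periodic points of $g$ whose rotation vector equals $v$, since any $\wh z \in X_v(\wh g)$ satisfies $\wh g^{q}(\wh z) = \wh z + w$ with $w/q = v$, and conversely any lift of a $g$-periodic point with rotation vector $v$ eventually lands in $X_v(\wh g)$ for some (possibly larger) period. No obstacle is expected here; the entire argument is a one-line deduction from the previous corollary plus openness of $\wh \pi$.
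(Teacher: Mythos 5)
Your argument is correct and is essentially the paper's own: the paper defines $\textnormal{Per}_v(g) = \wh \pi(X_v(\wh g))$ and treats the corollary as an immediate consequence of the density of $X_v(\wh g)$ in $\R^2$, exactly the projection step you spell out via openness of $\wh \pi$. No gap here; the deduction is as short as you claim.
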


	\section{Proof of Theorem A}\label{sec:Conservative}

	\subsection{The essential factor is conservative}
	
	We will prove that, up to a continuous change of coordinates, the essential factor of a torus homeomorphism $f$ in the General Hypothesis preserves the Lebesgue measure. More precisely,
	
	\begin{proposition}\label{prop:AreaPreservingConjugate}
		Let $f : \T^2 \to \T^2$ be an isotopic-to-the-identity homeomorphism, such that $\rho(f)$ has nonempty interior, and let $g: \T^2 \to \T^2$ be its essential factor (see Definition \ref{def:EssentialFactor} and Remark \ref{rem:TorusModel}). 
		Then, there exists a homeomorphism $h: \T^2 \to \T^2$ such that $hgh^{-1}$ preserves the Lebesgue measure on $\T^2$. 
	\end{proposition}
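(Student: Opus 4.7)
The plan is to apply the Oxtoby--Ulam theorem on the torus: it will suffice to produce a $g$-invariant Borel probability measure $\mu$ on $\T^2$ that is non-atomic and of full support. Given such a $\mu$, the Oxtoby--Ulam theorem in its form for closed surfaces yields a homeomorphism $h \in \textnormal{Homeo}_0(\T^2)$ with $h_\ast \mu = \mathrm{Leb}$, and then
\[
	(hgh^{-1})_\ast \mathrm{Leb} \;=\; (hg)_\ast \mu \;=\; h_\ast (g_\ast \mu) \;=\; h_\ast \mu \;=\; \mathrm{Leb}.
\]
Taking $h$ isotopic to the identity is moreover what is needed to preserve the rotation set, matching the statement of Theorem~\ref{thmA:semiconjugation}.

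\textbf{Producing $\mu$.} The strategy is: for each open set $U \subset \T^2$ first build a $g$-invariant non-atomic probability measure $\mu_U$ with $\mu_U(U) > 0$, and then take a countable convex combination to obtain full support. Fix $U$. By Proposition~\ref{prop:EssentialFactorHasDenseHorseshoes}, $g$ has a Markovian horseshoe $X \subset U$ of period $j$ in $m \geq 2$ symbols, and Proposition~\ref{prop:topologicalhorseshoe} provides a compact $\tilde Y$ with a homeomorphism $\tilde g$, together with continuous surjections $\pi: \tilde Y \to X$ (a restriction of a covering projection of surfaces, hence with countable fibres) and $\Psi: \tilde Y \to \{1,\dots,m\}^{\Z}$ satisfying $\pi \tilde g = g^j \pi$ and $\Psi \tilde g = \sigma \Psi$. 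Let $\beta$ be the Bernoulli $(1/m,\dots,1/m)$ measure on the shift, which is non-atomic; lift it to a $\tilde g$-invariant measure $\tilde\beta$ on $\tilde Y$ with $\Psi_\ast \tilde\beta = \beta$, using the standard existence of invariant lifts through factor maps. Then $\tilde\beta$ is non-atomic, because an atom $\tilde y$ would force $\beta(\{\Psi(\tilde y)\}) \geq \tilde\beta(\{\tilde y\}) > 0$, contradicting the non-atomicity of $\beta$; and the projection $\nu := \pi_\ast \tilde\beta$ is a $g^j$-invariant non-atomic measure on $X$, since each fibre $\pi^{-1}(z)$ is countable and $\tilde\beta$ has no atoms. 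Averaging,
\[
	\mu_U \;:=\; \frac{1}{j}\sum_{i=0}^{j-1}(g^i)_\ast \nu,
\]
is a $g$-invariant non-atomic Borel probability measure on $\T^2$ with $\mu_U(U) \geq \tfrac{1}{j} > 0$.

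\textbf{Finishing.} Fix a countable basis $\{U_n\}_{n \geq 1}$ of the topology of $\T^2$ and set $\mu := \sum_{n \geq 1} 2^{-n} \mu_{U_n}$. This $\mu$ is $g$-invariant, Borel probability, non-atomic (countable sums of non-atomic measures are non-atomic), and has full support because $\mu(U_n) \geq 2^{-n}\mu_{U_n}(U_n) > 0$ for every $n$. Applying Oxtoby--Ulam to $\mu$ concludes the proof.

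\textbf{Main obstacle.} The deep content is already packaged in the density of Markovian horseshoes (Proposition~\ref{prop:EssentialFactorHasDenseHorseshoes}) together with the shift structure they carry (Proposition~\ref{prop:topologicalhorseshoe}); once these are in place, the remainder of the argument is soft measure theory plus the classical Oxtoby--Ulam theorem. The only genuinely technical point to verify is the non-atomicity of $\nu$, which reduces to the countability of the fibres of $\pi$ and the non-atomicity of $\beta$.
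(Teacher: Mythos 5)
Your proposal is correct, and its skeleton is the same as the paper's: reduce to Oxtoby--Ulam by producing a $g$-invariant, non-atomic, fully supported Borel probability measure, obtain it locally from the dense horseshoes of Proposition \ref{prop:EssentialFactorHasDenseHorseshoes}, average over the period to get $g$-invariance, and sum over a countable basis. Where you differ is the mechanism for non-atomicity of the local measure. The paper only uses the Kennedy--Yorke-type semiconjugacy $\pi:\Lambda\to\Sigma_2$ coming from the horseshoe, pulls back a Sturmian (minimal, aperiodic) subshift $X$, applies Krylov--Bogolyubov on $\pi^{-1}(X)$, and rules out atoms by the observation that atoms of an invariant measure must sit on periodic orbits, which cannot exist since the factor $\sigma|_X$ is aperiodic. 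You instead exploit the full Markovian extension diagram of Proposition \ref{prop:topologicalhorseshoe}: lift the Bernoulli $(1/m,\dots,1/m)$ measure through the symbolic factor map (the standard Markov--Kakutani/Ces\`aro-average argument gives the invariant lift), and push it down through the covering projection, using that its fibres over the compact set $\tl Y$ are countable (indeed finite), so non-atomicity survives. Both are sound; the paper's route needs less structure (no Markovian property, no control of fibres), while yours produces a measure that factors onto the Bernoulli measure, which is slightly more informative. One small point to make explicit in your write-up: per the paper's adapted Definition \ref{def:RotationalHorseshoe}, the Markovian horseshoe and the extension $\tl Y$ live over $\mathrm{Dom}(\wh I)\subset\R^2$, so your measure $\nu$ must additionally be pushed down by $\wh\pi$ to $\T^2$; this is harmless since $\wh\pi$ has finite fibres over a compact set and one chooses the lift $\wh U$ of $U$ so that the horseshoe projects into $U$, but it deserves a sentence.
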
 
	
	\begin{proof}
		The whole idea of the proof is to find an atomless measure $\mu$ with total support, which is invariant under $g$. Once we have done that, we know by Oxtoby-Ulam's Theorem (see {\color{black} \cite{oxtobyulam41} for the original proof, \cite[Section 2]{leroux12} for a more recent one}), that the desired change of coordinates exists. We will strongly use the density of Markovian horseshoes for $g$. 
		
		Start by taking a countable base $\ \mathcal{U} = \{U_n\}_{n \in \mathbb{Z}^+}$ for the usual topology in $\T^2$ (i.e. balls of rational radius centered at rational points), and fix $n > 0$. By Proposition \ref{prop:ChaoticRectanglesAreDense}, we know that there exists a heteroclinic pseudo-rectangle $P$ included in $U_n$, and hence by Proposition \ref{prop:HeteroclinicPseudoRectangleHasTopHorseshoe} we know that there exists a compact subset $\Lambda \subset P$ which is invariant under some power $g^k$ of $g$, and such that it is an extension of the Bernoulli shift $\sigma$ in $2$ symbols by the surjection $\pi$, that is 
		
		\bigskip  
		
		\tikzset{every picture/.style={line width=0.75pt}} 
		
		\begin{tikzpicture}[x=0.75pt,y=0.75pt,yscale=-1,xscale=1]
			
			\draw    (273.07,38.23) -- (388.5,38.23) ;
			\draw [shift={(390.5,38.23)}, rotate = 180] [color={rgb, 255:red, 0; green, 0; blue, 0 }  ][line width=0.75]    (10.93,-3.29) .. controls (6.95,-1.4) and (3.31,-0.3) .. (0,0) .. controls (3.31,0.3) and (6.95,1.4) .. (10.93,3.29)   ;
			\draw    (273.67,121.19) -- (389.1,121.19) ;
			\draw [shift={(391.1,121.19)}, rotate = 180] [color={rgb, 255:red, 0; green, 0; blue, 0 }  ][line width=0.75]    (10.93,-3.29) .. controls (6.95,-1.4) and (3.31,-0.3) .. (0,0) .. controls (3.31,0.3) and (6.95,1.4) .. (10.93,3.29)   ;
			\draw    (253.83,50.25) -- (253.83,106.57) ;
			\draw [shift={(253.83,108.57)}, rotate = 270] [color={rgb, 255:red, 0; green, 0; blue, 0 }  ][line width=0.75]    (10.93,-3.29) .. controls (6.95,-1.4) and (3.31,-0.3) .. (0,0) .. controls (3.31,0.3) and (6.95,1.4) .. (10.93,3.29)   ;
			\draw    (412.54,50.86) -- (412.54,107.17) ;
			\draw [shift={(412.54,109.17)}, rotate = 270] [color={rgb, 255:red, 0; green, 0; blue, 0 }  ][line width=0.75]    (10.93,-3.29) .. controls (6.95,-1.4) and (3.31,-0.3) .. (0,0) .. controls (3.31,0.3) and (6.95,1.4) .. (10.93,3.29)   ;
			
			\draw (324.18,17) node [anchor=north west][inner sep=0.75pt]    {$g^k$};
			\draw (325.38,105) node [anchor=north west][inner sep=0.75pt]    {$\sigma$};
			\draw (248,29.23) node [anchor=north west][inner sep=0.75pt]    {$\Lambda$};
			\draw (403.34,29.23) node [anchor=north west][inner sep=0.75pt]    {$\Lambda$};
			\draw (248,118) node [anchor=north west][inner sep=0.75pt]    {$\Sigma_2$};
			\draw (403,118) node [anchor=north west][inner sep=0.75pt]    {$\Sigma_2$};
			\draw (82,63.67) node [anchor=north west][inner sep=0.75pt]   [align=left] { \ \ \ };
			\draw (239,69) node [anchor=north west][inner sep=0.75pt]    {$\pi$};
			\draw (397,69) node [anchor=north west][inner sep=0.75pt]    {$\pi$};
		\end{tikzpicture}
		
		where $\Sigma_2 = \{0,1\}^{\mathbb{Z}}$. Now, let us find a minimal subshift $\sigma_{\alpha}$ using a well-known technique. Choose an irrational number $0 < \alpha < 1$, and build a Sturmian sequence $\omega \in \Sigma_2$ with density of $1$'s equal to $\alpha$: the $n$-th coordinate of $\omega$ will be $1$ if and only if 
		\[\lfloor n\alpha \rfloor - \lfloor (n-1)\alpha \rfloor= 1,\]
		and it will be 0 otherwise.  
		
		We know that the closure $X$ of the orbit of $\omega$, is invariant under $\sigma$ and is also minimal for $\sigma$, from where we can obtain a minimal subshift $\sigma |_{X}$ (in particular it has no periodic orbits). 
		
		Take its preimage $\pi^{-1}(X) \subset U_n$ and look at the restriction $g^k|_{\pi^{-1}(X)}$. We know that it has no periodic orbits because its projection does not have either. By the Krylov–Bogolyubov theorem, we know there exists a $g^k$-invariant probability measure $\mu_n$ supported in $\pi^{-1}(X)$, which must have no atoms, as their existence would in turn imply the existence of periodic orbits for $g^k|_{\pi^{-1}(X)}$. Let us then define
		{\color{black} 
		\[ \mu'_n = \frac{1}{k} \sum \limits_{j=1}^{j=k} g^j_{\ast}(\mu_n).\]
		}
		Note that $\mathrm{supp}(\mu'_n) \cap U_n \neq \varnothing$ since $\mu'_n(U_n) \geq 1/n$, and that $\mu'_n$ is a $g$-invariant measure with no atoms. Let us now define the probability measure $\mu$ as 
		\[\mu = \sum \limits_{n \in \mathbb{Z}^+} \frac{\mu'_n}{2^n}.\]

		Now, note that the probability $\mu$ has the following properties: 
		\begin{itemize}
			\item It is invariant under $g$ because it is the countable sum of $g$-invariant measures, 
			\item It has total support: its support intersects every open ball because $\{U_n\}_{n \in \mathbb{Z}^+}$ is a base for the usual topology,
			\item It has no atoms because each of the countably many $\mu_n$ has no atoms.
		\end{itemize}  
		
		By Oxtoby-Ulam's Theorem, we know there exists a change of coordinates $h: \T^2 \to \T^2$ such that $h_{\ast}(\mu) = \textnormal{Leb}$, the usual Lebesgue measure for $\T^2$, which concludes the proof. 
	\end{proof}
	
	\subsection{The proof}
	
	We have finally gathered all the pieces to prove Theorem \ref{thmA:semiconjugation}. 
	
	\begin{proof}[Proof of Theorem \ref{thmA:semiconjugation}]
		Let $f: \T^2 \to \T^2$ be an isotopic-to-the-identity homeomorphism whose rotation set has nonempty interior, and let $g: \T^2 \to \T^2$ be its essential factor. Let us check that we have already obtained the required properties:
		\begin{itemize}
			\item $\rho(\wh g)=\rho(\wh f)$ for some well-chosen lifts. This comes from Proposition \ref{prop:elementprop.essfactor}.
			\item For every nontrivial continuum $K$, $\mathscr{D}_g(K) = \infty$. This is also proved in Proposition \ref{prop:elementprop.essfactor}. 
			\item $g$ is tight. This is Proposition \ref{prop:Tight}. 
			\item $g$ is rotationally mixing. This is proved in Proposition \ref{prop:essfactor.isrot.mixing}. 
			\item Density of Markovian horseshoes for $g$. This is proved in Proposition \ref{prop:EssentialFactorHasDenseHorseshoes}. We also obtain that $\overline{\textnormal{Per}(g)} = \T^2$.  
		\end{itemize}
		By Proposition \ref{prop:AreaPreservingConjugate}, we obtain that for some change of coordinates $h$, the conjugate $hgh^{-1}$ is area preserving (we can assume that $h$ is isotopic to the identity without loss of generality). Note that these first five properties are invariant under this topological conjugacy $h$. For the sake of simplicity, rename $hgh^{-1}$ as $g$, which {\color{black}satisfies} the six desired properties and thus finishes the proof.  
	\end{proof}

	\small
	\bibliographystyle{alpha}
	\bibliography{biblio}


\end{document}